\numberwithin{equation}{section}
\newtheorem{thm}{Theorem}[subsection]
\newtheorem{definition}[thm]{Definition}
\newtheorem{prop}[thm]{Proposition}
\newtheorem{lemma}[thm]{Lemma}
\newtheorem{cor}[thm]{Corollary}
\newtheorem{rem}[thm]{Remark}
\newtheorem{conj}[thm]{Conjecture}
\theoremstyle{definition}
\newtheorem{ex}[thm]{Example}
\newcommand{\Sc}{\mathcal{S}}
\newcommand{\Rc}{\mathcal{R}}
\newcommand{\Bc}{\mathcal{B}}
\newcommand{\Hc}{\mathcal{H}}
\newcommand{\Dc}{\mathcal{D}}
\newcommand{\vt}{{\tt{v}}}
\newcommand{\qt}{{\tt{q}}}
\newcommand{\ct}{{\tt{c}}}
\newcommand{\dt}{{\tt{d}}}
\newcommand{\zt}{{\tt{z}}}
\newcommand{\Qt}{{\tt{Q}}}
\newcommand{\Ut}{{\tt{U}}}
\newcommand{\Pbb}{\mathbb{P}}
\newcommand{\Dbb}{\mathbb{D}}
\newcommand{\Ibb}{\mathbb{I}}
\newcommand{\Fr}{\textup{Fr}}
\newcommand{\mbf}{\mathbf}
\begin{document}

\title[Affine Hecke algebras and the conjectures of Hiraga, Ichino and Ikeda ]
{Affine Hecke algebras and the conjectures of Hiraga, Ichino and Ikeda on the Plancherel density}
\author{Eric Opdam}
\address[E.~Opdam]
{Korteweg-de Vries Institute for Mathematics\\Universiteit van Amsterdam\\Science Park 105-107\\ 
1098 XG Amsterdam, The Netherlands}
\email{e.m.opdam@uva.nl}

\date{\today}
\keywords{Cuspidal unipotent representation, formal degree, 
discrete unramified Langlands parameter}
\subjclass[2000]{Primary 20C08; Secondary 22D25, 43A30}
\thanks{It is a pleasure to thank Maarten Solleveld for many useful comments. It is also a 
pleasure to acknowledge the excellent comments and suggestions of the 
participants of the workshop ``Representation Theory of Reductive Groups Over
Local Fields and Applications to Automorphic forms'' at the Weizmann institute, and 
of the workshop ``Representation theory of p-adic groups'' at IISER, Pune.}

\begin{abstract} 
Hiraga, Ichino and Ikeda have conjectured an explicit expression for the 
Plancherel density of the group of points of a reductive group defined 
over a local field $F$, in terms of local Langlands parameters. 
In these lectures we shall present a proof of these conjectures for 
Lusztig's class of representations of unipotent reduction if $F$ is $p$-adic 
and $G$ is of adjoint type and splits over an unramified extension of $F$.   
This is based on the author's paper [Spectral transfer morphisms for unipotent affine Hecke algebras, 
Selecta Math. (N.S.) {\bf 22} (2016), no. 4, 2143--2207]. 

More generally for $G$ connected reductive (still assumed to be split over an unramified extension 
of $F$), we shall show that 
the requirement of compatibility with the conjectures of Hiraga, Ichino and Ikeda essentially determines 
the Langlands parameterisation for tempered representations of unipotent reduction. 
We shall show that there exist parameterisations for which the conjectures of 
Hiraga, Ichino and Ikeda hold up to rational constant factors.   
The main technical tool is that of spectral transfer maps between normalised affine
Hecke algebras used in \it{op. cit.}
\end{abstract}
\dedicatory{Dedicated to Joseph Bernstein}
\maketitle
\tableofcontents


\section{Introduction}
Let $F$ be local field of characteristic $0$, let $\Gamma_F := \textup{Gal}(\overline{F} /F)$ 
be the absolute Galois group of $F$, and let $G$ be a the group of points 
of a connected reductive algebraic group defined over $F$. Let $G^\vee$ 
denote the Langlands dual group of $G$ (a complex Lie group with root system dual to that of $G$), 
and let ${}^LG := G^\vee \rtimes \Gamma_F$  
be the Galois form of the L-group of $G$. 
The Langlands group $L_F$ of $F$ is defined to be $W_F$ (the Weil group of $F$) if $F$ is archimedean, 
and $W_F \times \textup{SL}_2(\mathbb{C})$ 
otherwise. Let $\psi$ be a fixed additive character of $F$. 
To a finite dimensional complex representation $V$ of ${}^LG$ one attaches  
epsilon factors $\epsilon(s,V,\psi)$ and $L$-functions $L(s,V)$, 
where $s\in\mathbb{C}$ is a complex variable (see \cite{Tate}). 
A Langlands parameter for $G$ is a homomorphism $\varphi: L_F\to {}^LG$ some 
natural conditions (cf. Section \ref{Sect:LLP}). 
With all this in place, the adjoint $\gamma$-factor of a Langlands parameter $\varphi$ of $G$ 
is defined as
\[
\gamma(s,\textup{Ad} \circ \varphi,\psi):= \frac{\epsilon(s,\textup{Ad} \circ \varphi,\psi) L(1-s,\textup{Ad} \circ \varphi)}
{ L(s,\textup{Ad} \circ \varphi)},
\]
where $\textup{Ad}$ is the adjoint representation of ${}^LG$ on 
$\textup{Lie}(G^\vee)/\textup{Lie}(Z^\vee)$, with $Z^\vee$ the center of $G^\vee$. 
Let $\widehat{G}$ be the space of equivalence classes of irreducible unitary representations
$\pi$ of $G$, equipped with the Fell topology. We will denote by $\Theta_\pi$ 
the distribution character of $\pi$. Then Harish-Chandra's Plancherel formula states 
the existence of a unique positive measure $\nu_{Pl}$ on $\widehat{G}$,  
such that $f(e) = \int_{\widehat{G}}\Theta_\pi(f)d\nu_{Pl}(\pi)$, for all $f \in C_c^\infty(G)$.
The measure $d\nu_{Pl}$ is called the Plancherel measure of $G$.  
If $\pi$ is a discrete series representation of $G$, its formal degree is equal to 
$\textup{fdeg}(\pi) := \nu_{Pl}({\pi})>0$. For more general tempered representations 
$d\nu_{Pl}$ is described in terms of a density function.
Hiraga, Ichino, and Ikeda formulated two conjectures \cite{HII, HIIcor} expressing the 
Plancherel density at a tempered representation $\pi$ in terms of the conjectural enhanced  
Langlands parameter attached to $\pi$.  
For an essentially discrete series representation $\pi_\rho$ in an $L$-packet $\Pi_\varphi(G)$ 
attached to a (discrete) Langlands parameter $\varphi: L_F \to {}^LG$, enhanced with 
a local system $\rho \in \textup{Irr}(\mathcal{S}_\varphi,\chi_G)$ on the $G^\vee$-orbit 
of $\varphi$ (see Section \ref{Sect:LLC} for unexplained notation and more details), 
the conjecture reduces to the
equality
\[
\textup{fdeg}(\pi_\rho) = \frac{\textup{dim}(\rho)}{|\mathcal{S}^\natural_{\varphi}|} |\gamma(0,\textup{Ad} \circ \varphi,\psi)|. 
\]
Hiraga, Ichino, and Ikeda proved the conjecture for $F = \mathbb{R}$ \cite{HII, HIIcor}.  
For $F$ non-archimedean, it has been proved in several cases but not in general
(see Section \ref{Sect:known} for an overview of known results). 
From now on we assume that $F$ is a non-archimedean field and that $G$ 
splits over an unramified extension of $F$. When $G$ is absolutely almost 
simple of adjoint type, the conjecture above and its extension to general tempered 
representations are known to hold for representations of $G$ with unipotent reduction 
from previous works \cite{Re0}, \cite{Re}, \cite{HOH}, \cite{FO}, \cite{Opdl}, \cite{Fe2}, 
\cite{FOS}. The proof in \cite{Opdl} uses the 
Lusztig classification (composed with the Iwahori-Matsumoto involution in order 
to map tempered representations to bounded parameters) as a Langlands parameterisation.
The main goal of the present manuscript is to extend this result to a general connected reductive 
group $G$ (still assumed to be split over an unramified extension of $F$). This is achieved in 
the main result Theorem \ref{thm:main}. 

The irreducible representations with unipotent reduction (a terminology introduced by Moeglin and Waldspurger), 
called ÒunipotentÓ by Lusztig, are the representations of $G$ which admit non-zero invariant vectors by the pro-$p$ 
unipotent radical of a parahoric subgroup of $G$. 
In particular, they are depth-zero representations. They are expected to correspond to unramified Langlands parameters.
From their definition it follows that the category of representations with unipotent reduction of $G$ is Morita equivalent 
to the module category of a direct sum of affine Hecke algebras $\Hc_t(G)$, where $t$ runs over the set of 
equivalence classes of unipotent types of $G$. Let $G^*$ be the quasi-split group in the inner class of $G$, 
and let $\Ibb$ denote an Iwahori subgroup 
of $G^*$. Let $\Hc_\Ibb(G^*)$ be the Iwahori Hecke algebra of $G^*$.

One of the main ingredients of the proof of Theorem \ref{thm:main} is 
the notion of spectral transfer morphism between normalised affine Hecke
algebras \cite{Opds}, \cite{Opdl}, which allows us to construct a bijection between the set
$\widehat{G}^\textup{temp}_{uni}$ of equivalence classes of tempered irreducible representations of $G$ with unipotent 
reduction and the set $\Phi^{\textup{temp}}_{nr}(G)$ of $G^\vee$-conjugacy classes of unramified 
bounded enhanced Langlands parameters for $G$. 
The construction of such a bijection has some 
interest in its own right. A key point is the fact that a spectral transfer morphism $\Psi:\Hc_t(G)\leadsto\Hc_\Ibb(G^*)$ 
from the Hecke algebra $\Hc_t(G)$ of a unipotent type $t$ of $G$ to the Iwahori Hecke algebra $\Hc_\Ibb(G^*)$ 
defines Langlands parameters $\pi\to\varphi_\pi\in \Phi^{\textup{temp}}_{nr}(G)$ for the tempered 
representations covered by $t$ such that the conjectures of 
Hiraga, Ichino and Ikeda hold (up to rational constant factors independent of the cardinality $q$ 
of the residue field of $F$). This is explained in Corollary \ref{cor:STMpar}, which is itself based on the 
Iwahori-spherical case Theorem \ref{thm:cc} and Theorem \ref{thm:Iw}.  Conversely, a Langlands 
parameterisation such that the conjectures of 
Hiraga, Ichino and Ikeda hold (up to rational constant factors independent of the cardinality $q$ 
of the residue field of $F$) and satisfies a certain algebraic condition (see Theorem \ref{thm:main}(a)) 
defines such STMs uniquely.

It is remarkable that for tempered representations of unipotent reduction the 
conditions imposed on a Langlands parameterisation by the conjectures of Hiraga, Ichino and Ikeda
determine it up to twisting by certain diagram automorphism. This can be viewed as a generalisation 
and strengthening of the principle expressed by Mark Reeder \cite{Re} for discrete series $L$-packets, saying that 
``alleged $L$-packets can only be convicted upon circumstantial evidence,  
of which the formal degrees are one piece''. 

\section{The conjecture of Hiraga, Ichino and Ikeda}
Let $F$ be a local field of characteristic $0$, and let $\mbf{G}$ be a connected reductive group defined 
over $F$. The group $G=\mbf{G}(F)$ of $F$-points of $\mbf{G}$ is a separable locally compact topological 
group which is unimodular. Let $\mu_G$ denote a Haar measure on $G$. 
Let $C^*(G)$ be the group $C^*$-algebra of $G$, i.e. the $C^*$-envelope of the 
Banach algebra $L^1(G,\mu_G)$ with respect to convolution. By famous results of Harish-Chandra
\cite{HC1} (if $F$ is archimedean) and by Bernstein \cite{Be} (if $F$ is non-archimedean) we know that 
$C^*(G)$ is liminal, hence of Type I. 
Let $\hat{G}$ denote the space of equivalence classes of 
irreducible unitary representations of $G$, equipped with 
the Fell topology. For each $\pi\in \hat{G}$ 
we choose a representative denoted by $(V_\pi,\pi)$. 
The abstract Plancherel formula for separable locally compact 
unimodular topological groups of Type I asserts that: 
\begin{thm} There exists a unique positive measure $\nu_{Pl}$ (called the Plancherel measure of $G$)   
on $\hat{G}$ such that:  
\begin{equation}
L^2(G,\mu_G)\simeq \int^\oplus_{\pi\in\hat{G}}H(\pi)d\nu_{Pl}(\pi)
\end{equation}  
where $H(\pi):=V_\pi^*\hat\otimes V_\pi$ denotes the algebra of 
Hilbert-Schmidt operators on $V_\pi$.
\end{thm}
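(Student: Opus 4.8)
The plan is to identify $\nu_{Pl}$ with the measure governing the central disintegration of the regular representation, using the canonical trace on the group von Neumann algebra together with the standing assumption that $G$ is of type I, and to deduce uniqueness from the corresponding rigidity statements in reduction theory. First I would pass to the left regular representation $\lambda\colon G\to U(L^2(G,\mu_G))$ and set $\mathcal{L}(G):=\lambda(G)''$, with commutant the von Neumann algebra $\mathcal{R}(G):=\rho(G)''$ generated by right translations. Since $\mu_G$ is bi-invariant, $C_c(G)$ is a two-sided unimodular Hilbert algebra under convolution and the $L^2$-inner product, and the associated canonical normal, semifinite, faithful trace $\tau$ on $\mathcal{L}(G)$ is determined by $\tau(\lambda(f)^*\lambda(f))=\|f\|_{L^2}^2$ for $f\in C_c(G)$. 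Together with the Fourier transform $f\mapsto(\pi(f))_\pi$ this is the abstract Plancherel datum of Godement and Segal that the theorem is meant to encode.

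Next I would invoke the type I hypothesis. By the theorems of Harish-Chandra (archimedean $F$) and Bernstein (non-archimedean $F$) cited above, $C^*(G)$ is liminal, hence of type I; consequently $\mathcal{L}(G)$ is a type I von Neumann algebra on the separable Hilbert space $L^2(G,\mu_G)$, and the spectrum of $C^*(G)$ with its Mackey Borel structure is a standard Borel space canonically identified with $\hat G$. Dixmier's reduction theory then provides a measure class on $\hat G$ and a measurable field $\pi\mapsto V_\pi$ of irreducible representations giving a direct integral decomposition of $(L^2(G,\mu_G),\lambda)$; the multiplicity with which $\pi$ occurs in $\lambda$ — recorded by the commutant $\mathcal{R}(G)$ — is $\dim V_\pi$, so the $\pi$-fibre is canonically the Hilbert–Schmidt algebra $H(\pi)=V_\pi^*\hat\otimes V_\pi$ (via $V_\pi^*\cong\overline{V_\pi}$), yielding
\[
L^2(G,\mu_G)\;\cong\;\int^{\oplus}_{\hat G}H(\pi)\,d\mu(\pi),\qquad
\mathcal{L}(G)\;\cong\;\int^{\oplus}_{\hat G}B(V_\pi)\,d\mu(\pi),
\]
where $\lambda$ acts on each fibre by left multiplication by $\pi(g)$ and $\rho$ by right multiplication by $\pi(g)^{-1}$.

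Now I would disintegrate the trace. Along the above decomposition $\tau$ becomes $\int^{\oplus}_{\hat G}\mathrm{Tr}_{V_\pi}\,d\nu_{Pl}(\pi)$ for a unique measure $\nu_{Pl}$ in the given class, with $\mathrm{Tr}_{V_\pi}$ the usual operator trace on $B(V_\pi)$; concretely $\nu_{Pl}$ is pinned down by the requirement $\|f\|_{L^2}^2=\int_{\hat G}\mathrm{Tr}_{V_\pi}(\pi(f)^*\pi(f))\,d\nu_{Pl}(\pi)$ for all $f\in C_c(G)$. Positivity of $\nu_{Pl}$ is immediate from positivity of $\tau$. For uniqueness, suppose $\nu'$ also realises the stated isomorphism; the fibrewise picture then equips $\mathcal{L}(G)$ over $\nu'$-a.e.\ $\pi$ with the trace $\mathrm{Tr}_{V_\pi}$ rescaled by $d\nu'/d\nu_{Pl}$, and comparing with the unique disintegration of $\tau$ — using that the trace on the factor $B(V_\pi)$ is unique up to a positive scalar — forces $d\nu'/d\nu_{Pl}=1$ a.e., i.e.\ $\nu'=\nu_{Pl}$.

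These manipulations are formal; the real content lies in two structural inputs. The first is Dixmier's reduction theory for type I von Neumann algebras on separable Hilbert spaces, which is exactly what makes both the central decomposition and the disintegration of $\tau$ canonical and hence delivers the uniqueness clause. The second, and the genuine obstacle, is the type I property of $C^*(G)$ — a deep theorem (Harish-Chandra in the archimedean case, Bernstein in the non-archimedean case) rather than soft general theory — which guarantees that the decomposition takes place over the honest unitary dual $\hat G$, so that $\nu_{Pl}$ is literally a Borel measure on $\hat G$ and each irreducible constituent that occurs is genuinely irreducible.
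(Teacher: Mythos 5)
The paper states this theorem without proof: it is presented as the classical abstract Plancherel formula for separable unimodular Type I groups (Segal, Mautner, Dixmier), with the only non-formal input---the Type I property of $C^*(G)$---attributed to Harish-Chandra (archimedean) and Bernstein (non-archimedean) in the surrounding text. Your reconstruction is the standard argument and is essentially correct: Hilbert algebra structure on $C_c(G)$ giving the canonical trace on $\mathcal{L}(G)=\lambda(G)''$, Type I plus separability to invoke Dixmier's central disintegration over $\hat G$, identification of the fibres with Hilbert--Schmidt algebras, and disintegration of the trace to extract the measure.

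One point deserves to be made explicit. You write that the multiplicity of $\pi$ in $\lambda$ is ``recorded by the commutant $\mathcal{R}(G)$ --- $\dim V_\pi$,'' and that this \emph{therefore} identifies the fibre with $V_\pi^*\hat\otimes V_\pi$. The commutant alone does not pin down the multiplicity; what does is the fact that the Hilbert algebra structure on $C_c(G)$ realises $\mathcal{L}(G)$ in \emph{standard form}, so that each central fibre is the GNS space of the restricted trace on the type I factor $B(V_\pi)$, namely the Hilbert--Schmidt ideal. This is where unimodularity enters in an essential way (it is what makes $C_c(G)$ a two-sided, unimodular Hilbert algebra). You invoke the Hilbert algebra structure at the start, so the ingredient is present in your argument; the causal chain should just run through the standard-form property rather than through the phrase ``multiplicity $=\dim V_\pi$,'' which is awkward for infinite-dimensional $V_\pi$.

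You are also right to flag that the isomorphism $\simeq$ in the statement must be read as being implemented by the Fourier transform $f\mapsto(\pi(f))_\pi$; a mere $G\times G$-equivariant unitary isomorphism would only determine the measure class, and the normalisation $\|f\|^2_{L^2}=\int\operatorname{Tr}(\pi(f)^*\pi(f))\,d\nu_{Pl}(\pi)$ (equivalently the trace identity $f(e)=\int\Theta_\pi(f)\,d\nu_{Pl}(\pi)$ recorded in the paper's subsequent Corollary) is what singles out $\nu_{Pl}$ within that class.
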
  
Much of study of harmonic analysis on reductive groups is devoted to making 
the abstract Plancherel formula in this context explicit. 
This is a problem with many different facets, some of which are poorly understood 
or even unsolved even after more than 70 years into the subject.
One part of this is conceptual. The tremendous success of the approach of Langlands
towards harmonic analysis on reductive groups points out that number theory and 
algebraic geometry are inherent parts of this endeavour. 
An explicit Plancherel formula has to reflect the deep number theoretical  
problems which are conspiring in the background. There are formidable technical obstacles 
as well, stemming from the fact that one is forced to deal with representation theory on 
rather general topological vector spaces even if one's goal is the study unitary representations.

Harish-Chandra made deep contributions to our understanding of the 
structure of the explicit Plancherel formula
(\cite{HC2}, \cite{Wal}). 
He discovered that support of the Plancherel measure 
is not all of $\hat{G}$, except if $G$ happens to be built from anisotropic and commutative almost factors.  
In general, the support of the Plancherel measure is the set of so-called irreducible \emph{tempered} 
representations
of $G$. A connected component (Harish-Chandra series) of this set consists of the irreducible summands 
of the representations obtained by unitary parabolic induction of a discrete series representation of 
a Levi subgroup of $G$. There are countably many Harish-Chandra series. 

Assuming the local Langlands correspondence for tempered representations, Hiraga, Ichino and Ikeda 
conjectured an explicit formula 
for the Plancherel measure $\nu_{Pl}$ of $G$. The appeal of these conjectures is that they formulate the 
answers in terms of a natural number theoretical invariant which is associated with an irreducible 
representation $\pi$ in the conjectural local Langlands correspondence, the so-called adjoint gamma 
factor.   
\subsection{The decomposition of the trace}
The regular representation $L^2(G,\mu_G)$ corresponds to the semi-finite positive 
trace $\tau_G(f):=f(e)$ on $C^*(G)$, and in particular $C^*(G)$ is a Hilbert algebra. 
The dense subalgebra $C^\infty_c(G)\subset C^*(G)$ has the special property that 
for all $\pi\in \hat{G}$ and $f\in  C^\infty_c(G)$ the operator $\pi(f)\in\mathcal{B}(V_\pi)$
is of trace class. This defines a distribution $\Theta_\pi$ 
defined by:
\begin{equation}
\Theta_\pi(f):=\textup{Tr}_{V_\pi}(\pi(f))
\end{equation}
This is called Harish-Chandra's distributional character of $\pi$.

The positive trace $\tau_G$ is defined on the dense subalgebra $C^\infty_c(G)\subset C^*(G)$, 
and the Plancherel measure $\nu_{Pl}$ is completely determined by the decomposition 
of $\tau_G$ as a superposition of the distributional characters $\Theta_\pi$ with $\pi\in\hat{G}$:
\begin{cor} The Plancherel measure $\nu_{Pl}$ is the unique positive measure 
on $\hat{G}$ such that for all $f\in C^\infty_c(G)$:
\begin{equation}
f(e)= \int_{\pi\in\hat{G}}\Theta_\pi(f)d\nu_{Pl}(\pi)
\end{equation}  
\end{cor}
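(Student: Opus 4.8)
The plan is to obtain the Corollary directly from the abstract Plancherel Theorem above, the essential point being that the semi-finite positive trace $\tau_G(f)=f(e)$ on $C^*(G)$ is precisely the canonical trace attached to the Hilbert algebra structure coming from $L^2(G,\mu_G)$, and that under the direct integral decomposition of the Theorem this trace goes over to the operator trace on each fibre $H(\pi)=V_\pi^*\hat\otimes V_\pi$. Concretely I would use that the isomorphism in the Theorem is realised by the operator-valued Fourier transform $g\mapsto(\pi(g))_{\pi\in\hat G}$: this is a unitary isomorphism $L^2(G,\mu_G)\simeq\int^\oplus_{\hat G}H(\pi)\,d\nu_{Pl}(\pi)$ in which each $H(\pi)$ carries the Hilbert--Schmidt inner product $\langle S,T\rangle_{H(\pi)}=\textup{Tr}_{V_\pi}(T^*S)$, and $\pi(g)\in H(\pi)$ for $g\in C^\infty_c(G)$ since such $\pi(g)$ is of trace class (a fortiori Hilbert--Schmidt). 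The only elementary input required alongside this is the identity, valid by unimodularity of $G$ once we set $g^*(x):=\overline{g(x^{-1})}$,
\[
(g^* * h)(e)=\int_G\overline{g(x)}\,h(x)\,d\mu_G(x)=\langle h,g\rangle_{L^2},\qquad g,h\in C^\infty_c(G),
\]
with $\langle\,\cdot\,,\,\cdot\,\rangle_{L^2}$ conjugate-linear in the second argument.

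Granting these, for $g,h\in C^\infty_c(G)$ the (polarised) Fourier isometry gives $\langle h,g\rangle_{L^2}=\int_{\hat G}\langle\pi(h),\pi(g)\rangle_{H(\pi)}\,d\nu_{Pl}(\pi)$, while $\langle\pi(h),\pi(g)\rangle_{H(\pi)}=\textup{Tr}_{V_\pi}(\pi(g)^*\pi(h))=\textup{Tr}_{V_\pi}(\pi(g^* * h))=\Theta_\pi(g^* * h)$, using $\pi(g)^*=\pi(g^*)$ and that $\pi$ is multiplicative on $C^\infty_c(G)$. Combined with the displayed identity this gives
\[
(g^* * h)(e)=\int_{\hat G}\Theta_\pi(g^* * h)\,d\nu_{Pl}(\pi),\qquad g,h\in C^\infty_c(G),
\]
the integrand being $\nu_{Pl}$-integrable by Cauchy--Schwarz on the fibres. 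It then remains to pass from elements of the form $g^* * h$ to an arbitrary $f\in C^\infty_c(G)$. In the non-archimedean case this is immediate: if $f$ is bi-invariant under a compact open subgroup $K$ and $e_K$ is the normalised indicator function of $K$, then $e_K=e_K^*$ and $f=e_K^* * f$, so $f$ already has the required form and linearity gives $f(e)=\int_{\hat G}\Theta_\pi(f)\,d\nu_{Pl}(\pi)$ for all $f\in C^\infty_c(G)$. (In the archimedean case one writes instead $f=\sum_i g_i * h_i$ by the Dixmier--Malliavin factorisation, notes $g_i=(g_i^*)^*$, and concludes by linearity in the same way.) This establishes the existence assertion.

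For uniqueness, let $\nu'$ be a positive measure on $\hat G$ with $f(e)=\int_{\hat G}\Theta_\pi(f)\,d\nu'(\pi)$ for all $f\in C^\infty_c(G)$. Taking $f=g^* * g$ and running the computation above in reverse yields $\|g\|_{L^2}^2=\int_{\hat G}\|\pi(g)\|_{H(\pi)}^2\,d\nu'(\pi)$ for every $g\in C^\infty_c(G)$, so the Fourier transform extends to an isometric embedding $L^2(G,\mu_G)\hookrightarrow\int^\oplus_{\hat G}H(\pi)\,d\nu'(\pi)$; its image is closed and contains the dense set of fields $(\pi(g))_\pi$, hence is everything, and it is plainly equivariant for the left regular representation and the natural $G$-action on the direct integral. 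Thus $\nu'$ also realises a decomposition of the kind asserted in the Theorem, and the uniqueness clause there forces $\nu'=\nu_{Pl}$. The one genuinely non-formal ingredient in all of this is the identification of the abstract decomposition of the Theorem with the concrete one given by the operator-valued Fourier transform, with Hilbert--Schmidt fibres and the operator trace on each; if the Theorem is already taken in that form, the remainder is routine manipulation in the convolution $*$-algebra $C^\infty_c(G)$ together with the polarisation/factorisation step, and otherwise this identification---Dixmier's Plancherel theorem for unimodular groups of Type~I---is the substantive point.
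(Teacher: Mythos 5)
Your argument is correct, and it fills in a proof that the paper itself leaves implicit (the Corollary is presented as an immediate consequence of the abstract Plancherel theorem, with no proof given). You take the standard route: identify the abstract direct-integral isomorphism with the operator-valued Fourier transform $f\mapsto(\pi(f))_\pi$, recover the trace decomposition by polarisation and the identity $(g^**h)(e)=\langle h,g\rangle_{L^2}$, and pass from $g^**h$ to general $f\in C_c^\infty(G)$ using $e_K$-bi-invariance in the non-archimedean case (a nice simplification relative to Dixmier--Malliavin, which you correctly reserve for the archimedean case). You are also right to flag, at the end, the one substantive point: the Theorem as stated in the paper does not itself pin down $\nu_{Pl}$ by the bare existence of an isomorphism $L^2(G)\simeq\int^\oplus H(\pi)\,d\nu_{Pl}$ (one could a priori rescale $\nu_{Pl}$ by a positive density and twist the fibre identifications); the uniqueness is relative to the trace, i.e.\ the statement must be read as Dixmier's Plancherel theorem for unimodular Type~I groups, with the Fourier transform realising the isomorphism and the canonical Hilbert--Schmidt structure on each $H(\pi)$. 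Granting that reading, your existence argument and your uniqueness argument (reconstructing the isometry from $\nu'$, observing it is a $G\times G$-equivariant unitary onto the direct integral, and invoking the uniqueness clause) are both sound and match what a careful write-up of this Corollary would say.
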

Some remarks are in order:
\begin{enumerate}
\item[(1)] The measure $\nu_{Pl}$ depends on the normalization of the 
Haar measure. If we replace $\mu_G$ by $a\mu_G$ (for $a>0$) then 
$\nu_{Pl}$ is replaced by $a^{-1}\nu_{Pl}$.
\item[(2)] The definition of the distributional character $\Theta_\pi$ can be extended 
naturally to the class of admissible representations of $G$. An irreducible admissible 
representation $(V_\pi,\pi)$ is tempered iff the distribution $\Theta_\pi$ is tempered, by which 
we mean that $\Theta_\pi$ extends continuously to the Harish-Chandra Schwartz algebra 
$\mathcal{C}(G)\supset C^\infty_c(G)$ of $G$. In turn this is equivalent to the requirement 
that for every standard parabolic subgroup $P\subset G$, the exponents $\chi\in \textup{Exp}(\pi_P)$ 
of the Jacquet module $(V_{\pi,P},\pi_P)$ satisfy the Casselman conditions 
$\textup{Re}(\chi)\in\overline{{}^+\mathfrak{a}^{G,*}_P}$ (see \cite{Wal}). Here $\overline{{}^+\mathfrak{a}^{G,*}_P}$
denotes the closed convex cone inside the real span $\mathfrak{a}^{G,*}_M$ of the set of $G$-roots 
$\Sigma(A_M,G)$ of the connected split center $A_M$ of 
the standard Levi-factor $M$ of $P$ spanned by the set of roots associated to the unipotent radical of $P$.    
\item[(3)] By a deep result of Harish-Chandra, the support of $d\nu_{Pl}$ is contained in the set $\hat{G}^{\textup{temp}}$ 
of equivalence classes of tempered representations of $G$. 
This was explained more conceptually by Joseph Bernstein \cite{Be1}.
\item[(4)] We call an admissible irreducible representation $(V_\pi,\pi)$ of $G$ a discrete series representation 
if the matrix coefficients of $\pi$ are in $L^2(G,\mu_G)$. Let $\hat{G}^{\textup{disc}}\subset \hat{G}$ denote the subset of 
equivalence classes of discrete series representations. 
It is well known by Casselman's results (see \cite{Wal})
that $\pi\in \hat{G}^{\textup{disc}}$ iff $\textup{Re}(\chi)\in{}^+\mathfrak{a}^{G,*}_P$ for all standard parabolic subgroups 
$P\subset G$ and all $\chi\in \textup{Exp}(\pi_P)$.
The set $\hat{G}^{\textup{disc}}$ is not empty iff the center $Z(G)$ is anisotropic. By a well known characterisation of 
Dixmier we have $\pi\in \hat{G}^{\textup{disc}}$ iff $\nu_{Pl}(\{\pi\})>0$.
\begin{definition}
If $\pi\in \hat{G}^{\textup{disc}}$ then we define the formal degree of $\pi$ as $\textup{fdeg}(\pi):=\nu_{Pl}(\{\pi\})>0$.
\end{definition}
\item[(5)] Let $A\subset G$ be the maximal split component of $Z(G)$. 
Then $G$ has discrete series only if $A$ is trivial.
More generally, we call an 
irreducible representation $\pi$ a discrete series modulo the center if 
$\pi$ is tempered and its matrix coefficients are $L^2(G/A)$.  
If $\pi$ is a discrete series representation modulo the center 
then one can show that there exists a constant 
$\textup{fdeg}(\pi)>0$ such that for all $v,w\in V_\pi$: 
\begin{equation}
\int_{G/A}|(\pi(g)v,w)|^2 d\mu_G(g)=\textup{fdeg}(\pi)^{-1}\Vert v\Vert^2\Vert w\Vert^2
\end{equation}
This generalizes the notion of formal degree for a discrete series representation as in (3).
\end{enumerate}
\subsection{Normalization of Haar measure}
As we have seen above, we need to fix the Haar measure $\mu_G$ in order 
to fix $\nu_{Pl}$ uniquely. The Haar measure depends on the choice of an additive 
character $\psi$ of $F$. The construction is explained in \cite{GG} (also see 
\cite[Section 4]{G}, and the discussion in \cite{HIIcor} of the differences between 
these two constructions). 
The corresponding Haar measure will be denoted 
by $\mu_G^\psi$ if we need to stress the dependence on the choice of $\psi$.
\begin{lemma}
\begin{enumerate}
\item[(a)] Suppose that $F$ is non-archimedean and that 
$\mbf{G}$ is split over an unramified extension. Let $\psi_0$ be an 
additive character with conductor $\mathfrak{p}\subset\mathfrak{o}$. Let $q$ be the 
cardinality of the residue field $\mathfrak{o}/\mathfrak{p}$.
Then for 
any parahoric subgroup $\mathbb{P}\subset G$ with reductive quotient $\overline{\mathbb{P}}$
we have 
\begin{equation}
\textup{Vol}(\mathbb{P},\mu_G^{\psi_0})=q^{-\textup{dim}(\overline{\mathbb{P}})/2}|\overline{\mathbb{P}}|
\end{equation}
(This is the normalization of Haar measure used in \cite{DeRe}.)
\item[(b)] Suppose that $F=\mathbb{R}$, and that $\psi_0(x)=\exp(2\pi\sqrt{-1}x)$.
Assume that $G$ has discrete series representations. By Harish-Chandra's 
criterion this is equivalent to the existence of an anisotropic maximal torus $T\subset G$ of $G$. 
Let $\mu^0_G$ be the Haar measure on $G$ defined by the volume form 
on $\mathfrak{g}=\textup{Lie}(G)$ corresponding to invariant norm $\Vert x\Vert^2=-B(x,\theta(x))$, 
where $x\in\mathfrak{g}$, $\theta$ denotes the Cartan involution, and $B$ a nondegenerate bilinear form 
on $\mathfrak{g}$ as in \cite{HC1}. 
Let $\mu_T^0$ be the Haar measure on $T$ defined similarly. 
We denote by $\Sigma^\vee$ the root system of $\mathfrak{g}^\vee:=Lie(G^\vee)$, 
with $|\Sigma^\vee|=2N$ and $\textup{dim}(T)=l$. 
Then:
\begin{equation}
\mu_G^{\psi_0}=
2^{N}(2\pi)^l\textup{Vol}(T,\mu_T^0)^{-1}\prod_{\alpha^\vee\in\Sigma^\vee_+}(\alpha^\vee,\alpha^\vee)\mu_G^0
\end{equation}
\end{enumerate}
\end{lemma}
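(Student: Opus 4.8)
The plan is to derive both statements by unwinding the Gross--Gan normalisation of $\mu_G^{\psi_0}$ and reducing each to an explicit finite computation; neither part is deep, and essentially all the work lies in keeping the various normalisation conventions consistent.

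For (a), recall that $\mu_G^{\psi_0}=|\omega_G|_{\psi_0}$, where $\omega_G$ is a generator of the top exterior power of the cotangent space at the identity of an unramified (Chevalley, hyperspecial) $\mathfrak{o}$-model $\mathcal{K}$ of $\mbf{G}$ --- well defined up to $\mathfrak{o}^\times$ because $\mbf{G}$ is split over an unramified extension --- and $|\cdot|_{\psi_0}$ is formed using the self-dual Haar measure $dx_{\psi_0}$ on $F$. I would first record two inputs. Since $\psi_0$ has conductor $\mathfrak{p}$, the dual lattice of $\mathfrak{o}$ is $\mathfrak{p}$, so self-duality forces $\textup{Vol}(\mathfrak{o},dx_{\psi_0})=q^{1/2}$ and hence $|\omega_G|_{\psi_0}=q^{\dim(\mbf{G})/2}|\omega_G|_{dx}$ with $dx$ the Haar measure giving $\mathfrak{o}$ volume $1$. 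And Weil's $\mathfrak{p}$-adic volume formula gives $\textup{Vol}(\mathcal{G}(\mathfrak{o}),|\omega_G|_{dx})=|\mathcal{G}(\mathbb{F}_q)|/q^{\dim\mbf{G}}$ for any smooth affine $\mathfrak{o}$-model $\mathcal{G}$ of $\mbf{G}$ on which $\omega_G$ is a generator; applied to $\mathcal{G}=\mathcal{K}$, whose special fibre is $\overline{K}$ with $\dim\overline{K}=\dim\mbf{G}$, this yields the hyperspecial case $\textup{Vol}(K,\mu_G^{\psi_0})=q^{-\dim(\mbf{G})/2}|\overline{K}(\mathbb{F}_q)|$. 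To pass to a general parahoric $\mathbb{P}$ I would \emph{not} compare $\omega_G$ with the invariant form of the Bruhat--Tits group scheme of $\mathbb{P}$ (these differ by a factor $q^{(\dim\mbf{G}-\dim\overline{\mathbb{P}})/2}$, which is the ultimate origin of the exponent $-\dim(\overline{\mathbb{P}})/2$ rather than $-\dim(\mbf{G})/2$, but it is cleaner to bypass this). Instead, fix an Iwahori $\mathbb{I}$ with $\mathbb{I}\subset\mathbb{P}$ and $\mathbb{I}\subset K$ for some hyperspecial $K$. Using the standard fact that for parahorics $\mathbb{I}\subset\mathbb{P}'$ the quotient $\mathbb{P}'/\mathbb{I}$ is the $\mathbb{F}_q$-points of the flag variety of the reductive quotient $\overline{\mathbb{P}'}$, one gets $[\mathbb{P}:\mathbb{I}]=|\overline{\mathbb{P}}(\mathbb{F}_q)|/|\overline{B}(\mathbb{F}_q)|$ and $[K:\mathbb{I}]=|\overline{K}(\mathbb{F}_q)|/|\overline{B}_K(\mathbb{F}_q)|$ for appropriate Borel subgroups; writing $|\overline{B}_K(\mathbb{F}_q)|=|\overline{T}(\mathbb{F}_q)|\,q^{N}$ with $\dim\overline{T}=l$ the rank of $\mbf{G}$ and $2N+l=\dim\mbf{G}$, the hyperspecial formula gives $\textup{Vol}(\mathbb{I},\mu_G^{\psi_0})=q^{-\dim(\mbf{G})/2}|\overline{B}_K(\mathbb{F}_q)|=q^{-l/2}|\overline{T}(\mathbb{F}_q)|$, independent of $K$. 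Multiplying back by $[\mathbb{P}:\mathbb{I}]$, the torus factors cancel and the powers of $q$ combine to $q^{-N_{\overline{\mathbb{P}}}-l/2}=q^{-\dim(\overline{\mathbb{P}})/2}$ (using $2N_{\overline{\mathbb{P}}}+l=\dim\overline{\mathbb{P}}$), which is the claim; the argument is valid also when $\overline{\mathbb{P}}$ is merely quasi-split, as occurs here, and the identification with the measure of \cite{DeRe} is then just a check of normalisations.

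For (b), both $\mu_G^{\psi_0}$ and $\mu_G^0$ are the Haar measures attached to invariant top-degree differential forms on $\mbf{G}$, so $\mu_G^{\psi_0}=c\,\mu_G^0$ for a unique $c>0$, and the task is to compute $c$. Since for $\psi_0(x)=\exp(2\pi\sqrt{-1}x)$ the self-dual Haar measure on $\mathbb{R}$ is Lebesgue measure, one has $\mu_G^{\psi_0}=|\omega_G|_{\mathbb{R}}$ up to the archimedean $\Gamma$-factor normalisation of \cite{G} (which I would track separately), with $\omega_G$ corresponding to a generator of $\Lambda^{\dim\mbf{G}}$ of a $\mathbb{Z}$-structure $\mathfrak{g}_{\mathbb{Z}}\subset\mathfrak{g}$; hence $\mu_G^0/|\omega_G|_{\mathbb{R}}=\textup{covol}(\mathfrak{g}_{\mathbb{Z}},\Vert\cdot\Vert)$ where $\Vert x\Vert^2=-B(x,\theta x)$. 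Because $\theta$ preserves, and $B$ makes orthogonal, the decomposition $\mathfrak{g}=\mathfrak{t}\oplus\bigoplus_{\alpha>0}\mathfrak{g}_{[\alpha]}$ into $\mathfrak{t}$ and the $2$-dimensional real root planes, this covolume factors as $\textup{covol}(\mathfrak{t}_{\mathbb{Z}},\Vert\cdot\Vert)\cdot\prod_{\alpha>0}\textup{covol}(\mathfrak{g}_{[\alpha],\mathbb{Z}},\Vert\cdot\Vert)$. I would then evaluate the two kinds of factor: each rank-$2$ factor is a $2\times2$ Gram determinant in a Chevalley basis $X_{\pm\alpha}$, coming out as a power of $2$ times a power of $(\alpha^\vee,\alpha^\vee)$, so that over the $N$ positive roots these assemble into the constants $2^{N}$ and $\prod_{\alpha^\vee\in\Sigma^\vee_+}(\alpha^\vee,\alpha^\vee)$; and the torus factor is tied to $\textup{Vol}(T,\mu_T^0)$ through $\exp\colon\mathfrak{t}\to T$, whose kernel is $2\pi$ times the cocharacter lattice, giving $\textup{Vol}(T,\mu_T^0)=(2\pi)^l\,\textup{covol}(X_*(T),\Vert\cdot\Vert)$ and hence the factor $(2\pi)^l\,\textup{Vol}(T,\mu_T^0)^{-1}$. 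Assembling these, and checking that the $\Gamma$-factor of \cite{G} contributes nothing beyond --- or precisely cancels part of --- the stated constants, gives the identity; this is essentially the computation already contained in \cite{G} and \cite{GG}.

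The main obstacle in both parts is not a single deep step but the need to render several mutually entangled normalisation conventions consistent: the conductor of $\psi_0$ and the resulting exponent in $\textup{Vol}(\mathfrak{o},dx_{\psi_0})$; Gross's rational form $\omega_G$ and, in the real case, its accompanying $\Gamma$-factor; the normalisation of $B$ and of the pairing $(\cdot,\cdot)$ on coroots; and the $2\pi$ hidden in the exponential map. In (a) the genuine point is that $\textup{Vol}(\mathbb{P})$ is governed by $\dim\overline{\mathbb{P}}$ rather than $\dim\mbf{G}$, which is forced by the Iwahori computation and the cancellation of $|\overline{T}(\mathbb{F}_q)|$; in (b) it is the emergence of the coroot lengths from the rank-$2$ Gram determinants. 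I expect the constant-chasing in (b) --- reconciling Gross's $\Gamma$-factor normalisation against the elementary factors $2^{N}$ and $(2\pi)^l$ --- to be the most error-prone step.
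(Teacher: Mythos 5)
The paper's own proof is essentially a citation: part (a) is obtained by comparing the formula in \cite[(1.1)]{HII} with \cite{DeRe}, and part (b) is read off from the computation in \cite[\S 2]{HII}. Your proposal instead derives both statements directly from the Gross--Gan normalisation, which is a genuinely more self-contained route. In part (a), anchoring at a hyperspecial via Weil's $p$-adic volume formula, descending to an Iwahori through the index $|\overline{K}|/|\overline{B}_K|$, and then passing to a general parahoric via $[\mathbb{P}:\mathbb{I}]$ is a clean way to make the exponent $\dim\overline{\mathbb{P}}$ appear, and the cancellation of $|\overline{T}(\mathbb{F}_q)|$ that you isolate is indeed the real mechanism behind the formula.

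There is, however, a genuine gap in your part (a): the entire argument is premised on a hyperspecial parahoric (your $\mathcal{K}$, and later the $K\supset\mathbb{I}$ used to pivot), but the hypothesis is only that $\mbf{G}$ splits over an unramified extension, so $\mbf{G}$ is allowed to be a non--quasi-split inner form of an unramified group, and such $\mbf{G}$ has \emph{no} hyperspecial parahorics at all --- for instance $G=D^\times/F^\times$, the anisotropic inner form of $\textup{PGL}_{m+1}$, which appears as an example later in this very paper, has a single parahoric whose reductive quotient has dimension $m\ne\dim\mbf{G}$. Your argument therefore does not get off the ground precisely for the inner forms to which the lemma and the \cite{DeRe} convention are meant to apply. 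Two repairs: prove the formula for the quasi-split form $G^*$ by your argument and then transport it to $G$ along the inner twist according to the actual definition of $\mu_G^{\psi_0}$ in \cite{HII}/\cite{GG}; or confront the step you wished to bypass and compare $\omega_G$ directly to the invariant form of the Bruhat--Tits $\mathfrak{o}$-scheme of $\mathbb{I}$ (whose generic fibre is $\mbf{G}$, so Weil's formula still applies), establishing the discrepancy $q^{(\dim\mbf{G}-\dim\overline{\mathbb{I}})/2}$. As for (b), your reduction to Gram determinants on the root planes plus the $2\pi$ from $\exp\colon\mathfrak{t}\to T$ is the right picture, but you explicitly leave unresolved the reconciliation with the archimedean $\Gamma$-factor normalisation of \cite{G}; since that is exactly where the exponents of $2$ and $2\pi$ are pinned down, part (b) as written is a plan rather than a proof, while the paper simply defers this bookkeeping to \cite[\S 2]{HII}.
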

\begin{proof}
Part (a) follows from comparing \cite[(1.1) and further]{HII} with \cite{DeRe}.
Part (b) is an easy consequence of the computation in \cite[\S 2]{HII}.
\end{proof}
In the sequel we will use $\mu_G:=\mu^{\psi_0}_G$ as the standard normalization of the Haar measure on $G$.
\subsection{Local Langlands parameters}\label{Sect:LLP}
Let $\Gamma:=\textup{Gal}(\overline{F}/F)$ denote the absolute Galois group of $F$.
Choose a Borel subgroup $B$ and maximal torus $T\subset B$ of $G':=\mbf{G}_{\overline{F}}$, 
and let $\beta(G)=(X^*(T),\Delta,X_*(T),\Delta^\vee)$ be the corresponding based root datum.
Choose a pinning $(G',B,T,\{x_\alpha\}_{\alpha\in\Delta})$, 
which induces a splitting of the exact sequence: 
\begin{equation}
1\to \textup{Int}(G')\to \textup{Aut}(G')\to \textup{Aut}(\beta(G'))\to 1
\end{equation} 
The action of $\Gamma$ on $G(\overline{F})$ 
induces an action of $\Gamma$ on 
$\beta(G')$, and via the above splitting this gives rise to an (algebraic) action 
$a$ of $\Gamma$ on $G'$. There is a unique split $F$-structure on $G'$ which 
fixes the chosen splitting, and clearly this commutes with $a$. Therefore the 
composition of these two actions defines a quasi-split $F$-structure $\mbf{G}^*$ 
on $G'$.  We denote by $G^*=\mbf{G}^*(F)$ the corresponding group of points.

Let $G^\vee$ be the connected complex reductive group with 
$\beta(G^\vee):=\beta(G')^\vee$. Choose a pinning 
$(G^\vee,B^\vee,T^\vee,\{y_{\alpha^\vee}\}_{\alpha^\vee\in\Delta^\vee})$ of $G^\vee$. 
The action of $\Gamma$ on $\beta(G^\vee)$ induced by the action on $\beta(G')$ 
gives rise to an (algebraic) action of $\Gamma$ on $G^\vee$. 
We define ${}^LG:=G^\vee\rtimes\Gamma$, the Langlands dual group of $\mbf{G}$.
\\

The Langlands group $L_F$ of $F$ is defined as follows:
\begin{equation}
L_F=
\begin{cases}
W_F \text{\ if\ $F$\ is\ archimedean,}\\
W_F\times \textup{SL}_2(\mathbb{C})\text{\ else.}
\end{cases}
\end{equation}
Here $W_F$ denotes the Weil group of $F$ (see \cite{Tate}). 
A Langlands parameter is a homomorphism $\varphi:L_F\to{}^LG$ such that 
\begin{enumerate}
\item[(1)] $\varphi$ is continuous.
\item[(2)] In the non-archimedean case, $\varphi|_{\textup{SL}_2(\mathbb{C})}$ is algebraic.
\item[(3)] $\textup{pr}_2\circ\varphi|_{W_F}\to\Gamma$ is the canonical homomorphism $W_F\to \Gamma$. 
\item[(4)] $\varphi(W_F)$ is semisimple.
\item[(5)] If $\textup{Im}(\varphi)$ is contained in the Levi-subgroup of a parabolic 
subgroup $P$ of ${}^LG$ then $P$ is $G$-relevant (in the sense of \cite[\S 8]{Bo}). 
\end{enumerate} 
\begin{definition}
We call $\varphi$ \emph{discrete} if $C_{G^\vee}(\varphi)$ is finite.
We call $\varphi$ \emph{essentially discrete} if $\textup{Im}(\varphi)$ is not contained 
in the Levi subgroup of a proper relevant parabolic subgroup of ${}^LG$.
\end{definition}
\begin{lemma}\label{lem:dlp} 
Discrete parameters exist iff     
the connected center of $G$ is anisotropic and $F$ is non-archimedean, or  
else if $G$ admits an anisotropic maximal $F$-torus.  
In this situation $\varphi$ 
is discrete iff $\varphi$ is essentially discrete.
\end{lemma}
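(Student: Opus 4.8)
The plan is to reduce everything to structural facts about $G^\vee$ and its $\Gamma$-action, supplemented in the archimedean case by the classical theory of discrete series $L$-parameters over $\mathbb{R}$. Two observations hold for every Langlands parameter $\varphi$. First, since $\mathrm{pr}_2\circ\varphi|_{W_F}$ has dense image in $\Gamma$ and the $\Gamma$-action on $Z(G^\vee)$ factors through a finite quotient, an element $z\in Z(G^\vee)$ commutes with all of $\varphi(L_F)$ iff $z\in Z(G^\vee)^\Gamma$, so $Z(G^\vee)^\Gamma\subseteq C_{G^\vee}(\varphi)$ always. Secondly, every element of $C_{G^\vee}(\varphi)^\circ$ commutes with $\varphi(L_F)$ by definition, so $\varphi(L_F)$ centralises $C_{G^\vee}(\varphi)^\circ$. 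Combining the first observation with the $\Gamma$-equivariant isomorphism $X^*(Z(G^\vee)^\circ)\otimes\mathbb{Q}\cong X_*(Z(G)^\circ)\otimes\mathbb{Q}$ --- both sides being canonically the subspace $\{v\in X_*(T)\otimes\mathbb{Q}:\langle\alpha,v\rangle=0\text{ for all }\alpha\in\Delta\}$, using non-degeneracy of the Cartan pairing on $\mathbb{Z}\Delta\times\mathbb{Z}\Delta^\vee$ --- one gets that $Z(G^\vee)^\Gamma$ is finite iff $Z(G)^\circ$ is $F$-anisotropic. Hence if a discrete parameter exists then $Z(G)^\circ$ is $F$-anisotropic; since ``$\mathbf{G}$ has an anisotropic maximal $F$-torus'' trivially implies ``$Z(G)^\circ$ is anisotropic'', this already yields the right-hand side of the lemma in the non-archimedean case.

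Next I would show that whenever $Z(G^\vee)^\Gamma$ is finite --- in particular whenever discrete parameters exist --- ``discrete'' and ``essentially discrete'' are equivalent conditions on $\varphi$. A discrete parameter is essentially discrete: a proper Levi ${}^LM\subsetneq{}^LG$ is also a Levi of the $L$-group of the quasi-split inner form $\mathbf{G}^*$, hence corresponds to a proper $F$-Levi of $\mathbf{G}^*$, whose split central torus has positive dimension; dually $(Z(M^\vee)^\circ)^\Gamma$ is infinite, and it lies in $C_{G^\vee}(\varphi)$ as soon as $\mathrm{Im}(\varphi)\subseteq{}^LM$. Conversely, if $\varphi$ is not discrete, set $C:=C_{G^\vee}(\varphi)^\circ\neq\{1\}$; if $C\subseteq Z(G^\vee)$ then $C\subseteq Z(G^\vee)^\Gamma$ (the elements of $C$ commute with $\varphi(W_F)$), which is finite, a contradiction, so $C\not\subseteq Z(G^\vee)$; picking a maximal torus $S\subseteq C$ with $S\not\subseteq Z(G^\vee)$ and using that $\varphi(L_F)$ centralises $C\supseteq S$, one obtains $\mathrm{Im}(\varphi)\subseteq C_{{}^LG}(S)$, which is a proper Levi subgroup of ${}^LG$ (a standard fact, cf.\ \cite[\S 3]{Bo}), so by axiom (5) it is $G$-relevant and $\varphi$ is not essentially discrete. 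This proves the last assertion of the lemma, once existence is known.

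For existence in the non-archimedean case, assume $Z(G)^\circ$ is anisotropic, i.e.\ $Z(G^\vee)^\Gamma$ is finite. Let $\rho\colon\mathrm{SL}_2(\mathbb{C})\to G^\vee$ be the principal homomorphism attached to the pinning of $G^\vee$; since $\Gamma$ acts by pinned automorphisms it fixes the associated $\mathfrak{sl}_2$-triple, so $\rho$ is $\Gamma$-equivariant for the trivial $\Gamma$-action on $\mathrm{SL}_2(\mathbb{C})$, and $\varphi(w,g):=(\rho(g),\sigma_w)$ --- where $w\mapsto\sigma_w$ is the canonical map $W_F\to\Gamma$ --- defines a homomorphism $W_F\times\mathrm{SL}_2(\mathbb{C})\to{}^LG$. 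One checks axioms (1)--(4), while (5) holds vacuously because a regular nilpotent element is distinguished, so $\rho(\mathrm{SL}_2)$, hence $\mathrm{Im}(\varphi)$, lies in no proper Levi of $G^\vee$ (this also re-proves that $\varphi$ is essentially discrete). Since $C_{G^\vee}(\rho(\mathrm{SL}_2))=Z(G^\vee)$, the first observation gives $C_{G^\vee}(\varphi)=Z(G^\vee)^\Gamma$, which is finite; so $\varphi$ is discrete.

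In the archimedean case the subcase $F=\mathbb{C}$ is trivial (a discrete parameter forces $G^\vee$, hence $\mathbf{G}$, to be trivial, which over $\mathbb{C}$ is exactly when $\mathbf{G}$ has an anisotropic maximal torus), so let $F=\mathbb{R}$. Given an anisotropic maximal $\mathbb{R}$-torus $T\subseteq\mathbf{G}$, a sufficiently regular character of $T(\mathbb{R})$ composed with an admissible embedding ${}^LT\hookrightarrow{}^LG$ produces a parameter $\varphi$ with $C_{G^\vee}(\varphi(\mathbb{C}^\times))=T^\vee$ (by regularity) and $C_{G^\vee}(\varphi)=(T^\vee)^{\mathrm{Ad}\,\varphi(j)}$, which is finite by anisotropy of $T$; so $\varphi$ is discrete. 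Conversely, for a discrete $\varphi$, axiom (4) places $\varphi(\mathbb{C}^\times)$ in a maximal torus $T^\vee$, the map $\theta:=\mathrm{Ad}\,\varphi(j)$ is an involution of $L^\vee:=C_{G^\vee}(\varphi(\mathbb{C}^\times))$ with $C_{G^\vee}(\varphi)=(L^\vee)^\theta$, and finiteness of $(L^\vee)^\theta$ forces $L^\vee$ to be a torus (the fixed subgroup of an involution of a connected reductive group is finite only for tori), hence $L^\vee=T^\vee$, and forces $\theta$ to have no nonzero rational fixed vector on $X_*(T^\vee)$ --- which, through the admissible embedding, says precisely that the maximal $\mathbb{R}$-torus of $\mathbf{G}$ attached to $\varphi$ is anisotropic. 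Assembling the four cases gives the lemma. The step I expect to be the main obstacle is this last one: pinning down ``the maximal $\mathbb{R}$-torus attached to $\varphi$'' and verifying that the $\mathbb{R}$-structure it carries --- governed by $\mathrm{Ad}\,\varphi(j)$ together with the cocycle encoding the inner twist $\mathbf{G}\leftrightarrow\mathbf{G}^*$ --- is anisotropic; this is classical (Langlands, Shelstad) but requires care with admissible embeddings of $L$-groups of tori, and it is exactly here that the genuine difference between ``$Z(G)^\circ$ anisotropic'' and ``$\mathbf{G}$ has an anisotropic maximal torus'' enters the archimedean case.
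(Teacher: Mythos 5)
Your proof takes essentially the same approach as the paper's: translate anisotropy of $Z(G)^\circ$ into finiteness of $Z(G^\vee)^\Gamma$, use the structure of $C_{G^\vee}(\varphi)$ together with \cite[\S 3]{Bo} for the discrete/essentially-discrete equivalence, the principal parameter for non-archimedean existence, and $\theta=\mathrm{Ad}\,\varphi(j)$ acting on a maximal torus for the real case. The one point to tighten is this: where you pick a nontrivial torus $S\subseteq C:=C_{G^\vee}(\varphi)^\circ$ with $S\not\subseteq Z(G^\vee)$, you are implicitly relying on $C$ being reductive --- a nontrivial unipotent $C$ would contain no nontrivial torus at all --- and the paper records this explicitly as a consequence of $\varphi(W_F)$ being semisimple and $\varphi(\mathrm{SL}_2(\mathbb{C}))$ being reductive; you should say so. Otherwise your write-up is a somewhat more detailed version of the same route (you handle the $F=\mathbb{C}$ edge case and show $C_{G^\vee}(\varphi(\mathbb{C}^\times))$ is a torus rather than asserting outright that $\varphi(\mathbb{C}^\times)$ contains regular elements), and you correctly defer the final admissible-embedding step over $\mathbb{R}$ to Langlands--Shelstad, just as the paper invokes \cite[Cor.\ 2.9]{S} for inner-form independence.
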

\begin{proof}
It is easy to see that the connected center of $G$ is $F$-anisotropic if and only if the center 
${}^LZ:=Z(G^\vee)^\Gamma$ is finite.  
The group $C:=C_{G^\vee}(\varphi)\subset G^\vee$ is reductive since 
$\varphi(W_F)$ is semisimple and $\varphi({\textup{SL}_2(\mathbb{C})})$
is reductive. Hence $C$ is finite iff $C$ does not contain a nontrivial 
torus. By the above remark this can happen only if the connected center 
of $G$ is anisotropic. In this case \cite[Proposition 3.5, 3.6]{Bo} implies that 
$C$ does not contain a nontrivial torus 
iff $\textup{Im}(\varphi)$ is not contained in any proper Levi subgroup of ${}^LG$.
By definition this is equivalent to saying that  
$\textup{Im}(\varphi)$ is not contained in a Levi subgroup of ${}^LG$
of any proper relevant parabolic subgroup of ${}^LG$. 
In the non-archimedean case we can define a discrete character $\varphi_0$ 
which is trivial on $W_F$ and corresponds to the regular unipotent orbit on 
$\textup{SL}_2(\mathbb{C})$ (the principal parameter). In the case $F=\mathbb{C}$ 
there are no discrete parameters. 
If $F=\mathbb{R}$ and $\varphi$ is discrete then 
$\varphi(\mathbb{C}^\times)$ (with $\mathbb{C}^\times=W_\mathbb{C}\subset W_\mathbb{R}$) 
must contain regular semsimple elements. Thus
$\varphi(\mathbb{C}^\times)$ is contained in a unique maximal torus 
$T^\vee$ of $G^\vee$ which must be $\theta$-stable 
(with $\theta$ the automorphism corresponding to the nontrivial element of 
$\textup{Gal}(\mathbb{C}/\mathbb{R})$). Since $\theta\varphi(z)\theta^{-1}=\varphi(\overline{z})$, 
the discreteness of $\varphi$ implies that $\varphi(\overline{z})=\varphi(z)^{-1}$, and 
thus that $\theta$ restricted to $T^\vee$ is sending $t$ to $t^{-1}$.
This implies that $G$ has an anisotropic $F$-torus (this is clear if $G=G^*$ is 
quasi-split, and it is well known that this condition is independent of the inner form
\cite[Corollary 2.9]{S}). Conversely, when this condition holds it is easy to write 
down discrete parameters. 
\end{proof}
\begin{definition}
We call a Langlands parameter $\varphi$ tempered if $\varphi(W_F)$ is bounded.
\end{definition}
It is not difficult to see that $\varphi$ is tempered if $\varphi$ is discrete.
\begin{definition}
Two Langlands parameters $\varphi,\varphi'$ are called equivalent iff they 
are in the same orbit of $\textup{Int}(G^\vee)$ (acting on ${}^LG$). The set of 
equivalence classes of Langlands parameters of $G$ is denoted by 
$\Phi(G)$ and the subset of equivalence classes of tempered 
Langlands parameters of $G$ is denoted by $\Phi^{\textup{temp}}(G)$.  
\end{definition}
\subsection{$L$-functions and $\epsilon$ factors}
We associate an $L$-function and an $\epsilon$-factor to a representation $V$ 
of $L_F$ in the usual way (see \cite{Tate}). The $L$-function $L(s,V)$ 
is a meromorphic function of a parameter $s\in\mathbb{C}$ which only depends 
on the semisimplification of $V$, and satisfies  
by inductivity (i.e. if $F'\subset F$ then $L(s,V)=L(s,\textup{Ind}_{L_F}^{L_{F'}}(V))$)
and additivity. It is known these properties determine $L(s,V)$ completely if 
$L(s,\chi)$ is known for all characters $\chi$ of $L_F$.

In the archimedean case 
the $L$-functions assigned to characters are as follows:
\begin{enumerate}
\item[(a)] If $F=\mathbb{R}$, $L_F^{ab}\simeq \mathbb{R}^\times$. A character has a unique 
representation of form $\chi(x)=x^{-n}|x|^{s_0}$ with $n\in\{0,1\}$; then 
$L(s,\chi)= \pi^{-(s+s_0)/2}\Gamma((s+s_0)/2)$.
\item[(b)]  If $F=\mathbb{C}$, $L_F^{ab}\simeq \mathbb{C}^\times$. A character has a unique 
representation of form $\chi(z)=\sigma(z)^{-n}\Vert z\Vert^{s_0}$ with $n\in\mathbb{Z}_{\geq 0}$ 
and $\sigma\in\Gamma(\mathbb{C}/\mathbb{R})$; then 
$L(s,\chi)= 2(2\pi)^{-(s+s_0)}\Gamma(s+s_0)$.
\end{enumerate}
For the non-archimedean case, let $\textup{Fr}$ denote the Frobenius automorphism of the   
maximal unramified extension $F_{ur}$ of $F$. We choose once and for all an extension 
of $\textup{Fr}$ to $\overline{F}$, defining an element of $W_F$ which we will also denote by $\textup{Fr}\in W_F$.
Define 
\begin{equation}
\tilde{\textup{Fr}}=(\textup{Fr},\begin{pmatrix} v^{-1}&0\\0&v\end{pmatrix})\in L_F,\ \text{with\ } v=q^{1/2}
\end{equation}
Now we define for a representation $(V,\varphi)$ of $L_F$:
\begin{align*}
L(s,V)&=\textup{det}(1-q^{-s}\varphi(\tilde{\textup{Fr}})|_{V_N^{I_F}})^{-1}\\
&=\prod_{n\geq 0}\textup{det}(1-q^{-s-n/2}\varphi(\textup{Fr})|_{V_n^{I_F}})^{-1}
\end{align*}
where $I_F\subset W_F$ denotes the inertia subgroup $I_F:=\Gamma(\overline{F}/F_{ur})$, 
and $V_N^{I_F}$ the space of highest weight vectors in the 
$(W_F/I_F)\times \textup{SL}_2(\mathbb{C})$-module $V^{I_F}$. In the second line 
we decomposed $V$ as $V\simeq\oplus_{n\geq 0} V_n\otimes \textup{Sym}^n(\mathbb{C}^2)$
for certain representations $V_n$ of $W_F$.
\\

The $\epsilon$-factors depend on the choice of the additive character $\psi$ of $F$. It is known 
that $\epsilon$ is also additive, and inductive for virtual representations of degree $0$. 
In the non-archimedean case we have $\epsilon(s,V,\psi):=\omega(V,\psi)q^{a(V)(1/2-s)}$
where $a(V)$ is the Artin conductor of $V$ \cite[Section 2]{GR}.  
Here $\omega(V,\psi)\in \mathbb{C}^\times$ is independent of $s$.
In the archimeadean case we have $\epsilon(s,\chi,\psi)=c_\psi.(\sqrt{-1})^n$ where
$\chi$ is a character of $W_F$ expressed as above (see the discussion of the $L$-functions
in the archimedean case).  
\\

Given a Langlands parameter $\varphi:L_F\to {}^LG$, we define the 
adjoint $\gamma$ factor of $\varphi$ as follows. Let $\textup{Ad}$ denote the adjoint representation 
of ${}^LG$ on $\textup{Lie}(G^\vee)/\textup{Lie}({}^LZ)$.
\begin{equation}
\gamma(s,\textup{Ad}\circ\varphi,\psi):=\frac{\epsilon(s,\textup{Ad}\circ\varphi,\psi)L(1-s,\textup{Ad}\circ\varphi)}
{L(s,\textup{Ad}\circ\varphi)}
\end{equation}
It is not difficult to show that \cite[Lemma 1.2]{HII}: 
\begin{prop}
If $\varphi$ is tempered then $\gamma(s,\textup{Ad}\circ\varphi,\psi)$ is regular at $s=0$. 
Moreover $\varphi$ is tempered and essentially discrete iff $\gamma(s,\textup{Ad}\circ\varphi,\psi)$ is 
nonzero at $s=0$.
\end{prop}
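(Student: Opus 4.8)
The plan is to read off $\textup{ord}_{s=0}\,\gamma(s,\textup{Ad}\circ\varphi,\psi)$ directly from the three factors that constitute it. Write $V:=\textup{Ad}\circ\varphi$. First I would dispose of the $\epsilon$-factor: in the non-archimedean case $\epsilon(s,V,\psi)=\omega(V,\psi)\,q^{a(V)(1/2-s)}$ is a nonvanishing exponential in $s$, and in the archimedean case $\epsilon$ is multiplicative with elementary factors that are nonzero constants; so in all cases $\epsilon(s,V,\psi)$ is holomorphic and nowhere zero, and hence $\textup{ord}_{s=0}\,\gamma(s,V,\psi)=\textup{ord}_{s=0}\,L(1-s,V)-\textup{ord}_{s=0}\,L(s,V)$. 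Next I would use the explicit shape of the $L$-factor: by additivity and inductivity $L(s,V)$ is built from the elementary factors recalled above, namely it is a product of factors $\textup{det}(1-q^{-s-n/2}\varphi(\Fr)|_{V_n^{I_F}})^{-1}$ in the non-archimedean case and of shifted $\Gamma$-factors in the archimedean case; in particular $L(\cdot,V)$ has no zeros, only poles, so $1/L(s,V)$ is entire. Writing $m_0\ge 0$ and $m_1\ge 0$ for the pole orders of $L(\cdot,V)$ at $s=0$ and at $s=1$, this yields $\textup{ord}_{s=0}\,\gamma(s,V,\psi)=m_0-m_1$.

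For the first assertion I would locate the poles of $L(\cdot,V)$ when $\varphi$ is tempered. Temperedness means $\varphi(W_F)$ is bounded; since $I_F$ is compact, $W_F/I_F$ is topologically generated by $\Fr$, and $\varphi(\Fr)$ is semisimple, all eigenvalues of $\varphi(\Fr)$ on $V$ lie on the unit circle (and in the archimedean case every quasicharacter of $W_F$ occurring in $V$ is unitary). Hence every pole of $L(\cdot,V)$ has $\textup{Re}(s)\le 0$: in the non-archimedean case the $n$-th graded piece of $V$ for the $\textup{SL}_2$-action contributes poles only on the line $\textup{Re}(s)=-n/2$, and in the archimedean case the poles of the $\Gamma$-factors sit at $\textup{Re}(s)\le 0$ because the relevant shifts are purely imaginary. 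In particular $m_1=0$, so $\textup{ord}_{s=0}\,\gamma=m_0\ge 0$, which is the first statement.

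For the second assertion I would first note that, for tempered $\varphi$, the integer $m_0$ detects $V^{L_F}$: since $|\mu|=1$ for every eigenvalue $\mu$ of $\varphi(\Fr)$, a pole of $L(\cdot,V)$ at the precise point $s=0$ can come only from the graded piece $n=0$ and the eigenvalue $\mu=1$, so $m_0>0$ iff $\varphi(\Fr)$ has a nonzero fixed vector in $V_0^{I_F}$, i.e.\ iff $V^{L_F}=(\textup{Ad}\circ\varphi)^{L_F}\ne 0$ (and likewise in the archimedean case, where a pole at $s=0$ is contributed exactly by the trivial quasicharacter of $W_F$). Then I would identify the condition $(\textup{Ad}\circ\varphi)^{L_F}=0$ with essential discreteness of $\varphi$, exactly as in the proof of Lemma \ref{lem:dlp}: the group $C:=C_{G^\vee}(\varphi)$ is reductive with Lie algebra the $\textup{Ad}\circ\varphi$-fixed subspace of $\textup{Lie}(G^\vee)$; using the ${}^LG$-module decomposition of $\textup{Lie}(G^\vee)$ into its centre and its derived subalgebra one identifies $(\textup{Ad}\circ\varphi)^{L_F}=0$ with the condition that $C$ contain no non-central torus; and by \cite[\S 3, \S 8]{Bo} — relevance being automatic by condition (5) of the definition of a Langlands parameter — this holds iff $\textup{Im}(\varphi)$ lies in no proper relevant Levi subgroup of ${}^LG$, i.e.\ iff $\varphi$ is essentially discrete. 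Combining everything: if $\varphi$ is tempered and essentially discrete then $m_1=m_0=0$, whence $\gamma(0,V,\psi)\ne 0$ (also using $L(1,V)\ne 0$ and $\epsilon(0,V,\psi)\ne 0$); conversely, if $\gamma(0,V,\psi)\ne 0$ then $m_0=0$, which forces $\varphi$ essentially discrete, and $m_1=0$, which together with the self-duality of the adjoint representation — pairing an eigenvalue of $\varphi(\Fr)$ of modulus $>1$ with one of modulus $<1$ — forces $\varphi$ to be tempered.

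I expect the main obstacle to lie in the equivalence ``$(\textup{Ad}\circ\varphi)^{L_F}=0$ iff $\varphi$ essentially discrete'' used in the previous paragraph: it requires handling the $\Gamma$-twist inside $\textup{Lie}(G^\vee)/\textup{Lie}({}^LZ)$ correctly — the centre of $\textup{Lie}(G^\vee)$ need not be a trivial $\Gamma$-module — and invoking Borel's structure theory of relevant parabolic subgroups, in close parallel with the argument for Lemma \ref{lem:dlp}. The other delicate point is the ``only if'' half of the second assertion, namely extracting temperedness from the non-vanishing of $\gamma$ at $s=0$; this is where the self-duality of the adjoint representation is genuinely used, and where one must watch the degenerate cases.
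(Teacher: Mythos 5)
Your overall strategy---reading off $\operatorname{ord}_{s=0}\gamma$ from the pole orders $m_0,m_1$ of $L(\cdot,V)$ at $0$ and $1$, using that $\epsilon$ is entire and nowhere zero and that $1/L(\cdot,V)$ is entire---is exactly right, and is in effect the argument of \cite[Lemma 1.2]{HII}, to which the paper defers without writing out a proof. Your treatment of the first assertion (temperedness forces $|\mu|=1$ for every eigenvalue of $\varphi(\Fr)$, whence $m_1=0$ and $\operatorname{ord}_{s=0}\gamma=m_0\ge 0$), and your identification, for tempered $\varphi$, of $m_0>0$ with $V^{L_F}\neq 0$ and hence with the failure of essential discreteness via Borel, are correct. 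That is precisely the content of the statement: the ``tempered'' in the second sentence is the standing hypothesis carried over from the first, so the biconditional is ``given $\varphi$ tempered, $\varphi$ is essentially discrete iff $\gamma(0,\textup{Ad}\circ\varphi,\psi)\neq 0$''.

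Where you go wrong is in reading the second sentence as a free-standing equivalence and attempting to extract temperedness from $\gamma(0,V,\psi)\neq 0$. The step ``if $\gamma(0,V,\psi)\neq 0$ then $m_0=0$'' is unjustified: nonvanishing of $\gamma$ at $s=0$ yields only $m_0=m_1$, not $m_0=m_1=0$. And the claim you are trying to reach is simply false. Take $G=\textup{GL}_2$ and $\varphi$ unramified, trivial on $\textup{SL}_2(\mathbb{C})$, with $\varphi(\Fr)=\textup{diag}(q,1)$. Then $\textup{Ad}\circ\varphi$ acts on $\mathfrak{sl}_2$ with $\Fr$-eigenvalues $q,1,q^{-1}$, so $L(s,V)$ has simple poles at $s=1,0,-1$, hence $m_0=m_1=1$ and $\gamma(0,V,\psi)\neq 0$; yet $\varphi$ is neither tempered nor essentially discrete. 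The self-duality pairing $\mu\leftrightarrow\mu^{-1}$ that you invoke is true but does not place a pole precisely at $s=1$ whenever some $|\mu|\ne 1$: a non-tempered eigenvalue produces poles at whichever abscissae $\log_q|\mu|-n/2$ actually occur, and nothing forces one of them to be $1$. The worry you flag at the end about the ``only if'' half of the second assertion is thus well founded, but the remedy is not to look for a cleverer use of self-duality; it is to drop that direction entirely, since it is not part of the statement being proved.
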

\subsection{A conjectural tempered local Langlands correspondence}\label{Sect:LLC}
The conjecture on Plancherel densities of Hiraga, Ichino and Ikeda presupposes 
the existence of a local Langlands correspondence for tempered representations. 
A satisfactory formulation of a refined local Langlands conjecture in full detail 
(including certain desired properties of transfer factors) at this level of generality seems not to 
be known. We refer the reader to \cite{Ar1}, \cite{Vogan}, \cite{ABV}, \cite{Ar2}, \cite{HII}, \cite{HS} 
and \cite{Kal} for more background, discussion and overview of known results 
supporting various forms of the conjecture.  

In this section we would like to formulate a more crude version of the local Langlands 
conjecture for tempered representations covering the aspects which are relevant to our goals. 
We mainly follow \cite[Section 3]{Ar2}, \cite[Section 1]{HII}, \cite[Section 9]{HS}.

Put $\Pi(G)$ for the set of admissible irreducible representations of $G$.
The local Langlands conjecture predicts that there exists a partition 
\begin{equation}
\Pi(G)=\sqcup_{[\varphi]}\Pi_\varphi(G)
\end{equation}
where the disjoint union is over the set of equivalence classes $[\varphi]$ of local Langlands 
parameters $\varphi:L_F\to{}^LG$. The sets $\Pi_\varphi(G)$ are called $L$-packets.
Some of the fundamental expected properties of this conjectural partitioning are:
\begin{enumerate}
\item[(i)] $\Pi_\varphi$ is a non-empty finite set. 
\item[(ii)] $\Pi_\varphi$ contains tempered characters iff $\varphi$ is tempered. In this case 
all members of $\Pi_\varphi$ are tempered.
\item[(iii)] $\Pi_\varphi$ contains characters which are discrete modulo center iff $\varphi$ is 
essentially discrete. In this case all members of $\Pi_\varphi$ are discrete modulo center. 
\item[(iv)] Suppose that $F$ is $p$-adic. Then $\Pi_\varphi$ 
contains a character which is generic and supercuspidal iff $\varphi|_{W_F}$ is discrete.
In this case all members of $\Pi_\varphi$ are supercuspidal.
\end{enumerate}

Let us now look in more detail into the conjectural parameterisation of the $L$-packets $\Pi_\varphi$
for $\varphi$ tempered, following \cite{HII}. 

Let $A$ denote the (group of points of) the maximal split torus of $Z(G)$. 
Let $(G_{ad})^\vee=G_{sc}^\vee$ be the simply connected cover of the derived group 
of $G^\vee$.
Let $G^{\vee,\natural}$ denote the dual group of $G/A$, and let 
$G^\vee_{ad}=G^\vee_{sc}/Z(G^\vee_{sc})$ be the dual group of the simply connected 
cover of the derived group of $G$.  

We have homomorphisms $G^\vee_{sc}\xrightarrow{\beta} G^\vee_{ad}
\xleftarrow{\alpha} G^{\vee,\natural}$.
Given a tempered Langlands parameter $\varphi:L_F\to {}^LG$ for $G$, we define
$S_\varphi^\natural:=\{s\in G^{\vee,\natural}\mid \textup{Ad}(s)\circ \varphi=\varphi\}$
and $S_\varphi:=\beta^{-1}\alpha(S_\varphi^\natural)$. Next we 
define  $\mathcal{S}_\varphi^\natural=\pi_0(S_\varphi^\natural)$ and 
$\mathcal{S}_\varphi=\pi_0(S_\varphi)$. 

Recall that $G$ is (the group of $F$-points of) an inner form the quasi-split 
$F$-group $G^*$, which defines 
a class in $H^1(F,\mbf{G}^*_{ad})$. Kottwitz constructed a canonical map 
$H^1(F,\mbf{G}^*_{ad})\to({}^LZ_{sc})^*$
(with ${}^LZ_{sc}=Z(G_{sc}^\vee)^\Gamma$) which is bijective in the $p$-adic case. 
Let $\chi_G\in ({}^LZ_{sc})^*$ be the character 
that corresponds to $G$. We have $\chi_{G^*}=\textup{triv}$.
By \cite[\S 3]{Ar2}, \cite[Lemma 9.1]{HS} we have:
\begin{lemma}
The kernel $\textup{Ker}(\chi_G)$ contains ${}^LZ_{sc}\cap S_\varphi^0$, hence $\chi_G$ 
descends to $\textup{Im}({}^LZ_{sc}\to\mathcal{S}_\varphi)\subset \mathcal{S}_\varphi$
(also called $\chi_G$). 
\end{lemma}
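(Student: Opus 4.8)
The plan is to recast the claim as a single vanishing statement for the Kottwitz pairing and then to extract that statement from the relevance condition~(5) built into the notion of a Langlands parameter for $G$.

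\emph{Reduction.} Since $G^\vee_{sc}$ is semisimple and simply connected, $Z(G^\vee_{sc})$ is finite, so ${}^LZ_{sc}=Z(G^\vee_{sc})^\Gamma$ is a finite abelian group and $\chi_G$ is literally a character of it, which under the Kottwitz isomorphism $H^1(F,\mbf{G}^*_{ad})\xrightarrow{\ \sim\ }({}^LZ_{sc})^{*}$ is the pairing $z\mapsto\langle c_G,z\rangle$ against the class $c_G\in H^1(F,\mbf{G}^*_{ad})$ of the inner twisting $\mbf{G}\sim\mbf{G}^*$. Because $\ker(\beta)=Z(G^\vee_{sc})$ and $1\in\alpha(S_\varphi^\natural)$, we get $Z(G^\vee_{sc})\subseteq\beta^{-1}\alpha(S_\varphi^\natural)=S_\varphi$; hence ${}^LZ_{sc}\subseteq S_\varphi$ and the homomorphism ${}^LZ_{sc}\to\mathcal S_\varphi=\pi_0(S_\varphi)$ has kernel exactly ${}^LZ_{sc}\cap S_\varphi^0$. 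Thus the inclusion $\textup{Ker}(\chi_G)\supseteq{}^LZ_{sc}\cap S_\varphi^0$ is precisely the condition under which $\chi_G$ factors through $\textup{Im}({}^LZ_{sc}\to\mathcal S_\varphi)$, and the whole content of the lemma lies in proving $\langle c_G,z\rangle=1$ for every $z\in{}^LZ_{sc}\cap S_\varphi^0$.

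\emph{Where relevance enters.} This vanishing is false for an arbitrary homomorphism $L_F\to{}^LG$ (e.g.\ the trivial parameter on a non-quasi-split $G$); it genuinely uses that $\varphi$ satisfies~(5). I would exploit this as follows. Let $L^\vee\subseteq G^\vee$ be a Levi subgroup of minimal dimension with $\textup{Im}(\varphi)\subseteq{}^LL:=L^\vee\rtimes\Gamma$, and $\mbf{L}^*\subseteq\mbf{G}^*$ the corresponding standard Levi of the quasi-split form; condition~(5) forces $\mbf{L}^*$ to be $G$-relevant, i.e.\ $\mbf{G}$ to possess an $F$-rational Levi subgroup in the inner class of $\mbf{L}^*$. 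By Borel's analysis of relevant Levi subgroups \cite[\S 3]{Bo}, this amounts to $c_G$ lying in the image of $H^1(F,\bar{\mbf{L}})\to H^1(F,\mbf{G}^*_{ad})$ for the Levi $\bar{\mbf{L}}$ of $\mbf{G}^*_{ad}$ that is the image of $\mbf{L}^*$; dualising via Kottwitz, this says exactly that $\chi_G$ is trivial on the kernel $K_L$ of the natural map ${}^LZ_{sc}=Z(G^\vee_{sc})^\Gamma\to\pi_0\big(Z(\widetilde{L}^\vee)^\Gamma\big)$, where $\widetilde{L}^\vee\subseteq G^\vee_{sc}$ is the preimage of $L^\vee$. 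It then remains to show ${}^LZ_{sc}\cap S_\varphi^0\subseteq K_L$. Here one uses that conjugation by $\varphi(w_\sigma)$ ($w_\sigma\in L_F$ lifting $\sigma\in\Gamma$) defines a $\varphi$-twisted $\Gamma$-action on $C_{G^\vee}(\varphi)$ that preserves $S_\varphi^0$ and restricts to the ordinary Galois action on $Z(G^\vee_{sc})$; an element $z\in{}^LZ_{sc}\cap S_\varphi^0$ is then a $\Gamma$-fixed central element of the connected reductive group $S_\varphi^0$, hence lies in every maximal torus of $S_\varphi^0$, and a torus argument (choosing a $\varphi$-twisted-$\Gamma$-stable maximal torus of $S_\varphi^0$ that is also a maximal torus of the identity component of $Z(\widetilde{L}^\vee)^\Gamma$, which is possible because the centraliser constraints force $\varphi$ through ${}^LL$) places $z$ in the identity component of $Z(\widetilde{L}^\vee)^\Gamma$, i.e.\ in $K_L$. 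This yields $\chi_G(z)=1$, and the asserted descent of $\chi_G$ is then formal.

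\emph{Main obstacle.} The delicate point is the last step of the previous paragraph: phrasing Borel's relevance criterion correctly in terms of $c_G$, identifying the induced homomorphism ${}^LZ_{sc}\to\pi_0\big(Z(\widetilde{L}^\vee)^\Gamma\big)$ on the dual side, and verifying that ${}^LZ_{sc}\cap S_\varphi^0$ lands in its kernel --- all of which requires careful bookkeeping of the simply connected covers and adjoint quotients of $G$ and of $L$ on both the group and the dual side, of the variance of the Kottwitz isomorphism, and (crucially) of the full relevance condition~(5) rather than one instance of it. Precisely this bookkeeping is carried out in \cite[\S 3]{Ar2} and \cite[Lemma 9.1]{HS}, which one may simply invoke; the remaining ingredients --- finiteness of $Z(G^\vee_{sc})$, the inclusion $Z(G^\vee_{sc})\subseteq S_\varphi$, and the formal descent of a character across the kernel of ${}^LZ_{sc}\to\mathcal S_\varphi$ --- are routine.
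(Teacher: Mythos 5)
The paper gives no argument of its own here: it simply invokes \cite[\S 3]{Ar2} and \cite[Lemma 9.1]{HS}. Your proposal correctly reduces the lemma to the vanishing of $\chi_G$ on ${}^LZ_{sc}\cap S_\varphi^0$ and, after sketching how the relevance condition~(5) enters, ultimately defers the delicate torus/Levi bookkeeping to precisely those same two references, so the approach is essentially the same as the paper's.
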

Following \cite{Ar2}, we choose an extension of $\chi_G$ to $\textup{Im}(Z_{sc}\to\mathcal{S}_\varphi)$
(also denoted by $\chi_G$) such that $\chi_{G^*}=\textup{triv}$ and define: 
$\Pi(\mathcal{S}_\varphi,\chi_G)=\{\rho\in\textup{Irr}(\mathcal{S}_\varphi)
\mid \rho|_{\textup{Im}(Z_{sc}\to\mathcal{S}_\varphi)}=n\chi_G\}$. 

From \cite[Section 3]{Ar2}, \cite{HII} and \cite[Section 9]{HS} we 
distill the following crude form of the local Langlands conjecture: 
\begin{conj}\label{conj:LC}
There exists a bijection $\rho\to\pi_\rho$ between  $\Pi(\mathcal{S}_\varphi,\chi_G)$ and $\Pi_\varphi(G)$ 
such that for all tempered local Langlands parameters $\varphi$ for $G$, 
\begin{equation}
\Theta_\varphi:=\sum_{\rho\in\Pi(\mathcal{S}_\varphi,\chi_G)}\textup{dim}(\rho)\Theta_{\pi_\rho}
\end{equation}
is a stable character of $G$. Here $\Theta_{\pi_\rho}$ denotes the  
distributional character 
of $G$ corresponding to the tempered irreducible representation $\pi_\rho$.  
Any stable linear combination of characters from 
$\Pi(\mathcal{S}_\varphi,\chi_G)$ is a multiple of $\Theta_\varphi$. 
\end{conj}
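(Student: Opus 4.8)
The plan is to establish the conjecture in the range relevant to this paper --- tempered parameters $\varphi$ whose packets $\Pi_\varphi(G)$ consist of representations of unipotent reduction --- by transporting the whole picture to affine Hecke algebras, where the combinatorics of unramified parameters is transparent; in full generality the statement is of course the deep conjectural input it is presented as, but the same line of attack indicates exactly what is missing. First I would use the Morita equivalence identifying the category of representations of unipotent reduction of $G$ with a finite direct sum $\bigoplus_t\mathrm{Mod}(\Hc_t(G))$ over unipotent types $t$, and likewise the Iwahori block of $G^*$ with $\mathrm{Mod}(\Hc_\Ibb(G^*))$. In Weil--Deligne form an unramified tempered parameter $\varphi$ is a pair $(s,u)$ with $s\in G^\vee$ bounded semisimple, $u$ unipotent, and $sus^{-1}=u^{q}$, and then $\mathcal{S}_\varphi$ is the corresponding component group (a suitable $\pi_0$ of $Z_{G^\vee}(s,u)$, adapted through $G^{\vee,\natural}$ as in Section \ref{Sect:LLC}). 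The geometric (Kazhdan--Lusztig, Ginzburg, Lusztig) classification parametrizes the simple modules of $\Hc_\Ibb(G^*)$ by triples $(s,u,\rho)$ with $\rho$ an irreducible representation of this component group occurring in the total homology of the associated Springer fibre; composing with the Iwahori--Matsumoto involution --- which interchanges the trivial and Steinberg representations and is inserted so that tempered modules correspond to bounded parameters --- yields the partition into $L$-packets together with the bijection $\rho\mapsto\pi_\rho$. For the remaining unipotent types I would transport this parametrization through the spectral transfer morphisms $\Psi:\Hc_t(G)\leadsto\Hc_\Ibb(G^*)$ of \emph{op. cit.}; the twist by the character $\chi_G\in({}^LZ_{sc})^*$ classifying the inner form $G$ matches exactly the $\chi_G$-isotypic restriction imposed on $\rho$ in the definition of $\Pi(\mathcal{S}_\varphi,\chi_G)$.

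The heart of the matter is the stability of $\Theta_\varphi:=\sum_\rho\dim(\rho)\,\Theta_{\pi_\rho}$. I would treat the essentially discrete case first: there $\varphi$ is discrete, $\Pi_\varphi(G)$ consists of discrete-series representations of unipotent reduction, and the claim is that the $\dim(\rho)$-weighted sum is $G$-stable. The strategy is a comparison of elliptic pairings --- on the Hecke-algebra side the elliptic inner products and formal degrees of the $\pi_\rho$ are computed from the residue and formal-degree formulae of \emph{op. cit.}, while on the geometric side the vector $(\dim\rho)_\rho$ is precisely the multiplicity vector of the \emph{full} Springer representation (the class induced from the trivial local system), whose associated virtual character is stable by a Lefschetz fixed-point argument on the Springer fibre, in the spirit of DeBacker--Reeder's proof of stability for depth-zero packets. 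Stability then propagates to a general tempered $\varphi$ by parabolic induction: such a $\varphi$ factors through a Levi ${}^LM$ with $\varphi_M$ essentially discrete, $\Pi_\varphi(G)$ is obtained by unitary parabolic induction of $\Pi_{\varphi_M}(M)$, normalized parabolic induction carries stable distributions to stable distributions, and the inclusion $\mathcal{S}_{\varphi_M}\hookrightarrow\mathcal{S}_\varphi$ is compatible with the dimension weights.

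It remains to prove the rigidity clause: every stable linear combination of the characters in $\Pi(\mathcal{S}_\varphi,\chi_G)$ is a scalar multiple of $\Theta_\varphi$. By linear independence of characters such a combination is detected by its values on regular elliptic elements, so it suffices to know that the space of stable virtual characters supported on a single packet is one-dimensional; for discrete $\varphi$ this follows from the nondegeneracy of the elliptic pairing on the elliptic characters of the packet together with the fact that the stability constraint cuts out exactly the line $\mathbb{C}\cdot(\dim\rho)_\rho$ --- dually, the only $Z_{G^\vee}(s,u)$-equivariant class function on the component group that extends to a stable distribution is, up to scalar, the one attached to the trivial local system by the generalized Springer correspondence. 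For general tempered $\varphi$ one again reduces to the Levi. \textbf{The main obstacle} is the stability input of the second paragraph: making the Lefschetz/Springer argument unconditional for \emph{all} inner forms (not merely pure ones) and \emph{all} unipotent types, with the correct normalization of transfer factors, requires controlling how the spectral transfer morphisms $\Psi$ interact with endoscopy --- and it is there, rather than in the bookkeeping of packets, that the real work lies.
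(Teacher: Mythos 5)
This statement is Conjecture \ref{conj:LC}, which the paper explicitly treats as an \emph{unproved hypothesis}: it is the ``crude form of the local Langlands conjecture'' distilled from \cite{Ar2}, \cite{HII}, \cite{HS}, and it enters the paper as an input to the Hiraga--Ichino--Ikeda conjectures, not as something the paper establishes. (The author even remarks at the start of Section \ref{Sect:LLC} that ``a satisfactory formulation of a refined local Langlands conjecture in full detail \dots\ at this level of generality seems not to be known.'') There is therefore no paper proof to compare your proposal against, and the paper's own Theorems \ref{thm:lus} and \ref{thm:main} deliberately avoid asserting stability: they produce a partitioning into packets and a bijection with $\Pi(\Sc_\varphi,\chi_G)$ for which the formal degree and Plancherel density conjectures hold, but say nothing about $\Theta_\varphi$ being a stable distribution.

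Your proposal is not a proof either, and you say as much in your final paragraph: the stability of $\Theta_\varphi$ is precisely the ingredient that is missing, and the Hecke-algebra machinery of this paper does not supply it. The spectral transfer morphisms $\Psi:\Hc_t(G)\leadsto\Hc_\Ibb(G^*)$ are calibrated to match $\mu$-functions (equivalently adjoint $\gamma$-factors) up to rational constants; that is a statement about Plancherel densities, which is insensitive to the endoscopic data (transfer factors, stable orbital integrals, character identities on the elliptic set) that stability actually requires. Thus the assertion that the $\dim(\rho)$-weighted sum of Kazhdan--Lusztig/Lusztig characters is stable cannot be derived from the residue and formal-degree calculus reviewed here; the Lefschetz/Springer heuristic and the nondegeneracy of the elliptic pairing give an appealing sketch, but the step ``the class induced from the trivial local system yields a stable virtual character'' and the converse rigidity claim ``the stable span is one-dimensional'' are each independent theorems that are unknown at this generality. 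In short: you have correctly identified where the real difficulty sits, but you should also recognise that the paper never claims to cross that gap, and presenting the conjecture with a conjectural sketch attached is not a proof. The honest thing to do is what the paper does: state Conjecture \ref{conj:LC} as a hypothesis and make precise which of its consequences (the parameterisation, the equivariance properties of Definition \ref{def:desi}, the formal-degree identities) can be established unconditionally on the Hecke-algebra side.
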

\begin{definition}
We define $\tilde{\Phi}^{\textup{temp}}(G)=\{(\varphi,\rho)\mid \varphi\in \Phi^{\textup{temp}}(G) \text{\ and\ } 
\rho\in \Pi(\mathcal{S}_\varphi,\chi_G)\}$. 
Suppose that for all $\varphi\in \Phi^{\textup{temp}}(G)$ 
a parameterisation $\rho\to\pi_\rho$ of $\Pi_\varphi(G)$ as in Conjecture \ref{conj:LC} exists. 
The corresponding bijection $\hat{G}^{\textup{temp}}\to \tilde{\Phi}^{\textup{temp}}(G)$, 
$\pi\to (\varphi_\pi,\rho_\pi)$ such that for each $\varphi\in \Phi^{\textup{temp}}(G)$, 
$\Pi_\varphi(G)=\{\pi\in \tilde{\Phi}^{\textup{temp}}(G)\mid \varphi_\pi=\varphi\}$ and such that 
the bijection $\Pi_\varphi(G)\to  \Pi(\mathcal{S}_\varphi,\chi_G)$, $\pi\to \rho_\pi$ is the inverse 
of the bijection $\rho\to \pi_\rho$ in Conjecture \ref{conj:LC}, is called 
an enhancement of the Langlands parameterisation $\pi\to \varphi_\pi$.
\end{definition}
\subsection{The conjectures of Hiraga, Ichino and Ikeda}
We now have everything in place in order to formulate the conjectures
of Hiraga, Ichino and Ikeda \cite{HII}. Suppose that we have given an enhanced 
Langlands parameterisation $\hat{G}^{\textup{temp}}\to \tilde{\Phi}^{\textup{temp}}(G)$. 
\begin{conj}[Conjecture 1.4 of \cite{HII}]\label{conj:1}
Let $\varphi: L_F\to {}^LG$ be a discrete Langlands parameter for $G$, let 
$\rho\in\Pi(\mathcal{S}_\varphi,\chi_G)$ and let $\pi_\rho\in\Pi_\varphi$ be the tempered 
essentially discrete series representation corresponding to $(\varphi,\rho)$.
Then
\begin{equation}
\textup{fdeg}(\pi_\rho)=
\frac{\textup{dim}(\rho)}{|\mathcal{S}^\natural_{\varphi}|}|\gamma(0,\textup{Ad}\circ\varphi,\psi)|
\end{equation}
\end{conj}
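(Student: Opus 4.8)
\emph{Reduction to affine Hecke algebras.} The plan is to transport the desired equality from the $p$-adic group $G$ to the affine Hecke algebras governing its representations of unipotent reduction, to verify it there as an identity of rational functions of $q$, and to organise the comparison with the Galois side by means of spectral transfer morphisms; only the case relevant here is treated, namely $\pi=\pi_\rho$ of unipotent reduction with $F$ $p$-adic and $\mbf{G}$ split over an unramified extension. Since every irreducible representation of unipotent reduction is covered by a unipotent type $t$, the Morita equivalence between the category of representations of unipotent reduction of $G$ and $\bigoplus_t\mathrm{Mod}(\Hc_t(G))$ sends the essentially discrete series $\pi_\rho$ to a discrete series module over the normalised affine Hecke algebra $\Hc_t(G)$. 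With the Haar measure fixed as $\mu_G=\mu_G^{\psi_0}$ and the Euler--Poincar\'e/Plancherel trace on $\Hc_t(G)$ normalised compatibly, one then has $\textup{fdeg}(\pi_\rho)=c_t\,\textup{fdeg}_{\Hc_t(G)}(\overline{\pi_\rho})$ for an explicit positive rational constant $c_t$ depending only on $t$; the volumes of parahoric subgroups from the lemma above enter exactly here.

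\emph{Both sides are rational in $q$.} On the Hecke-algebra side the formal degree of a discrete series module is given by the residue formula for the Plancherel measure of $\Hc_t(G)$: an explicit rational function of $q$ with rational coefficients, nonzero precisely for discrete series. On the Galois side, for an unramified discrete $\varphi$ all the local $L$- and $\epsilon$-factors entering $\gamma(0,\textup{Ad}\circ\varphi,\psi)$ are explicit --- products of factors of the form $(1-q^{-n/2}\lambda)^{-1}$ for eigenvalues $\lambda$ of $\varphi(\textup{Fr})$ on the graded pieces of $\mathrm{Lie}(G^\vee)$, times a power of $q$ and a sign from the $\epsilon$-factor --- so $|\gamma(0,\textup{Ad}\circ\varphi,\psi)|$ is again a rational function of $q$, nonzero precisely when $\varphi$ is essentially discrete, consistently with the Proposition preceding the conjecture. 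Since a rational function of $q$ is determined by its values at infinitely many prime powers, it will suffice to identify the two rational functions up to the constant $c_t$ and then to pin the constant down.

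\emph{Spectral transfer morphisms and the Iwahori case.} A spectral transfer morphism $\Psi:\Hc_t(G)\leadsto\Hc_\Ibb(G^*)$ relates the normalised discrete series formal degrees of the two algebras up to the explicit transfer factor attached to $\Psi$, and simultaneously produces the Langlands parameters $\pi\mapsto\varphi_\pi$ of Corollary \ref{cor:STMpar}. Thus the problem reduces to two things: the Iwahori-spherical case of $\Hc_\Ibb(G^*)$, i.e. Theorems \ref{thm:cc} and \ref{thm:Iw}, where the discrete series correspond to the Kazhdan--Lusztig/Reeder parameters and the identity becomes a combinatorial statement about distinguished unipotent classes and their reductive centralisers in $G^\vee$; and the verification that, for each STM $\Psi$ from a unipotent type to $\Hc_\Ibb(G^*)$, the transfer factor of $\Psi$ combined with the constant $c_t$ of the first step equals the ratio of the corresponding adjoint gamma factors. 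Feeding the Iwahori identity through every such $\Psi$ then gives the formula for all unipotent types, hence for all $\pi_\rho$ of unipotent reduction.

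\emph{Main obstacle.} The hard part is not producing a rational-function identity but making the constant exactly right: one must compute the transfer factor of each spectral transfer morphism exactly, reconcile it with the Haar-measure normalisation carried through the Morita equivalence, and prove the sharp combinatorial identity in the Iwahori case, so that in the adjoint case the product of all these contributions is exactly $1$. For general connected reductive $\mbf{G}$ one first obtains the identity only up to a $q$-independent positive rational constant; removing that constant exploits the rigidity of spectral transfer morphisms together with the already-established adjoint case and reduction to Levi and endoscopic constituents, which is the substance of Theorem \ref{thm:main}.
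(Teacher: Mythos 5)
Your outline follows the same route as the paper: pass from $G$ to the unipotent Hecke algebras $\Hc_t(G)$ via the type-theoretic Morita equivalence and Theorem~\ref{thm:BHK}; note that both $\textup{fdeg}$ and $|\gamma(0,\textup{Ad}\circ\varphi,\psi)|$ are rational in $q$; use spectral transfer morphisms $\Hc_t(G)\leadsto\Hc_\Ibb(G^*)$ to reduce to the Iwahori case of Theorems~\ref{thm:cc} and~\ref{thm:Iw}; and then treat the remaining scalar. This is exactly the structure of Theorem~\ref{thm:main0} and the proof of Theorem~\ref{thm:main}(c), and the scope (unipotent reduction, $\mbf{G}$ unramified, ultimately the adjoint case with Lusztig's parameterisation) matches the paper's.

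There is, however, one substantive omission in your ``Main obstacle'' step. You propose to pin down the exact positive rational constant by computing STM transfer factors exactly and reconciling them with the Haar measure through the Morita equivalence. But an STM is, by Definition~\ref{def:STM} and Theorem~\ref{thm:corr}, a density-preserving correspondence \emph{only up to} locally constant rational factors; and, more fundamentally, the multiplicity constants $d_{\Hc,\delta}$ in the decomposition $\chi_{W_0r}=\sum_{\delta}d_{\Hc,\delta}(\vt)\chi_\delta$ of Theorem~\ref{thm:ds}(iii) are a strictly finer invariant than the $\mu$-function or the Plancherel density and are \emph{not} determined by the STM itself. The paper's actual mechanism for nailing these down is Dirac induction for affine Hecke algebras (\cite{CO}, Theorem~\ref{thm:rat}): the formal degree $\textup{fdeg}(\textup{Ind}_D(b,\vt))=d_b\,m_b(\vt)$ of a generic family of discrete series is a rational function of $\vt$ on all of $\Qt$, even though the family of characters does not extend continuously, so that each constant $d_b$ can be computed at a single convenient specialisation. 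Without this rationality-in-$\vt$ input your argument establishes Conjecture~\ref{conj:1} for unipotent reduction only up to an undetermined positive rational, which is Theorem~\ref{thm:main}(a)(1), but not the sharp equality asserted in Theorem~\ref{thm:main}(c).
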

For general tempered representations \cite{HII} formulate a conjecture expressing the 
Plancherel density. This amplification is based on Harish-Chandra's Plancherel Theorem 
(\cite{HC2}, \cite{Wal}) and  
Langlands' conjecture on the Plancherel 
measure \cite[Appendix II]{L}, \cite{Sha}. 
\begin{conj}[Conjecture 1.5 of \cite{HII}]\label{conj:2}
Let $P=MN\subset G$ be a semi-standard $F$ parabolic subgroup. Let $\mathcal{O}$ be an 
orbit of tempered essentially discrete series characters of $M$. Let $d\pi$ denote the Haar measure 
on $\mathcal{O}$, normalised as in \cite[pages 239 and 302]{Wal}. For $\pi\in\mathcal{O}$ we put 
\begin{equation}
d\nu(\pi)=
\frac{\textup{dim}(\rho)}{|\mathcal{S}^\natural_{\varphi_M}|}|\gamma(0,r_M\circ\varphi,\psi)|d\pi
\end{equation} 
where $r_M$ denotes the adjoint representation of ${}^LM$ on 
$\textup{Lie}(G^\vee)/\textup{Lie}({}^LZ_M)$. Then the Plancherel density 
at $\textup{Ind}_P^G(\pi)$ is $c_Md\nu(\pi)$ for some explicit 
constants $c_M\in\mathbb{R}_+$ 
independent of $F$ and $\mathcal{O}$.
\end{conj}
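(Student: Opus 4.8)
The plan is to establish Conjecture \ref{conj:2} in the range where this paper operates --- $F$ $p$-adic, $\mbf{G}$ split over an unramified extension, $\pi$ of unipotent reduction --- by transporting the statement to the category of normalised affine Hecke algebras, where the Plancherel density is completely explicit, and from there to the Iwahori Hecke algebra $\Hc_\Ibb(G^*)$ of the quasi-split inner form by means of a spectral transfer morphism. First I would use the Morita equivalence between the category of representations of unipotent reduction of $G$ and the module category of $\bigoplus_t\Hc_t(G)$, $t$ running over unipotent types; under it the restriction of $\nu_{Pl}$ to the tempered unipotent spectrum becomes the Plancherel measure of $\bigoplus_t\Hc_t(G)$ for the trace induced by $\tau_G$, which by the lemma on parahoric volumes differs from the canonical normalised trace of each $\Hc_t(G)$ only by a scalar that is a power of $q$ times orders of finite reductive groups. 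For a normalised affine Hecke algebra the Plancherel density is known from work of Opdam and Delorme--Opdam: at a representation parabolically induced from a discrete series $\delta$ of a ``Levi'' subalgebra it equals $\textup{fdeg}(\delta)$ times the residue, at the relevant residual point, of the Macdonald $c$-function, equivalently of the Harish-Chandra $\mu$-function. Thus Conjecture \ref{conj:2} is reduced to an identity comparing this Hecke-algebraic density with $\tfrac{\dim(\rho)}{|\mathcal{S}^\natural_{\varphi_M}|}\,|\gamma(0,r_M\circ\varphi,\psi)|$, up to a constant that remains to be shown independent of $q$.

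Next I would invoke a spectral transfer morphism $\Psi:\Hc_t(G)\leadsto\Hc_\Ibb(G^*)$ as in \cite{Opds}, \cite{Opdl}: such a map matches residual --- and more generally tempered --- spectra, multiplies Plancherel densities by a rational constant independent of $q$, and, being compatible with passage to ``Levi'' subalgebras, respects the decomposition into Harish-Chandra series and the parabolic-induction structure. Hence it suffices to verify the identity on $\Hc_\Ibb(G^*)$, which is exactly the content of Theorem \ref{thm:cc} and Theorem \ref{thm:Iw}: there the tempered spectrum is parameterised by unramified bounded enhanced Langlands parameters via Kazhdan--Lusztig and Lusztig, one writes out $\gamma(0,\textup{Ad}\circ\varphi,\psi)$ from the local $L$- and $\epsilon$-factors of $\textup{Ad}\circ\varphi$ --- through the Frobenius eigenvalues on $(\textup{Ad}\circ\varphi)^{I_F}$ and the Artin conductor --- and one checks term by term that it reproduces the $c$-function residue. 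The general parabolic case reduces to the essentially discrete case on a Levi by multiplicativity of both sides: spectrally the $\mu$-function is a product over the roots of $G$ lying outside $M$, while the ${}^LM$-representation $r_M$ decomposes as the adjoint representation of ${}^LM$ plus the complementary space $\textup{Lie}(G^\vee)/\textup{Lie}(M^\vee)$, so that $|\gamma(0,r_M\circ\varphi,\psi)|$ factors into $|\gamma(0,\textup{Ad}\circ\varphi_M,\psi)|$ --- which combines with $\dim(\rho)/|\mathcal{S}^\natural_{\varphi_M}|$ to give the formal degree of the discrete series by Conjecture \ref{conj:1} --- times the gamma factor of the complementary piece, matched root by root with the $\mu$-function. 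Collecting the $q$-powers from the first step and the rational factor contributed by $\Psi$ then produces the constant $c_M$, which depends only on the combinatorics of the root datum and the unipotent type and not on $F$ or $\mathcal{O}$; the needed rationality and $q$-independence is already built into Corollary \ref{cor:STMpar}.

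The hard part is twofold. First, constructing $\Psi:\Hc_t(G)\leadsto\Hc_\Ibb(G^*)$ for non-quasi-split $G$ and arbitrary unipotent types, and establishing the compatibilities invoked above --- with Harish-Chandra series, with Levi subalgebras, and with the enhanced parameterisation including the $\chi_G$-twist and the component group $\mathcal{S}^\natural_\varphi$ --- which rests on Lusztig's classification of unipotent representations and on the delicate combinatorics developed in \cite{Opds}. Second, pinning down the matching of the $\mu$-function of $\Hc_\Ibb(G^*)$ with the adjoint gamma factor \emph{exactly}, that is, getting the power of $2$ and the factor $|\mathcal{S}^\natural_{\varphi_M}|$ precisely right rather than merely up to a constant; this demands a careful analysis of the action of the component group on the adjoint representation space and on the residual cocharacter, and is exactly where one genuinely uses that $\mbf{G}$ is of adjoint type in order to get the identity on the nose --- for general connected reductive $\mbf{G}$ one can only expect it to hold up to a rational constant, as the abstract states.
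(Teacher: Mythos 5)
Your proposal follows the paper's strategy essentially step for step: Morita-equivalence to the normalised unipotent Hecke algebras and matching of Plancherel measures (Theorem \ref{thm:BHK}), the explicit spectral decomposition of the trace via residual cosets and the $\mu$-function (Theorems \ref{thm:ds}, \ref{thm:parb}), a spectral transfer morphism $\Phi^t:\Hc_t(G)\leadsto\Hc_\Ibb(G^*)$ which preserves Plancherel densities up to rational constants (Theorem \ref{thm:corr}, Theorem \ref{thm:STMexuni}, Corollary \ref{cor:STMpar}), the identification of the Iwahori $\mu$-function residue with the adjoint $\gamma$-factor via the Kazhdan--Lusztig parameterisation (Theorems \ref{thm:cc}, \ref{thm:Iw}), and the reduction of the general tempered case to Conjecture \ref{conj:1} on Levi quotients modulo center --- which the paper compresses into an appeal to Harish-Chandra's Plancherel theorem and Shahidi's Plancherel-density theorem in the proof of Theorem \ref{thm:main}(c). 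The one place where your sketch diverges in substance from what the paper actually does is the closing paragraph: the paper does not obtain the exact rational constants in the adjoint case by a direct ``analysis of the component group action on the residual cocharacter''; its mechanism is the scaling invariance of formal degrees (Theorem \ref{thm:ds}(iv)) combined with the Dirac-induction rationality theorem, Theorem \ref{thm:rat} from \cite{CO}, which shows $\textup{fdeg}(\textup{Ind}_D(b,\vt))=d_b m_b(\vt)$ with $d_b\in\mathbb{Q}_+$ constant on a generic family, so that a single computation per family (not per unequal-parameter specialization) pins down the constants. Naming that tool would close the one genuine vagueness in your outline.
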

\subsection{Known results and further comments}\label{Sect:known}
Conjecture \ref{conj:1} is reduced to the case of generic tempered representations 
by Shahidi's paper \cite{Sha}, if one knows the stability of $\Theta_\varphi$ in Conjecture 
\ref{conj:LC}.

The Conjecture \ref{conj:1} is known for $F=\mathbb{R}$ \cite[Section 3]{HII}.
For $F$ non-archimedean Conjecture \ref{conj:1} is known in the following cases:
\begin{enumerate}
\item[(a)] $G$ an inner form of $\textup{GL}_n$ (\cite{SZ}, \cite{Z}, \cite{HII}). 
\item[(b)] $G$ an inner form of $\textup{SL}_n$ (\cite{HS}, \cite{HII}).
\item[(c)] $G$ arbitrary, $\pi$ the Steinberg representation, $\varphi$ the 
principal parameter (due essentially to Borel, \cite{Bo1}, \cite{HII}).
\item[(d)] $G$ split exceptional of adjoint type, $\pi$ discrete series of unipotent reduction 
(due to Reeder, \cite{Re}).
\item[(e)] $G$ arbitrary,  $\pi$ depth $0$ supercuspidal representation (tame regular semisimple case) 
(DeBacker and Reeder \cite{DeRe}, \cite{HII}).
\item[(f)] Ichino, Lapid and Mao proved Conjecture \ref{conj:1} for odd orthogonal groups.
\item[(g)] Beuzart-Plessis proved Conjecture \ref{conj:1} for unitary groups. 
\item[(h)] For supercuspidal representations of unipotent reduction of connected semisimple 
$p$-adic groups which split over an unramified extension \cite[Theorem 1.3]{FOS}.
\end{enumerate}

The main result we will discuss in these lectures is and extension to general connected reductive $G$
of the following result: \footnote{The Lusztig parameterisation should be twisted by 
the Iwahori-Matsumoto involution in order to map tempered representations 
to bounded parameters (cf. \cite[Text below Theorem 2]{AMS}). Here and elsewhere 
we will tacitly assume this modification.}
\begin{thm}[\cite{Re}, \cite{Opds}, \cite{Opdl}, \cite{Fe2}]\label{thm:main0}
Let $G$ be absolutely almost simple of adjoint type over 
a non-archimedean field $F$ such that $G$ splits 
over an unramified extension of $F$. Then Conjectures \ref{conj:1}, 
\ref{conj:2} hold for representations of unipotent reduction, when we 
use Lusztig's classification \cite{Lusztig-unirep}, \cite{Lusztig-unirep2} 
as a Langlands parameterisation.
\end{thm}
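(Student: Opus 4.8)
The plan is to transport the whole problem into the spectral theory of normalised affine Hecke algebras. First I would use the Morita equivalence recalled in the introduction: since every representation of $G$ of unipotent reduction is fixed by the pro-$p$ radical of some parahoric subgroup, the category of such representations is equivalent to the module category of $\bigoplus_t\Hc_t(G)$, with $t$ ranging over the unipotent types of $G$. With the Haar measure normalised as above and with the canonical trace of each $\Hc_t(G)$ scaled to match, Harish-Chandra's trace $\tau_G$ corresponds to the canonical trace of the Hecke algebra; consequently the formal degree of a discrete series of unipotent reduction, and more generally the tempered Plancherel density, are computed by the formal degrees and the Plancherel measure of the relevant affine Hecke algebra (after twisting Lusztig's parameterisation by the Iwahori--Matsumoto involution so that tempered representations are sent to bounded parameters, as in the footnote). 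Conjectures \ref{conj:1} and \ref{conj:2} then become identities relating Hecke-algebra Plancherel densities to adjoint $\gamma$-factors of unramified enhanced Langlands parameters.

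Next I would invoke the spectral transfer morphisms of \cite{Opds} and \cite{Opdl}: for each unipotent type $t$ of $G$ there is an STM $\Psi\colon\Hc_t(G)\leadsto\Hc_\Ibb(G^*)$ to the Iwahori Hecke algebra of the quasi-split inner form $G^*$. By construction an STM matches central characters of discrete series modules with $W$-orbits of residual points, and it multiplies formal degrees (and the Plancherel densities along the induced Harish-Chandra series) by a rational constant that is independent of $q$. This reduces the assertion to two things: (i) the Iwahori-spherical case for $G^*$, which is the content of Theorems \ref{thm:cc} and \ref{thm:Iw}, and (ii) the identification of the rational constants attached to the STMs.

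For (i) I would use the Kazhdan--Lusztig description of $\Hc_\Ibb(G^*)$-modules together with Lusztig's geometric parameterisation: discrete series modules correspond to pairs consisting of a distinguished unipotent class in $G^\vee$ (equivalently a homomorphism $\textup{SL}_2(\mathbb{C})\to G^\vee$) and an irreducible local system, that is, precisely to discrete unramified enhanced parameters $(\varphi,\rho)$ for $G^*$. On this side $|\gamma(0,\textup{Ad}\circ\varphi,\psi)|$ is an explicit product over the eigenvalues of $\varphi(\tilde{\textup{Fr}})$ on $\textup{Lie}(G^\vee)$, while the formal degree of the corresponding Hecke-algebra discrete series is, by Opdam's residue calculus, the inverse of the value at the residual point of the Hecke-algebra $\mu$-function, again an explicit rational function of $q$. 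I would establish the required identity by a comparison over the isogeny types of $G^\vee$, using the known factorisation of the residue of $\mu$ at a residual point and the combinatorics of distinguished unipotent classes, in the spirit of \cite{Re}, \cite{Opdl} and \cite{Fe2}.

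The main obstacle is (ii): showing that the rational constants produced by the STMs are exactly the factor $\textup{dim}(\rho)/|\Sc^\natural_\varphi|$ predicted by Hiraga, Ichino and Ikeda, with no stray power of $q$ and no spurious rational factor. This forces one to reconcile the normalisation of the Hecke algebras, Kottwitz's classification of $G$ inside its adjoint inner class, and the identification of the component group $\Sc_\varphi$ with the group governing the unipotent types $t$ and with the $R$-group of the induced series. I would handle it by the rigidity argument underlying Corollary \ref{cor:STMpar} and Theorem \ref{thm:main}: the Hiraga--Ichino--Ikeda identities together with the algebraic constraints on the parameterisation leave essentially no freedom once the Iwahori-spherical case is fixed, so the constants are forced, the only residual ambiguity being a diagram automorphism which does not affect $|\gamma(0,\textup{Ad}\circ\varphi,\psi)|$. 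Finally, the upgrade from Conjecture \ref{conj:1} to the full Plancherel statement Conjecture \ref{conj:2} is obtained by combining the discrete-series-modulo-centre case on Levi subgroups with Harish-Chandra's Plancherel theorem and Langlands' description of the Plancherel measure (via Shahidi's results), together with the compatibility of STMs with parabolic induction.
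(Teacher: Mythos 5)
Your overall plan follows the paper's strategy: Morita equivalence to the unipotent Hecke algebras via the type theory of Bushnell--Kutzko (Theorem \ref{thm:BHK}), transfer to the Iwahori Hecke algebra of the quasi-split inner form $G^*$ via spectral transfer morphisms (Theorem \ref{thm:STMexuni}, Theorem \ref{thm:corr}), the Kazhdan--Lusztig/Deligne--Langlands description of the Iwahori-spherical case (Theorems \ref{thm:cc} and \ref{thm:Iw}), the Iwahori--Matsumoto twist of Lusztig's parameterisation, and the reduction of Conjecture \ref{conj:2} to Conjecture \ref{conj:1} via Harish-Chandra's Plancherel theorem and compatibility of STMs with parabolic induction. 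All of this is correct and matches the paper.

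However, your treatment of step (ii) --- pinning down the exact rational constant $\textup{dim}(\rho)/|\Sc^\natural_\varphi|$ --- is where the proposal has a genuine gap. You propose to close this by appealing to the rigidity (essential uniqueness) argument behind Corollary \ref{cor:STMpar} and Theorem \ref{thm:main}, saying ``the Hiraga--Ichino--Ikeda identities together with the algebraic constraints leave essentially no freedom once the Iwahori-spherical case is fixed, so the constants are forced.'' But this is circular for the present purpose: the rigidity result says that \emph{if} a parameterisation satisfying HII exists then it is essentially unique, which is a statement about uniqueness of the map $\pi\mapsto\varphi_\pi$, not a computation of the rational constants $d_{\Hc,\delta}$ appearing in Theorem \ref{thm:ds}(v). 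Concretely, Theorem \ref{thm:corr} delivers the Plancherel measure only up to unknown constants $r_i\in\QQ_+$ depending on the component, and these are not determined by the STM alone nor by the Iwahori-spherical case of a different group. The paper closes this gap with a separate technique you do not invoke: the rationality theorem for Dirac induction, Theorem \ref{thm:rat} from \cite{CO}, which shows that in each generic family $b\in\Bc_{gm}$ the function $\vt\mapsto\textup{fdeg}(\textup{Ind}_D(b,\vt))$ is a rational function equal to $d_b\,m_b(\vt)$ with a single constant $d_b\in\QQ_+$ for the whole family, and these $d_b$ are explicitly computed (e.g.\ $d_b=1$ for all $b$ in the relevant classical cases $\textup{C}_n[m_-,m_+](q^\beta)$). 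This supplies precisely the ``no stray power of $q$, no spurious rational factor'' conclusion you flagged as the obstacle; without it your argument only yields the weakened form ``up to constants independent of $q$'' of Theorem \ref{thm:main}(a)(1), not the full statement of Theorem \ref{thm:main0}. (As a minor side remark: the Hecke-algebra formal degree is the regularised \emph{value} $\mu^{\{r\}}(r)$ up to a rational constant, not its inverse, cf.\ Theorem \ref{thm:ds}(ii),(v).)
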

The proof of Theorem \ref{thm:main0} and its extension is based on two techniques 
for affine Hecke algebras:
\begin{enumerate}
\item[(1)] Spectral transfer maps between Hecke algebras \cite{Opds}, \cite{Opdl}, \cite{FO}, \cite{FOS}.
We use these tools to deal with the $q$-rational factors of the formal degree.
\item[(2)] Dirac induction for affine Hecke algebras \cite{COT}, \cite{CO}.  
This tool is useful to determine the precise rational constant factors of the formal degree.
\end{enumerate}
In fact, by the theory of types and Theorem \ref{thm:main} I expect that these techniques may reduce 
the general case of Conjectures \ref{conj:1}  and \ref{conj:2} 
to the case of generic supercuspidal representations. 

The  ``converse results'' Theorem \ref{thm:cuspidalcase}, Theorem \ref{thm:main} 
are interesting in their own right, and are closely related to the theory of spectral transfer maps 
between normalised affine Hecke algebras \cite{Opdl}. 
\section{The Plancherel formula for affine Hecke algebras}
Let $F$ be a nonarchimedean local field with residue field of cardinality $q$ from here onwards.
\subsection{The Bernstein center}
Let $\mathcal{C}(G)$ be the abelian category of smooth representations of $G$, and let 
$\Pi(G)$ denote the space of classes of irreducible objects of $\mathcal{C}(G)$. Let 
$\mathcal{B}(G)$ be the set of inertial equivalence classes of cuspidal pairs $(M,\tau)$ 
(with $M$ a Levi subgroup, and $\tau$ a supercuspidal representation of $M$). For 
$s\in\mathcal{B}(G)$, let $\Omega_s$ be the corresponding set of cuspidal pairs
in the class $s$, modulo $G$-conjugacy (an affine variety). 
We have a central character map 
\begin{equation}\label{eq:bercc}
\textup{cc}:\Pi(G)\to\sqcup_{s\in\mathcal{B}(G)}\Omega_s
\end{equation} 
such that 
$\textup{cc}(\pi)=(M,\tau)$ if $\pi$ is a subquotient of $i_P^G(\tau)$, where $P=MU$ 
is a parabolic subgroup with Levi factor $M$.
Let $\mathcal{O}(\Omega_s)$ be the ring of regular 
functions on $\Omega_s$, and put $\mathfrak{z}(G)=\prod_{s\in\mathcal{B}(G)}\mathcal{O}(\Omega_s)$.
We put $Z_\mathcal{B}(G)=\textup{End}(\textup{Id}_{\mathcal{C}(G)})$, the Bernstein center.
As is well known, we can interpret $Z_\mathcal{B}(G)$ as the set of $G$-invariant distributions 
$z$ on $G$ such that $z\mathcal{H}(G)\subset \mathcal{H}(G)$ and 
$\mathcal{H}(G)z\subset \mathcal{H}(G)$, where $\mathcal{H}(G)$ denotes the Hecke algebra 
of $G$. The famous theorem of Bernstein and Deligne states:
\begin{thm}[\cite{BeDe}, Theorem 2.13]\label{thm:BeDe} 
There is a unique algebra isomorphism (the Fourier transform)
\begin{align}
Z_\mathcal{B}(G)&\to \mathfrak{z}(G)\\
z&\to \hat{z}
\end{align} 
characterised by the property that for all $\pi\in \Pi(G)$, one has 
$z_\pi=\hat{z}({\textup{cc}(\pi)})\textup{Id}_{V_\pi}$. 
\end{thm}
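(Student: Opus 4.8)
The plan is to prove the Bernstein--Deligne theorem (Theorem \ref{thm:BeDe}) by combining the structural description of the Bernstein center as a product of coordinate rings of the Bernstein varieties $\Omega_s$ with the classical fact that a $G$-invariant distribution supported at the identity which normalises $\mathcal{H}(G)$ acts by a scalar on each irreducible smooth representation. First I would recall the decomposition $\mathcal{C}(G)=\prod_{s\in\mathcal{B}(G)}\mathcal{C}(G)_s$ into Bernstein blocks; since $Z_\mathcal{B}(G)=\textup{End}(\textup{Id}_{\mathcal{C}(G)})$ and the blocks are mutually orthogonal, this immediately gives $Z_\mathcal{B}(G)=\prod_{s}\textup{End}(\textup{Id}_{\mathcal{C}(G)_s})$, reducing the problem to a single block. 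Within a fixed block, one fixes a cuspidal pair $(M,\tau)$ representing $s$ and studies the parabolically induced representation; the key geometric input is that $\Omega_s$ is the quotient of the space of unramified twists of $\tau$ by the finite stabiliser group, and that this is an affine variety whose coordinate ring $\mathcal{O}(\Omega_s)$ is finitely generated.

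Next I would construct the map $Z_\mathcal{B}(G)\to\mathfrak{z}(G)$. Given $z\in Z_\mathcal{B}(G)$, for each $\pi\in\Pi(G)$ the endomorphism $z_\pi\in\textup{End}(V_\pi)$ commutes with the $G$-action, hence by Schur's lemma (valid here because irreducible smooth representations of $G$ have countable dimension over an uncountable algebraically closed field, so endomorphism rings are division algebras finite-dimensional over the scalars, in fact equal to the scalars) it is multiplication by a scalar $\hat{z}(\pi)\in\mathbb{C}$. One then shows $\hat{z}(\pi)$ depends only on $\textup{cc}(\pi)\in\Omega_s$: this uses that two irreducibles with the same cuspidal support are subquotients of a common induced representation $i_P^G(\tau')$, and $z$ acts compatibly on subquotients. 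The resulting function on $\Omega_s$ is regular because it is obtained by letting $z$ act on the ``universal'' family $i_P^G(\tau\otimes\chi)$ as $\chi$ ranges over unramified characters — concretely, $z$ acts on the associated bundle of Jacquet modules by an endomorphism whose matrix entries are regular functions of $\chi$, and the $W$-invariance matches the passage to $\Omega_s$. This produces the ring homomorphism $z\mapsto\hat z$.

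For injectivity and surjectivity I would argue as follows. Injectivity: if $\hat z=0$ then $z$ annihilates every irreducible, hence every representation of finite length, hence (since every object of $\mathcal{C}(G)_s$ is a union of finite-length subobjects and $z$ is a natural transformation) $z=0$; alternatively one uses that $\mathcal{H}(G)$ embeds in a product of matrix algebras over the fields of fractions of the $\mathcal{O}(\Omega_s)$ via its action on the induced representations. Surjectivity is the substantive half: given $(f_s)_s\in\mathfrak{z}(G)$, one must produce a natural endomorphism of the identity functor acting on $i_P^G(\tau\otimes\chi)$ by the prescribed scalar. Here one exploits that each Bernstein block is equivalent (via a progenerator built from a type, or via the Bernstein presentation) to the module category of an algebra $\mathcal{H}_s$ finite over its center, with center canonically $\mathcal{O}(\Omega_s)$; one then transports the element $f_s$ of this center back to an element of $Z_\mathcal{B}(G)$. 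The compatibility condition $z_\pi=\hat z(\textup{cc}(\pi))\,\textup{Id}_{V_\pi}$ then characterises $z$ uniquely because, as just shown, $Z_\mathcal{B}(G)$ injects into $\prod_s\textup{Fct}(\Omega_s,\mathbb{C})$ via evaluation on irreducibles.

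The main obstacle is surjectivity — equivalently, showing that the abstractly defined Bernstein center is \emph{as large as} the full product $\prod_s\mathcal{O}(\Omega_s)$, not just a subring. This requires the finiteness theorem that each block $\mathcal{C}(G)_s$ is the module category of a Noetherian algebra module-finite over a finitely generated commutative center, which in turn rests on Bernstein's second adjointness and the geometric analysis of the functor $i_P^G$ and its adjoint $r_P^G$ on the family of unramified twists, including control of the finite group $W_s$ permuting them. I expect essentially all the real work to be in establishing this finiteness and the identification of the center of the block with $\mathcal{O}(\Omega_s)$; once that is in hand, the packaging into the stated Fourier transform isomorphism with its characterising property is formal. (Since this is precisely \cite[Theorem 2.13]{BeDe}, in these lectures I would in fact simply quote it, indicating the above as the shape of the original argument.)
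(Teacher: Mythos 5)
The paper does not prove this theorem; it simply quotes it as \cite[Theorem 2.13]{BeDe} and then derives the Bernstein decomposition from it as a corollary. Your sketch is a broadly accurate outline of the original Bernstein--Deligne argument, and you are right to flag at the end that in the present context one would just cite it.

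A few remarks on the sketch itself. Your logical order is the reverse of the paper's: you take the block decomposition $\mathcal{C}(G)=\prod_s\mathcal{C}(G)_s$ as an input, whereas the paper presents it as a corollary of the theorem (via the idempotents $e_s$). This is not a gap — in \cite{BeDe} the decomposition into blocks is indeed established first, independently of the identification of the center, so your ordering matches the original. Be aware, though, that \emph{establishing} that decomposition (showing cuspidal support is well-defined and constant on inertial orbits, and that the blocks are closed under subquotients and extensions) is itself nontrivial, so it cannot just be ``recalled'' as a black box if one is actually reproving the theorem from scratch. Second, your appeal to ``a progenerator built from a type'' for surjectivity is anachronistic: types in the Bushnell--Kutzko sense postdate \cite{BeDe}. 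The original mechanism is Bernstein's second adjointness together with the finiteness and flatness of $i_P^G$ applied to the family $\tau\otimes\chi$, yielding a Noetherian, module-finite-over-center algebra for each block; the type-theoretic progenerator is a later, alternative realization. Finally, the regularity of $\chi\mapsto\hat z$ needs a little more than ``matrix entries are regular'': one must choose a finitely generated projective model for the family of induced modules over $\mathcal{O}(\Omega_s)$ so that the endomorphism $z$ acts $\mathcal{O}(\Omega_s)$-linearly, and then apply Quillen's lemma / the generic irreducibility of the induced family to see that the action is by a central regular function. These are filled-in details rather than gaps in your outline, which otherwise captures the shape of the argument correctly.
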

As an immediate consequence one obtains the Bernstein decomposition of $\mathcal{C}(G)$:
\begin{cor}
We have a family of orthogonal idempotents $e_s\in Z_\mathcal{B}(G)$ 
(with $s\in \mathcal{B}(G)$) such that $\widehat{e_s}$ is the characteristic function of $\Omega_s$.  
We have corresponding decompositions
\begin{equation}
\mathcal{C}(G)=\prod_{s\in\mathcal{B}(G)}\mathcal{C}(G)_s,\ \ \Pi(G)=\sqcup_{s\in\mathcal{B}(G)}\Pi(G)_s
\end{equation} 
where $\Pi(G)_s$ is the set of irreducible objects of $\mathcal{C}(G)_s$. 
\end{cor}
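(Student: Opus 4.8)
The plan is to transport the tautological block structure of $\mathfrak{z}(G)=\prod_{s\in\mathcal{B}(G)}\mathcal{O}(\Omega_s)$ across the Fourier transform of Theorem~\ref{thm:BeDe}. For each $s\in\mathcal{B}(G)$ let $\epsilon_s\in\mathfrak{z}(G)$ be the element equal to $1$ on the factor $\mathcal{O}(\Omega_s)$ and to $0$ on every other factor; under the identification of $\mathfrak{z}(G)$ with the regular functions on $\sqcup_{s}\Omega_s$ this is exactly the characteristic function of $\Omega_s$, and the $\epsilon_s$ are pairwise orthogonal idempotents. Since $z\mapsto\hat z$ is an algebra isomorphism, there are unique $e_s\in Z_{\mathcal{B}}(G)$ with $\widehat{e_s}=\epsilon_s$; these form a family of pairwise orthogonal idempotents with $\widehat{e_s}=\mathbf 1_{\Omega_s}$, which is the first assertion.

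First I would read off the action of $e_s$ on irreducibles. By the characterising property in Theorem~\ref{thm:BeDe}, for $\pi\in\Pi(G)$ one has $(e_s)_\pi=\widehat{e_s}(\textup{cc}(\pi))\,\textup{Id}_{V_\pi}$, which equals $\textup{Id}_{V_\pi}$ if $\textup{cc}(\pi)\in\Omega_s$ and $0$ otherwise. Hence, putting $\Pi(G)_s:=\textup{cc}^{-1}(\Omega_s)$, the sets $\Pi(G)_s$ partition $\Pi(G)$, and for an irreducible $\pi$ the condition $\pi\in\Pi(G)_s$ is equivalent to $e_s$ acting as the identity on $V_\pi$ (equivalently, $e_{s'}$ acting by $0$ for all $s'\neq s$).

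Next I would let $\mathcal{C}(G)_s$ be the full subcategory of those $V\in\mathcal{C}(G)$ on which $e_s$ acts as $\textup{Id}_V$. Because $e_s$ is a central idempotent of the category $\mathcal{C}(G)$, each $\mathcal{C}(G)_s$ is closed under subquotients, extensions and arbitrary direct sums, and $\mathcal{C}(G)_s\cap\mathcal{C}(G)_{s'}=0$ for $s\neq s'$; by the previous paragraph the set of irreducible objects of $\mathcal{C}(G)_s$ is precisely $\Pi(G)_s$. The remaining content of $\mathcal{C}(G)=\prod_s\mathcal{C}(G)_s$ is that $V\mapsto(e_sV)_s$ is an equivalence onto the product category, with quasi-inverse $(V_s)_s\mapsto\bigoplus_s V_s$. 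One half is routine: a finite sum $v=\sum_{s}v_s$ with $v_s$ fixed by a compact open $K_s$ is fixed by the (finite) intersection $\bigcap_s K_s$, so $\bigoplus_s V_s$ is again a smooth representation. The substantive point, which I expect to be the main obstacle, is the equality $V=\bigoplus_s e_sV$ for every smooth $V$ — equivalently, that each $v\in V$ lies in only finitely many of the components $e_sV$ and is the sum of those.

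For that I would invoke the finiteness that is part of the Bernstein--Deligne package, rather than just the bare isomorphism stated in Theorem~\ref{thm:BeDe}: for any compact open $K\subset G$ with associated idempotent $e_K\in\mathcal{H}(G)$, the centre $Z(e_K\mathcal{H}(G)e_K)$ is a finitely generated $\mathbf{C}$-algebra and is a quotient of $Z_{\mathcal{B}}(G)$, so $\textup{Spec}$ of it has finitely many connected components; concretely, there is a finite subset $\mathcal{B}_K\subset\mathcal{B}(G)$ with $e_se_K=0$ for $s\notin\mathcal{B}_K$ and $e_K=\sum_{s\in\mathcal{B}_K}e_se_K$. Consequently any $v\in V^K=e_KV$ satisfies $v=\sum_{s\in\mathcal{B}_K}e_sv$, and letting $K$ run through a neighbourhood basis of $1$ gives $V=\bigcup_K V^K=\bigoplus_s e_sV$, with each summand in $\mathcal{C}(G)_s$. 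Together with the first paragraph (the idempotents and their Fourier transforms) and the second (the partition of $\Pi(G)$), this yields the stated decompositions of $\mathcal{C}(G)$ and $\Pi(G)$.
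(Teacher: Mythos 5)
Your argument is correct and is exactly the standard unpacking of the Bernstein decomposition from Theorem~\ref{thm:BeDe}; the paper states the corollary as an immediate consequence without a separate proof, and your reconstruction matches the intended reasoning. You also rightly isolate the one genuinely non-formal point — that every smooth $V$ equals $\bigoplus_s e_sV$ — and correctly attribute it to the finiteness of $\mathcal{B}_K$ for each compact open $K$, which is indeed part of the Bernstein--Deligne package rather than a consequence of the bare isomorphism $Z_{\mathcal{B}}(G)\cong\mathfrak{z}(G)$.
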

\subsection{Types, Hecke algebras and Plancherel measure}
A \emph{type} is a pair $t=(J,\rho)$ such that:
\begin{enumerate}
\item[(1)] $J\subset G$ is a compact open subgroup of $G$.
\item[(2)] $\rho$ is a finite dimensional irreducible representation of $J$.
\item[(3)] Let $e_t\in\mathcal{H}(G)$ be the idempotent given by 
\begin{equation}
e_t(x)=\begin{cases}\frac{\textup{Tr}(\rho(x^{-1}))}{\textup{Vol}(J)}&\text{\ if\ }x\in J \\
0&\text{\ else.\ }\end{cases}
\end{equation} 
Let $\mathcal{C}^t(G)$ be the full subcategory of $\mathcal{C}(G)$ consisting of representations 
$(\pi,V_\pi)$ such that $\mathcal{H}(G)(e_tV_\pi)=V_\pi$, and let $\mathcal{H}_t=e_t\mathcal{H}(G)e_t$. 
Then the functor $m_t:\mathcal{C}^t(G)\to \mathcal{H}_t-\textup{mod}$ given by $V_\pi\to e_t(V_\pi)$
is an equivalence of categories.
\end{enumerate} 
\begin{thm}[Bushnell and Kutzko \cite{BK}] Let $t$ be a type. Then 
$\mathcal{C}^t(G)=\prod_{s\in\mathcal{B}^t(G)}\mathcal{C}(G)_s$ where 
$\mathcal{B}^t(G)$ is a finite set.
\end{thm}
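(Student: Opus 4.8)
The plan is to realise $\mathcal{C}^t(G)$ as the module category $\mathcal{H}_t$-$\textup{mod}$ through an explicit finitely generated projective generator, deduce the basic exactness properties of this subcategory from the hypothesis that $m_t$ is an equivalence, and then read off the block decomposition from the Bernstein decomposition of the generator. Put $P:=\mathcal{H}(G)e_t$, viewed as a smooth representation of $G$. First I would record the standard facts about $P$: it is cyclic, generated by $e_t$, hence finitely generated; it is projective, being a direct summand of $\mathcal{H}(G)e_K$ (via right multiplication by $e_t$) for any compact open $K$ with $e_Ke_t=e_t=e_te_K$, and $\mathcal{H}(G)e_K$ is projective because $V\mapsto V^K=e_KV$ is an exact functor on smooth representations; one has natural isomorphisms $\textup{Hom}_G(P,V)\cong e_tV$ and $\textup{End}_G(P)\cong\mathcal{H}_t$ (up to taking opposites); and $P\in\mathcal{C}^t(G)$ is a generator of $\mathcal{C}^t(G)$, since for $V\in\mathcal{C}^t(G)$ a suitable evaluation map $P^{(\Lambda)}\to V$ has image $\mathcal{H}(G)e_tV=V$. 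From the defining relation $\mathcal{H}(G)e_tV=V$ and exactness of $e_t(-)$ it is immediate that $\mathcal{C}^t(G)$ is closed under quotients, arbitrary direct sums, and extensions.

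The first substantial point is that $\mathcal{C}^t(G)$ is also closed under subobjects, and here the hypothesis that $m_t$ is an equivalence is used. Let $V'\subseteq V$ with $V\in\mathcal{C}^t(G)$, and set $V'':=\mathcal{H}(G)e_tV'\subseteq V'$. From $e_tV'\subseteq V''\subseteq V'$ and idempotency of $e_t$ one gets $e_tV''=e_tV'$, so $V''\in\mathcal{C}^t(G)$. Both $V/V''$ and $V/V'$ lie in $\mathcal{C}^t(G)$, being quotients of $V$, and $e_t(V'/V'')=e_tV'/e_tV''=0$; hence the canonical surjection $V/V''\to V/V'$ induces an isomorphism $e_t(V/V'')\to e_t(V/V')$. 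Since $m_t=e_t(-)|_{\mathcal{C}^t(G)}$ is an equivalence it is conservative, so $V/V''\to V/V'$ is an isomorphism, i.e.\ $V'=V''\in\mathcal{C}^t(G)$. Thus $\mathcal{C}^t(G)$ is a Serre subcategory of $\mathcal{C}(G)$ closed under arbitrary direct sums.

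Finiteness of $\mathcal{B}^t(G)$ comes from the fact, standard for reductive $p$-adic groups, that a finitely generated smooth representation lies in only finitely many Bernstein components: applied to $P$ this gives a finite set $S=\{s\in\mathcal{B}(G):e_sP\neq 0\}$. As $P$ generates $\mathcal{C}^t(G)$ and each $\mathcal{C}(G)_s$ is closed under quotients and direct sums, $\mathcal{C}^t(G)\subseteq\prod_{s\in S}\mathcal{C}(G)_s$; conversely the idempotents $e_s$ are central, so $e_sP\in\mathcal{C}^t(G)$, and therefore $S$ coincides with $\mathcal{B}^t(G):=\{s:\mathcal{C}^t(G)\cap\mathcal{C}(G)_s\neq 0\}$, a finite set. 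Combining this with closure under subobjects: for $V\in\mathcal{C}^t(G)$ each summand $e_sV$ of $V=\bigoplus_{s}e_sV$ lies in $\mathcal{C}^t(G)$, whence $\mathcal{C}^t(G)=\prod_{s\in\mathcal{B}^t(G)}\bigl(\mathcal{C}^t(G)\cap\mathcal{C}(G)_s\bigr)$. It remains to prove that $\mathcal{C}^t(G)\cap\mathcal{C}(G)_s=\mathcal{C}(G)_s$ for every $s\in\mathcal{B}^t(G)$.

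This last equality is the main obstacle. By the above, $\mathcal{T}_s:=\mathcal{C}^t(G)\cap\mathcal{C}(G)_s$ is a nonzero Serre subcategory of the block $\mathcal{C}(G)_s$, closed under arbitrary direct sums, containing the nonzero finitely generated projective object $P_s:=e_sP$; moreover, since $\textup{Hom}_G$ vanishes between distinct blocks, $\mathcal{H}_t\cong\textup{End}_G(P)\cong\prod_{s\in\mathcal{B}^t(G)}\textup{End}_G(P_s)$, and transporting $m_t$ through this product shows that $\textup{Hom}_G(P_s,-)$ restricts to an equivalence $\mathcal{T}_s\xrightarrow{\ \sim\ }\textup{End}_G(P_s)\textup{-mod}$. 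So everything reduces to showing that $P_s$ is a progenerator of $\mathcal{C}(G)_s$ — equivalently, that a nonzero Serre subcategory of $\mathcal{C}(G)_s$ closed under direct sums and containing a nonzero finitely generated projective object with the above equivalence property must be all of $\mathcal{C}(G)_s$. For an arbitrary abelian category this fails — the idempotent at the sink of the quiver $\bullet\to\bullet$ is a counterexample — so at this point one genuinely needs the structure theory of Bernstein blocks: their block algebras are module categories over (generalised) affine Hecke algebras, and this precludes a proper idempotent trace ideal of the nonzero projective $P_s$. This is the technical heart of the theorem, for which I would quote Bushnell--Kutzko; granting it, every object of $\mathcal{C}(G)_s$ is a quotient of some $P_s^{(\Lambda)}\in\mathcal{C}^t(G)$, hence $\mathcal{C}(G)_s\subseteq\mathcal{C}^t(G)$, and the proof is complete.
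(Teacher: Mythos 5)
The paper does not prove this theorem; it cites Bushnell--Kutzko, so there is no in-paper argument to compare against. Your reconstruction of the formal reduction is correct and well organized: the derivation of closure under subobjects from the hypothesis that $m_t$ is an equivalence (conservativity of $m_t$ applied to $V/V''\to V/V'$, whose kernel is killed by $e_t$) is exactly the right use of the defining property of a type; the finiteness of $\mathcal{B}^t(G)$ from the finite generation of $P=\mathcal{H}(G)e_t$ is standard; and the splitting $\mathcal{C}^t(G)=\prod_s\mathcal{T}_s$ then falls out formally. You also identify correctly that the residual identity $\mathcal{T}_s=\mathcal{C}(G)_s$ is the real content of the theorem and is not a consequence of the categorical hypotheses alone, as your quiver example shows; this is the right place to invoke Bushnell--Kutzko.

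Where I would push back is the closing sentence explaining why the block-level statement is true. Bernstein blocks are not in general known to be module categories over (generalised) affine Hecke algebras, and even if they were that would not by itself exclude a nonzero finitely generated projective with proper trace ideal: your quiver is already a finite-dimensional algebra with connected centre exhibiting exactly this configuration, and one can cook up similar examples among orders finite over $\mathbb{C}[x]$. A reformulation you almost reach, and should state: by your closure under subobjects, $\mathcal{T}_s\subsetneq\mathcal{C}(G)_s$ would produce a nonzero object $V/\mathcal{H}(G)e_tV$ of the block with vanishing $e_t$-invariants, hence an irreducible $\pi$ of $\mathcal{C}(G)_s$ with $e_t\pi=0$; so the question is precisely whether every irreducible of a touched block satisfies $e_t\pi\neq0$ (i.e.\ contains $\rho$ upon restriction to $J$). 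This is what Bushnell--Kutzko prove, and their argument genuinely uses the structure of the Bernstein decomposition (cuspidal support over the connected variety $\Omega_s$, parabolic induction and Jacquet functors), not a Hecke-algebra heuristic. Cite the precise locus in \cite{BK} for this step rather than the imprecise closing gloss.
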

As a consequence of Theorem \ref{thm:BeDe} we see:
\begin{cor}\label{cor:beta} 
Let $t$ be a type, and let $Z(\mathcal{H}_t)$ denote the center of 
$\mathcal{H}_t$. There exists a unique isomorphism: 
\begin{equation}
\beta^t:\Omega^t:=\sqcup_{s\in \mathcal{B}^t(G)}\Omega_s\to \textup{Spec}(Z(\mathcal{H}_t))
\end{equation}
such that for all $\pi\in \Pi(G)^t:=\sqcup_{s\in \mathcal{B}^t(G)}\Pi(G)_s$ we have:  
$\textup{cc}^t(m_t(\pi))=\beta^t(cc(\pi))$. Here $\textup{cc}^t$ denotes the central character 
map of the algebra $\mathcal{H}_t$.
\end{cor}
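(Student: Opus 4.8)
The plan is to deduce Corollary \ref{cor:beta} as a formal consequence of the Bernstein--Deligne theorem (Theorem \ref{thm:BeDe}), the Bushnell--Kutzko theorem on types, and the Morita equivalence $m_t:\mathcal{C}^t(G)\xrightarrow{\sim}\mathcal{H}_t\text{-mod}$ built into the definition of a type. The key observation is that all the structure we need already lives on the category $\mathcal{C}^t(G)$: it is a product of finitely many Bernstein blocks $\mathcal{C}(G)_s$ with $s\in\mathcal{B}^t(G)$, and on each block the Bernstein center restricts to $\mathcal{O}(\Omega_s)$.

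First I would identify the center $Z(\mathcal{H}_t)$ with the endomorphism ring of the identity functor $\operatorname{Id}_{\mathcal{H}_t\text{-mod}}$. Since $m_t$ is an equivalence of categories, $\operatorname{End}(\operatorname{Id}_{\mathcal{H}_t\text{-mod}})\cong\operatorname{End}(\operatorname{Id}_{\mathcal{C}^t(G)})$. By the Bushnell--Kutzko theorem $\mathcal{C}^t(G)=\prod_{s\in\mathcal{B}^t(G)}\mathcal{C}(G)_s$, so $\operatorname{End}(\operatorname{Id}_{\mathcal{C}^t(G)})=\prod_{s\in\mathcal{B}^t(G)}\operatorname{End}(\operatorname{Id}_{\mathcal{C}(G)_s})$, and by Theorem \ref{thm:BeDe} (restricted to each block via the idempotents $e_s$) this is $\prod_{s\in\mathcal{B}^t(G)}\mathcal{O}(\Omega_s)=\mathcal{O}(\Omega^t)$. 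Taking $\operatorname{Spec}$ of this chain of isomorphisms yields an isomorphism $\beta^t:\Omega^t\to\operatorname{Spec}(Z(\mathcal{H}_t))$. I would then check that this $\beta^t$ is characterised by the stated compatibility with central characters: for $\pi\in\Pi(G)^t$, the action of $z\in Z_{\mathcal{B}}(G)$ on $V_\pi$ is by the scalar $\hat z(\operatorname{cc}(\pi))$ (Theorem \ref{thm:BeDe}), and under $m_t$ this transports to the action of the corresponding central element of $\mathcal{H}_t$ on $m_t(\pi)$ by the same scalar; unwinding the definitions this says exactly $\operatorname{cc}^t(m_t(\pi))=\beta^t(\operatorname{cc}(\pi))$. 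Uniqueness follows because the points $\operatorname{cc}(\pi)$ for $\pi\in\Pi(G)^t$ are Zariski-dense in $\Omega^t$ (every point of each $\Omega_s$ is $\operatorname{cc}(\pi)$ for some irreducible $\pi$, e.g. a generic point of a standard module), and a morphism of affine varieties is determined by its values on a dense set, while a ring homomorphism between finitely generated reduced algebras is determined by the induced map on points.

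The step I expect to require the most care is the passage from the global Bernstein center $Z_{\mathcal{B}}(G)$ to the local center $Z(\mathcal{H}_t)$ and the verification that this passage is exactly the product of the $\mathcal{O}(\Omega_s)$ over $s\in\mathcal{B}^t(G)$ with no spurious extra central elements. Concretely, one must show that the natural map $Z_{\mathcal{B}}(G)\to Z(\mathcal{H}_t)=Z(e_t\mathcal{H}(G)e_t)$, $z\mapsto e_t z$, induces an isomorphism from $\prod_{s\in\mathcal{B}^t(G)}\mathcal{O}(\Omega_s)$ onto $Z(\mathcal{H}_t)$; this is where the Morita equivalence $m_t$ does the real work, since it forces the center of $\mathcal{H}_t$ to coincide with $\operatorname{End}(\operatorname{Id}_{\mathcal{C}^t(G)})$, which a priori could be larger than the image of the global center but in fact is not, again by Theorem \ref{thm:BeDe} applied blockwise. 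Once this identification is secured, everything else is a formal spectrum-taking and a density argument, so the corollary follows without further input.
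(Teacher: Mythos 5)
Your proof is correct and takes the approach the paper implicitly intends: the corollary is stated without proof as ``a consequence of Theorem~\ref{thm:BeDe},'' and your chain $Z(\mathcal{H}_t)\cong\textup{End}(\textup{Id}_{\mathcal{H}_t\textup{-mod}})\cong\textup{End}(\textup{Id}_{\mathcal{C}^t(G)})\cong\prod_{s\in\mathcal{B}^t(G)}\mathcal{O}(\Omega_s)$, via the Morita equivalence $m_t$ and the Bushnell--Kutzko product decomposition, followed by Spec and the density/reducedness argument for uniqueness, is exactly the standard way to unpack it. No gaps.
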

\begin{thm}[Yu, J-L Kim] If $p$ is sufficiently large then 
for all $s\in \mathcal{B}(G)$ we can find a type $t$ 
such that $\mathcal{C}^t(G)=\mathcal{C}(G)_s$.
\end{thm}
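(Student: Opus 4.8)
The plan is to reduce the assertion to the case where $s$ is the inertial class of a cuspidal pair $(M,\tau)$ with $M=G$, and then to combine Yu's construction of tame supercuspidal types with Ju-Lee Kim's exhaustion theorem; the descent from a general $s\in\mathcal{B}(G)$ to the supercuspidal case is supplied by Bushnell and Kutzko's theory of covers. Concretely, fix $s\in\mathcal{B}(G)$ and choose a cuspidal pair $(M,\tau)$ in the class $s$, so that $M$ is a Levi subgroup and $\tau$ an irreducible supercuspidal representation of $M$. Let $s_M\in\mathcal{B}(M)$ be the inertial class of $(M,\tau)$ regarded inside $M$ itself. Granting (this is the content of the remaining steps) a type $t_M=(J_M,\rho_M)$ in $M$ with $\mathcal{C}^{t_M}(M)=\mathcal{C}(M)_{s_M}$, I would apply the cover construction of \cite{BK} relative to a parabolic subgroup $P=MU$ to obtain a $G$-cover $t=(J,\rho)$ of $t_M$; by their theorem on covers a $G$-cover of an $s_M$-type is an $s$-type, that is, $\mathcal{C}^t(G)=\mathcal{C}(G)_s$. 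Here one uses that, $\tau$ being supercuspidal, $s_M$ is the unique inertial class in $\mathcal{B}(M)$ supported on the cuspidal support of $\tau$, so that $\mathcal{B}^{t_M}(M)=\{s_M\}$ and the cover yields a type for a single $G$-block, exactly as required.

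It then remains to produce, for every connected reductive $p$-adic group $M$ with $p$ large and every irreducible supercuspidal $\tau$ of $M$, a type for $[M,\tau]$. For this I would invoke Yu's construction, which attaches to each \emph{generic cuspidal datum} $\Psi$ — a tame twisted Levi sequence $M^0\subset M^1\subset\cdots\subset M^d=M$ with $M^0/Z(M)$ anisotropic, a point $x$ in the reduced building of $M^0$, a depth-zero supercuspidal datum at $x$, and a sequence of generic characters $\phi_i$ of $M^i$ of suitable depths — an open subgroup $\tilde J_\Psi$ of $M$ (compact modulo the centre) together with an irreducible representation $\tilde\rho_\Psi$ such that $\cInd_{\tilde J_\Psi}^M\tilde\rho_\Psi$ is irreducible supercuspidal. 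By the criterion of \cite{BK} relating compactly induced supercuspidals to types, the restriction of $\tilde\rho_\Psi$ to the maximal compact subgroup $J_M$ of $\tilde J_\Psi$ gives a type $t_M=(J_M,\rho_M)$ for the inertial class of $\cInd_{\tilde J_\Psi}^M\tilde\rho_\Psi$. Thus the whole statement comes down to knowing that every irreducible supercuspidal $\tau$ of $M$ is of the form $\cInd_{\tilde J_\Psi}^M\tilde\rho_\Psi$ for some generic cuspidal datum $\Psi$.

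This last point is precisely Ju-Lee Kim's exhaustion theorem, and it is where the hypothesis "$p$ sufficiently large'' and essentially all the difficulty reside. The argument I have in mind follows Kim: attach to $\tau$, via Moy--Prasad theory, its depth $r$ together with an unrefined minimal $K$-type (a Howe factorization); show, using the genericity results of Adler--Roche that are available when $p$ is large, that the associated Lie-algebra character is $M^0$-generic for a tame twisted Levi $M^0$; cut out the corresponding isotypic component, which is an irreducible supercuspidal representation of $M^0$ of strictly smaller depth; and descend by induction on the depth until one reaches depth zero, where the Moy--Prasad/Morris classification of depth-zero supercuspidals in terms of cuspidal representations of the reductive quotients of parahoric subgroups completes the construction. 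The hard part will be exactly this descent: each step rests on the Moy--Prasad isomorphisms between Lie-algebra and group filtration quotients and on harmonic-analytic input (orbital integrals, existence of good minimal $K$-types) that is only valid for $p$ large relative to the absolute root system of $M$, and one must moreover check that the datum extracted from $\tau$ is literally one of Yu's generic cuspidal data, so that the two constructions are compatible. Everything else — the cover step and the passage from a generic cuspidal datum to a type — is formal once \cite{BK} is in hand. (For the slightly weaker bound $p\nmid|W|$ the exhaustion has since been reproved by Fintzen.)
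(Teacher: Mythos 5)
The paper states this theorem with no proof, citing it as a black box to Yu and J.-L.\ Kim, so there is no internal argument to compare yours against. Your sketch is a faithful outline of how the result is actually obtained in the literature: Yu's construction of tame supercuspidal representations and the associated inducing data for each Levi $M$; J.-L.\ Kim's exhaustion theorem (reproved by Fintzen under the weaker hypothesis $p\nmid|W|$), which is where the assumption on $p$ and essentially all of the analytic difficulty reside; and the Bushnell--Kutzko formalism of covers to pass from a type for $[M,\tau]_M$ on $M$ to a type for $[M,\tau]_G$ on $G$.

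Two imprecisions deserve flagging. First, the ellipticity condition in Yu's generic cuspidal data is that $Z(M^0)/Z(M)$ be anisotropic, not $M^0/Z(M)$; the latter would force $M^0$ to be essentially toral and the resulting construction would miss most supercuspidals, so the exhaustion step would fail as written. Second — and more substantively — your closing remark that ``the cover step \ldots is formal once \cite{BK} is in hand'' overstates what \cite{BK} provides. Bushnell and Kutzko develop the axiomatics of $G$-covers and prove that a $G$-cover of an $[M,\tau]_M$-type is an $[M,\tau]_G$-type, but they do not establish the \emph{existence} of $G$-covers in general. For the types arising from Yu's construction, the existence of $G$-covers is itself a theorem of Kim and Yu (``Construction of tame types,'' 2017) and must be invoked as a third nontrivial ingredient alongside Yu's construction and Kim's exhaustion. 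With that added, the outline is correct and is the standard route.
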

\begin{ex}\label{ex:type}
\begin{enumerate}
\item[(1)] The archetypical example is that of the Borel component: Let 
$\mathbb{B}\subset G$ be the Iwahori subgroup, then $(\mathbb{B},\textup{triv})$ 
is a type. More generally, if $\mathbb{B}_m$ is the $m$-th filtration subgroup of 
$\mathbb{B}$ in the Moy-Prasad filtration, then $(\mathbb{B}_m,\textup{triv})$ is 
a type.
\item[(2)] (Moy-Prasad, Morris, Lusztig) Let $t=(\mathbb{P},\sigma)$ with 
$\mathbb{P}\subset G$ a parahoric subgroup, and $\sigma$ a cuspidal 
unipotent representation of the reductive quotient $\overline{\mathbb{P}}(\mathbb{F}_q)$.
We refer to such $t$ as a ``unipotent type'', and to the objects in 
the associated categories $\mathcal{C}^t(G)$ as ``representations of unipotent reduction''.
\item[(3)] Let $x\in B(G)$ be a point in the building of $G$, and let $r\geq 0$.
Let $G_{x,r,+}$ denote the corresponding Moy-Prasad subgroups. 
Then $(G_{x,r,+},\textup{triv})$ is a type (Bestvina-Savin). 
\end{enumerate}
\end{ex}
The Hecke algebra $\mathcal{H}_t=e_t\mathcal{H}(G)e_t$  of a type $t$ inherits a $*$ 
(an anti-linear anti-involution) and trace $\tau$ from $\mathcal{H}(G)$, defined by 
\begin{enumerate}
\item[(1)] $f^*(g):=\overline{f(g^{-1})}$ for $f\in \mathcal{H}_t$.
\item[(2)] $\tau(f):=f(1)$ (observe that for the unit $e_t\in\mathcal{H}_t$ we have 
$\tau(e_t)=\frac{\textup{dim}(\rho)}{\textup{Vol}(J)}$).
\end{enumerate}
The Hermitian form $(x,y):=\tau(x^*y)$ is positive definite, and defines a 
Hilbert space completion $L^2(\mathcal{H}_t)$ of $\mathcal{H}_t$. This turns 
$\mathcal{H}_t$ into a Hilbert algebra with trace $\tau$. It is well known that the 
irreducible representations of $\mathcal{H}_t$ are finite dimensional, hence 
this Hilbert algebra has a type I $C^*$-algebra envelop. As a consequence of 
Dixmier's central decomposition theorem for Type I $C^*$-algebras we conclude:
\begin{cor} Let $\hat{\mathcal{H}_t}=
\{[\pi]\mid \pi \text{\ is\ an\ irrducible\ $*$-unitary\ }\mathcal{H}_t\text{-mod}\}$.
There exists a unique positive measure $\nu_{\mathcal{H}_t}$ on $\hat{\mathcal{H}_t}$ 
such that 
\begin{equation}
\tau=\int_{\pi\in\hat{\mathcal{H}_t}}\chi_\pi d\nu_{\mathcal{H}_t}(\pi)
\end{equation}
The support $\hat{\mathcal{H}_t}^{\textup{temp}}:=\textup{Supp}(\nu_{\mathcal{H}_t})\subset \hat{\mathcal{H}_t}$ 
is called the tempered dual of $\mathcal{H}_t$. 
\end{cor}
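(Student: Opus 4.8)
The plan is to deduce this from the abstract Plancherel theorem for Hilbert algebras whose left regular representation generates a von Neumann algebra of type I --- the very mechanism that produces the abstract Plancherel formula for $C^\ast(G)$ recalled at the beginning of this section, where the role of $\tau_G(f)=f(e)$ is now played by $\tau(f)=f(1)$ on $\mathcal{H}_t$. First I would record the structural facts that license this. Since $e_t$ is the unit of $\mathcal{H}_t$ and $\tau(e_t)=\textup{dim}(\rho)/\textup{Vol}(J)<\infty$, the trace $\tau$ is finite, and it is faithful because the Hermitian form $(x,y)=\tau(x^\ast y)$ is positive definite; hence left multiplication defines a $\ast$-representation $\lambda$ of $\mathcal{H}_t$ on the completion $L^2(\mathcal{H}_t)$, and I would set $\mathcal{M}:=\lambda(\mathcal{H}_t)''$, a finite von Neumann algebra carrying a finite faithful normal trace extending $\tau$. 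By Corollary \ref{cor:beta} the algebra $\mathcal{H}_t$ is a finite module over its finitely generated commutative centre $Z(\mathcal{H}_t)$, so the $C^\ast$-completion $C^\ast(\mathcal{H}_t)$ for the norm $\|x\|=\sup_{[\pi]\in\hat{\mathcal{H}_t}}\|\pi(x)\|$ is separable; and since every $[\pi]\in\hat{\mathcal{H}_t}$ is finite dimensional, $C^\ast(\mathcal{H}_t)$ is liminal, hence of type I. Consequently $\hat{\mathcal{H}_t}$ is a standard Borel space and $\mathcal{M}$ is a type I von Neumann algebra.

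Next I would invoke Dixmier's central decomposition theorem together with the classification of type I factor representations (equivalently, the abstract Plancherel theorem for separable liminal $C^\ast$-algebras): this produces a positive Borel measure $\nu_{\mathcal{H}_t}$ on $\hat{\mathcal{H}_t}$ and a unitary isomorphism $L^2(\mathcal{H}_t)\simeq\int^\oplus_{\hat{\mathcal{H}_t}}H(\pi)\,d\nu_{\mathcal{H}_t}(\pi)$ with $H(\pi)=V_\pi^\ast\hat\otimes V_\pi$, intertwining $\lambda$ with $\int^\oplus\pi(\cdot)\otimes 1$ and the trace of $\mathcal{M}$ with $\int^\oplus\textup{Tr}_{V_\pi}\,d\nu_{\mathcal{H}_t}(\pi)$. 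Restricting the last identity to $f\in\mathcal{H}_t\subset\mathcal{M}$ then gives, using $\chi_\pi(f)=\textup{Tr}_{V_\pi}(\pi(f))$, the asserted formula $\tau(f)=\int_{\hat{\mathcal{H}_t}}\chi_\pi(f)\,d\nu_{\mathcal{H}_t}(\pi)$. Finally I would set $\hat{\mathcal{H}_t}^{\textup{temp}}:=\textup{Supp}(\nu_{\mathcal{H}_t})$, a closed subset of $\hat{\mathcal{H}_t}$, and note that it coincides with the support of $\lambda$.

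For uniqueness I would appeal to the uniqueness clause of this abstract Plancherel theorem: the measure class on $\hat{\mathcal{H}_t}$ is unique by the essential uniqueness of the direct-integral decomposition of $\lambda$ into irreducibles over the type I algebra $C^\ast(\mathcal{H}_t)$, and the measure $\nu_{\mathcal{H}_t}$ itself --- not merely its class --- is pinned down by requiring the fibre over $[\pi]$ to be the \emph{full} Hilbert--Schmidt algebra $H(\pi)=V_\pi^\ast\hat\otimes V_\pi$ equipped with the \emph{standard} operator trace $\textup{Tr}_{V_\pi}$, rather than a scalar multiple thereof. One can also see this more concretely: the Borel functions $[\pi]\mapsto\chi_\pi(f)$, $f\in\mathcal{H}_t$, separate the points of the standard Borel space $\hat{\mathcal{H}_t}$ (characters of inequivalent finite-dimensional irreducibles are linearly independent), so a measure on $\hat{\mathcal{H}_t}$ satisfying the displayed identity with the fixed fibre weights is determined by its integrals against enough of these functions. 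I do not expect a genuine obstacle here, since every ingredient has already been granted in the excerpt (the Hilbert algebra structure of $\mathcal{H}_t$, finite dimensionality of its irreducibles, hence type I; finiteness over the centre, hence separability). The only delicate point is precisely the last one --- that $\tau$ determines the measure and not merely its class --- because it rests on the precise identification of the direct-integral fibres with the full Hilbert--Schmidt operator algebras and the correct normalisation of the fibrewise traces; this is the same subtlety present in the abstract Plancherel formula for $C^\ast(G)$, and I would dispatch it in exactly the same way.
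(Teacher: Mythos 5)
Your proof is correct and takes essentially the same route as the paper: the paper also notes that $\mathcal{H}_t$ is a Hilbert algebra, that its irreducible representations are all finite dimensional so its $C^*$-envelope is of type I, and then quotes Dixmier's central decomposition theorem to conclude; your argument simply fills in the supporting details (faithfulness of $\tau$, separability, liminality, and the normalisation that pins down the measure rather than just its class).
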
  
\begin{thm}[\cite{BHK}]\label{thm:BHK}
The functor $m_t$ defines a homeomorphism 
$\hat{m}_t^{\textup{temp}}:\hat{G}^{t,{\textup{temp}}}:=\Pi(G)^t\cap \hat{G}^{\textup{temp}}
\to \hat{\mathcal{H}_t}^{\textup{temp}}$ such that 
$(\hat{m}_t^{\textup{temp}})_*(\nu_{Pl}|_{\hat{G}^{t,{\textup{temp}}}})=\nu_{\mathcal{H}_t}$. 
\end{thm}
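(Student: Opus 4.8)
The plan is to deduce this from Harish-Chandra's Plancherel formula for $G$ (the Corollary of \S 2.1), the Plancherel formula for the Hilbert algebra $\mathcal{H}_t$ (the Corollary just above the statement), the equivalence $m_t$, and the single structural fact that $e_t\in\mathcal{H}(G)$ is a \emph{full} self-adjoint idempotent in the finite sub-sum $z_t\mathcal{H}(G):=\bigoplus_{s\in\mathcal{B}^t(G)}e_s\mathcal{H}(G)$ of Bernstein blocks (this fullness being precisely what makes $m_t$ an equivalence, in the theory of types). Write $C^*_r(G)$ for the reduced group $C^*$-algebra, i.e. the completion of $\mathcal{H}(G)$ acting by left convolution $\lambda$ on $L^2(G,\mu_G)$, so that $\hat{G}^{\textup{temp}}=\widehat{C^*_r(G)}$, and set $z_t=\sum_{s\in\mathcal{B}^t(G)}e_s$, $e_t=e_tz_t$. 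The first, elementary, ingredient is a trace-and-character identity on the corner: for irreducible smooth $\pi$ one has $\pi(e_t)\ne 0$ exactly when $\pi\in\Pi(G)^t$, in which case $\pi(e_t)$ is the projection of $V_\pi$ onto the finite-dimensional space $e_tV_\pi=m_t(\pi)$; since every $f\in\mathcal{H}_t$ equals $e_tfe_t$, the operator $\pi(f)=\pi(e_t)\pi(f)\pi(e_t)$ vanishes unless $\pi\in\Pi(G)^t$, and otherwise it is the action of $f$ on $m_t(\pi)$, whence $\Theta_\pi(f)=\chi_{m_t(\pi)}(f)$. Moreover, for $f\in\mathcal{H}_t\subset\mathcal{H}(G)$ the trace $\tau_G(f)=f(e)$ of $\mathcal{H}(G)$ agrees with the trace $\tau(f)=f(1)$ of $\mathcal{H}_t$.

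\emph{Identifying the tempered duals.} Next I would show that the reduced $C^*$-completion of the Hilbert algebra $\mathcal{H}_t=e_t\mathcal{H}(G)e_t$ is $C^*_r(\mathcal{H}_t)\cong e_tC^*_r(G)e_t$. The inner product of $\mathcal{H}_t$ is the restriction of that of $L^2(G,\mu_G)$, so $L^2(\mathcal{H}_t)$ is the range of the two commuting self-adjoint projections $p=\lambda(e_t)$ (left convolution by $e_t$) and $q$ (right convolution by $e_t$), and the left regular action of $f\in\mathcal{H}_t$ on it is the restriction of $p\lambda(f)p$. Hence $C^*_r(\mathcal{H}_t)$ is the image of the corner $e_tC^*_r(G)e_t=pC^*_r(G)p$ under restriction of operators to $\textup{range}(pq)$; this restriction is injective because $q$ lies in the commutant of $e_tC^*_r(G)e_t$ and, $e_t$ meeting every block in $\mathcal{B}^t(G)$ and acting at each tempered $\pi$ there as the nonzero projection $\pi(e_t)$ on the second factor of $H(\pi)$, the projection $q$ has full central support. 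As $\mathcal{B}^t(G)$ is finite and $e_t$ is a full idempotent in $z_t\mathcal{H}(G)$, it is likewise a full projection in $z_tC^*_r(G)$, so $e_tC^*_r(G)e_t$ is a full corner of $z_tC^*_r(G)$ and hence strongly Morita equivalent to it. Morita equivalent $C^*$-algebras have homeomorphic (Fell) duals, and unwinding the imprimitivity bimodule $e_tC^*_r(G)$ — which sends a module $V$ to $e_tV$ — identifies this homeomorphism with the one induced by $m_t$; that is,
\[
\hat{m}_t^{\textup{temp}}:\hat{G}^{t,\textup{temp}}=\widehat{z_tC^*_r(G)}\ \xrightarrow{\ \sim\ }\ \widehat{e_tC^*_r(G)e_t}=\widehat{C^*_r(\mathcal{H}_t)}=\hat{\mathcal{H}_t}^{\textup{temp}}
\]
is a homeomorphism, where $\widehat{z_tC^*_r(G)}=\bigsqcup_{s\in\mathcal{B}^t(G)}\bigl(\hat{G}^{\textup{temp}}\cap\Pi(G)_s\bigr)=\hat{G}^{t,\textup{temp}}$.

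\emph{Transferring the Plancherel measure.} Apply the Plancherel formula for $G$ to an arbitrary $f\in\mathcal{H}_t\subset\mathcal{H}(G)$: by the first paragraph the left side is $\tau(f)$, the integrand $\Theta_\pi(f)$ vanishes off $\hat{G}^{t,\textup{temp}}$ and equals $\chi_{m_t(\pi)}(f)$ on it, so transporting along the homeomorphism above gives, for all $f\in\mathcal{H}_t$,
\[
\tau(f)=\int_{\hat{G}^{t,\textup{temp}}}\chi_{m_t(\pi)}(f)\,d\nu_{Pl}(\pi)=\int_{\hat{\mathcal{H}_t}^{\textup{temp}}}\chi_\sigma(f)\,d\mu(\sigma),\qquad \mu:=(\hat{m}_t^{\textup{temp}})_*\bigl(\nu_{Pl}|_{\hat{G}^{t,\textup{temp}}}\bigr).
\]
Thus $\mu$ is a positive (Radon) measure on $\hat{\mathcal{H}_t}$ computing the trace $\tau$, and the uniqueness clause of the Plancherel formula for $\mathcal{H}_t$ forces $\mu=\nu_{\mathcal{H}_t}$, which is the assertion. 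As a consistency check: by Harish-Chandra $\textup{Supp}(\nu_{Pl})=\hat{G}^{\textup{temp}}$, and $\hat{G}^{t,\textup{temp}}$ is a finite union of Bernstein pieces, hence relatively clopen in $\hat{G}^{\textup{temp}}$, so $\textup{Supp}(\nu_{Pl}|_{\hat{G}^{t,\textup{temp}}})=\hat{G}^{t,\textup{temp}}$; taking supports recovers that $\hat{m}_t^{\textup{temp}}$ maps onto $\hat{\mathcal{H}_t}^{\textup{temp}}$, consistently with the second paragraph.

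\emph{Main obstacle.} The real work is entirely in the second paragraph: the honest identification of the reduced completion of the two-sided corner $e_t\mathcal{H}(G)e_t$ with the corner $e_tC^*_r(G)e_t$ of the reduced group $C^*$-algebra (the subtlety being that $\mathcal{H}_t$ is a two-sided corner, so one must rule out that its reduced completion is smaller — which is exactly the central-support point), together with the fullness of $e_t$. Once this refinement of the Morita equivalence $\mathcal{H}_t\sim z_t\mathcal{H}(G)$ to the reduced $C^*$-level is in place, everything else is formal, using only uniqueness of the two Plancherel measures and the identities of the first paragraph.
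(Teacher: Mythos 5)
The paper does not prove this theorem; it is quoted from Bushnell--Henniart--Kutzko \cite{BHK} and used as a black box, so there is no in-text proof to compare your sketch against. Evaluated on its own terms, your argument is sound and, as far as I can tell, reproduces in outline the strategy one would find in \cite{BHK}: pass from the Morita equivalence $\mathcal{H}_t\sim z_t\mathcal{H}(G)$ at the level of smooth algebras to a strong Morita equivalence of the reduced $C^*$-completions, invoke Rieffel's theorem that strongly Morita equivalent (type I, separable) $C^*$-algebras have homeomorphic spectra, identify that homeomorphism with $m_t$ via the imprimitivity bimodule $e_tC^*_r(G)z_t$, and then transfer the Plancherel measure through the uniqueness clause. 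Your first paragraph (the vanishing $\Theta_\pi(f)=0$ for $\pi\notin\Pi(G)^t$, the identity $\Theta_\pi(f)=\chi_{m_t(\pi)}(f)$ otherwise, and $\tau_G|_{\mathcal{H}_t}=\tau$) and your third paragraph (the uniqueness argument) are routine given the second. You correctly locate the real content in the second paragraph.

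Two places where the argument is slightly compressed and should be spelled out if this were to be turned into a full proof. First, you tacitly use that $\hat{\mathcal{H}}_t^{\textup{temp}}=\textup{Supp}(\nu_{\mathcal{H}_t})$ coincides with $\widehat{C^*_r(\mathcal{H}_t)}$ inside $\hat{\mathcal{H}}_t=\widehat{C^*(\mathcal{H}_t)}$; this holds because $\nu_{\mathcal{H}_t}$ is by construction the decomposition measure of the faithful trace $\tau$ on the separable, type I algebra $C^*_r(\mathcal{H}_t)$, but it is not automatic from the definition on the page and deserves one line. Second, the central-support claim for $q=\rho(e_t)$ uses the disintegration $L^2(G)\simeq\int^\oplus H(\pi)\,d\nu_{Pl}(\pi)$ together with the fact that $q$ acts fibrewise by a nonzero projection at every $\pi$ in the spectrum of $pC^*_r(G)p$; since this spectrum is exactly $\hat{G}^{t,\textup{temp}}$ (the set where $\pi(e_t)\neq 0$), the argument closes, but the identification of that spectrum is logically prior to, not a consequence of, the injectivity of the restriction and one should present it in that order to avoid circularity. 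Neither of these is a genuine gap; they are just points where an examiner would ask you to supply the sentence. The fullness transfer from $z_t\mathcal{H}(G)$ to $z_tC^*_r(G)$ by taking closed ideals, the Rieffel homeomorphism, its identification with $m_t$, and the Plancherel transfer are all correct as you have written them.
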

Therefore we can compute $\nu_{Pl}$ by computing $\nu_{\mathcal{H}_t}$ for the Hecke algebras 
$\mathcal{H}_t$ of a collection of types $t$ such that the open closed sets 
$\hat{G}^{t,{\textup{temp}}}$ cover $\hat{G}^{\textup{temp}}$. In this sense the measures 
$\nu_{\mathcal{H}_t}$ are the building blocks of the Plancherel measure of $G$.
\subsection{Affine Hecke algebras as Hilbert algebras}
The algebras $\mathcal{H}_t$ are slight generalisations of affine Hecke algebras.
In Lusztig's case of unipotent types (\cite{Lusztig-unirep}, \cite{Lusztig-unirep2}; 
also see \cite{Mo}, \cite{MP1}, \cite{MP2}) the associated ``unipotent Hecke algebras'' are precisely 
(extended) affine Hecke algebras. 

Let us therefore review the theory of affine Hecke algebras and the spectral decomposition
of the corresponding Hilbert algebra.

Let $W=W^a\rtimes \Omega$ be an extended affine Weyl group. By this we mean that 
we have an affine Coxeter group $W^a$ with set of simple reflections $S$ say, and 
a group of \emph{special automorphisms} $\Omega\subset\textup{Out}_S(W^a,S)$. 
(A diagram automorphism $\omega\in\textup{Out}_S(W^a,S)$ is called special 
if its restriction to the canonical normal subgroup $Q\subset W^a$ consisting of the 
elements with a finite conjugacy class, equals the restriction to $Q$ of an inner 
automorphism of $W^a$.) 

Let $\Lambda=\mathbb{Z}[v_s^{\pm 1}\mid s\in S;\ v_s=v_{s'}\text{\ if\ }s\sim _W s']$ (the $v_s$
are commuting indeterminates), 
and put $\Qt=\{\vt\in\textup{Hom}_{alg}(\Lambda,\mathbb{C}^\times)\mid 
\vt_s=\vt(v_s)\in\mathbb{R}_+\forall s\in S\}\simeq\mathbb{R}_+^N$ where $N$ denotes the number 
of conjugacy classes of affine reflections in $W$. We have a length function $l$ on $W^a$ relative to 
the set $S$ of simple reflections, which we extend to $W$ by giving elements of $\Omega$ 
length $0$. 
\begin{definition}[Coxeter presentation of the Hecke algebra]
Let $\Hc_\Lambda$ be the free $\Lambda$-algebra with basis $\{N_w\}_{w\in W}$ 
such that
\begin{enumerate}
\item[(i)] $N_uN_v=N_{uv}$ for $u,v\in W$ such that $l(uv)=l(u)+l(v)$.
\item[(ii)] For all $s\in S$ we have $(N_s-v_s)(N_s+v_s^{-1})=0$.
\end{enumerate}
\end{definition}
Given $\vt\in\Qt$ we define $\Hc_\vt=\Hc_\Lambda\otimes\mathbb{C}_\vt$.
Let $d\in \Lambda$ be positive on $\Qt$ (or on some subset of $\Qt$ which 
contains $\vt$).  Then we define a Hilbert algebra structure on $\Hc_\vt$ by 
defining a $*$-operator and a \emph{positive trace} $\tau$ as we did before with $\mathcal{H}(G)$:
\begin{enumerate}
\item[(i)] $\tau(N_w)=\delta_{w,e}d(\vt)$. 
\item[(ii)] $N_w^*=N_{w^{-1}}$.
\end{enumerate}
We recall that the positivity of $\tau$ means that the Hermitian form 
$(x,y)=\tau(x^*y)$ on $\Hc_\vt$ is positive definite. This elementary fact is crucial in all that follows.

Next we would like to express the Hilbert algebra stucture in terms of the Bernstein presentation 
of $\Hc_\vt$.
Let $X\subset W$ be the canonical normal subgroup of $W$ consisting of the elements 
which have finitely many conjugates. This is the translation subgroup of $W$, and we 
can choose a splitting of $W/X\simeq W_0$ by choosing a special point $0\in C$, where 
$C\subset V=\mathbb{R}\otimes X$ denotes the alcove. 

The length $l(w)$ of $w\in W$ can be interpreted more geometrically as the number of affine 
reflection hyperplanes of $W^a$ separating the fundamental alcove $C\subset V=\mathbb{R}\otimes Q$
and $w(C)$. In particular we have $l(x)=2\rho(x)$ for $x\in X^+$, the dominant cone in $X$.
By the defining relations of $\Hc_\Lambda$ this implies that the elements $N_x$ with  
$x\in X^+$ form a commutative monoid of invertible elements of $\Hc_\Lambda$. Bernstein 
and Zelevinski turned this into a very important alternative presentation of $\Hc_\Lambda$, 
(see \cite{Lus3} for further background).
\begin{lemma}
There exists a unique homomorphism $X\ni x\to\theta_x\in\Hc_\Lambda$ such that 
for $x\in X^+$ one has $\theta_x=N_x$. Let $A\subset \Hc_\Lambda$ be the commutative 
subalgebra $\Lambda[\theta_x\mid x\in X]\subset \Hc_\Lambda$ generated by the $\theta_x$.
Let $\Hc_0=\Hc_\Lambda(W_0,S_0)\subset \Hc_\Lambda$ be the finite type Hecke subalgebra 
associated to the isotropy group $(W_0,S_0)$ of the chosen special point $0\in C$. 
Then the multiplication map $A\otimes\Hc_0\to \Hc_\Lambda$ is an isomorphism of 
$(A,\Hc_0)$-bimodules, and $\Hc_0\otimes A\to \Hc_\Lambda$ is an isomorphism of 
$(\Hc_0,A)$-bimodules, and the algebra structure of $\Hc_\Lambda$ is determined by the 
Bernstein relation:
\begin{equation}
\theta_x N_s-N_s \theta_{s(x)}=\left((v_s-v_s^{-1})+
(v_{s^\prime}-v_{s^\prime}^{-1})\theta_{-\alpha}\right)
\frac{\theta_x-\theta_{s(x)}}{1-\theta_{-2\alpha}}
\end{equation}
where $s=s_{\alpha^\vee}\in S_0$ for some simple root $\alpha_0$, 
and $s'\in S$ is such that $s'\sim_W s_{\alpha^\vee+1}$. 
\end{lemma}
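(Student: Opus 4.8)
The plan is to establish the Bernstein presentation of $\Hc_\Lambda$ in the standard way, following the approach of Bernstein--Zelevinski as exposed in \cite{Lus3}. The starting point is the observation already recorded: for $x\in X^+$ one has $l(x)=2\rho(x)$, and consequently for $x,y\in X^+$ the lengths add, $l(x+y)=l(x)+l(y)$, so by relation (i) of the Coxeter presentation the elements $N_x$ ($x\in X^+$) form a commutative submonoid of invertible elements. To produce the homomorphism $X\ni x\to\theta_x$, I would write an arbitrary $x\in X$ as $x=y-z$ with $y,z\in X^+$ (possible since $X^+$ generates $X$ as a group) and set $\theta_x:=N_yN_z^{-1}$. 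The main verification here is \emph{well-definedness}: if $y-z=y'-z'$ then $y+z'=y'+z$ in $X^+$, whence $N_yN_{z'}=N_{y+z'}=N_{y'+z}=N_{y'}N_z$ using commutativity of the monoid $\{N_x:x\in X^+\}$, and then $N_yN_z^{-1}=N_{y'}N_{z'}{}^{-1}$. The same computation shows $x\to\theta_x$ is a homomorphism. Set $A:=\Lambda[\theta_x\mid x\in X]$.

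Next I would prove the bimodule decomposition $A\otimes\Hc_0\xrightarrow{\sim}\Hc_\Lambda$. The key input is that $W=X\rtimes W_0$ with the chosen special point, and that every $w\in W$ can be written $w=x\cdot w_0$ with $x\in X$, $w_0\in W_0$; one shows by a length/induction argument on $l(w)$ that $\theta_x N_{w_0}$ for $(x,w_0)$ ranging over $X\times W_0$ form a $\Lambda$-basis of $\Hc_\Lambda$. Concretely: pick $x\in X^+$ large enough (in the dominant cone, far from the walls) so that $l(x w)=l(x)+l(w)$ and also $l((x+x')w)=\cdots$ for the relevant translates; then $N_xN_w=N_{xw}$ reduces products to the known basis $\{N_v\}_{v\in W}$, and invertibility of $N_x$ lets one solve for $\theta_{x'}N_{w_0}$ in terms of $\{N_v\}$. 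This is the classical reduction-to-dominant-translate trick; the routine bookkeeping I would not spell out. The analogous statement for $\Hc_0\otimes A$ follows by applying the anti-involution $N_w\mapsto N_{w^{-1}}$ (extended suitably) or by a symmetric argument.

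The remaining and most technical point is the \emph{Bernstein relation} itself, governing how $\theta_x$ commutes past a simple reflection $N_s$ with $s=s_{\alpha^\vee}\in S_0$. The strategy is the usual one: it suffices to check the relation for $x$ in a generating set of $X$, and moreover the operators on both sides are $\Lambda$-linear derivations-like in a precise sense, so one reduces to the rank-one situation — the reflection $s$ together with the line spanned by $\alpha$ — where $\Hc_\Lambda$ restricted to $\langle\theta_{\alpha^\vee}\rangle$ and $N_s$ is essentially a rank-one affine Hecke algebra (possibly of the non-reduced $C^\vee C_1$ type, which accounts for the two parameters $v_s$ and $v_{s'}$ with $s'\sim_W s_{\alpha^\vee+1}$). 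There one verifies the identity
\begin{equation}
\theta_x N_s-N_s\theta_{s(x)}=\left((v_s-v_s^{-1})+(v_{s'}-v_{s'}^{-1})\theta_{-\alpha}\right)\frac{\theta_x-\theta_{s(x)}}{1-\theta_{-2\alpha}}
\end{equation}
by a direct computation: expand $N_s\theta_{s(x)}$ using the Coxeter relation $(N_s-v_s)(N_s+v_s^{-1})=0$ and the explicit action on $N_{x'}$ for small dominant $x'$, check that the right-hand side is a genuine element of $A$ (the apparent denominator $1-\theta_{-2\alpha}$ divides the numerator $\theta_x-\theta_{s(x)}$ in the group algebra of $X$ because $s(x)-x\in\ZZ\alpha^\vee$), and match coefficients. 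I expect \textbf{this last step — the rank-one Bernstein relation with the correct two-parameter right-hand side, including the verification that it extends consistently to all of $X$ and is compatible with the monoid relations among the $\theta_x$ — to be the main obstacle}, since it is where the delicate interplay between the affine and finite parts of $W$, the length function, and the non-reduced root directions all enter; everything before it is formal manipulation of the braid and quadratic relations.
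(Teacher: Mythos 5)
The paper does not prove this lemma; it states it as known and refers to \cite{Lus3} for the Bernstein--Zelevinski presentation. Your sketch reproduces exactly that standard argument — well-definedness of $\theta_x = N_y N_z^{-1}$ via commutativity of $\{N_x : x\in X^+\}$, the PBW-style basis $\{\theta_x N_{w_0}\}$ established by shifting into the deep dominant chamber, and the rank-one reduction for the Bernstein relation with the non-reduced $C^\vee C_1$ case supplying the two parameters — so it matches the approach the paper relies on.
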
  
\begin{rem}
\begin{enumerate}
\item[(a)] Let $\Rc=(\Delta_0,X,\Delta^\vee_0,Y)$ the based root datum associated 
with $W=W_0\rtimes X$, where $S_0$ corresponds to $\Delta_0$. For $\alpha\in \Delta_0$ 
one can put $q_\alpha^+=v_sv_{s'}$ and $q_\alpha^-=v_s/v_{s'}$. The parameters 
$q_\alpha^{\pm}$ will be more convenient than the $v_s$ in the spectral theory of $\Hc_v$.  
\item[(b)] If $v_{s'}\not=v_s$ then $\alpha^\vee\in 2Y$. In irreducible cases this 
happens only if $\Rc$ is of type $C_n^{(1)}$. That means that $\Delta_0$ is the 
basis $\{e_1-e_2,\dots, e_{n-1}-e_n, e_n\}$ of the irreducible root system of type $B_n$, 
and $X=\mathbb{Z}^n$ is the root lattice of this root system. The affine Dynkin diagram 
of $W$ is the untwisted affine extension of the Dynkin diagram of $\Delta_0^\vee$  
of type $C_n$.
Note that $q_\alpha^-=1$ unless we are in the $C_n^{(1)}$ case, and $\alpha=e_n$. 
\end{enumerate}
\end{rem}
\begin{cor}
The center $Z:=Z(\Hc_\Lambda)$ is equal to $A^{W_0}$, which is 
naturally isomorphic to the ring 
$\Lambda[\theta_x\mid x\in X]^{W_0}$. Let $T$ denote the algebraic torus with 
character lattice $\mathbb{Z}^{S/\sim}\times X$, viewed as a split torus over $\Lambda$ 
via $v_s^n\to (ne_s,0)$. Then $Z(\Hc_\Lambda)=\mathbb{C}[W_0\backslash\backslash T]$.
\end{cor}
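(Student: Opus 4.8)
The plan is to run the classical Bernstein argument in three stages: first push a central element into the commutative subalgebra $A$, then cut it down to the $W_0$-invariants, and finally restate the answer in terms of the torus $T$. Throughout I use the previous Lemma: the multiplication maps $A\otimes\Hc_0\to\Hc_\Lambda$ and $\Hc_0\otimes A\to\Hc_\Lambda$ are bimodule isomorphisms, and the commutation of $A$ past $\Hc_0$ is governed by the Bernstein relation.

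\emph{Stage 1 ($Z(\Hc_\Lambda)\subseteq A$).} I would use the direct sum decomposition $\Hc_\Lambda=\bigoplus_{w\in W_0}AN_w$ together with the Bruhat-order filtration of $\Hc_0$. Iterating the Bernstein relation gives $aN_w=N_w\,w^{-1}(a)+(\text{terms in }\bigoplus_{w'<w}AN_{w'})$ for $a\in A$, where $W_0$ acts on $A$ by $w\cdot\theta_x=\theta_{w(x)}$; equivalently $N_w\theta_x=\theta_{w(x)}N_w+(\text{strictly lower terms})$. Writing a central element as $z=\sum_{w\in W_0}a_wN_w$ and comparing the coefficient of $N_w$ for $w$ maximal with $a_w\ne0$ in the identity $z\theta_x=\theta_xz$ yields $a_w\bigl(\theta_{w(x)}-\theta_x\bigr)=0$ in $A$ for every $x\in X$. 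Since $A\cong\Lambda[X]$ is an integral domain and the action of $W_0$ on $X$ is faithful (it carries the reflection representation), for each $w\ne e$ there is an $x$ with $\theta_{w(x)}\ne\theta_x$; hence $a_w=0$ for all $w\ne e$ and $z=a_e\in A$.

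\emph{Stage 2 ($Z(\Hc_\Lambda)=A^{W_0}$).} Given $z=a\in A$, the requirement that $z$ commute with each generator $N_s$, $s\in S_0$, becomes, via the Bernstein relation, the condition $s(a)=a$. Indeed $\partial_s(a):=aN_s-N_s s(a)$ defines a map $A\to A$: it lands in $A$ because on the spanning set $\{\theta_x\}$ it equals the right-hand side of the Bernstein relation, $c_s(\theta_x-\theta_{s(x)})/(1-\theta_{-2\alpha})$ with $c_s=(v_s-v_s^{-1})+(v_{s'}-v_{s'}^{-1})\theta_{-\alpha}$, and it propagates to all of $A$ by the Leibniz rule $\partial_s(ab)=\partial_s(a)s(b)+a\,\partial_s(b)$ and $\Lambda$-linearity. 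Using that $A$ is free of rank two over $A^s$ with basis $\{1,\theta_y\}$ for any $y$ with $\theta_y\ne\theta_{s(y)}$, and that $\partial_s(\theta_y)\ne0$ is a non-zero-divisor in the domain $A$, one gets $\ker\partial_s=A^s$. So $z$ is central iff $z\in\bigcap_{s\in S_0}A^s=A^{W_0}$; conversely any $a\in A^{W_0}$ commutes with the commutative algebra $A$ and with all $N_s$, hence with $\Hc_\Lambda=\langle A,\Hc_0\rangle$. Since by definition $A=\Lambda[\theta_x\mid x\in X]$ with $W_0$ acting by $\theta_x\mapsto\theta_{w(x)}$, this is exactly $\Lambda[\theta_x\mid x\in X]^{W_0}$.

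\emph{Stage 3 (identification with $\mathbb{C}[W_0\backslash\backslash T]$), and the hard part.} The assignment $\theta_x\mapsto x$, $v_s^n\mapsto(ne_s,0)$ identifies $A\otimes\mathbb{C}$ with the group algebra $\mathbb{C}[\mathbb{Z}^{S/\sim}\times X]=\mathbb{C}[T]$, and it is $W_0$-equivariant for the action that is trivial on the $\mathbb{Z}^{S/\sim}$-factor and natural on $X$; taking invariants gives $Z(\Hc_\Lambda)\otimes\mathbb{C}=A^{W_0}\otimes\mathbb{C}=\mathbb{C}[T]^{W_0}=\mathbb{C}[W_0\backslash\backslash T]$, the last equality because for the finite group $W_0$ the GIT quotient of the affine variety $T$ has coordinate ring $\mathbb{C}[T]^{W_0}$. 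I expect the only genuine obstacle to be Stage 1 together with the kernel computation in Stage 2: one must set up the leading-term argument relative to a filtration in which the Bernstein cross-relation truly lowers length, and make sure that faithfulness of the $W_0$-action on $X$ is available in the generality of the extended affine Weyl group considered here (so that $\theta_{w(x)}\ne\theta_x$ for some $x$, and likewise each $s\in S_0$ acts nontrivially). Once these points are pinned down, everything else is formal.
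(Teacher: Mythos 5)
The paper states this Corollary without proof --- it is the well-known Bernstein description of the center of an affine Hecke algebra, a formal consequence of the preceding Bernstein-presentation Lemma --- and your three-stage argument is precisely the classical one, and it is correct. I would add one word in Stage~2: you compute $\ker\partial_s=A^s$ and then conclude that $z\in A$ is central iff $z\in\bigcap_{s\in S_0}A^s$, but the bridge between ``$z$ commutes with $N_s$'' and ``$\partial_s(z)=0$'' is left implicit, and these two conditions are not tautologically the same. The cleanest way to close this is to write $zN_s-N_sz=\partial_s(z)+N_s\bigl(s(z)-z\bigr)$, observe that $\partial_s(z)\in A$ while $N_s\bigl(s(z)-z\bigr)\in N_sA$, and invoke the direct-sum decomposition $\Hc_\Lambda=\bigoplus_{w\in W_0}N_wA$ from the Lemma: both summands must vanish, so already $s(z)=z$ (and your $\ker\partial_s=A^s$ computation, via the formula $\partial_s(a)=c_s\,(a-s(a))/(1-\theta_{-2\alpha})$, then confirms that the two vanishing conditions are in any case equivalent, with the freeness-of-rank-two input being dispensable). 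The worries you flag at the end are easily dismissed: the length filtration works, since iterating the Bernstein cross-relation along a reduced word gives $N_w\theta_x\equiv\theta_{w(x)}N_w$ modulo $\bigoplus_{l(w')<l(w)}AN_{w'}$; and $W_0$ acts faithfully on $X$ because $X$ spans $V=\mathbb{R}\otimes X$, on which $W_0$ acts by its reflection representation, so for every $w\ne e$ (in particular every $s\in S_0$) there is some $x\in X$ with $w(x)\ne x$, whence $\theta_{w(x)}\ne\theta_x$ in the domain $A$.
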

\begin{definition}
We have $T=T_0\times\textup{Spec}(\Lambda)$ where $T_0$ is algebraic torus 
over $\mathbb{C}$ with character lattice $X$. If $\tt{v}\in\Qt$ then we will write 
$T^\vt$ for the fibre of $T$ above $\tt{v}$, i.e. the  spectrum of $\mathbb{C}_{\tt{v}}[X]$ where 
$\mathbb{C}_{\tt{v}}$ denotes the residue field of $\Lambda$ at $\tt{v}$.
\end{definition}
\subsection{A formula for the trace of an affine Hecke algebra}
We will now write the trace $\tau$ of $\Hc_\vt$ (for some $\vt\in \Qt$) in terms of the Bernstein 
presentation of $\Hc_\vt$. First we introduce ``intertwining elements'' 
$R_s\in\Hc_\Lambda$ for every $s=s_\alpha\in S_0$
by:
\begin{equation}
R_s=v_s\left((1-\theta_{-2\alpha})N_s-((v_s-v_s^{-1})+(v_{s'}-v_{s'}^{-1})\theta_{-\alpha}) \right)
\end{equation}
These elements satisfy:
\begin{enumerate}
\item[(i)] For all $x\in X$ and $s=s_\alpha\in S_0$ we have: $R_s\theta_x=\theta_{s(x)}R_s$.
\item[(ii)] $R_s^2=v_s^2\Delta_{2\alpha}\Delta_{-2\alpha}c_\alpha c_{-\alpha}$, where 
$\Delta_{\pm 2\alpha}=(1-\theta_{\pm 2\alpha})$ and where 
the rational functions $c_\alpha$ are the famous \emph{Harish Chandra $c$-functions} in the present
context: $c_\alpha=({\Delta_{-2\alpha}})^{-1}
{(1+\theta_{-\alpha}/q_\alpha^-)(1-\theta_{-\alpha}/q_\alpha^+)}$.
\end{enumerate}
\begin{cor}
We have $\Hc_\Lambda\otimes_\mathcal{Z} Z'\simeq A'\# W_0$, where 
$Z'$ is the localization of $Z$ on the open subset of $W_0\backslash T$ which is the intersection 
of the open subsets $U_\alpha\subset T$ ($\alpha\in \Sigma_0$) where $c_\alpha$ is an 
invertible regular function (the complement of the union of the hyperplanes of the form 
$\alpha(t)=\pm q_\alpha^{\pm}$ and $\alpha(t)=\pm 1$). 
\end{cor}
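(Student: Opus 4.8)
The plan is to identify $\Hc':=\Hc_\Lambda\otimes_Z Z'$ (here $Z=Z(\Hc_\Lambda)=A^{W_0}$) with the crossed product $A'\#W_0$ by means of the intertwining elements $R_s$, after renormalising them so that they square to $1$. Write $A':=A\otimes_Z Z'$, which is the localisation of $A$ along the preimage in $T$ of the open locus in the statement, so that $\textup{Spec}(A')$ is exactly the $W_0$-stable open subset $\bigcap_\alpha U_\alpha$ with the hypersurfaces $\alpha(t)=\pm1$ removed, and $\textup{Spec}(Z')=W_0\backslash\textup{Spec}(A')$. Since $\Hc_\Lambda\cong A\otimes\Hc_0$ is a free (hence flat) $A$-module of rank $|W_0|$ by the previous corollary, $\Hc'\cong\Hc_\Lambda\otimes_A A'$ is a free $A'$-module of rank $|W_0|$; note also that $W_0$ acts freely on $\textup{Spec}(A')$, precisely because the reflection hypersurfaces $\alpha(t)=\pm1$ have been excluded.

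First I would show that each $R_s$ ($s=s_\alpha\in S_0$) is a unit of $\Hc'$. By property (ii) of the $R_s$ one has $R_s^2=v_s^2\,\Delta_{2\alpha}\Delta_{-2\alpha}\,c_\alpha c_{-\alpha}$, and every factor on the right lies in $(A')^\times$: the $c_{\pm\alpha}$ by the very definition of the localisation locus, the $\Delta_{\pm2\alpha}=1-\theta_{\pm2\alpha}$ because the hypersurfaces $\alpha(t)=\pm1$ are excluded, and $v_s\in\Lambda^\times$. Hence $R_s^{-1}=(R_s^2)^{-1}R_s\in\Hc'$. Next I would renormalise: taking $f_s:=v_s\,\Delta_{2\alpha}\,c_\alpha\in(A')^\times$ and using the symmetries $s(\Delta_{2\alpha})=\Delta_{-2\alpha}$, $s(c_\alpha)=c_{-\alpha}$ gives $f_s\,s(f_s)=R_s^2$, so that $\tau_s:=f_s^{-1}R_s$ satisfies $\tau_s^2=1$; here one uses property (i), which yields $R_s a=s(a)R_s$ and hence $\tau_s a=s(a)\tau_s$ for all $a\in A'$.

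The key step, and the one I expect to be the main obstacle, is to verify that the elements $\tau_s$, $s\in S_0$, satisfy the braid relations of $(W_0,S_0)$; granting this, $\tau_w:=\tau_{s_{i_1}}\cdots\tau_{s_{i_k}}$ is independent of the reduced expression $w=s_{i_1}\cdots s_{i_k}$, $\tau_e=1$, and $a\#w\mapsto a\,\tau_w$ defines an algebra homomorphism $\Phi\colon A'\#W_0\to\Hc'$. This reduces, by restricting to the rank-$2$ parabolic sub-Hecke algebras attached to pairs $\{s,s'\}\subset S_0$, to an explicit identity among normalised intertwiners in the rank-$2$ situation, which is a standard (if tedious) computation; see \cite{Lus3}. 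Equivalently, the units $\{\tau_w\}$ always exhibit $\Hc'$ as a \emph{twisted} crossed product, with a $2$-cocycle $\sigma$ on $W_0$ valued in $(A')^\times$ determined by $\tau_u\tau_v=\sigma(u,v)\,\tau_{uv}$, and the content of the braid verification is that $\sigma$ is a coboundary.

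Finally I would check that $\Phi$ is an isomorphism. It is surjective: on the localised locus $\Delta_{-2\alpha}$ is invertible, so
\[
N_s=\Delta_{-2\alpha}^{-1}\left(v_s^{-1}R_s+(v_s-v_s^{-1})+(v_{s'}-v_{s'}^{-1})\theta_{-\alpha}\right)
\]
belongs to the $A'$-subalgebra generated by the $R_s$ (equivalently the $\tau_s$), and since $\Hc_\Lambda=A\otimes\Hc_0$ with $\Hc_0$ generated by $\{N_s:s\in S_0\}$, the algebra $\Hc'$ is generated by $A'$ together with the $\tau_s$; hence $\Phi$ is onto. As $\Phi$ is then an $A'$-linear surjection between free $A'$-modules of equal finite rank $|W_0|$ over the integral domain $A'$, it is injective as well: both modules are torsion-free, so injectivity can be checked after applying $-\otimes_{A'}\textup{Frac}(A')$, where it follows from surjectivity together with equality of dimensions. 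This yields $\Hc_\Lambda\otimes_Z Z'\simeq A'\#W_0$, as desired.
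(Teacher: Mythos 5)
The paper states this corollary without proof (it is presented as following from the intertwiner properties (i)--(ii)), so there is no argument in the source to compare against; your proposal supplies the expected argument, and it is correct. Your key observation is accurate and worth stressing: properties (i) and (ii) of $R_s$ alone do not yield the corollary. From (i), (ii), and the invertibility of $R_s^2\in(A')^\times$ one only gets that $\Hc'$ is a \emph{twisted} crossed product — a free $A'$-module with basis $\{R_w\}$ (for a fixed choice of reduced expressions) satisfying $R_uR_v=\sigma(u,v)R_{uv}$ with $\sigma$ an $(A')^\times$-valued $2$-cocycle on $W_0$; the actual content of the statement is that $\sigma$ is trivial, which is exactly the braid relation for the renormalised intertwiners. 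Your normalisation $f_s=v_s\Delta_{2\alpha}c_\alpha$ does satisfy $f_s\,s(f_s)=R_s^2$ (using $s(\Delta_{2\alpha})=\Delta_{-2\alpha}$ and $s(c_\alpha)=c_{-\alpha}$), and lies in $(A')^\times$ precisely because the hyperplanes $\alpha(t)=\pm1$ (for $\Delta_{\pm2\alpha}$) and the zeros of $c_{\pm\alpha}$ are all removed from $\textup{Spec}(A')$; hence $\tau_s^2=1$ via $R_s a=s(a)R_s$. The surjectivity argument recovering $N_s$ over $A'$ from $R_s$, and the rank comparison of free modules over the integral domain $A'$ for injectivity, are both sound. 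For the braid relations themselves the appeal to \cite{Lus3} is appropriate; they are also treated in \cite{Opd3}, which may be the more directly adapted reference in the present notation.
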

\begin{definition}
Define 
\begin{equation}
\mu(t)=\frac{d}{q(w_0)}\frac{1}{\prod_{\alpha\in\Sigma_0^+}c_\alpha(t)c_{-\alpha}(t)}
=\frac{1}{q(w_0)}\frac{d}{c(t)c(t^{-1})}, 
\end{equation}
the $\mu$-function of $\Hc_\Lambda$ with normalising factor $d\in\Lambda$.
\end{definition}
\begin{thm}[\cite{Opd0}]\label{thm:conv}
Let $t\in T^\vt$ where $\vt$ is such that $q_\alpha^\pm>0$ for all $\alpha$.
Via the nondegenerate symmetric bilinear form $\langle x,y\rangle:=\tau(xy)$ we view $\Hc_\vt$ as 
a subspace of  $\Hc^*_\vt$, and equip $\Hc_\vt^*$ with the weak topology. 
Then $\mathcal{E}_t:=\sum_{x\in X}t(-x)\theta_x\in\Hc^*_\vt$ is convergent 
in $\Hc_\vt^*$ if for all $\alpha\in\Delta_0$, we have $|\alpha(t)|<\textup{min}\{(q_\alpha^-)^{-1},(q_\alpha^+)^{-1}\}$.
Moreover, 
\begin{equation}
\mathcal{E}_t=\frac{E_t}{q(w_0)\Delta(t)}\mu(t)
\end{equation}
where $T^\vt\ni t\to E_t\in \Hc^*_\vt$ is a certain regular family of matrix coefficients of 
minimal principal series at $t$ such that $E_t(1)=q(w_0)\Delta(t)$ and such that 
for all $a,b\in A$, $E_t(ahb)=a(t)b(t)E_t(h)$.  
\end{thm}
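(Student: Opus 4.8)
The plan is to build the element $\mathcal{E}_t$ and the matrix coefficients $E_t$ by hand, exploiting the Bernstein presentation and the intertwining elements $R_s$. First I would observe that the infinite sum $\mathcal{E}_t = \sum_{x \in X} t(-x)\theta_x$, regarded as a functional on $\Hc_\vt$ via the pairing $\langle x,y\rangle = \tau(xy)$, is determined by its values on the $\Lambda$-basis adapted to the Bernstein decomposition $\Hc_\vt \cong \Hc_0 \otimes A$. On basis elements of the form $N_w \theta_x$ with $w \in W_0$, the trace $\tau(N_w\theta_x N_u \theta_y)$ can be computed because $\tau$ kills everything except the coefficient of $N_e$; expanding $\theta_x N_u$ back into the $N$-basis via the Bernstein relation reduces this to a finite combinatorial computation, and the resulting series in $t$ converges precisely in the region $|\alpha(t)| < \min\{(q_\alpha^-)^{-1}, (q_\alpha^+)^{-1}\}$ for all $\alpha \in \Delta_0$ — this is the geometric-series condition coming from the denominators $1 - \theta_{-2\alpha}$, $1 + \theta_{-\alpha}/q_\alpha^-$, $1 - \theta_{-\alpha}/q_\alpha^+$ appearing in the $c$-functions. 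So the convergence statement is essentially a domain-of-convergence estimate for an explicit rational-function expansion.

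Next I would identify $\mathcal{E}_t$ with a matrix coefficient of the minimal principal series $\pi_t := \Ind_A^{\Hc_\vt}(\mathbb{C}_t)$. Because $\Hc_\vt \cong \Hc_0 \otimes A$ as a right $A$-module, this induced module has dimension $|W_0|$ and carries a natural $\Hc_0$-structure; one has a canonical vector (the image of $N_e$) and a canonical functional, and the matrix coefficient $h \mapsto \langle \text{(cyclic vector)}, \pi_t(h)\,\text{(cyclic vector)}\rangle$ is $A$-bi-equivariant: $E_t(ahb) = a(t)b(t)E_t(h)$. The normalization $E_t(1) = q(w_0)\Delta(t)$ fixes the scaling. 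The content of the identity $\mathcal{E}_t = E_t/(q(w_0)\Delta(t))\cdot \mu(t)$ is then that the "naive" generating functional $\sum_x t(-x)\theta_x$, which is manifestly $A$-equivariant on one side, equals the genuinely bi-equivariant principal-series coefficient after dividing by the Plancherel factor $\mu(t)$. To prove the equality it suffices to check it on the subalgebra $A$ (both sides are explicit) and then propagate across the whole algebra using the intertwining elements $R_s$: since $R_s \theta_x = \theta_{s(x)} R_s$ and $R_s^2 = v_s^2 \Delta_{2\alpha}\Delta_{-2\alpha} c_\alpha c_{-\alpha}$, one gets a functional equation relating the value of any candidate functional at $t$ to its value at $s(t)$, and $\mu(t)$ is exactly the unique (up to the normalizing factor $d$) rational function making the equivariance compatible with the $W_0$-action. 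This is where the formula $\mu = d/(q(w_0)\, c(t)c(t^{-1}))$ is forced.

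The main obstacle, I expect, is the interchange of the two descriptions of convergence: one must show that the formal identity of rational functions, valid a priori only after clearing denominators in $\Hc_\Lambda \otimes_{\mathcal Z} Z'$ (the localization where the $c_\alpha$ are invertible — the preceding corollary gives $\Hc_\Lambda \otimes_{\mathcal Z} Z' \cong A' \# W_0$), actually holds as an identity of \emph{convergent} functionals in $\Hc_\vt^*$ on the stated open region. Concretely: $\mu(t)$ has poles along $\alpha(t) = \pm q_\alpha^{\pm}$, and one needs that the region $|\alpha(t)| < \min\{(q_\alpha^-)^{-1},(q_\alpha^+)^{-1}\}$ where $\mathcal{E}_t$ converges lies strictly inside the region where $E_t/(q(w_0)\Delta(t)) \cdot \mu(t)$ is holomorphic, so that the identity of meromorphic families can be restricted there. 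Once holomorphy is matched, equality on the dense domain follows from equality as formal $\Lambda$-linear functionals plus continuity in the weak topology on $\Hc_\vt^*$. A secondary technical point is keeping track of the factor $\Delta(t) = \prod_{\alpha \in \Sigma_0^+}(1 - \alpha^{-1}(t))$ (or the appropriate product) so that $E_t$ is genuinely regular in $t$ rather than merely meromorphic — this is why $E_t(1)$ is $q(w_0)\Delta(t)$ and not simply a constant, and it is the bookkeeping needed to make "certain regular family of matrix coefficients" literally true. I would refer to \cite{Opd0} for the detailed verification of these estimates, as the statement attributes the theorem there.
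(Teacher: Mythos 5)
The paper does not contain a proof of this theorem --- it is recalled from \cite{Opd0} with no argument given --- so there is no internal proof to compare your sketch against; what follows evaluates the sketch on its own merits. Your outline identifies the correct ingredients: $A$-bi-equivariance of $\mathcal{E}_t$ (which holds on both sides since $\tau$ is tracial, not only ``one side''), the intertwiners $R_s$ together with the relation $R_s^2 = v_s^2\Delta_{2\alpha}\Delta_{-2\alpha}c_\alpha c_{-\alpha}$ to propagate across $W_0$ and force the $\mu$-factor, and the pole/zero bookkeeping between $\mu$, $\Delta$, and the regular family $E_t$ needed to upgrade a formal identity over the localisation $Z'$ to an identity of convergent functionals on the stated open region. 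Two places need tightening. First, ``check on $A$ then propagate'' understates the reduction: by $A$-bi-equivariance both sides are determined by their values on $\Hc_0$, not on $A$, and the $R_s$-relations only hold after localising; the actual verification is an induction on $l(w)$ for $w\in W_0$, in which the $c$-functions produced by the $R_s$-commutation must be seen to cancel against the $c_\alpha c_{-\alpha}$ in $\mu$, and the boundary case $w=e$ pins the normalisation $E_t(1)=q(w_0)\Delta(t)$. Second, your ``geometric-series'' picture of convergence glosses the real estimate, which is a bound on the trace coefficients $\tau(\theta_x h)$ as $x$ ranges over $X$ (including its non-dominant part, where $\theta_x$ is not a single $N_w$); it is this growth estimate in the $N$-basis, not merely the shape of the $c$-function denominators, that yields absolute convergence on $\{|\alpha(t)|<\min\{(q_\alpha^-)^{-1},(q_\alpha^+)^{-1}\}\}$. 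Also, ``converges precisely in the region'' is a stronger claim than the theorem makes, which asserts only sufficiency of the stated condition. With these points filled in, the plan is sound and consistent with the route one would expect from the cited source.
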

\begin{cor}[\cite{Opd0}]\label{cor:dis}
We have the following disintegration of $\tau$ on $\Hc_\vt$:
\begin{equation}\label{eq:dis}
\tau=\int_{t_0T^\vt_u}\frac{E_t}{q(w_0)\Delta(t)}\mu(t)dt
\end{equation}
where $T^\vt_u$ denotes the compact form of $T^\vt$, and $t_0\in T^\vt_v$ is a real base point such that 
the inequality $|\alpha(t_0)|<\textup{min}\{(q_\alpha^-)^{-1},(q_\alpha^+)^{-1}\}$ holds. 
\end{cor}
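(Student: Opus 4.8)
The plan is to combine the convergence result of Theorem \ref{thm:conv} with a contour-shift argument, turning the algebraically defined trace $\tau$ into a Plancherel-type integral over the compact torus $t_0 T^\vt_u$. First I would recall that in the Bernstein presentation the functional $\tau \in \Hc_\vt^*$ is characterised by $\tau(N_w) = \delta_{w,e}\, d(\vt)$ and by $A$-bilinearity properties inherited from the defining relations; in particular $\tau$ restricted to the commutative subalgebra $A \cong \mathbb{C}_\vt[X]$ is, up to the normalising constant $d(\vt)$, evaluation of the ``constant term'' of a Laurent polynomial, i.e. the integral of its Fourier expansion over the compact torus $T^\vt_u$. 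The key point is that $\mathcal{E}_t = \sum_{x\in X} t(-x)\theta_x$ is precisely the reproducing kernel for this constant-term functional: for $h = \sum_x a_x \theta_x \in A$ one has $\langle \mathcal{E}_t, h\rangle = \sum_x a_x t(-x)$, so that formally
\begin{equation}
\tau|_A = d(\vt)\int_{T^\vt_u}\mathcal{E}_t\, dt.
\end{equation}
Theorem \ref{thm:conv} upgrades this formal identity to a genuine convergent integral once we move the contour of integration from $T^\vt_u$ into the region $|\alpha(t)| < \min\{(q_\alpha^-)^{-1},(q_\alpha^+)^{-1}\}$ where the series for $\mathcal{E}_t$ converges in the weak topology on $\Hc_\vt^*$; this is exactly the role of the base point $t_0 \in T^\vt_v$.

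Next I would extend the identity from $A$ to all of $\Hc_\vt$. Here one uses that $E_t$ is an $(A,A)$-bi-eigenfunctional, $E_t(ahb) = a(t)b(t)E_t(h)$ for $a,b \in A$, together with $\Hc_\vt = A\otimes \Hc_0 = \Hc_0\otimes A$ as bimodules (the Bernstein decomposition), to reduce any trace computation to the ``$A$-part'': the off-diagonal contributions of the finite-dimensional piece $\Hc_0$ vanish against $\tau$ because $\tau(N_w) = 0$ for $w \neq e$, and what survives is governed by the minimal principal series matrix coefficient family $E_t$. Substituting the formula $\mathcal{E}_t = \dfrac{E_t}{q(w_0)\Delta(t)}\mu(t)$ from Theorem \ref{thm:conv} then yields
\begin{equation}
\tau = \int_{t_0 T^\vt_u}\frac{E_t}{q(w_0)\Delta(t)}\mu(t)\,dt,
\end{equation}
which is the claimed disintegration, with the normalising constant $d(\vt)$ absorbed into the definition of $\mu$ (recall $\mu = \dfrac{d}{q(w_0)}\prod_\alpha (c_\alpha c_{-\alpha})^{-1}$).

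The main obstacle, and the step I would spend the most care on, is the \emph{contour shift}: one must justify that integrating the weakly convergent $\Hc_\vt^*$-valued function $t \mapsto \mathcal{E}_t$ over $t_0 T^\vt_u$ is independent of the admissible base point $t_0$ and recovers $\tau$, rather than picking up residual terms. This requires checking that the only potential poles of the integrand $\dfrac{E_t}{q(w_0)\Delta(t)}\mu(t) = \mathcal{E}_t$ in the relevant tube are precisely cancelled — i.e. that $\mathcal{E}_t$ is in fact holomorphic in $t$ on the region swept out between $T^\vt_u$ and $t_0 T^\vt_u$ — so that Cauchy's theorem applies with no correction. Since $\mathcal{E}_t$ is a \emph{convergent} power series there (by Theorem \ref{thm:conv}), it is automatically holomorphic, so the shift is clean; the subtlety is purely the bookkeeping of convergence in the weak topology on the infinite-dimensional space $\Hc_\vt^*$, which is exactly what Theorem \ref{thm:conv} was designed to handle. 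I would therefore present this corollary as a fairly direct consequence of Theorem \ref{thm:conv}, with the contour shift spelled out and the reduction to the $A$-part via the Bernstein bimodule decomposition made explicit.
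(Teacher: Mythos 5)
Your overall strategy is on target: the corollary is indeed an application of Fourier inversion on the compact torus, combined with the convergence statement of Theorem~\ref{thm:conv}. But the middle of your argument contains two unnecessary detours that introduce errors, and they would cause trouble in a careful write-up.

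First, the ``contour shift'' is not part of the argument and should be dropped. The equality to prove is directly an integral over the translated torus $t_0 T^\vt_u$, exactly where $\mathcal{E}_t$ converges. On that coset, characters of $T^\vt$ restrict to functions $t=t_0u\mapsto t_0(-x)u(-x)$, and orthogonality of characters on the \emph{compact} group $T^\vt_u$ gives $\int_{t_0 T^\vt_u} t(-x)\,dt = t_0(-x)\,\delta_{x,0}=\delta_{x,0}$. Substituting $\mathcal{E}_t(h)=\sum_{x\in X} t(-x)\,\tau(\theta_x h)$ (which is what the identification $\Hc_\vt\hookrightarrow\Hc_\vt^*$ via $\langle a,b\rangle=\tau(ab)$ produces) and interchanging sum and integral — justified by the convergence of Theorem~\ref{thm:conv} on $t_0T^\vt_u$ — yields $\tau(\theta_0 h)=\tau(h)$. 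There is no point where one integrates over $T^\vt_u$ itself and then moves the contour; in fact $\mathcal{E}_t$ is typically divergent on $T^\vt_u$, so your proposed Cauchy-theorem step has nothing to shift \emph{from}.

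Second, the reduction ``to the $A$-part'' via the Bernstein bimodule decomposition is both unnecessary and incorrect as stated. The pairing in play is $\langle\mathcal{E}_t,h\rangle=\sum_x t(-x)\tau(\theta_x h)$, not $\sum_x a_x t(-x)$ for $h=\sum a_x\theta_x$, since $\tau(\theta_x\theta_y)$ is not $\delta_{x,-y}$ in an affine Hecke algebra (e.g.\ $\tau(\theta_{-\alpha})\ne0$ in rank one). Moreover, the claim that ``the off-diagonal contributions of $\Hc_0$ vanish against $\tau$'' has no straightforward meaning once you multiply by elements of $A$. Fortunately none of this is needed: the Fourier-inversion computation above applies to an arbitrary $h\in\Hc_\vt$ directly, with the entire $\Hc_\vt$-structure hidden in the numbers $\tau(\theta_x h)$. (Similarly, the normalising factor $d(\vt)$ is already carried by $\mu$ in the formula $\mathcal{E}_t=\frac{E_t}{q(w_0)\Delta(t)}\mu(t)$, so your intermediate display $\tau|_A=d(\vt)\int_{T^\vt_u}\mathcal{E}_t\,dt$ is off by exactly that factor and by the domain of integration.) The paper's proof is the one-step Fourier-inversion argument; your ``obstacle'' and your ``reduction'' are the only places where you deviate, and both should be removed.
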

\begin{proof}
Immediate from Theorem \ref{thm:conv} by the Fourier inversion formula 
on $T^\vt_u$. 
\end{proof}
\subsection{Spectral decomposition of $\tau$}
The disintegration of $\tau$ given in Corollary \ref{cor:dis} is not yet a spectral decomposition 
because the matrix coefficients $E_t$ are neither tempered on $t_0T^\vt_u$, nor tracial (i.e. they do not 
vanish on commutators). To arrive at the spectral decomposition of $\tau$ several steps of refinement 
are necessary. The first step uses ``residue distributions" for integrals in the form (\ref{eq:dis}), 
and symmetrizing the resulting distributions on $T$ over $W_0$. This leads to a decomposition of the 
form \cite{Opd3}:
\begin{equation}\label{eq:dectau}
\tau=\int_{W_0t\in W_0\backslash T^{\vt}}\chi_{W_0t}d\nu(W_0t)
\end{equation}    
where 
\begin{enumerate}
\item[(i)] $\nu$ denotes the spectral measure of the decomposition of $\tau|_{Z_\vt}$, 
where $Z_\vt\subset \Hc_\vt$ denotes the center. We remark that $Z_\vt$ is invariant 
for $*$, and the restriction of $*$ and $\tau$ to $Z_\vt$ equips it with the structure of a 
commutative Hilbert algebra. 
\item[(ii)] The support of $\nu$ is denoted by 
$W_0\backslash T^{\vt,\textup{temp}}$. For each $W_0t$ in $W_0\backslash T^{\vt,\textup{temp}}$, 
$\chi_{W_0t}$ is a tempered positive trace of $\Hc_\vt$, with central character $W_0t$.
\item[(iii)] We have $T^{\vt,\textup{temp}}=\cup_{L\text{\ residual coset}}L^{\textup{temp}}$, where a 
coset $L\subset T^\vt$ of a subtorus is called \emph{residual} if 
\begin{equation}
\#\{\alpha\in\Sigma_0\mid \alpha|_{L}=\pm q_\alpha^\pm\}-
\#\{\alpha\in\Sigma_0\mid \alpha|_{L}=\pm 1\}=\textup{codim}(L)
\end{equation}
Furthermore if $L\subset T$ a residual coset, we define its tempered part $L^{\textup{temp}}\subset L$ 
as follows. Let $\Sigma_L\subset \Sigma_0$ be the parabolic subsystem of the roots which are 
constant on $L$. Let $T^L\subset T^\vt$ be the identity component of the simultaneous kernel 
of the $\alpha\in \Sigma_L$. Let $T_L\subset T^\vt$ be the subtorus associated with the 
subspace $\mathbb{C}\Sigma_L^\vee\subset \mathfrak{t}=\textup{Lie}(T^\vt)$. If 
$r_L\in T_L\cap L$  then $L=r_LT^L$. Now put $L^{\textup{temp}}=r_LT^L_u$ (this does not depend on 
the choice $r_L\in T_L\cap L$).
\end{enumerate}
The computation of the spectral decomposition of $\tau$ as trace on $\Hc^\vt$ now reduces to 
the problem of computing the measure $\nu$ explicitly, and for each $W_0t\in W_0\backslash T^{\vt,\textup{temp}}$, 
decomposing $\chi_{W_0t}$ as a (positive) superposition of irreducible tempered characters 
(with central character $W_0t$).   

In the special case of the discrete series of $\Hc^\vt$ we see that these correspond to the 
$W_0$-orbits of \emph{residual points}. This case is the basic building block for the spectral decomposition:
\begin{thm}[\cite{Opd3}] \label{thm:ds}
\begin{enumerate}
\item[(i)] An orbit $W_0r\subset W_0\backslash T^\vt$ is the central character 
of a discrete series representation $\pi$ of $\Hc^\vt$ if and only if $r$ is a residual point (a residual 
coset of dimension $0$). 
\item[(ii)] If $W_0r\subset T^\vt$ is an orbit of residual points then $\nu(\{W_0r\})=c\mu^{\{r\}}(r):=
c\frac{d(\vt)}{q(w_0)}m_r(\vt)$ (the residue of $\mu$ at $W_0r$), where 
$c\in\mathbb{Q}^\times$ and where the regularisation $\mu^{\{r\}}$ of $\mu$ at $r$ is defined by: 
\begin{equation}
\mu^{\{r\}}=\frac{d(\vt)}{q(w_0)}
{\prod_{\alpha\in \Sigma_0}}'\frac{(1-\alpha^{-2})}{(1+\alpha^{-1}/q_\alpha^-)(1-\alpha^{-1}/q_\alpha^+)}
=\frac{d(\vt)}{q(w_0)}m_r(\vt)
\end{equation}
where the symbol $\prod'$ means that all irreducible factors of the numerator and the 
denominator which become identically $0$ upon evaluation at $r\in T^\vt$ are omitted. 
\item[(iii)] We have $\chi_{W_0r}=\sum_{\delta\text{\ ds}, \textup{cc}(\delta)=W_0r}d_{\Hc,\delta}(\vt)\chi_\delta$ 
where $d_{\Hc,\delta}(\vt)>0$.
\item[(iv)] (Scaling invariance.) 
Define $\vt(\epsilon)$ by $\vt_s(\epsilon)=\vt_s^\epsilon$ for $\epsilon\in\mathbb{R}_+$. Every orbit 
of residual points $W_0r\in T^\vt$ has a unique extension to a real analytic $\epsilon$-family of orbits of residual 
points $W_0\tilde{r}$ such that $W_0\tilde{r}(\vt(1))=W_0r$. A discrete series character $\delta$ with 
$\textup{cc}(\delta)=W_0r$ has a unique extension to a continuous $\epsilon$-family of discrete series 
characters $\tilde{\delta}$ of $\Hc_{\vt(\epsilon)}$, and we have 
$\textup{cc}(\tilde{\delta}(\epsilon))=W_0\tilde{r}(\vt(\epsilon))$ for all $\epsilon>0$. This yields for all $\epsilon>0$ 
a canonical bijection between $\{\delta\text{\ ds\ of\ }\Hc_{\vt}\mid\textup{cc}(\delta)=W_0r\}$ and 
$\{\delta'\text{\ ds\ of\ }\Hc_{\vt(\epsilon)}\mid\textup{cc}(\delta')=W_0r(\epsilon)\}$. 
Then $d_{\Hc,\tilde{\delta}(\epsilon)}(\vt(\epsilon))$ is independent of $\epsilon>0$.
\item[(v)] If $\delta$ is a discrete series representation of $\Hc_\vt$ then 
$\textup{fdeg}(\delta)=d_{\Hc,\delta}\frac{d(\vt)}{q(w_0)} |m_r(\vt)|$
where $d_{\Hc,\delta}\in\mathbb{R}_+$ as defined in (iii) and $m_r$ as defined in (ii)
 (we will see below that in fact $d_{\Hc,\delta}\in\mathbb{Q}_+$). 
\end{enumerate}
\end{thm}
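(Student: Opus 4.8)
The plan is to derive this from the residue calculus applied to the disintegration of the trace in Corollary \ref{cor:dis}, following the method of \cite{Opd3}. The starting point is that the integrand $\frac{E_t}{q(w_0)\Delta(t)}\mu(t)$ is, by Theorem \ref{thm:conv}, a regular family of (non-tempered, non-tracial) minimal principal series matrix coefficients multiplied by the rational function $\mu(t)$, whose polar divisor lies in the union of the hyperplanes $\alpha(t)=\pm q_\alpha^\pm$. One deforms the cycle of integration from $t_0T^\vt_u$ toward the origin inside the positive chamber and, by a multidimensional contour-shift (residue) scheme organised by codimension, collects a finite sum of residue distributions supported on cosets of subtori of $T^\vt$. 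The bookkeeping shows that the cosets which actually carry a nonzero residue are precisely those $L$ for which
\[
\#\{\alpha\in\Sigma_0\mid \alpha|_{L}=\pm q_\alpha^\pm\}-\#\{\alpha\in\Sigma_0\mid \alpha|_{L}=\pm 1\}=\textup{codim}(L),
\]
i.e. the residual cosets, and that the distribution attached to $L$ is supported on the compact form $L^{\textup{temp}}$. After symmetrising over $W_0$ and regrouping by central character one obtains \eqref{eq:dectau} with $T^{\vt,\textup{temp}}=\cup_L L^{\textup{temp}}$ and $\nu$ the spectral measure of $\tau|_{Z_\vt}$. This geometric classification of residual cosets, and the matching of the combinatorial balance condition with ``carries a nonzero residue'', is the technical heart of the argument.

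For (i): the zero-dimensional residual cosets are exactly the residual points, and the classification shows these are isolated points of $T^{\vt,\textup{temp}}$. An isolated point $W_0r$ of $\textup{Spec}(Z_\vt)$ gives a central idempotent $e_{W_0r}$ in the $C^*$-completion of $\Hc_\vt$, so $\chi_{W_0r}$ is a positive multiple of $x\mapsto\tau(e_{W_0r}x)$, which is a \emph{positive} tempered trace because $e_{W_0r}$ is a positive central element. Since all irreducible $\Hc_\vt$-modules are finite dimensional, $e_{W_0r}$ cuts out a finite-dimensional semisimple quotient, a direct sum of matrix algebras indexed by the discrete series $\delta$ with $\textup{cc}(\delta)=W_0r$; reading off the matrix-algebra weights yields (iii) with $d_{\Hc,\delta}(\vt)>0$. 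This proves the ``if'' direction of (i) together with (iii); the ``only if'' direction is the fact, extracted from the residue calculus, that an atom of $\nu$ can only arise as a top-codimension residue, which forces $r$ to satisfy the residual-point balance condition.

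For (ii) I would compute the residue of $\mu$ at a residual point $r$ directly from its product formula: after discarding the root factors that vanish identically at $r$ (the primed product in the definition of $\mu^{\{r\}}$) and evaluating, one gets $\nu(\{W_0r\})=c\,\mu^{\{r\}}(r)=c\,\frac{d(\vt)}{q(w_0)}m_r(\vt)$, where $c$ is a product of elementary combinatorial factors coming from the iterated residue and the $W_0$-symmetrisation. The delicate point is $c\in\mathbb{Q}^\times$; I would deduce it by expressing the total atomic mass $\nu(\{W_0r\})\sum_\delta d_{\Hc,\delta}(\vt)$ through an Euler--Poincar\'e (elliptic) pairing of virtual $\Hc_\vt$-characters, which is manifestly rational, and comparing the $\vt$-dependencies of the two sides. (The finer statement that each individual $d_{\Hc,\delta}$ is rational, announced parenthetically in the theorem, is what the Dirac-induction input \cite{COT}, \cite{CO} supplies later in the manuscript.)

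Finally (iv): the generic algebra $\Hc_\Lambda$ over $\Lambda=\mathbb{Z}[v_s^{\pm1}]$ makes the whole construction depend real-analytically on $\vt$; the balance condition for residual points is scale-invariant, so each orbit extends uniquely to a real-analytic family $W_0\tilde{r}(\vt(\epsilon))$ along which $m_r$, hence $\nu(\{W_0r\})$, is a nowhere-vanishing real-analytic function. Non-vanishing of the atomic mass along the connected family $\epsilon\in\mathbb{R}_+$, together with a deformation/rigidity argument for the finite-dimensional quotients $e_{W_0\tilde{r}(\epsilon)}C^*(\Hc_{\vt(\epsilon)})$, forces the number of discrete series over the family to be locally constant, hence constant, and produces the canonical bijection of discrete series together with the constancy of $d_{\Hc,\tilde{\delta}(\epsilon)}(\vt(\epsilon))$. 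Part (v) is then immediate: matching the atomic part of $\tau=\int\chi_{W_0t}\,d\nu$ at $W_0r$ with the discrete-series part $\sum_\delta\textup{fdeg}(\delta)\chi_\delta$ of $\tau$ gives $\textup{fdeg}(\delta)=\nu(\{W_0r\})\,d_{\Hc,\delta}$, which combined with (ii) and the positivity of the Plancherel mass (forcing $c\,\mu^{\{r\}}(r)=\frac{d(\vt)}{q(w_0)}|m_r(\vt)|$) yields the asserted formula. The main obstacle is the residue calculus of the first paragraph — rigorously controlling the multidimensional contour shift and proving that the residue support is \emph{exactly} the family of residual cosets — with the rationality of $c$ in (ii) a close second.
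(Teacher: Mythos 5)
The paper does not prove Theorem \ref{thm:ds}; it cites \cite{Opd3} (and, for the analytic dependence on parameters, \cite{OpdSol}), and the surrounding text only sketches the residue scheme leading to \eqref{eq:dectau}. Your proposal reconstructs precisely that approach: deform the torus contour from Corollary \ref{cor:dis}, organise the shifts by codimension, identify residual cosets as the exact residue support, read off atoms of $\nu$ at residual points, and interpret the atomic contribution as a positive tempered trace cut out by a central idempotent in the $C^*$-completion. That is the correct skeleton, and (i), (iii), (v) are handled in the way \cite{Opd3} does.

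The one genuine soft spot is your argument for $c\in\mathbb{Q}^\times$ in (ii). You propose to write the total atomic mass $\nu(\{W_0r\})\sum_\delta d_{\Hc,\delta}(\vt)$ as an Euler--Poincar\'e pairing and "compare $\vt$-dependencies". This does not isolate $c$: what is a priori rational in that expression is the product $c\cdot\bigl(\sum_\delta d_{\Hc,\delta}\bigr)$ against $\tfrac{d(\vt)}{q(w_0)}m_r(\vt)$, and $\sum_\delta d_{\Hc,\delta}$ is only known to be a positive real at this stage (its rationality is exactly what the later Dirac-induction input \cite{COT}, \cite{CO} provides, as you correctly note at the end of the paragraph). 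In \cite{Opd3} the rationality of $c$ is established directly from the combinatorics of the iterated residue and the $W_0$-symmetrisation: the constant is built from orders of stabilizers, indices of sublattices of $X$, and Weyl group cardinalities, and is rational for those structural reasons, not via an elliptic pairing. This is a shorter and cleaner route than what you propose, and it also avoids the apparent circularity of using \ref{thm:ds}(iv) (constancy of $\sum_\delta d_{\Hc,\delta}$ along the scaling family) inside the proof of \ref{thm:ds}(ii).

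On (iv), your "rigidity of finite-dimensional quotients" argument is the right intuition, but in \cite{Opd3} this is made precise via the scaling isomorphisms on the Schwartz/$C^*$ completions (and in the generality of Theorem \ref{thm:def} via \cite{OpdSol}): one needs not just that the idempotent $e_{W_0\tilde{r}(\epsilon)}$ moves continuously, but that the full matrix-block decomposition of the corresponding finite-dimensional quotient varies continuously in $\epsilon$, which is a nontrivial analytic statement. You flag the residue calculus as the main obstacle, which is fair, but the scaling-isomorphism input for (iv) and the combinatorial (rather than elliptic-pairing) origin of $c$ in (ii) are the two places where your reconstruction diverges from the cited argument and would need to be repaired.
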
 
The non-discrete contributions to the spectral decomposition of $\tau$ can be obtained from the discrete 
summend of the spectral decomposition of the corresponding traces of ``parabolic subalgebras'' by a process 
of unitary parabolic induction, analogous to Harish-Chandra's theory of the Plancherel decomposition for 
reductive groups. More precisely we have \cite{Opd3}:
\begin{thm}\label{thm:parb}
Let $L=r_LT^L\subset T^\vt$ be a residual coset, such that $\Sigma_L\subset \Sigma_0$ is a standard 
parabolic subsystem. Let $\Hc^{P(L)}\subset \Hc$ be the subalgebra corresponding 
to the based root datum $P(L):=(\Delta_L,X,\Delta^\vee_L,Y)$. Let $X_L$ be the character lattice of $T_L$, and $Y_L\subset Y$ 
its dual. Let $\Hc_{P(L)}$ be the Hecke algebra with the semisimple based root datum 
$P_{ss}(L)=(\Delta_L,X_L,\Delta^\vee_L,Y_L)$
and Hecke parameters $q_\alpha^\pm$ obtained by restriction from $\Delta_0$ to $\Delta_L$. Given $t^L\in T^L$ there 
exists a homomorphism $\phi_{t^L}:\Hc^{P(L)}\to \Hc_{P(L)}$ defined by (in the Bernstein presentation) $N_w\to N_w$ for all 
$w\in W_L$, 
and $\theta_x\to x(t^L)\theta_{\textup{pr}(x)}$ where $\textup{pr}:X\to X_L$ is the canonical projection.
\begin{enumerate}
\item[(i)] Let $t=r_Lt^L\in L^{\textup{temp}}$ be a generic point, i.e. $c_\alpha$ defines a regular and invertible 
germ at $t$ for all $\alpha\in\Sigma_0\backslash\Sigma_L$. Then in (\ref{eq:dectau}) we have:  
\begin{equation}
|W_0/W_L|\chi_{W_0t}=
\sum_{\delta\in\hat{\Hc}_{P(L),ds}, \textup{cc}(\delta)=W_0r_L}\textup{Ind}_{\Hc^L}^\Hc(\chi_{L,W_Lr_L}\circ\phi_{t^L})
\end{equation}
 \item[(ii)] We have $\nu=\sum_{L\text{\ residual coset} }\nu_L$ where $\nu_L$ is the push forward of a measure on 
 $L^{\textup{temp}}$ given by $d\nu_L(t)=\mu^L(t)dt=c_L\mu_{\Hc_L}^{\{r_L\}}(r_L)m^L(t)dt^L$ with $c_L\in\mathbb{Q}_+$  and 
 \begin{equation}
m^L(t)=\frac{1}{q(w^L)}\prod_{\alpha\in \Sigma^L_+:=
\Sigma_{0,+}\backslash\Sigma_{L,+}}\frac{1}{c_\alpha(t)c_\alpha(t^{-1})}
\end{equation}
\end{enumerate}
\end{thm}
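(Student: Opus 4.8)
The plan is to follow the Harish-Chandra paradigm of reconstructing the Plancherel decomposition from the discrete series of parabolic ``Levi pieces'', here carried out through the residue calculus behind the disintegration of Corollary \ref{cor:dis}. Since the coarse decomposition (\ref{eq:dectau}) together with its support properties is already available, the remaining task is to identify, for each residual coset $L$, both the measure piece $\nu_L$ on $L^{\textup{temp}}$ and the local trace $\chi_{W_0t}$ at a generic $t\in L^{\textup{temp}}$; both come out of one source, namely a closer look at how (\ref{eq:dectau}) is produced. Starting from
\[
\tau=\int_{t_0T^\vt_u}\frac{E_t}{q(w_0)\Delta(t)}\mu(t)\,dt
\]
with $t_0$ deep in the negative chamber, I would deform the cycle of integration towards $T^\vt_u$. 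The integrand is a meromorphic family of functionals whose only singularities encountered along the way are the poles of $\mu$, on the hyperplanes $\alpha(t)=\pm q_\alpha^{\pm}$; each crossing contributes, via Cauchy's theorem, an iterated-residue distribution supported on a lower-dimensional coset, and a dimension bookkeeping shows that the surviving contributions sit precisely on the cosets $L$ for which the number of hyperplanes $\alpha(t)=\pm q_\alpha^{\pm}$ through $L$ minus the number of hyperplanes $\alpha(t)=\pm 1$ through $L$ equals $\textup{codim}(L)$ --- that is, on residual cosets. Symmetrising over $W_0$, which is legitimate because $\tau$ is tracial on the commutative Hilbert algebra $Z_\vt$, gives $\nu=\sum_L\nu_L$ with $\nu_L$ supported on $L^{\textup{temp}}$.

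For part (ii) I would carry the normalising constants through this residue computation. Restricting to $L$, the factorisation of $\mu$ over the roots splits the iterated residue into the ``$L$-intrinsic'' part $\mu_{\Hc_L}^{\{r_L\}}(r_L)$ --- exactly the residue of the $\mu$-function of the semisimple subalgebra $\Hc_{P(L)}$ at the residual point $r_L$, as computed in Theorem \ref{thm:ds}(ii) --- times the transverse factor $m^L(t)=q(w^L)^{-1}\prod_{\alpha\in\Sigma^L_+}c_\alpha(t)^{-1}c_\alpha(t^{-1})^{-1}$ along $T^L$, up to a rational constant $c_L$. Rationality of $c_L$ is inherited from Theorem \ref{thm:ds}(ii) for $\Hc_L$; positivity $c_L>0$ then follows because $\nu$ is a genuinely positive measure while $m^L$ has constant nonzero sign on $L^{\textup{temp}}$ and $\mu_{\Hc_L}^{\{r_L\}}(r_L)$ is a nonzero residue on its tempered cycle. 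This yields $d\nu_L(t)=\mu^L(t)\,dt=c_L\,\mu_{\Hc_L}^{\{r_L\}}(r_L)\,m^L(t)\,dt^L$.

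For part (i) I would localise $\Hc_\vt$ at the central character $W_0t$ for a generic $t=r_Lt^L\in L^{\textup{temp}}$, where by genericity $c_\alpha$ is a regular invertible germ at $t$ for every $\alpha\in\Sigma_0\setminus\Sigma_L$, so the intertwining elements $R_{s_\alpha}$ with $\alpha\notin\Sigma_L$ become invertible in this completion. Exactly as in the isomorphism $\Hc_\Lambda\otimes_{\mathcal{Z}}Z'\simeq A'\#W_0$, but now inverting only the $c_\alpha$ with $\alpha\notin\Sigma_L$, the completed algebra becomes Morita equivalent to the completion of the parabolic subalgebra $\Hc^{P(L)}$ at $W_Lt$; under this equivalence the restriction of $\tau$ is the unitarily parabolically induced trace $\Ind_{\Hc^{L}}^{\Hc}$ of the corresponding trace of $\Hc^{P(L)}$, the factor $|W_0/W_L|$ accounting for the $W_0$-orbit of $L$. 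Passing through $\phi_{t^L}\colon\Hc^{P(L)}\to\Hc_{P(L)}$, which implements the twist of the discrete series of $\Hc_{P(L)}$ by the ``unramified character'' $t^L\in T^L$, and then expanding the trace of $\Hc_{P(L)}$ at $W_Lr_L$ into its discrete series constituents via Theorem \ref{thm:ds}(iii), yields formula (i).

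The main obstacle is the interplay of the contour deformation with this local identification. The delicate analytic points are verifying that the poles of $E_t/(q(w_0)\Delta(t))$ created by $\Delta(t)$ are cancelled by zeros of $\mu(t)$, so that only the true $\mu$-poles contribute, and that the resulting iterated-residue functionals are tempered traces on $L^{\textup{temp}}$ with the stated central characters. The delicate algebraic point is showing that the localised $\tau$ is \emph{exactly} the parabolically induced trace with no correction term: this uses the precise interaction of the minimal-principal-series family $E_t$ of Theorem \ref{thm:conv} with the intertwining elements $R_s$, and the fact that $\phi_{t^L}$ is the correct twist. Granting these structural facts, the positivity and rationality of the constants, and hence both parts of the theorem, follow essentially formally from Theorem \ref{thm:ds} and the positivity of $\tau$.
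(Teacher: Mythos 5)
The paper gives no proof of this statement; it is quoted from \cite{Opd3}, so there is no argument in the present manuscript against which to compare. Your outline is a faithful high-level reconstruction of the strategy of that reference: (a) the residue-calculus deformation of the integration cycle $t_0T^\vt_u\to T^\vt_u$ producing iterated-residue functionals supported on cosets, with the dimension count selecting exactly the residual cosets; (b) $W_0$-symmetrization to produce the tracial local distributions $\chi_{W_0t}$; (c) factorization of the symmetrized residue along $L$ into the intrinsic formal-degree piece $\mu_{\Hc_L}^{\{r_L\}}(r_L)$ and the transverse factor $m^L(t)$; (d) localization at a generic central character and identification with parabolic induction from $\Hc^{P(L)}$, twisted through $\phi_{t^L}$. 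One small imprecision: the $W_0$-symmetrization is not justified merely because ``$\tau$ is tracial on $Z_\vt$''; in \cite{Opd3} one needs a uniqueness statement for the residue data (the local distributions are determined by the pairing with $Z_\vt$ only after symmetrization, and this requires the main residue lemma). The two ``delicate points'' you flag --- cancellation of the spurious $\Delta(t)$-poles against zeros of $\mu$, temperedness of the local distributions, and the exactness (not mere proportionality) of the localized trace with the induced one --- are indeed where most of the technical work in \cite{Opd3} is concentrated, and a complete proof would require importing that machinery (in particular the Casselman-type criterion for temperedness of modules over affine Hecke algebras). As an outline, your proposal is correct and consistent with the cited proof.
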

\begin{cor}
The explicit spectral decomposition of the trace $\tau$ on $\Hc_\vt$  
reduces, by Theorem \ref{thm:parb}, to the classification of the discrete series of the standard parabolic semisimple 
subquotient Hecke algebras $\Hc_{P}$ of $\Hc$, and the computation of their formal degree.
This reduces further to the classification of the set of $W_0$-orbits of residual points $\{W_0r\}$, and 
of the finite set of discrete series characters $\delta$ with $\textup{cc}(\delta)=W_0r$
(which has been carried out in \cite{OpdSol}),  
and the computation of the constants $d_{\Hc_P,\delta}\in\mathbb{R}_+$ (carried out in \cite{CO}).
\end{cor}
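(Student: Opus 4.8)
The plan is to assemble the statements established above into the asserted chain of reductions, keeping track at each stage of which finite pieces of data remain undetermined. First I would start from the disintegration of Corollary \ref{cor:dis} and pass to the spectral form (\ref{eq:dectau}) by the residue-distribution calculus for integrals of the shape (\ref{eq:dis}), followed by symmetrisation over $W_0$; this yields $\tau=\int_{W_0\backslash T^{\vt,\textup{temp}}}\chi_{W_0t}\,d\nu(W_0t)$ with support the union $\bigcup_L W_0\backslash L^{\textup{temp}}$ over $W_0$-orbits of residual cosets $L\subset T^\vt$. Making this explicit then amounts to two tasks: (a) identifying the measure $\nu$ on each stratum $W_0\backslash L^{\textup{temp}}$, and (b) decomposing each tempered trace $\chi_{W_0t}$ into irreducible tempered characters.

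For (b) I would invoke Theorem \ref{thm:parb}(i): after choosing a standard-parabolic representative for $\Sigma_L$, a generic point $t=r_Lt^L\in L^{\textup{temp}}$ has $\chi_{W_0t}$, up to the factor $|W_0/W_L|$, equal to a sum of characters $\textup{Ind}_{\Hc^L}^{\Hc}(\chi_{L,W_Lr_L}\circ\phi_{t^L})$ parabolically induced from the tempered trace of the semisimple subquotient $\Hc_{P(L)}$ with central character $W_0r_L$. Since such a tempered trace decomposes as $\chi_{L,W_Lr_L}=\sum_\delta d_{\Hc_{P(L)},\delta}(\vt)\chi_\delta$ over the discrete series $\delta$ with $\textup{cc}(\delta)=W_0r_L$ by Theorem \ref{thm:ds}(iii), the decomposition of $\chi_{W_0t}$ into irreducibles is pinned down once one knows these finitely many $\delta$ together with the constants $d_{\Hc_{P(L)},\delta}$.

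For (a), Theorem \ref{thm:parb}(ii) gives $\nu=\sum_L\nu_L$ with $d\nu_L(t)=c_L\,\mu_{\Hc_L}^{\{r_L\}}(r_L)\,m^L(t)\,dt^L$, where $c_L\in\mathbb{Q}_+$ and $m^L$ is the explicit product of Harish-Chandra $c$-functions written there; the only non-elementary ingredient is the residue $\mu_{\Hc_L}^{\{r_L\}}(r_L)$, which by Theorem \ref{thm:ds}(ii),(v) is, up to the rational constant $d_{\Hc_{P(L)},\delta}$, precisely the formal degree of the corresponding discrete series of $\Hc_{P(L)}$. Finally I would apply Theorem \ref{thm:ds}(i) to identify the central characters $W_0r_L$ that actually occur with the $W_0$-orbits of residual points attached to the semisimple root datum $P_{ss}(L)$.

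Putting these steps together shows that the explicit spectral decomposition of $\tau$ is controlled by exactly three ingredients: the classification of the $W_0$-orbits of residual points, the finite set of discrete series characters lying over each such orbit, and the constants $d_{\Hc_P,\delta}$; the first two are carried out in \cite{OpdSol} and the last in \cite{CO}, which gives the claim. I do not expect a genuine obstacle, since all the analytic work sits inside Theorems \ref{thm:ds} and \ref{thm:parb} and the corollary is essentially a bookkeeping statement. The one point needing a little care is to check that the union $\bigcup_L W_0\backslash L^{\textup{temp}}$, taken with the multiplicities implicit in Theorem \ref{thm:parb}, reproduces the left-hand side of (\ref{eq:dectau}) without overcounting; this is ensured by the factor $|W_0/W_L|$ and the genericity hypothesis on $t$ in Theorem \ref{thm:parb}(i).
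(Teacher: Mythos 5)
Your proposal is correct and fills in precisely the bookkeeping that the paper leaves implicit (the corollary is stated without a separate proof as an immediate consequence of Theorems \ref{thm:ds} and \ref{thm:parb}). The chain you describe — disintegration (\ref{eq:dectau}), then \ref{thm:parb}(i) to reduce $\chi_{W_0t}$ to discrete series traces of the parabolic subquotients, then \ref{thm:ds}(iii),(v) and \ref{thm:parb}(ii) to reduce to residual points, the finite sets of discrete series over each, and the constants $d_{\Hc_P,\delta}$ — is exactly the intended reduction, and your remark about the $|W_0/W_L|$ normalisation and genericity handling multiplicities is the right caveat.
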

\begin{rem}
In the context of Hecke algebras of a type of a reductive 
group over a non-archimedean local field $F$, changing the  
base field to an unramified extension of $F$ of degree $n$ 
corresponds to the scaling $\vt\to \vt(n)$. This explains 
the importance of the scaling invariance properties.   
\end{rem}
\subsection{Residual cosets and their properties}
Given the importance of residual subspaces for the spectral decomposition of $\tau$ we 
discuss some of their properties \cite{Opd3,Opd4,Opds} and \cite{OpdSol}.
\begin{thm} 
For every coset of a subtorus $L\subset T^\vt$ we have 
\begin{equation}
\#\{\alpha\in\Sigma_0\mid \alpha|_{L}=\pm q_\alpha^\pm\}-
\#\{\alpha\in\Sigma_0\mid \alpha|_{L}=\pm 1\}\leq \textup{codim}(L)
\end{equation}
\end{thm}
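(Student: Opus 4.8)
The left--hand side is nothing but the order of vanishing of $\mu^{-1}$ along $L$. Indeed $c_\alpha c_{-\alpha}$ is a ratio whose numerator is a product of four linear factors of the shape $(1-\theta_{\pm\alpha}/q)$ with $q\in\{\pm q_\alpha^+,\pm q_\alpha^-\}$ and whose denominator is $\Delta_{2\alpha}\Delta_{-2\alpha}=(1-\theta_{2\alpha})(1-\theta_{-2\alpha})$; collecting orders along $L$, each numerator factor that vanishes identically on $L$ contributes $+1$ to $\mathrm{ord}_L(\mu^{-1})$, while a positive root $\alpha$ with $\alpha|_L=\pm1$ kills both $\Delta_{2\alpha}$ and $\Delta_{-2\alpha}$ and so contributes $-2$. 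Since $\#\{\alpha\in\Sigma_0\mid\alpha|_L=\pm1\}=2\,\#\{\alpha\in\Sigma_0^+\mid\alpha|_L=\pm1\}$, the assertion is precisely that $\mathrm{ord}_L(\mu^{-1})\le\textup{codim}(L)$ (equality being the definition of a residual coset), and this is the point of view I would take throughout.

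First I would reduce to the ``residual point'' situation. Let $\Sigma_L\subset\Sigma_0$ be the parabolic subsystem of roots that are constant on $L$; replacing $L$ by $r_LT^L$, with $T^L$ the identity component of $\bigcap_{\alpha\in\Sigma_L}\ker\alpha$ and $r_L\in T_L\cap L$ as in the discussion of residual cosets above, leaves $\Sigma_L$ and the displayed count unchanged while replacing $\textup{codim}(L)$ by $\textup{rank}(\Sigma_L)\le\textup{codim}(L)$. So we may assume $L=r_LT^L$ with $r_L\in T_L$. Writing $r_L=cs$ with $c$ a positive real point and $s$ in the compact form, only roots with $\alpha(s)=\pm1$ can occur in either set, and for those the two conditions become conditions on $c$ alone; passing to the root subsystem $\{\alpha\in\Sigma_L\mid\alpha(s)=\pm1\}$ and then to its irreducible components (both sides are additive, and $\textup{rank}$ can only drop), the statement reduces to an inequality $\mathrm{ord}_c(\mu^{-1})\le n$ for an irreducible root system of rank $n$ and a real point $c$. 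Equivalently, in logarithmic coordinates $\xi=\log c$: the number of affine ``pole hyperplanes'' $\{\langle\alpha,\cdot\rangle=\pm\log q_\alpha^\pm\}$ of $\mu^{-1}$ through $\xi$, minus the number of linear ``zero hyperplanes'' $\alpha^{\perp}$ through $\xi$, is at most $n$.

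This core estimate I would prove by induction on the rank $n$. Using that $\mu$ is $W_0$-invariant, normalise $\xi$ into the closed dominant chamber. If no pole hyperplane of $\mu^{-1}$ passes through $\xi$, then $\mathrm{ord}_\xi(\mu^{-1})\le 0\le n$ and we are done. Otherwise pick a pole hyperplane through $\xi$ attached to a root $\beta$ chosen maximal (a highest root of an irreducible component of the subsystem generated by the pole--roots through $\xi$), project $\xi$ onto $\beta^{\perp}$, and compare $\mathrm{ord}_\xi(\mu^{-1})$ with the order along the projected point of the $\mu$-function of the corank--one subsystem $\Sigma_0\cap\beta^{\perp}$. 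The $W_0$-invariance of $\mu$ constrains the values $\log q_\alpha^{\pm}$ rigidly enough — this is exactly where the shape of the affine Dynkin diagram, and in particular the exceptional case of the Remark in which some $q_\alpha^-\neq1$, enters — to guarantee that passing to $\beta^{\perp}$ lowers the net pole order by at least one, so that the inductive hypothesis applies.

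The main obstacle is precisely this last step: controlling, uniformly in the (possibly unequal) parameters, which affine pole hyperplanes of $\mu^{-1}$ can meet a given point of $T^\vt$. In \cite{Opd3} and \cite{OpdSol} this is done by first reducing, as above, to residual \emph{points} — the cosets where equality holds — and then classifying them; under that classification residual points correspond to distinguished unipotent classes in pseudo--Levi subgroups of $G^\vee$ in the sense of Bala--Carter--Lusztig--Spaltenstein, and the quantity $\#\{\alpha\mid\alpha(r)=\pm q_\alpha^\pm\}-\#\{\alpha\mid\alpha(r)=\pm1\}$ turns into a dimension count for the associated orbit, which is bounded by $\textup{rank}(\Sigma_0)$. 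There is also an a posteriori ``spectral'' reading: a pole of $\mu$ of order exceeding $\textup{codim}(L)$ along $L$ would force the residue distribution of $\tau$ along $L$, in the residue calculus underlying Corollary \ref{cor:dis} and Theorem \ref{thm:ds}, to be too singular to be a finite positive combination of characters of finite--dimensional $\Hc_\vt$-modules; but since that calculus is itself built on the present inequality, I would keep the inductive route as the actual proof.
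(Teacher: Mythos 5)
Your opening reductions are correct, and they are also the ones used in \cite{Opd3}: the left-hand side computes the order of vanishing of $\mu^{-1}$ along $L$ (once the cancellations between $\Delta_{\pm 2\alpha}$ and the rank-one numerator factors are accounted for), replacing $L$ by $r_LT^L$ replaces $\textup{codim}(L)$ by the sharper bound $\textup{rank}(\Sigma_L)$, and splitting $r_L=cs$ into real and compact parts reduces the problem to a real point of an irreducible subsystem. The paper gives no proof of this theorem and defers to \cite{Opd3}, \cite{Opd4}, \cite{OpdSol}, so what has to be assessed is whether your inductive core is sound. As stated it is not: the step ``project $\xi$ onto $\beta^\perp$, compare with the corank-one subsystem $\Sigma_0\cap\beta^\perp$, and use that this lowers the pole order by at least one'' already fails at rank $2$. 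Take $\Sigma_0$ of type $A_2$ with equal parameter $q$ and let $\xi$ be the regular residual point with $\alpha_1(\xi)=\alpha_2(\xi)=q$. No two roots of $A_2$ are orthogonal, so for either pole root $\beta=\alpha_i$ the set $\Sigma_0\cap\beta^\perp$ is empty (rank $0$, not corank one), the associated $\mu$-function has order $0$, while the order of $\mu^{-1}$ at $\xi$ is $2$: the drop is $2$, not at most $1$. Your rule for choosing $\beta$ also misfires here, since the highest root $\alpha_1+\alpha_2$ of the subsystem generated by the pole roots through $\xi$ is not itself a pole root. More generally, the contribution of the roots outside $\beta^\perp$ can be as large as $n-\textup{rank}(\Sigma_0\cap\beta^\perp)$, so no uniform drop of one can be extracted from the $W_0$-invariance of $\mu$ alone; the bound is a global feature of the arrangement, not a local one.

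You correctly identify where the cited references go after the reduction to residual points: the inequality is obtained from an explicit classification of residual points for each irreducible root datum, carried out in \cite{OpdSol}, with the equal-parameter case governed by the Bala--Carter classification of distinguished unipotent classes in pseudo-Levi subgroups. Your closing remark that a spectral proof from positivity of the residue distribution would be circular is also right. But since the inductive route you propose breaks already at rank $2$, choosing to ``keep the inductive route as the actual proof'' leaves the core estimate, and hence the theorem, unproved.
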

\begin{prop}\label{prop:par}
Let $L\subset T$ be a residual coset of the subtorus $T^L$, with 
$\frak{t}^L:=\textup{Lie}(T^L)=\Sigma_L^\perp$. Let $T_L\subset T$
be the subtorus such that $\frak{t}_L:=\textup{Lie}(T_L)=\mathbb{R}\Sigma_L^\vee$. 
Then $T=T_LT^L$ and $T^L\cap T_L=K_L$ is a finite abelian group. Moreover  
$L\cap T_L=K_Lr_L$ for a residual point $r_L\in T_L$ of $\Hc_L$.
\end{prop}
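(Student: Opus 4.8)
The plan is to reduce the statement to linear algebra on $\mathfrak{t}=\textup{Lie}(T^\vt)$, using two ingredients: a $W_0$-invariant inner product, and the fact that in the residual-coset identity defining $L$ only the roots of $\Sigma_L$ can contribute. First I would fix a $W_0$-invariant inner product on $V=\mathbb{R}\otimes X$; it is nondegenerate on the subspace $\mathbb{R}\Sigma_L$, and under the induced identification $V\cong\mathbb{R}\otimes Y$ (extended $\mathbb{C}$-bilinearly to $\mathfrak{t}$) it carries $\mathbb{C}\Sigma_L$ onto $\mathbb{C}\Sigma_L^\vee=\mathfrak{t}_L$. I claim $\mathfrak{t}=\mathfrak{t}_L\oplus\mathfrak{t}^L$: by construction $\mathfrak{t}^L=\Sigma_L^\perp$ is the annihilator of $\Sigma_L$, so $\dim\mathfrak{t}^L=\dim\mathfrak{t}-\textup{rank}(\Sigma_L)=\dim\mathfrak{t}-\dim\mathfrak{t}_L$; and if $x\in\mathfrak{t}_L\cap\mathfrak{t}^L$, then the element of $\mathbb{C}\Sigma_L$ corresponding to $x$ is orthogonal to $\Sigma_L$, hence zero by nondegeneracy, so $x=0$. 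From $\mathfrak{t}=\mathfrak{t}_L\oplus\mathfrak{t}^L$ it follows that $T=T_LT^L$ (the image of the connected group $T_L\times T^L$ under the multiplication map is a closed connected subgroup of $T$ whose Lie algebra is all of $\mathfrak{t}$), and that $K_L:=T_L\cap T^L$, being a closed subgroup of the abelian torus $T$ with trivial Lie algebra, is a finite abelian group.

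Next I would locate the point $r_L$. Writing $L=t_0T^L$ with $t_0\in T$ and using $T=T_LT^L$ to factor $t_0=r_Lb$ with $r_L\in T_L$ and $b\in T^L$, we get $L=r_LbT^L=r_LT^L$ with $r_L\in L\cap T_L$; and then $L\cap T_L=r_L(T^L\cap T_L)=r_LK_L=K_Lr_L$, which is the asserted description (this also reproves the relation $L=r_LT^L$ used earlier).

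Finally I would check that $r_L$ is a residual point of $\Hc_L$. The key observation is that any $\alpha\in\Sigma_0\setminus\Sigma_L$ is, by the very definition of $\Sigma_L$, nonconstant on $L$, so $\alpha|_L$ cannot equal any of the constants $\pm1$ or $\pm q_\alpha^{\pm}$; therefore in the defining identity
\[
\#\{\alpha\in\Sigma_0\mid\alpha|_{L}=\pm q_\alpha^{\pm}\}-\#\{\alpha\in\Sigma_0\mid\alpha|_{L}=\pm 1\}=\textup{codim}(L)
\]
both multisets are contained in $\Sigma_L$, and for $\alpha\in\Sigma_L$ the constant value $\alpha|_L$ equals $\alpha(r_L)$ since $r_L\in L$. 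On the right, $\textup{codim}_T(L)=\dim\mathfrak{t}-\dim\mathfrak{t}^L=\textup{rank}(\Sigma_L)=\dim T_L$; and because the image of $\Sigma_L$ in the character lattice $X_L$ of $T_L$ still has full rank (again by nondegeneracy of the form on $\mathbb{R}\Sigma_L$), this equals $\textup{codim}_{T_L}(\{r_L\})$. Since by definition the Hecke parameters of $\Hc_L$ attached to $\alpha\in\Sigma_L$ are the restrictions of $q_\alpha^{\pm}$, the identity becomes
\[
\#\{\alpha\in\Sigma_L\mid\alpha(r_L)=\pm q_\alpha^{\pm}\}-\#\{\alpha\in\Sigma_L\mid\alpha(r_L)=\pm 1\}=\dim T_L,
\]
which is exactly the condition for $r_L\in T_L$ to be a residual point of $\Hc_L$. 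The only inputs that are not purely formal are the facts (from \cite{Opd3}) that $\Sigma_L$ is a parabolic subsystem of $\Sigma_0$ and that a nondegenerate $W_0$-invariant form exists; granting these, the one place that needs care — the ``main obstacle'' such as it is — is the bookkeeping in this last step, namely verifying that the passage from $T$ to $T_L$ preserves all roots of $\Sigma_L$ and matches the two codimensions, so that the residual-coset identity for $L$ literally turns into the residual-point identity for $r_L$ in $\Hc_L$.
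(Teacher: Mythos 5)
The paper states this Proposition without a proof, citing it from [Opd3] (Opdam, \emph{On the spectral decomposition of affine Hecke algebras}), so there is no in-paper argument to compare against. On its own merits your proof is correct and complete: the direct-sum decomposition $\mathfrak{t}=\mathfrak{t}_L\oplus\mathfrak{t}^L$ follows either from your invariant inner product or, more directly, from the invertibility of the Cartan matrix of $\Sigma_L$ (so one can avoid the inner product altogether in that step); the factorization $T=T_LT^L$ and the finiteness of $K_L$ then follow as you say; and the reduction of the residual-coset identity for $L$ to the residual-point identity for $r_L$ in $\Hc_L$ is exactly the right bookkeeping, with the observation that roots outside $\Sigma_L$ cannot contribute being the key point.

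One small remark: you treat $\mathfrak{t}^L=\Sigma_L^\perp$ as given ``by construction,'' which is consistent with the proposition's wording. In [Opd3] this equality is itself a nontrivial consequence of the residual-coset condition together with the inequality recorded in the theorem just before this proposition (if $\mathfrak{t}^L\subsetneq\Sigma_L^\perp$, one would get a strictly larger coset $L'\supset L$ supported by the same multiset of constant roots, violating the upper bound $\leq\textup{codim}(L')$). Since the paper's statement appears to present it as part of the hypothesis, your reading is defensible; but it is worth being aware that in the source the rank equality $\textup{rank}(\Sigma_L)=\textup{codim}(L)$ has to be proved.
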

 \begin{cor}
 There exists only finitely many residual cosets $L\subset T^\vt$.
 \end{cor}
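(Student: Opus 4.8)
The plan is to exhibit every residual coset as a connected component of the common zero locus in $T^\vt$ of a small system of equations of the form $\alpha(t)=\pm q_\alpha^\pm$; since there are only finitely many such systems, finiteness follows. So let $L\subset T^\vt$ be a residual coset, put $k=\textup{codim}(L)$, and set $A=\{\alpha\in\Sigma_0\mid \alpha|_L\equiv\pm q_\alpha^\pm\}$ and $B=\{\alpha\in\Sigma_0\mid \alpha|_L\equiv\pm1\}$, so that by definition of residuality $\#A-\#B=k$; note $A,B\subset\Sigma_L$, the parabolic subsystem of roots constant on $L$, and that $\Sigma_L$ has rank $k$. If I knew that $A$ contained a linearly independent subset $A_0$ with $\#A_0=k$, I would be done: for $r\in L$ the locus $Z:=\{t\in T^\vt\mid\alpha(t)=\alpha(r)\ \text{for}\ \alpha\in A_0\}$ is a coset of the codimension-$k$ subgroup $\bigcap_{\alpha\in A_0}\ker\alpha$, hence has finitely many connected components, each of dimension $\dim T^\vt-k=\dim L$; since $L$ is connected and $L\subset Z$, it equals one of these components. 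As $A_0$ ranges over the finitely many linearly independent subsets of $\Sigma_0$ and the values $\alpha(r)=\pm q_\alpha^\pm$ over the finitely many admissible sign choices, only finitely many cosets $L$ can occur.

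The whole matter therefore reduces to the linear-independence (equivalently spanning) statement: the roots in $A$ span a subspace of dimension $k$, i.e. the same $\QQ$-span as $\Sigma_L$. This is the step I expect to be the crux, and it is where the general inequality of the Theorem just before Proposition \ref{prop:par} enters. Suppose the span of $A$ had dimension $m<k$, and let $S=(\bigcap_{\alpha\in A}\ker\alpha)^\circ\subset T^\vt$, a subtorus of dimension $\dim T^\vt-m$; then $M:=rS$ is a coset of $S$ of codimension $m$ which contains $L$ (because $T^L\subset S$, as $A\subset\Sigma_L$), and every $\alpha\in A$ is constant on $M$ with value $\alpha(r)=\pm q_\alpha^\pm$. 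Hence at least $\#A$ roots satisfy $\alpha|_M\equiv\pm q_\alpha^\pm$; on the other hand any root identically $\pm1$ on $M$ is a fortiori identically $\pm1$ on $L\subset M$, so the roots with $\alpha|_M\equiv\pm1$ form a subset of $B$ and number at most $\#B$. Thus the left-hand side of the inequality of that Theorem, evaluated at $M$, is at least $\#A-\#B=k$, which strictly exceeds $\textup{codim}(M)=m$, a contradiction. Hence $m=k$.

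Beyond invoking that inequality the only work is the elementary bookkeeping just indicated — perturbing $r$ inside $M$ keeps all of $A$ at its special values while producing no new $\pm1$-roots outside $B$ — together with the routine fact that a coset of a codimension-$k$ subgroup of a torus has finitely many connected components. As an alternative organising principle one could instead route the argument through Proposition \ref{prop:par}, which identifies each residual coset, up to the finite group $K_L$ and the finitely many choices of $\Sigma_L$, with a residual point $r_L\in T_L$ of the semisimple subquotient algebra $\Hc_L$, so that finiteness follows from the finiteness of the residual points of each $\Hc_L$, established in the classification of \cite{OpdSol}; but the self-contained argument above seems to me the cleanest route.
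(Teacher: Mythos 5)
Your argument is correct, and it is genuinely different in organisation from the route the paper takes. The paper states the corollary immediately after Proposition \ref{prop:par}, and the implicit proof is the one you describe as the ``alternative organising principle'': Proposition \ref{prop:par} reduces a residual coset $L$ to a residual \emph{point} $r_L\in T_L$ of the parabolic subquotient $\Hc_L$ (up to the finite group $K_L$), there are finitely many parabolic subsystems $\Sigma_L\subset\Sigma_0$, and the residual points of each $\Hc_L$ are finite in number by the classification of \cite{OpdSol}. Your route instead works directly from the inequality theorem preceding Proposition \ref{prop:par}: you show that the set $A$ of roots taking a $\pm q_\alpha^\pm$ value on $L$ must span a subspace of dimension exactly $\operatorname{codim}(L)$ (the inequality applied to the auxiliary coset $M=rS$ with $S=(\bigcap_{\alpha\in A}\ker\alpha)^\circ$ would otherwise be violated, since passing from $L$ to the larger $M$ can only keep all of $A$ at its special value while shrinking the set of $\pm 1$-roots), and then finiteness is bookkeeping: $L$ is a connected component of one of finitely many loci $\{\alpha(t)=\pm q_\alpha^\pm\ :\ \alpha\in A_0\}$ as $A_0$ runs over linearly independent subsets of $\Sigma_0$ and the admissible sign/value choices. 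This is more elementary and self-contained --- it treats all codimensions (including $0$) at once and never invokes the classification --- while the paper's route buys the sharper structural information of Proposition \ref{prop:par} as a by-product. One minor presentational point: you assert up front that ``$\Sigma_L$ has rank $k$,'' but this is not an a priori observation --- it is a consequence of the spanning claim you prove for $A\subset\Sigma_L$, so you should either delete the remark or state it as a corollary of the argument; it is unused in the proof, so the logic is unaffected.
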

 \begin{cor}
 If $L,M\subset T$ are residual cosets then $L^{\textup{temp}}\subset M^{\textup{temp}}$ if 
 and only if $L=M$. 
 \end{cor}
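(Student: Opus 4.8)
The plan is to reduce the non-trivial implication to a statement about residual \emph{points}, and to settle that by invoking the known classification of $W_0$-orbits of residual points.

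First I would dispose of the easy points. If $L=M$ then trivially $L^{\textup{temp}}\subseteq M^{\textup{temp}}$. Conversely, assume $L^{\textup{temp}}\subseteq M^{\textup{temp}}$. Since $T^L_u$ is Zariski dense in $T^L$, the set $L^{\textup{temp}}=r_LT^L_u$ is Zariski dense in $L=r_LT^L$, and likewise for $M$; hence $L=\overline{L^{\textup{temp}}}\subseteq\overline{M^{\textup{temp}}}=M$. Translating by $r_L\in L\subseteq M$ gives $T^L\subseteq T^M$, hence $\mathfrak{t}^L=\Sigma_L^{\perp}\subseteq\Sigma_M^{\perp}=\mathfrak{t}^M$, hence $\Sigma_M\subseteq\Sigma_L$ (both are parabolic, so each equals $\Sigma_0$ intersected with its own span). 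Moreover every $\alpha\in\Sigma_M$ takes one and the same constant value on $M$ and on $L\subseteq M$, so $\alpha(r_L)=\alpha(r_M)$; writing $T^\vt=T^\vt_u\times T^\vt_{\textup{rs}}$ for the decomposition into the compact form and the real split form (on which all characters are positive reals) and $|t|\in T^\vt_{\textup{rs}}$ for the polar part of $t$, all points of $L^{\textup{temp}}$ (resp. $M^{\textup{temp}}$) have polar part $|r_L|$ (resp. $|r_M|$), so $|r_L|=|r_M|$; in particular $\log|r_L|=\log|r_M|\in\mathbb{R}\Sigma_M^{\vee}$. Finally, if I can show $\dim L=\dim M$, then $T^L\subseteq T^M$ are subtori of equal dimension, hence equal, so $L^{\textup{temp}}=r_LT^M_u$ and $M^{\textup{temp}}=r_MT^M_u$ are cosets of $T^M_u$ with one inside the other, hence equal, and taking Zariski closures gives $L=M$.

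The core is therefore to prove $\Sigma_M=\Sigma_L$ (equivalently $\dim L=\dim M$). By Proposition \ref{prop:par}, $r_L$ is a residual point of $\Hc_L$, which has a semisimple root datum with root system $\Sigma_L$, and $r_M$ is a residual point of $\Hc_M$, whose root system is $\Sigma_M$; since $\Sigma_M\subseteq\Sigma_L$ is a parabolic subsystem and $\alpha(r_L)=\alpha(r_M)$ for all $\alpha\in\Sigma_M$, the point of $T_M$ with coordinates $(\alpha(r_L))_{\alpha\in\Sigma_M}$ — which is $r_M$ — is a residual point of $\Hc_M$, while $\log|r_L|\in\mathbb{R}\Sigma_M^{\vee}$. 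It thus suffices to establish: \emph{if $r$ is a residual point of a semisimple root datum with root system $\Sigma$, and $\Sigma'\subseteq\Sigma$ is a parabolic subsystem such that $\log|r|\in\mathbb{R}(\Sigma')^{\vee}$ and $r$ restricts to a residual point of $\Sigma'$, then $\Sigma'=\Sigma$.} Applied to $\Hc_L$ (with $\Sigma=\Sigma_L$, $\Sigma'=\Sigma_M$, $r=r_L$) this yields $\Sigma_M=\Sigma_L$, and with the reductions above, $L=M$.

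The step I expect to be the genuine obstacle is this last statement about residual points. It is not formal: the polar part of a residual point may perfectly well lie in the coroot span of a proper parabolic subsystem, and it is the combination with the hypothesis that $r$ restricts to a residual point of $\Sigma'$ that does the work — under both hypotheses, the $\mathfrak{sl}_2$-triple (or, for unequal parameters, the appropriate distinguished datum in the dual algebra) attached to $r$ would be forced into the Levi subalgebra cut out by $\Sigma'$, contradicting that a residual point of $\Sigma$ is not induced from a proper Levi. Making this precise requires the explicit description of the $W_0$-orbits of residual points and of their polar parts from \cite{OpdSol} and \cite{Opd3}, together with the customary bookkeeping of signs and of $q_\alpha^{\pm}$ versus $(q_\alpha^{\pm})^{-1}$ in the residuality condition when the Hecke parameters are not all equal.
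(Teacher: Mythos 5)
Your reductions are sound. The first block is correct: Zariski density of the compact torus in the complex torus gives $L\subseteq M$; from $L\subseteq M$ one gets $T^L\subseteq T^M$ and $\Sigma_M\subseteq\Sigma_L$ parabolic; the polar decomposition gives $|r_L|=|r_M|$, hence $\log|r_L|\in\mathbb{R}\Sigma_M^\vee$ since $r_M\in T_M$; and $\alpha(r_L)=\alpha(r_M)$ for $\alpha\in\Sigma_M$. (Incidentally, once you know $L\subseteq M$ and $\dim L=\dim M$ you may conclude $L=M$ directly — both are irreducible closed subvarieties — the final detour back through the tempered parts is unnecessary.) The passage to the statement about residual points via Proposition \ref{prop:par} is also correct: $r_L\in T_L$ is residual for $\Hc_L$, $\Sigma_M$ is a parabolic subsystem of $\Sigma_L$, $r_L$ restricts to a residual point for $\Sigma_M$ (because it agrees with $r_M$ there), and $\log|r_L|\in\mathbb{R}\Sigma_M^\vee$. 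You have thus correctly isolated what needs to be proved, and your observation that \emph{both} hypotheses are needed is apt: e.g.\ for the regular residual point $r$ of $A_2$ one has $\log|r|=\log(q)\,\rho^\vee\in\mathbb{R}\,(\alpha_1+\alpha_2)^\vee$, the coroot span of a proper (non-standard) rank-one parabolic, but $(\alpha_1+\alpha_2)(r)=q^2$ so $r$ does \emph{not} restrict to a residual point there.

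The genuine gap is that the isolated lemma — that a residual point $r$ of $\Sigma$ cannot lie in $L^{\textup{temp}}$ for a residual coset $L$ of positive dimension, equivalently your italicised statement — is left unproved. It is not a formal consequence of the residuality inequality alone: the only thing the inequality immediately gives at the residual point $r$ is the equality $p_\Sigma(r)-z_\Sigma(r)=\operatorname{rank}\Sigma$, which you already assumed. One really needs a structural input on residual points — either the explicit classification of $W_0$-orbits of residual points for each parameter regime \cite{OpdSol}, or the finer results of \cite{Opd3} relating a residual coset $L$ to the polar part $c_L$ and its centraliser root subsystem. Note also that reducing to residual points of $\Hc_L$ is not logically a reduction to a \emph{weaker} statement: what you call the Key Lemma is exactly the special case $L=\{r\}$ of the corollary for the semisimple Hecke algebra $\Hc_L$, so there is no a priori simplification — the value of the reformulation is only that it is more amenable to a case-by-case check against the classification. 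As the paper itself cites \cite{Opd3,Opd4,Opds,OpdSol} for this block of corollaries rather than proving them, what you have written is a correct reorganisation of the problem, but not yet a proof; completing it requires invoking the classification (or the specific lemmas of \cite{Opd3} on the polar parts $c_L$) in earnest rather than gesturing at it.
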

 \begin{cor}
 The measure $\nu_L$ defined in \ref{thm:parb} is smooth on $L^{\textup{temp}}$.
 \end{cor}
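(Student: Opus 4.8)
The plan is to strip away everything inessential and reduce to a statement about a single rational function. By Theorem \ref{thm:parb}(ii) we have, on $L^{\textup{temp}}=r_LT^L_u$,
\[
d\nu_L(t)=c_L\,\mu_{\Hc_L}^{\{r_L\}}(r_L)\,m^L(t)\,dt^L ,
\]
where $c_L\in\mathbb{Q}_+$, the scalar $\mu_{\Hc_L}^{\{r_L\}}(r_L)$ is nonzero (it is a nonzero multiple of $m_{r_L}(\vt)$, by Theorem \ref{thm:ds}(ii),(v)), and $dt^L$ is the smooth normalised Haar measure on the compact torus $L^{\textup{temp}}$. Hence $\nu_L$ is smooth on $L^{\textup{temp}}$ if and only if $t\mapsto m^L(t)$ is real-analytic there, and it is this that I would prove.

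First I would locate the possible singularities of $m^L=\frac{1}{q(w^L)}\prod_{\alpha\in\Sigma^L_+}(c_\alpha(t)c_\alpha(t^{-1}))^{-1}$. Each factor is the pull-back, under the root character $\alpha\colon T^\vt\to\mathbb{C}^\times$, of a fixed rational function of one variable; inserting $c_\alpha=\Delta_{-2\alpha}^{-1}(1+\theta_{-\alpha}/q_\alpha^-)(1-\theta_{-\alpha}/q_\alpha^+)$ and using $\theta_{-\alpha}(t)=\alpha(t)^{-1}$, one sees that $(c_\alpha(t)c_\alpha(t^{-1}))^{-1}$ is regular away from the ``resonant'' hypersurfaces $\alpha(t)\in\{q_\alpha^+,(q_\alpha^+)^{-1}\}$ and (only when $q_\alpha^-\neq1$) $\alpha(t)\in\{-q_\alpha^-,-(q_\alpha^-)^{-1}\}$, while its zeros sit on $\alpha(t)=\pm1$ and are harmless. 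As the product is finite, $m^L$ can fail to be real-analytic at $t\in L^{\textup{temp}}$ only if one of these resonant hypersurfaces, for some $\alpha\in\Sigma_0\setminus\Sigma_L$, passes through $t$. Now the key point: on $L^{\textup{temp}}=r_LT^L_u$ the modulus $|\alpha(t)|$ is constant, equal to $|\alpha(r_L)|$, because $\alpha$ is unitary on the compact form $T^L_u$; and all the resonant values have modulus $q_\alpha^{\pm1}$ or $(q_\alpha^-)^{\pm1}$. So everything reduces to the assertion that, for $\alpha\in\Sigma_0\setminus\Sigma_L$ (equivalently, for $\alpha$ not constant on $L$),
\[
|\alpha(r_L)|\notin\{q_\alpha^{\pm1},(q_\alpha^-)^{\pm1}\},
\]
$r_L\in T_L$ being the residual point of $\Hc_L$ of Proposition \ref{prop:par}; granting this, each factor of $m^L$, hence $m^L$ itself, restricts to a real-analytic function on $L^{\textup{temp}}$ with non-vanishing denominator, and $\nu_L$ is smooth.

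I expect this non-resonance statement to be the crux: a priori nothing prevents a root transverse to $L$ from taking a resonant modulus at $r_L$, and it is here that the genuine geometry of residual cosets enters. There are two ways to close the gap. Directly, it follows from the structure theory and explicit classification of residual points and cosets in \cite{Opd3}, \cite{Opd4}, \cite{OpdSol}; indeed $L^{\textup{temp}}$ is, by construction, a facet of the arrangement of these resonant hypersurfaces, placed so as to avoid them. Alternatively, one argues a posteriori: $\nu=\sum_{M\textup{ residual}}\nu_M$ is a \emph{finite} positive measure (a positive Radon measure on the compact space $W_0\backslash T^{\vt,\textup{temp}}$), and by the preceding corollary $L^{\textup{temp}}\not\subset M^{\textup{temp}}$ for $M\neq L$, so $\bigcup_{M\neq L}(M^{\textup{temp}}\cap L^{\textup{temp}})$ is a proper real-analytic subset of $L^{\textup{temp}}$ and has $dt^L$-measure zero; since on its complement $\nu$ agrees with $|c_L\mu_{\Hc_L}^{\{r_L\}}(r_L)|\,|m^L|\,dt^L$ up to sign, one gets $\int_{L^{\textup{temp}}}|m^L|\,dt^L<\infty$, and a rational function with a genuine pole along a hypersurface meeting $L^{\textup{temp}}$ (local model $1/\phi$) cannot be integrable there. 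Either way $m^L$ has no pole on $L^{\textup{temp}}$, and the corollary follows.
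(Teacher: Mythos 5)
Your reduction to a non‑resonance condition is the weak point: the claim that $|\alpha(r_L)|\notin\{(q_\alpha^+)^{\pm1},(q_\alpha^-)^{\pm1}\}$ for all $\alpha\in\Sigma_0\setminus\Sigma_L$ is \emph{false} in general, so the ``direct'' route cannot work as stated and the ``zeros are harmless'' remark is exactly backwards. A small but instructive example: take $\Hc$ of type $A_3$ with equal parameter $q$, $X=\mathbb{Z}^4$, and the residual coset $L=r_L T^L$ with $r_L=(q,1,q^{-1},1)$, $T^L=\{(t,t,t,u)\}$, $\Sigma_L$ the $A_2$ subsystem in the first three coordinates. One checks $m(L)=2=\operatorname{codim}(L)$, so $L$ is residual; but $|(e_1-e_4)(r_L)|=q$ and $|(e_3-e_4)(r_L)|=q^{-1}$, both resonant moduli, and $|(e_2-e_4)(r_L)|=1$. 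Along the real hypersurface $\{t=u\}\subset L^{\textup{temp}}$ the factors for $e_1-e_4$ and $e_3-e_4$ each contribute a simple pole and the factor for $e_2-e_4$ contributes a double zero (since $\alpha(t)=1$ there), so $m^L$ is regular only because the pole is \emph{cancelled} by the zero. Thus the resonant hypersurfaces do meet $L^{\textup{temp}}$, and the mechanism enforcing smoothness is precisely the interplay of pole and zero orders, governed by the inequality theorem and the arrangement of residual cosets — not avoidance. Your framing misidentifies the crux, and a proof attempting to verify your non‑resonance statement would fail.

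Your ``a posteriori'' integrability argument, by contrast, does go through (modulo cleanup): from the preceding corollary $L^{\textup{temp}}\not\subset M^{\textup{temp}}$ for $M\neq L$, so $W_0M^{\textup{temp}}\cap L^{\textup{temp}}$ has $dt^L$-measure zero, and on its complement the positive finite measure $\nu$ restricts to $\nu_L$; hence $\mu^L=c_L\mu_{\Hc_L}^{\{r_L\}}(r_L)\,m^L$ is a nonnegative $L^1$ rational function on the compact real form $L^{\textup{temp}}$, and a rational function with a net pole of order $\geq1$ along a hypersurface meeting a compact real form transversally cannot be $L^1$ — so $m^L$ has no net poles, hence is real‑analytic. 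But be aware that this argument proves regularity of $m^L$ (after cancellation), not the non‑resonance you reduced to, so the logic of the proposal should be reorganised accordingly; and the argument trades on Theorem~\ref{thm:parb} already asserting that $\nu=\sum_L\nu_L$ with each $\nu_L$ a genuine (hence locally finite) measure of the stated form, which is essentially the content being re‑derived. The paper's own (implicit, via \cite{Opd3}, \cite{Opd4}) route is the residue‑calculus one: the codimension inequality $m(L')\leq\operatorname{codim}(L')$ together with $m(L)=\operatorname{codim}(L)$ bounds the transverse pole order of $m^L$ along any sub‑coset $L'\subset L$, and a further analysis of when the tempered forms $L'^{\textup{temp}}$ and $L^{\textup{temp}}$ can actually meet shows this order is $\leq0$. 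Either route is acceptable, but as written your proposal asserts a false lemma and offers the correct argument only as a fallback.
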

 \begin{thm}
 For $L\subset T$ residual, put 
 $S_L:=W_0\backslash W_0L^{\textup{temp}}\subset \textup{supp}(\nu)=S$ with 
 $S=S(\mathcal{H}):=\textup{cc}(\hat{\Hc}^{\textup{temp}}_\vt)\subset W_0\backslash T^\vt$. The sets 
 $S_L\subset S$ are the connected components of $S$.  
 \end{thm}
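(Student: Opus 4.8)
The plan is to realise $S$ as a finite disjoint union $S=\bigsqcup_{[L]}S_L$, over the $W_0$-association classes $[L]$ of residual cosets, of nonempty subsets that are simultaneously connected and closed in $S$. Granting this, each $S_L$ is also open in $S$ (its complement in $S$ is the union of the finitely many remaining pieces $S_M$, which are closed), hence a connected clopen subset, and therefore a connected component; conversely any connected component of $S$, being nonempty and connected, is contained in a single one of the $S_L$ (they partition $S$) and, being maximal connected, equals it.

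First I would record the covering and the support statement. By Theorem~\ref{thm:parb}(ii) we have $\nu=\sum_{L}\nu_L$, a \emph{finite} sum (there are only finitely many residual cosets, a corollary to Proposition~\ref{prop:par}) of positive measures, where $\nu_L$ is the push-forward to $W_0\backslash T^\vt$ of a measure on $L^{\textup{temp}}$ with real-analytic density $\mu^L=\big(c_L\,\mu^{\{r_L\}}_{\Hc_L}(r_L)\big)\,m^L$. Here $r_L$ is a residual point of $\Hc_L$ by Proposition~\ref{prop:par}, so $\mu^{\{r_L\}}_{\Hc_L}(r_L)\neq0$ by Theorem~\ref{thm:ds}(ii), while $m^L$ is a nonzero rational function; hence $\mu^L$ is real-analytic, not identically zero, and $\geq0$ on the connected manifold $L^{\textup{temp}}=r_LT^L_u$, so its zero set has empty interior and $\textup{supp}(\nu_L)=W_0\backslash W_0L^{\textup{temp}}=S_L$. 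Since the sum over $L$ is finite, $S=\textup{supp}(\nu)=\bigcup_L\textup{supp}(\nu_L)=\bigcup_LS_L$. Moreover each $S_L$ is connected and closed: $L^{\textup{temp}}=r_LT^L_u$ is a translate of the compact torus $T^L_u$, hence connected and compact, so its continuous image $S_L$ in the Hausdorff space $W_0\backslash T^\vt$ is connected and compact, in particular closed.

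The decisive point is the disjointness: if $L,M$ are residual cosets with $S_L\cap S_M\neq\varnothing$ then $L$ and $M$ are $W_0$-conjugate. Concretely, a point $W_0t\in S_L$ is the central character of an irreducible tempered $\Hc_\vt$-module occurring in a module unitarily induced from a discrete series of the parabolic subalgebra $\Hc_{P(L)}$ (Theorem~\ref{thm:parb}(i)); writing $t=r_Lt^L$ with $t^L\in T^L_u$, the positive-real part of $t$ coincides with that of the residual point $r_L$ of $\Hc_L$, and the structure theory of residual points and cosets in \cite{Opd3} (see also \cite{OpdSol}) shows that this datum determines the $W_0$-class of $L$ --- equivalently, the $W_0$-saturations $W_0L^{\textup{temp}}$ attached to $W_0$-inequivalent residual cosets are pairwise disjoint. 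I expect this to be the only step of the argument that is not formal topology; note in particular that the weaker Corollary ``$L^{\textup{temp}}\subset M^{\textup{temp}}\iff L=M$'' does not by itself suffice, since two tempered parts could a priori meet in $T^\vt$ without one being contained in the other.

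Combining these ingredients, $S$ is a finite disjoint union of nonempty connected closed subsets $S_L$ indexed by the $W_0$-association classes of residual cosets, and by the opening remarks these are exactly the connected components of $S$.
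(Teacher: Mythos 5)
Your proof is correct and matches the argument in \cite{Opd3}, to which the paper refers for this result without reproducing a proof. In particular, you correctly identify the decisive nontrivial input --- that the $W_0$-saturations $W_0L^{\textup{temp}}$ of $W_0$-inequivalent residual cosets are pairwise disjoint --- and rightly observe that this is genuinely stronger than the containment criterion ($L^{\textup{temp}}\subset M^{\textup{temp}}\iff L=M$) stated as a Corollary just before the theorem; the remaining steps (finiteness of residual cosets, real-analyticity and positivity of $\mu^L$ giving $\textup{supp}(\nu_L)=S_L$, compactness and connectedness of $L^{\textup{temp}}=r_LT^L_u$ and hence of its image $S_L$ in the Hausdorff quotient, and the clopen-partition argument) are all elementary and sound.
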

 \begin{cor}
 For every connected component $C\subset \hat{\Hc}_\vt^{\textup{temp}}$ there 
 exists a residual coset $L$ such that $\textup{cc}(C)=S_L$. Then 
 $(\nu_\Hc)|_C=c_Ci_*(\textup{cc}|_{C^{reg}})^*(\nu_L)$ for some constant $c_C>0$.
 Here $C^{reg}\subset C$ is open and dense, and has a unique structure of a 
 smooth manifold such that $ \textup{cc}|_{C^{reg}}$ is a smooth finite covering map
 to $S_L^{reg}$, where $S_L^{reg}\subset S_L$ is the image of the largest stratum 
 with respect to the action of $W_0$ on $W_0L^{\textup{temp}}$, and $i:C^{reg}\to C$ denotes 
 the embedding.
 \end{cor}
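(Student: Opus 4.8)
The plan is to read the statement off the parabolic‑induction description of $\tau$ in Theorem \ref{thm:parb}, together with the theorem stated just above that the $S_L$ are precisely the connected components of $S=\textup{cc}(\hat{\Hc}_\vt^{\textup{temp}})$, and with the smoothness of $\nu_L$ on $L^{\textup{temp}}$.

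\textbf{Locating $L$ and the covering structure.} Fix a connected component $C\subset\hat{\Hc}_\vt^{\textup{temp}}$; as it is a finite quotient of a compact torus it is compact, so $\textup{cc}(C)$ is a connected closed subset of $S$, hence lies in a single component $S_L$, pinning down $L$ up to $W_0$-conjugacy. By Theorem \ref{thm:parb}(i), for a generic $t=r_Lt^L\in L^{\textup{temp}}$ one has $|W_0/W_L|\,\chi_{W_0t}=\sum_\delta\textup{Ind}_{\Hc^L}^\Hc(\chi_{L,W_Lr_L}\circ\phi_{t^L})$, summed over the discrete series $\delta$ of $\Hc_{P(L)}$ with central character $W_Lr_L$; expanding $\chi_{L,W_Lr_L}=\sum_\delta d_{\Hc_L,\delta}\chi_\delta$ via Theorem \ref{thm:ds}(iii) and decomposing each $\textup{Ind}(\chi_\delta\circ\phi_{t^L})$ into irreducibles describes the fibre $\textup{cc}^{-1}(W_0t)$ explicitly. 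By Knapp--Stein/$R$-group theory for affine Hecke algebras the constituents of these induced modules, together with their multiplicities, are locally constant in $t^L$ along the largest stratum $L^{\textup{reg}}\subset L^{\textup{temp}}$; since $L^{\textup{reg}}\to S_L^{\textup{reg}}$ is a finite covering of manifolds, transporting the smooth structure of $S_L^{\textup{reg}}$ through $\textup{cc}$ equips $C^{\textup{reg}}:=C\cap\textup{cc}^{-1}(S_L^{\textup{reg}})$ with the unique smooth structure for which $\textup{cc}|_{C^{\textup{reg}}}$ is a finite covering onto $S_L^{\textup{reg}}$. This $C^{\textup{reg}}$ is open in $C$ and dense (using finiteness of $\textup{cc}$ and density of $S_L^{\textup{reg}}$ in $S_L$), and it is nonempty since $C\subset\textup{Supp}(\nu_\Hc)$ whereas $S_L\setminus S_L^{\textup{reg}}$ is $\nu_L$-null, being a proper submanifold of the compact torus coset $L^{\textup{temp}}$ on which $\nu_L$ is smooth; being a union of sheets of the covering it surjects onto $S_L^{\textup{reg}}$, so $\textup{cc}(C)=S_L$.

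\textbf{The measure identity.} Restrict the disintegration $\tau=\int\chi_{W_0t}\,d\nu(W_0t)$ of (\ref{eq:dectau}) to $C$. On $S_L$ the only summand of $\nu=\sum_M\nu_M$ that contributes is $\nu_L$, with $d\nu_L(t)=c_L\,\mu_{\Hc_L}^{\{r_L\}}(r_L)\,m^L(t)\,dt^L$ by Theorem \ref{thm:parb}(ii). For $\pi\in C^{\textup{reg}}$ over $W_0t\in S_L^{\textup{reg}}$, the coefficient of $\chi_\pi$ in $\chi_{W_0t}$ is, by the previous paragraph, a product of the rational constant $|W_0/W_L|^{-1}$, the constants $c_L$ and $\mu_{\Hc_L}^{\{r_L\}}(r_L)$, the formal-degree constants $d_{\Hc_L,\delta}$ (which depend only on $\delta$, not on $t^L$), and a multiplicity that is locally constant on the connected set $C^{\textup{reg}}$, hence constant there; the only ingredient varying with $t^L$ is the smooth density $m^L$, which is exactly what $(\textup{cc}|_{C^{\textup{reg}}})^*$ transports from $\nu_L$. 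Therefore $(\nu_\Hc)|_{C^{\textup{reg}}}=c_C\,(\textup{cc}|_{C^{\textup{reg}}})^*(\nu_L)$ for a single constant $c_C>0$, and since $C\setminus C^{\textup{reg}}$ is $\nu_\Hc$-null we conclude $(\nu_\Hc)|_C=c_C\,i_*(\textup{cc}|_{C^{\textup{reg}}})^*(\nu_L)$.

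\textbf{Main obstacle.} The decisive input is the local constancy, along $L^{\textup{reg}}$, of the decomposition of the induced tempered modules $\textup{Ind}(\chi_\delta\circ\phi_{t^L})$ into irreducibles with multiplicities: this simultaneously supplies the covering-space structure of the first step and the constancy of the combinatorial factors in the second, and it rests on the Knapp--Stein theory for affine Hecke algebras (intertwining operators, $R$-groups, and continuity of the commuting algebra along tempered families). Granting that, the rest is bookkeeping built on Theorems \ref{thm:ds} and \ref{thm:parb} and the smoothness of $\nu_L$.
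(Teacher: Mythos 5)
The paper leaves this corollary without an explicit proof, treating it as an immediate consequence of the preceding theorem (that the $S_L$ are the connected components of $S$), of Theorems \ref{thm:ds} and \ref{thm:parb}, and of the structure theory of the tempered dual of an affine Hecke algebra established in \cite{Opd3}. Your reconstruction follows exactly this implicit route: you read off the candidate $L$ from connectedness, build the covering structure from the local constancy of the decomposition of tempered parabolic induction along the open $W_0$-stratum (i.e. the Knapp--Stein/$R$-group theory of \cite{Opd3}), and then deduce the measure identity by combining Theorem \ref{thm:parb}(ii) with the constancy of the combinatorial factors. You also correctly isolate the one genuinely nontrivial input (local constancy of the number of constituents and their multiplicities along $L^{\textup{reg}}$), which is precisely what \cite{Opd3} supplies.

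Two places are glossed and worth making explicit. First, to pass from $\textup{cc}(C)\subset S_L$ to $\textup{cc}(C)=S_L$ you implicitly use that $S_L^{\textup{reg}}$ is connected (or else argue directly with open-and-closedness of $\textup{cc}(C)\cap S_L^{\textup{reg}}$ inside $S_L^{\textup{reg}}$); this is a feature of the specific stratifications occurring here, established in \cite{Opd3}, not a formal consequence of what is quoted in the present paper. Second, the density and nonemptiness of $C^{\textup{reg}}$ are not immediate from ``finiteness of $\textup{cc}$'': they come from the fact, established in \cite{Opd3}, that $\textup{cc}_*(\nu_\Hc|_C)$ is absolutely continuous with respect to Lebesgue measure on $S_L$, so that $\nu_\Hc(C\setminus C^{\textup{reg}})=0$ while $\nu_\Hc(C)>0$. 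Since both facts are in the same reference you invoke, your argument is sound, but as written it looks a little circular (you appeal to a measure identity close in spirit to the one being proved to show $C^{\textup{reg}}\neq\emptyset$); in a final write-up one should cite those structural results directly rather than re-derive them.

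Minor remark: in Theorem \ref{thm:parb}(i) as printed, the summation index $\delta$ does not actually appear in the summand; you correctly read it as intended, namely expanding $\chi_{L,W_Lr_L}=\sum_\delta d_{\Hc_L,\delta}\chi_\delta$ via Theorem \ref{thm:ds}(iii), and your bookkeeping is consistent with that reading.
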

 \begin{thm}[\cite{OpdSol}]
 Let $L^\vt\subset T^\vt$ be a residual coset. Then there exists a residual coset 
 $L_\Lambda\subset T=T_\Lambda$ defined over $\Lambda$ such that 
 $L^\vt=\{\vt\}\times_{\textup{Spec}(\Lambda)} L_\Lambda$. 
 \end{thm}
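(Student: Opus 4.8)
The statement asserts that a residual coset $L^\vt\subset T^\vt$, living over a specific point $\vt\in\Qt$, always arises by specialisation from a residual coset $L_\Lambda$ defined over the whole parameter ring $\Lambda$. The plan is to reduce this to the case of residual \emph{points} via Proposition \ref{prop:par}, and then to establish the statement for residual points by an explicit analysis of the defining equalities together with the scaling-invariance mechanism of Theorem \ref{thm:ds}(iv). First I would invoke Proposition \ref{prop:par}: writing $L^\vt=r_L T^L$, the subtorus $T^L\subset T^\vt$ is the identity component of the simultaneous kernel of the parabolic subsystem $\Sigma_L=\{\alpha\in\Sigma_0\mid \alpha|_{L^\vt}\text{ constant}\}$, and this subsystem (hence $T^L$ itself, and the complementary torus $T_L$) is manifestly defined over $\Lambda$, being cut out by purely combinatorial data independent of $\vt$. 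Thus the only thing that can vary with $\vt$ is the point $r_L\in T_L$, which by Proposition \ref{prop:par} is a residual point for the semisimple quotient algebra $\Hc_L$ (with the restricted parameters). So everything comes down to: \emph{every residual point $r\in T_L^\vt$ of $\Hc_L$ extends to a residual point $r_\Lambda\in T_L=T_{L,\Lambda}$ defined over $\Lambda$.}

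For the residual-point case I would argue as follows. A residual point $r$ is characterised by the numerical equality
\begin{equation}
\#\{\alpha\in\Sigma\mid \alpha(r)=\pm q_\alpha^\pm\}-\#\{\alpha\in\Sigma\mid \alpha(r)=\pm 1\}=\textup{rk}(\Sigma),
\end{equation}
and by Theorem \ref{thm:ds}(iv) applied with the scaling $\vt\mapsto\vt(\epsilon)$, the orbit $W_0 r$ extends uniquely to a real-analytic family $W_0\tilde r(\epsilon)$ of residual points, with $\tilde r(1)=r$. The point is that this real-analytic family is in fact \emph{algebraic} over $\Lambda$: the coordinates of $\tilde r$ are determined by the vanishing of a maximal independent collection of polynomials of the form $\alpha(t)\mp q_\alpha^\pm$ and $\alpha(t)\mp 1$ (whose number, counted with signs as above, equals the rank), and this is a system of polynomial equations with coefficients in $\Lambda$. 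Solving it — e.g. by observing that after restricting to the set of $\alpha$ realising the maximum in the residual count one gets a full-rank linear system in the "logarithmic" coordinates with entries linear in the $\log q_\alpha^\pm$ — produces a solution $r_\Lambda$ whose coordinates are monomials (up to roots of unity / torsion, accounting for $K_L$) in the indeterminates $v_s$, hence a genuine point of $T_{L,\Lambda}=T_{L,0}\times\textup{Spec}(\Lambda)$. Specialising at $\vt$ recovers $r$, possibly after adjusting within the finite group $K_L$, which does not affect the residual coset $L^\vt=K_L r_L T^L = r_L T^L$.

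Assembling the pieces: set $L_\Lambda:=r_\Lambda T^L_\Lambda\subset T_\Lambda$, where $T^L_\Lambda$ is the $\Lambda$-subtorus with Lie algebra $\Sigma_L^\perp$ and $r_\Lambda\in T_{L,\Lambda}$ is the residual point constructed above. Then $L_\Lambda$ is a coset of a subtorus defined over $\Lambda$, its defining equalities $\alpha|_{L_\Lambda}=\pm q_\alpha^\pm$ resp. $\pm 1$ hold identically in $\Lambda$ (since they hold for the linear system solved above, and the remaining roots in $\Sigma_L$ are constant on $L_\Lambda$ by construction of $T^L$), so $L_\Lambda$ is residual in the sense over $\Lambda$, and $\{\vt\}\times_{\textup{Spec}(\Lambda)}L_\Lambda=r_L T^L=L^\vt$. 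The main obstacle I anticipate is the bookkeeping in the residual-point step: one must check that the "maximal count" subsystem of roots realising equality is itself of full rank (so that the linear system genuinely determines $r_\Lambda$) and that the various branch choices of $\pm$ and of the finite torsion ambiguity can be made uniformly in $\Lambda$ rather than only pointwise at $\vt$. This is where the scaling-invariance statement Theorem \ref{thm:ds}(iv) does the real work, since it guarantees a \emph{coherent} family across all $\epsilon$, pinning down the branch; promoting the real-analytic family to an algebraic one over $\Lambda$ is then a matter of recognising that a real-analytic family of solutions to a polynomial system with coefficients polynomial in $\log q$ must be given by Laurent monomials in the $v_s$.
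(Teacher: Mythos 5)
Your reduction to residual points via Proposition \ref{prop:par} is sound, and you have correctly located the crux: one must know that the roots $\alpha$ realising the extremal values $\alpha(r)=\pm q_\alpha^\pm$ or $\alpha(r)=\pm 1$ at the residual point $r$ actually span the ambient rational vector space. But the way you propose to close that gap does not work. Scaling invariance (Theorem \ref{thm:ds}(iv)) moves the parameters only along the one-dimensional ray $\epsilon\mapsto\vt(\epsilon)$ inside $\Qt$, whereas ``$\Lambda$-definability'' is a statement over the full multi-dimensional base $\textup{Spec}(\Lambda)$. A real-analytic family along a single ray neither forces the spanning claim nor pins down a coherent branch over all of $\Qt$, and your final step --- that a real-analytic family of solutions to polynomials with coefficients in $\Lambda$ ``must be given by Laurent monomials in the $v_s$'' --- is asserted rather than proved. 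There is also a conflation in your parenthetical: the signed residual count $\#\{\text{special}\}-\#\{\text{trivial}\}=n$ is not a priori the dimension of the span of the involved roots; these are two different invariants and the whole point is to compare them.

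The spanning claim does have a short proof, but it rests on the inequality theorem stated just before Proposition \ref{prop:par}, not on scaling. Suppose $r\in T^\vt$ is residual (rank $n$) and the roots realising extremal or trivial values at $r$ span only a subspace $V'$ of dimension $m<n$. Let $T'$ be the identity component of $\bigcap_{\alpha\in\Sigma_0\cap V'}\ker\alpha$ and set $L'=rT'$, a coset with $\textup{codim}(L')=m$. A root is constant on $L'$ precisely when it lies in $\Sigma_0\cap V'$, and it then restricts to its value $\alpha(r)$; hence the pole count of $L'$ equals that of $r$, namely $n>m$, violating the inequality. So $m=n$. That is the lemma your argument is missing. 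Even granting it, two further points in your plan still need care: when trivial roots are present the defining system $\alpha(r_\Lambda)=c_\alpha$ is overdetermined, and its compatibility must be verified over $\Lambda$ rather than only at the chosen specialisation $\vt$ (a nongeneric $\vt$ could produce accidental coincidences among the $q_\alpha^\pm(\vt)$ that do not persist generically); and the solution carries a torsion factor and, a priori, rational exponents in the $v_s$, so the sense in which $L_\Lambda$ is ``defined over $\Lambda$'' must be the one used in \cite{OpdSol}, not literally a point of $T_0(\Lambda)$.
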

 \begin{thm}[\cite{OpdSol}, \cite{CO}]
 Let $Q(\Lambda)$ denote the quotient field of $\Lambda$, and let 
 $r_\Lambda\in T_\Lambda$ be a residual point defined over $\Lambda$. 
 Then $m_{r_\Lambda}\in Q(\Lambda)$ is regular on $\Qt$, and if $\vt\in\Qt$ 
 then $r_\Lambda^\vt\in T^\vt$ is residual 
 if and only if $m_{r_\Lambda}(\vt)\not=0$. The set 
$\{\vt\in\Qt\mid m_{r_\Lambda}(\vt)=0\}$ is a union of finitely many hyperplanes 
 in the real vector group $\Qt$.
 \end{thm}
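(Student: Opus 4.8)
The plan is to read off all three assertions from the product shape of $m_{r_\Lambda}$, using the a priori inequality on residuality defects as essentially the only substantial ingredient. First I would record that shape: since $r_\Lambda$ is defined over $\Lambda$, for every root $\alpha\in\Sigma_0$ the value $\alpha(r_\Lambda)$ is a unit of $\Lambda=\mathbb Z[v_s^{\pm1}]$, hence of the form $\varepsilon_\alpha\mathbf v^{k_\alpha}$ with $\varepsilon_\alpha\in\{1,-1\}$ and $k_\alpha\in\mathbb Z^{S/\sim}$ (writing $\mathbf v^{k}$ for the Laurent monomial $\prod_s v_s^{k_s}$). Consequently, after the substitution $t\mapsto r_\Lambda$, each of the factors $1-\alpha^{-2}$, $1+\alpha^{-1}/q_\alpha^-$, $1-\alpha^{-1}/q_\alpha^+$ occurring in the regularised expression $\mu^{\{r\}}$ of Theorem \ref{thm:ds}(ii) becomes an element of $\Lambda$ of the very special form $1-\varepsilon\mathbf v^{k}$ (absorbing signs into $\varepsilon$; the numerator factors in fact all have $\varepsilon=1$). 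Deleting those which are identically zero — that is, with $\varepsilon=1$ and $k=0$ — presents $m_{r_\Lambda}$ as a ratio $P/Q$ of finitely many factors of this type, none identically zero; in particular $m_{r_\Lambda}\in Q(\Lambda)$.

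Next I would localise the zeros and poles. For $\vt\in\Qt$ one has $\mathbf v(\vt)\in\mathbb R_+^{N}$, so $\mathbf v(\vt)^{k}>0$; hence a factor $1-\varepsilon\mathbf v^{k}$ can vanish somewhere on $\Qt$ only if $\varepsilon=1$ and $k\neq0$, and then its zero locus in $\Qt$ is the hyperplane $\mathcal H=\{\vt\mid\langle k,\log\mathbf v(\vt)\rangle=0\}$, along which it vanishes to first order. Only finitely many such hyperplanes $\mathcal H$ occur, there being finitely many roots. For each $\mathcal H$, let $A_{\mathcal H}$ be the number of numerator factors of $m_{r_\Lambda}$, and $B_{\mathcal H}$ the number of (surviving) denominator factors, that vanish on $\mathcal H$; then $\textup{ord}_{\mathcal H}(m_{r_\Lambda})=A_{\mathcal H}-B_{\mathcal H}$, and for an arbitrary $\vt\in\Qt$ the order of $m_{r_\Lambda}$ at $\vt$ equals $\sum_{\mathcal H\ni\vt}(A_{\mathcal H}-B_{\mathcal H})$, summed over the finitely many such hyperplanes through $\vt$.

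The crux is to control the sign of $A_{\mathcal H}-B_{\mathcal H}$, and here the general bound $\#\{\alpha\in\Sigma_0\mid\alpha|_L=\pm q_\alpha^{\pm}\}-\#\{\alpha\in\Sigma_0\mid\alpha|_L=\pm1\}\leq\textup{codim}(L)$ does all the work. Fix $\mathcal H$ and a generic $\vt\in\mathcal H$ and apply the bound to the zero-dimensional coset $L=\{r_\Lambda^{\vt}\}\subset T^{\vt}$: its left-hand side is exactly the excess of the vanishing order of the denominator of $\mu$ over that of its numerator at $r_\Lambda^{\vt}$. For generic $\vt$ this excess equals its generic value $\ell:=\dim T^{\vt}$ — generic because $r_\Lambda^{\vt}$ is residual for generic $\vt$, as part of the hypothesis that $r_\Lambda$ is a residual point defined over $\Lambda$ — and specialising to the generic point of $\mathcal H$ adds precisely the $B_{\mathcal H}$ extra denominator zeros and the $A_{\mathcal H}$ extra numerator zeros. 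Hence the defect there is $\ell+B_{\mathcal H}-A_{\mathcal H}$, which by the bound is $\leq\ell$; therefore $B_{\mathcal H}\leq A_{\mathcal H}$, i.e.\ $\textup{ord}_{\mathcal H}(m_{r_\Lambda})\geq0$ for every $\mathcal H$. Since the polar divisor of $m_{r_\Lambda}$ is carried by the hypersurfaces cut out by its denominator factors, and the only ones of those meeting $\Qt$ are the $\mathcal H$, this shows $m_{r_\Lambda}$ has no pole on $\Qt$, i.e.\ it is regular there. The remaining claims now fall out of the same count: $r_\Lambda^{\vt}$ is residual iff the defect at $\vt$ equals $\ell$, iff $\sum_{\mathcal H\ni\vt}(B_{\mathcal H}-A_{\mathcal H})=0$, iff — all summands being $\leq0$ — $\vt$ lies on no $\mathcal H$ with $A_{\mathcal H}>B_{\mathcal H}$; while by regularity $m_{r_\Lambda}(\vt)=0$ iff $\textup{ord}_\vt(m_{r_\Lambda})=\sum_{\mathcal H\ni\vt}(A_{\mathcal H}-B_{\mathcal H})>0$, iff $\vt$ lies on some such $\mathcal H$. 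Thus $r_\Lambda^{\vt}$ is residual $\iff m_{r_\Lambda}(\vt)\neq0$, and $\{\vt\in\Qt\mid m_{r_\Lambda}(\vt)=0\}=\bigcup_{A_{\mathcal H}>B_{\mathcal H}}\mathcal H$ is a union of finitely many hyperplanes in the real vector group $\Qt$. (The direction ``residual $\Rightarrow m_{r_\Lambda}(\vt)\neq0$'' can alternatively be quoted from Theorem \ref{thm:ds}(i),(v): a residual $r_\Lambda^{\vt}$ is the central character of a discrete series $\delta$ of $\Hc_{\vt}$ with $\textup{fdeg}(\delta)=d_{\Hc,\delta}\,\tfrac{d(\vt)}{q(w_0)}\,|m_{r_\Lambda}(\vt)|>0$.)

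The step demanding genuine care — the only real obstacle — is the identification, used twice above, of the combinatorial defect $\#\{\alpha\in\Sigma_0\mid\alpha(r)=\pm q_\alpha^{\pm}\}-\#\{\alpha\in\Sigma_0\mid\alpha(r)=\pm1\}$ with the difference of the vanishing orders of the denominator and numerator of $\mu$ at $r$. This is immediate for equal parameters, but in the $C^{(1)}_n$ case one must keep track of the doubled roots and of the two Hecke parameters $q_\alpha^{\pm}$, and one must also verify that no single factor acquires a vanishing order exceeding one along its hypersurface, so that the orders genuinely add. Everything else is bookkeeping resting on the single inequality ``defect $\leq\textup{codim}$''.
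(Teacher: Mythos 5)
Your proof is correct and runs along the same line as the argument in \cite{OpdSol}: the decisive ingredients are (i) that each factor of $m_{r_\Lambda}$ is, after evaluation at $r_\Lambda$, of the form $1-\zeta\mathbf v^k$ and therefore vanishes on $\Qt$ along at most one hyperplane and to exactly first order; and (ii) that the a priori defect inequality ``defect $\le\textup{codim}$'', applied at a generic point of each candidate polar hyperplane $\mathcal H$, forces $A_{\mathcal H}\ge B_{\mathcal H}$. Two small points of caution. First, the assertion that $\alpha(r_\Lambda)=\pm\mathbf v^{k_\alpha}$ uses that the residue field at $r_\Lambda$ is $Q(\Lambda)$ itself; in general a residual point over $\Lambda$ can have $\alpha(r_\Lambda)=\zeta\mathbf v^{k_\alpha}$ with $\zeta$ a root of unity in a cyclotomic extension, but this does not disturb your argument since $1-\zeta\mathbf v^k$ is nowhere zero on $\Qt$ unless $\zeta=1$, and only the factors with $\zeta=1,\ k\neq0$ contribute hyperplanes. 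Second, the phrase ``order of $m_{r_\Lambda}$ at $\vt$'' is not literally meaningful where several of the $\mathcal H$ meet; but once $A_{\mathcal H}\ge B_{\mathcal H}$ is known, you can pair each surviving denominator factor on $\mathcal H$ with a numerator factor on the same $\mathcal H$, so that each paired ratio extends to a nowhere-vanishing smooth function on $\Qt$, and $m_{r_\Lambda}(\vt)=0$ iff some \emph{unpaired} numerator factor vanishes at $\vt$ --- which recovers exactly your description of the zero set as $\bigcup_{A_{\mathcal H}>B_{\mathcal H}}\mathcal H$ without invoking an order at a point. You also correctly flagged the only delicate bookkeeping step, the $C^{(1)}_n$ case, where one must account for the doubled root and both parameters $q_\alpha^{\pm}$ in identifying the combinatorial defect with the difference of vanishing orders of numerator and denominator.
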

 \subsection{Deformation of discrete series and the computation of $d_{\Hc,\delta}$}
 In this section we review a 
 deformation principle in the parameters 
 $\vt\in\Qt$ for the discrete series characters  
 $\delta$ of an affine Hecke algebra $\Hc_\vt$. 
 As we will see, this leads to an important tool to compute the rational constants 
 $d_{\Hc_\vt,\delta}$ for unequal parameter Hecke algebras $\Hc_\vt$ 
(which are abundant among unipotent Hecke algebras).
 \begin{thm}[\cite{OpdSol}]\label{thm:def}
 Assume that $\Hc$ is a semisimple affine Hecke algebra.
Let $\Sc=\{\sum_{w\in W}c_wN_w\mid \forall N\in\mathbb{N}: W\ni w\to l(w)^N|c_w|\text{\ is\ a\ bounded\ function}\}$
denote the Schwartz completion of $\Hc$.
 Note that this nuclear Frechet space is independent of the Hecke parameter $v$.  
 Suppose that $\delta$ is a discrete series character of $\Hc_\vt$ with $cc(\delta)=W_0r$.
 There exists an (analytic) open neighbourhood $\Ut\subset \Qt$ of $\vt$, 
 a \emph{unique} $W_0$-orbit $W_0r_\Lambda$ 
 of residual points defined over $\Lambda$, and a \emph{unique} continuous family 
 $\Ut\ni\vt'\to\tilde{\delta}^{\vt'}\in\Sc$ of discrete series characters $\tilde{\delta}^{\vt'}$ of $\Hc_{\vt'}$ 
 such that $\tilde{\delta}^\vt=\delta$.  
 \end{thm}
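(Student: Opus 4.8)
The plan is to realise every $\Hc_{\vt'}$ on the one fixed nuclear Fréchet space $\Sc$ and to follow $\delta$ by a resolvent-type deformation of the relevant central idempotent, using the rigidity of residual points over $\Lambda$ to isolate the unique orbit $W_0 r_\Lambda$. First I would fix the parameter neighbourhood. By Theorem~\ref{thm:ds}(i) we have $\textup{cc}(\delta) = W_0 r$ with $r$ a residual point of $\Hc_\vt$; by the theorem of \cite{OpdSol} quoted above, $r = r_\Lambda^\vt$ for some residual point $r_\Lambda \in T_\Lambda$ defined over $\Lambda$, and by the theorem of \cite{OpdSol}, \cite{CO} quoted above the set $\{\vt' \in \Qt \mid m_{r_\Lambda}(\vt') = 0\}$ is a finite union of hyperplanes. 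Since $m_{r_\Lambda}(\vt) \neq 0$ I would take a connected open $\Ut \subset \Qt$ containing $\vt$ and disjoint from all these hyperplanes; then $r_\Lambda^{\vt'}$ is residual for every $\vt' \in \Ut$, so $W_0 r_\Lambda^{\vt'}$ is an isolated point of the tempered central support $S(\Hc_{\vt'})$ throughout $\Ut$. (Along the one-parameter scaling families $\vt(\epsilon)$ such a deformation is already provided by Theorem~\ref{thm:ds}(iv); the point is to obtain it in all directions.)

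For the deformation I would argue as follows. For $\vt' \in \Ut$ let $e_r(\vt') \in \Sc$ be the central idempotent of the Schwartz completion of $\Hc_{\vt'}$ projecting onto the closed two-sided ideal supported on the isolated point $W_0 r_\Lambda^{\vt'}$ of the spectrum; this ideal $e_r(\vt')\,\Sc\,e_r(\vt')$ is a finite-dimensional semisimple $*$-algebra whose irreducible characters are precisely the discrete series characters of $\Hc_{\vt'}$ with central character $W_0 r_\Lambda^{\vt'}$. The crucial structural input, emphasised in the statement, is that $\Sc$ is the \emph{same} topological vector space for all $\vt'$, with multiplication depending polynomially (hence analytically) on the parameters $q_\alpha^\pm(\vt')$, and that by the previous paragraph the relevant block remains a clopen, isolated, finite-dimensional summand for all $\vt' \in \Ut$. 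A standard resolvent argument --- writing $e_r(\vt') = \tfrac{1}{2\pi i}\oint (z - p_{\vt'})^{-1}\,dz$ for an analytically varying family $p_{\vt'} \in \Sc$ built from the centre of $\Hc_{\vt'}$ and having $e_r(\vt)$ among its spectral projections --- then shows $\vt' \mapsto e_r(\vt')$ is analytic, so after possibly shrinking $\Ut$ the finite-dimensional $C^*$-algebras $e_r(\vt')\Sc e_r(\vt')$ all have equal dimension and are $*$-isomorphic through a continuous family of isomorphisms. Transporting $\delta$ along this family yields a continuous family $\tilde\delta^{\vt'} \in e_r(\vt')\Sc e_r(\vt') \subset \Sc$ with $\tilde\delta^\vt = \delta$; each $\tilde\delta^{\vt'}$ is a discrete series character of $\Hc_{\vt'}$ since it is an irreducible tempered unitary character with residual central character (Theorem~\ref{thm:ds}(i)), and its central character exhibits the orbit $W_0 r_\Lambda$.

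For uniqueness, any continuous family of discrete series characters through $\delta$ has, by continuity of $\textup{cc}$, central characters forming an analytic family $W_0 s_{\vt'}$ of orbits of residual points with $s_\vt \in W_0 r$; by the rigidity of residual points over $\Lambda$ from \cite{OpdSol}, such a germ of analytic families of residual points arises from a single residual point over $\Lambda$, which must be $W_0$-conjugate to $r_\Lambda$ --- giving uniqueness of the orbit $W_0 r_\Lambda$ --- and then continuity within the single isolated block $e_r(\vt')\Sc e_r(\vt')$ determines the family uniquely. The main obstacle is the deformation step: one must show that the ``$W_0 r$-block'' is an isolated, clopen, finite-dimensional summand of $\Sc$ \emph{uniformly} over $\Ut$ and that its central idempotent genuinely deforms inside the fixed space $\Sc$, rather than merely that individual characters deform formally. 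This is exactly where keeping $r_\Lambda^{\vt'}$ residual on all of $\Ut$ is needed, and where one leans on the continuous-field-of-$C^*$-algebras picture underlying the spectral decomposition of $\tau$ (Theorem~\ref{thm:conv}, Corollary~\ref{cor:dis}, Theorem~\ref{thm:ds}) together with the residual-coset results of \cite{OpdSol}.
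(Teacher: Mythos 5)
Since the paper only cites this result from \cite{OpdSol} without reproducing the proof, your proposal must be judged against that reference and on its own merits. Your strategy---treat the discrete series block as the isolated finite-dimensional central summand of the fixed Fr\'echet space $\Sc$, and deform its central idempotent holomorphically---is the right conceptual framework and is in the same spirit as \cite{OpdSol}, where the parameter-independence of $\Sc$ and the rigidity of residual points over $\Lambda$ are precisely the two pillars.

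However, there is a genuine gap at the centre of your deformation step, and it is not cosmetic. You invoke a resolvent formula $e_r(\vt')=\tfrac{1}{2\pi i}\oint (z-p_{\vt'})^{-1}\,dz$ inside $\Sc$, but $\Sc$ is a nuclear Fr\'echet algebra, not a Banach or $C^*$-algebra, so neither the existence of $(z-p_{\vt'})^{-1}$ in $\Sc$ on the contour nor the continuity (let alone analyticity) of $\vt'\mapsto (z-p_{\vt'})^{-1}$ is automatic. Making this work requires two nontrivial inputs that you use silently: (i) that $\Sc$ is a \emph{spectrally invariant} (holomorphically closed) subalgebra of the reduced $C^*$-completion $C^*_r(\Hc_{\vt'})$, so that an inverse in the $C^*$-algebra already lies in $\Sc$; and (ii) uniform norm estimates showing the resolvent varies continuously in $\vt'$ in the Fr\'echet topology. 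Both are established in the literature (this is essentially Opdam's analytic groundwork feeding into \cite{OpdSol}), but as written your sketch treats them as standard Banach-algebra facts, which they are not here. A related soft spot: you assert the blocks $e_r(\vt')\Sc e_r(\vt')$ are $*$-isomorphic ``through a continuous family of isomorphisms'' from equal dimension alone; equal dimension does not determine a finite-dimensional $C^*$-algebra, so you should instead invoke local rigidity of the multiplication structure on the fixed underlying vector space to conclude the matrix-block type is locally constant.

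The uniqueness paragraph is essentially right but slightly mis-phrased: continuity of $\vt'\mapsto cc(\tilde\delta^{\vt'})$ only gives a \emph{continuous} family of residual orbits, not an analytic one. What saves you is that, after shrinking $\Ut$ off the finitely many coincidence loci where distinct $\Lambda$-orbits of residual points meet, a continuous selection from finitely many analytic branches $\vt'\mapsto W_0 r_\Lambda^{\vt'}$ is forced to follow a single branch; you should say this rather than appeal to analyticity of the selected family. With that correction, the uniqueness of $W_0 r_\Lambda$ and then of the family does follow as you indicate.

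In short: the route is sound and matches the one in \cite{OpdSol} in its essentials (fixed Fr\'echet space, isolated residual central character, rigidity over $\Lambda$), but the resolvent step needs the spectral-invariance and uniform-estimate machinery made explicit, and the uniqueness argument should replace ``analytic family'' by ``continuous selection off the coincidence loci.''
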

We now review a remarkable rationality property of the formal degree of 
$\tilde{\delta}$ as in Theorem \ref{thm:def}. According to \cite{CO} there exists an orthonormal 
set $\Bc_{gm}$ of elliptic virtual characters of the affine Weyl group $W=X\rtimes W_0$ 
(cf. \cite[2A1]{CO}) 
(with respect to the Euler-Poincar\'e pairing) which naturally parameterises the generic 
families of discrete series characters. More precisely, to each $b\in\Bc_{gm}$ we assign
(based in part on the previous subsection): 
\begin{enumerate}
\item[(i)] An orbit of generic residual points $W_0r_b\in W_0\backslash T(\Lambda)$.
\item[(ii)] The open set $\Qt^{reg}_b=\{\vt\in\Qt\mid m_b(\vt):=m_{W_0r_b}(\vt)\not=0\}$
(the complement of finitely many hyperplanes of $\Qt$).
\item[(iii)] A continuous family $\Qt^{reg}_b\ni\vt\to \textup{Ind}_D(b,\vt)$ of virtual 
characters of $\Hc_\vt$ (the ``Dirac induction'' of $b$, cf. \cite[2B4]{CO}) with the properties that:
\begin{enumerate}
\item[(a)] 
For each $b\in\Bc_{gm}$ there exists a locally constant function 
$\epsilon(b,\vt)\in\{\pm 1\}$ on $\Qt^{reg}_b$ such that 
for all $\vt\in \Qt^{reg}_b$, $\epsilon(b,\vt)\textup{Ind}_D(b,\vt)$ is an irreducible discrete series 
character of $\Hc_\vt$. 
\item[(b)] We have $cc(\textup{Ind}_D(b,\vt))=W_0r_b$, for all $\vt\in \Qt^{reg}_b$. 
\item[(c)] For all $\vt\in \Qt$, the set of irreducible discrete series character of $\Hc_\vt$ is 
equal to 
$\{\epsilon(b,\vt)\textup{Ind}_D(b,\vt)\mid b\in\Bc_{gm}^\vt\}$, where 
$\Bc_{gm}^\vt:=\{b\in Bc_{gm}\mid \vt\in \Qt^{reg}_b\}$. 
\item[(d)] Let $[\pi]$ denote the elliptic class of a virtual character $\pi$ of 
$W$. Then we have $[\lim_{\epsilon\to 0}\textup{Ind}_D(b,\vt^\epsilon)]=b$.   
\end{enumerate}
\end{enumerate} 
\begin{rem}
By the Langlands classification, virtual elliptic characters of $\Hc_\vt$ can be written as 
linear combinations of \emph{tempered} characters of $\Hc$. Hence it makes sense to 
view $\textup{fdeg}$ as a linear function on the space of virtual elliptic characters of $\Hc_\vt$. 
\end{rem}
\begin{thm}[\cite{CO}]\label{thm:rat} Assume that $\tau$ is normalised by $\tau(1)=1$.
For all $b\in \Bc_{gm}$ there exist a constant $d_b\in\mathbb{Q}_+$ such that 
for all $\vt\in \Qt^{reg}_b$:  $\textup{fdeg}(\textup{Ind}_D(b,\vt))=d_bm_b(\vt)$. 
\end{thm}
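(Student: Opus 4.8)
The plan is to reduce the statement to the residual point formula in Theorem \ref{thm:ds}(v) combined with the scaling invariance in Theorem \ref{thm:ds}(iv) and the deformation principle in Theorem \ref{thm:def}. Fix $b\in\Bc_{gm}$ and work inside a connected component of $\Qt^{reg}_b$; by property (iii)(a) the virtual character $\epsilon(b,\vt)\textup{Ind}_D(b,\vt)$ is, for every $\vt$ in that component, an irreducible discrete series character $\delta_\vt$ of $\Hc_\vt$ with central character $W_0r_b$ by (iii)(b). Theorem \ref{thm:ds}(v) then gives $\textup{fdeg}(\delta_\vt)=d_{\Hc,\delta_\vt}\,m_b(\vt)$ (using $d(\vt)=q(w_0)$ after the normalisation $\tau(1)=1$, so that the prefactor $d(\vt)/q(w_0)$ disappears), where $m_b=m_{W_0 r_b}$ is, by the results of the subsection on residual cosets, a rational function on $\Qt$ which is regular and nonvanishing exactly on $\Qt^{reg}_b$. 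So the entire content of the theorem is that the \emph{positive real} constant $d_{\Hc,\delta_\vt}$ is in fact a \emph{rational} number and is \emph{independent} of $\vt$ within the component; the deformation/Dirac-induction packaging of $\Bc_{gm}$ is precisely what allows us to speak of a single constant $d_b$ attached to $b$ rather than to each $(\vt,\delta)$ separately.

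The independence of $\vt$ along real-analytic paths inside $\Qt^{reg}_b$ is the scaling invariance statement Theorem \ref{thm:ds}(iv), suitably upgraded: first one checks it for the one-parameter scaling family $\vt\mapsto\vt(\epsilon)$, where (iv) literally asserts that $d_{\Hc,\tilde\delta(\epsilon)}(\vt(\epsilon))$ is constant in $\epsilon>0$; then, using Theorem \ref{thm:def} to propagate a given discrete series character to a continuous family $\tilde\delta^{\vt'}$ over an open neighbourhood, together with the fact (from the residual-coset subsection) that $\Qt^{reg}_b$ is the complement of finitely many hyperplanes and hence connected-by-scaling-rays up to the wall structure, one concludes $d_{\Hc,\tilde\delta^{\vt'}}(\vt')$ is locally constant, hence constant on each connected component. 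Property (iii)(d), identifying the elliptic class $[\lim_{\epsilon\to0}\textup{Ind}_D(b,\vt^\epsilon)]=b$, pins down which family corresponds to which $b$ and guarantees the correspondence $b\leftrightarrow$ (generic family) is the right bookkeeping, so that $d_b:=d_{\Hc,\delta_\vt}$ is well defined.

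Granting constancy, rationality is then obtained by a limiting argument as $\vt\to 1$ (the equal-parameter, or rather $\vt\to$ trivial, specialisation) along the scaling direction. At $\vt=1$ the algebra $\Hc_1$ degenerates to the group algebra $\mathbb{C}[W]$ of the extended affine Weyl group, where the formal degrees of the (limits of) discrete series are computed directly from the Euler--Poincar\'e pairing on elliptic characters of $W$: since $\Bc_{gm}$ is by construction an \emph{orthonormal} set of elliptic virtual characters for that pairing, the limiting formal degree of $\textup{Ind}_D(b,\vt^\epsilon)$ as $\epsilon\to0$ is computed to be a rational number (the relevant ratio of the order of $W_0$ and combinatorial multiplicities), and by (iii)(d) this limit equals $d_b\cdot\lim m_b(\vt^\epsilon)$ up to an explicit rational factor coming from the leading behaviour of $m_b$ along the scaling ray. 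Comparing the two sides forces $d_b\in\mathbb{Q}_+$.

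\textbf{Main obstacle.} The delicate point is the passage to the limit $\epsilon\to0$: one must show that $\textup{fdeg}(\textup{Ind}_D(b,\vt^\epsilon))$, a priori only defined and smooth for $\vt^\epsilon\in\Qt^{reg}_b$, has a controlled (polynomial in $\epsilon$, or at worst a Laurent expansion) behaviour as $\epsilon\to0$ whose leading coefficient matches the Euler--Poincar\'e computation in $\mathbb{C}[W]$ — i.e. that the Dirac-induction construction of \cite{CO} genuinely interpolates between the Hecke-algebra discrete series and the $W$-elliptic picture, and that no spurious cancellation or blow-up occurs. This is exactly where the machinery of \cite{CO} (the Dirac operator, its index, and the compatibility of the elliptic pairing with the trace $\tau$ under deformation) does the real work; everything else is assembling Theorem \ref{thm:ds}, Theorem \ref{thm:def}, and the hyperplane structure of $\Qt^{reg}_b$.
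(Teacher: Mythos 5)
The paper states Theorem \ref{thm:rat} as a citation to \cite{CO} and does not give its own proof, so I compare your reconstruction against the structure of the argument in that reference. You correctly reduce the claim, via Theorem \ref{thm:ds}(v), to showing that the positive real $d_{\Hc,\delta}(\vt)$ is a single rational constant independent of $\vt$ on $\Qt^{reg}_b$, and you correctly point to the $\epsilon\to 0$ scaling limit and the Euler--Poincar\'e pairing of elliptic characters of $W$ as the essential mechanism. The problem lies in how you establish constancy. You claim that Theorem \ref{thm:ds}(iv) (constancy along scaling rays) together with Theorem \ref{thm:def} (existence of a continuous deformation family) and the hyperplane structure of $\Qt^{reg}_b$ yields local constancy of $\vt\mapsto d_{\Hc,\delta}(\vt)$. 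This does not follow. Theorem \ref{thm:def} supplies only continuity of the family, hence continuity of $\vt\mapsto d_{\Hc,\delta}(\vt)$; it says nothing about constancy in a transverse direction. A continuous function that is constant along scaling rays need not be locally constant (the angular coordinate on $\mathbb{R}^N\setminus\{0\}$ is the obvious counterexample). Nor is a chamber of $\Qt^{reg}_b$ ``connected by scaling rays'': a chamber is an open cone, not a single ray, so points with distinct directions cannot be joined by scalings. Finally, $\Qt^{reg}_b$ is in general disconnected, and neither scaling invariance nor continuity lets you cross a wall, yet the theorem asserts a single $d_b$ valid on all of $\Qt^{reg}_b$.

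The logical architecture therefore needs to be reversed. In \cite{CO} the Dirac-operator index argument identifies $\textup{fdeg}(\textup{Ind}_D(b,\vt))/m_b(\vt)$, for each fixed $\vt\in\Qt^{reg}_b$, with an Euler--Poincar\'e pairing of virtual elliptic characters. This pairing is manifestly rational, is deformation-invariant (it descends to the elliptic pairing of $b$ in the affine Weyl group $W$), and by the orthonormality of $\Bc_{gm}$ depends on $b$ alone. Constancy in $\vt$, constancy across the connected components of $\Qt^{reg}_b$, and rationality are all simultaneous consequences of this computation rather than something one establishes first by topological means and supplements with rationality afterwards. You have identified the correct main obstacle (the compatibility of the Dirac induction with the $\mathbb{C}[W]$ elliptic picture in the limit), but you have deferred to it only for the last rationality step, whereas it is in fact the sole source of the constancy claim; without it the more elementary tools you invoke cannot close the argument.

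Two minor remarks. First, with $\tau(1)=1$ one has $d(\vt)=1$ in Theorem \ref{thm:ds}(v), but the prefactor $q(w_0)^{-1}$ survives and depends on $\vt$; the statement of Theorem \ref{thm:rat} presumably absorbs this into the conventions for $m_b$, and one should say so explicitly. Second, for the limit $\epsilon\to 0$ to be meaningful one has to be careful that one is comparing \emph{leading coefficients} of $\textup{fdeg}$ and $m_b$ along the ray (both tend to degenerate at $\vt=1$, where $\Hc_1=\mathbb{C}[W]$ has no discrete series since $W$ is amenable), not naive limits; you gesture at this but it deserves to be spelled out.
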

Hence for each $b\in \Bc_{gm}$ the function $\textup{fdeg}(\textup{Ind}_D(b,\vt))$ 
is a rational function of $\vt$ which is regular on $\Qt$. This rationality is remarkable, because 
the family of characters $\Qt_b^{reg}\ni \vt\to \textup{Ind}_D(b,\vt)$ does not extend 
continuously to $\Qt$. The rationality is very powerful to compute the rational constants 
$d_{\Hc_\vt,\delta}$ for the discrete series characters $\delta$ of $\Hc_\vt$ in the unequal parameters 
cases, because we see that it is enough to compute the single constant $d_b$ in each generic family.
These ``generic constants'' $d_b$ are known for all irreducible root data and are quite simple (for 
example, for the classical Hecke algebras of the type $\textup{C}_n[m_-,m_+](q^\beta)$ 
we have $d_b=1$ for all $b$, (cf. \cite{CO}, \cite{CK}, \cite{Opdl})).
\subsection{Central characters and Langlands parameters} The orbits $S_L=W_0\backslash W_0L^{\textup{temp}}$ of 
tempered residual cosets for affine Hecke algebras can be viewed as ``parameter deformations
of unramified Langlands parameters''. This is a crucial point in order to be able to cast the results on 
spectral decompositions of traces of affine Hecke algebras as discussed above in terms of adjoint gamma 
factors. 

The basic result is the following (\cite{KLDL}, \cite{HOH}, \cite{Opd3}, \cite{Opdl}):
\begin{thm}\label{thm:cc}
Consider the special case of the Iwahori Hecke algebra $\Hc_{\mathbb{I},\vt}$ of (the group 
of points of) an 
unramified connected reductive group $G$ over $F$. By \cite[Lemma 6.5, Proposition 6.7]{Bo}
we have $\textup{Spec}(Z(\Hc_{\mathbb{I},\vt}))\simeq (G^\vee\theta)_{ss}=W_{\mathbb{I},0}\backslash T_{\mathbb{I},\vt}$ 
where 
$W_{\mathbb{I},0}:=W^\theta$ is the $F$-Weyl group of $G$, and where the torus $T_{\mathbb{I},\vt}$ 
can be identified with the quotient $T_{\mathbb{I},\vt}=T^\vee/(1-\theta)T^\vee$ of the maximal 
torus $T^\vee$ of $G^\vee$.  Let 
\begin{equation*}
S_{\mathbb{I},\vt}:=S(\mathcal{H}_{\mathbb{I},\vt})=\bigsqcup_{L\subset T_{\mathbb{I},\vt}\text{\ residual}}S_L\subset 
W_{\mathbb{I},0}\backslash T_{\mathbb{I},\vt}=(G^\vee\theta)_{ss}
\end{equation*}
denote the central support of the tempered spectrum of 
$\Hc_{\mathbb{I},\vt}$, and let $\Phi^{\textup{temp}}_{nr}(G)$ denote the set of 
equivalence classes of unramified tempered Langlands parameters for $G$.
The map 
\begin{align}
\gamma^\mathbb{I}:\Phi^{\textup{temp}}_{nr}(G)&\to S_{\mathbb{I},\vt}\subset 
W_{\mathbb{I},0}\backslash T_{\mathbb{I},\vt}\\
[\varphi]&\to W_{\mathbb{I},0}\overline{\varphi(\tilde{\textup{Fr}})}
\end{align}
is a bijection. 
\end{thm}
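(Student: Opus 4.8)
The plan is to establish the bijection $\gamma^{\mathbb{I}}$ by unwinding the definitions on both sides and comparing them through the semisimple conjugacy classes in $G^\vee\theta$. First I would make explicit what an unramified tempered Langlands parameter $\varphi:L_F\to{}^LG$ is: since $\varphi$ is unramified, it is trivial on the inertia subgroup $I_F$, hence determined by the pair $(\varphi(\mathrm{Fr}),\varphi|_{\mathrm{SL}_2(\mathbb{C})})$, where $\varphi(\mathrm{Fr})\in G^\vee\theta$ and the $\mathrm{SL}_2$-part is an algebraic homomorphism commuting with $\varphi(\mathrm{Fr})$. The temperedness condition says $\varphi(W_F)$ is bounded, i.e. $\varphi(\mathrm{Fr})$ has image with relatively compact powers; combined with the Jacobson--Morozov data this is the classical ``bounded'' normalisation. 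The element $\overline{\varphi(\tilde{\mathrm{Fr}})}$ appearing in the statement is then the semisimple class obtained by multiplying the semisimple part of $\varphi(\mathrm{Fr})$ by the torus element $\mathrm{diag}(v^{-1},v)$ inserted through the $\mathrm{SL}_2$; this is precisely the recipe that converts the Langlands/Kazhdan--Lusztig parameter into a point of $(G^\vee\theta)_{ss}$.

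Next I would recall, via \cite[Lemma 6.5, Proposition 6.7]{Bo} as quoted in the statement, the identification $\mathrm{Spec}(Z(\Hc_{\mathbb{I},\vt}))\simeq W_{\mathbb{I},0}\backslash T_{\mathbb{I},\vt}\simeq(G^\vee\theta)_{ss}$, so that the right-hand side $S_{\mathbb{I},\vt}$ is literally a subset of semisimple classes in $G^\vee\theta$, namely those that are central characters of \emph{tempered} modules. The core of the proof is then a two-way matching: (1) show that if $[\varphi]\in\Phi^{\textup{temp}}_{nr}(G)$ then $W_{\mathbb{I},0}\overline{\varphi(\tilde{\mathrm{Fr}})}$ actually lies in $S_{\mathbb{I},\vt}$, i.e. is the central character of a tempered $\Hc_{\mathbb{I},\vt}$-module; and (2) show conversely that every point of $S_{\mathbb{I},\vt}$ arises this way from a unique equivalence class of tempered parameter. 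For (1) I would use the Kazhdan--Lusztig classification (as in \cite{KLDL}): a tempered parameter $(s,u)$ with $s=\varphi(\mathrm{Fr})$ semisimple-boundedly-normalised and $u$ a unipotent element centralising $s$ gives a standard module whose central character is exactly $W_{\mathbb{I},0}$-orbit of $s\cdot(\text{the } v\text{-twist})$; temperedness of $\varphi$ translates into the Casselman-type condition that this central character lies on a tempered residual coset $L^{\textup{temp}}$, which is how the residual coset decomposition of $S_{\mathbb{I},\vt}$ enters. For (2), residual cosets and tempered forms $L^{\textup{temp}}$ classify the possible tempered central characters (Theorem \ref{thm:ds}, Theorem \ref{thm:parb} and the structure results on residual cosets), and one reverses the dictionary: a point of $L^{\textup{temp}}$ determines $s$ up to conjugacy, and the defining ``residual'' numerology $\#\{\alpha\mid\alpha|_L=\pm q_\alpha^\pm\}-\#\{\alpha\mid\alpha|_L=\pm1\}=\mathrm{codim}(L)$ forces the existence of a suitable distinguished unipotent $u$ in the relevant centraliser, i.e. recovers the $\mathrm{SL}_2$-part of $\varphi$.

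For the injectivity and well-definedness of $\gamma^{\mathbb{I}}$ I would argue that two unramified tempered parameters are $G^\vee$-conjugate if and only if their $(s,u)$-data are simultaneously conjugate, and that $\overline{\varphi(\tilde{\mathrm{Fr}})}$ together with the residual coset it lies on determines $s$ up to $W_{\mathbb{I},0}$ and the unipotent $u$ up to the induced centraliser action --- this is exactly the content of the Kazhdan--Lusztig--Deligne--Langlands parameterisation of simple Iwahori-spherical modules, restricted to its tempered part. The surjectivity onto $S_{\mathbb{I},\vt}$ is then the assertion that every tempered central character does come from a tempered module (rather than, say, only from a non-tempered standard module with tempered subquotient), which follows from the description in Theorem \ref{thm:parb}(i) of $\chi_{W_0t}$ as a nonnegative combination of parabolically induced tempered characters together with the bijection between discrete series and $W_0$-orbits of residual points in Theorem \ref{thm:ds}(i).

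The main obstacle, as I see it, is not any single hard estimate but the careful bookkeeping that identifies the \emph{tempered} locus on both sides: one must check that Borel's temperedness/boundedness condition on $\varphi$ over $W_F$ matches precisely the analytic temperedness condition (Casselman's criterion, i.e. membership of the tempered form $L^{\textup{temp}}$ of a residual coset) that cuts out $S_{\mathbb{I},\vt}$ inside $(G^\vee\theta)_{ss}$. This is where the disconnectedness of $G^\vee\theta$ (equivalently the $\theta$-twist and the passage $T_{\mathbb{I},\vt}=T^\vee/(1-\theta)T^\vee$) has to be handled with care, since the root system $\Sigma_0$ governing residual cosets is the relative (twisted-invariant) one, and the $q_\alpha^\pm$ can be unequal; the unequal-parameter residual point combinatorics recalled in Theorem \ref{thm:ds} and the results on residual cosets are exactly what makes the matching go through, but lining them up with the Frobenius-twisted centraliser data on the Galois side is the delicate step. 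Once that identification of tempered loci is in place, bijectivity is a formal consequence of the Kazhdan--Lusztig classification of Iwahori-spherical representations.
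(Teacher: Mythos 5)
The paper does not actually reproduce a proof of Theorem \ref{thm:cc}; it states it and cites \cite{KLDL}, \cite{HOH}, \cite{Opd3}, \cite{Opdl}, so there is no internal argument to compare your sketch against. What you outline is the strategy of those references: read off the data $(s,u)$ from $\varphi$ (with $s=\varphi(\textup{Fr})$ semisimple bounded and $u$ the nilpotent from the $\mathrm{SL}_2$-part), send it to the semisimple class of $s\cdot\varphi(\operatorname{diag}(v^{-1},v))$, and align tempered unramified parameters with the tempered residual cosets of $\Hc_{\mathbb{I},\vt}$. You have also correctly isolated the genuinely delicate step: matching Borel's boundedness condition on $\varphi|_{W_F}$ with the analytic/Casselman-type condition that cuts out $L^{\textup{temp}}$, in the possibly $\theta$-twisted, unequal-parameter setting where the residual numerology is governed by the relative root system with parameters $q_\alpha^\pm$.

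One place where your argument should be sharpened is the injectivity step. You appeal to ``the content of the Kazhdan--Lusztig--Deligne--Langlands parameterisation of simple Iwahori-spherical modules'' and say that $\overline{\varphi(\tilde{\textup{Fr}})}$ \emph{together with the residual coset it lies on} recovers $(s,u)$. This is heavier machinery than the statement (which concerns only central characters) requires, and it somewhat obscures where temperedness is actually being used. The cleaner and more elementary mechanism is the polar decomposition of $T_{\mathbb{I},\vt}$ into compact and real-positive parts: temperedness forces $s$ (after conjugation into $T_{\mathbb{I},\vt}$) to lie in the compact torus, while $\varphi(\operatorname{diag}(v^{-1},v))=q^{h/2}$ lies in the real-positive torus, so $r=s\,q^{h/2}$ uniquely determines $(s,h)$ up to the $W_{\mathbb{I},0}$-action, and then the Dynkin--Kostant classification (injectivity of the assignment of weighted Dynkin diagrams to unipotent orbits in the centraliser of $s$) recovers $u$ from $h$. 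In particular you do not need to first locate the residual coset, and you avoid invoking the full module classification for what is a statement about central characters. For surjectivity, the residual numerology is then precisely what guarantees that the real-positive part of the residual point $r_L$ is of the form $q^{h/2}$ for an honest $\mathrm{SL}_2$-triple in the relevant twisted centraliser, which is what the cited references establish case by case for unequal parameters.
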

Using this fact it is not difficult to translate the results on the spectral decomposition of $\tau$
in this special case using adjoint $\gamma$-factors, a remark that essentially goes back to 
\cite{HII}. In fact, using the work of Reeder \cite{Ree4} and results from \cite{CO} one can 
deduce: 
\begin{thm}[\cite{HII}, \cite{HOH}, \cite{Opd3}, \cite{KLDL}, \cite{Re}, \cite{Ree4}, \cite{CO}]\label{thm:Iw} 
Suppose $\mbf{G}$ is unramified over $F$. There exists an enhanced Langlands parameterisation 
of the tempered Iwahori-spherical representations of the packets $\Pi_\varphi(G)$ such that 
the conjectures \ref{conj:1} and \ref{conj:2} hold true for Iwahori-spherical representations.
\end{thm}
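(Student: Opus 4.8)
The plan is to transport the explicit spectral decomposition of the trace $\tau$ of the Iwahori Hecke algebra $\Hc_{\mathbb{I},\vt}$ (with $\vt$ the value of the Hecke parameters determined by $q$) to the Plancherel measure of $G$ via Borel's equivalence of the Iwahori-spherical block with $\Hc_{\mathbb{I},\vt}$-modules and Theorem \ref{thm:BHK}, and then to match the resulting density with the adjoint $\gamma$-factor via Theorem \ref{thm:cc}. First I would fix $\mu_G=\mu_G^{\psi_0}$; by the lemma on the normalization of Haar measure the Iwahori subgroup $\mathbb{I}$ (a parahoric with torus reductive quotient) has a definite volume, which pins down the normalising factor $d\in\Lambda$ of $\tau$ so that $\tau(\mathrm{triv})=\textup{Vol}(\mathbb{I})^{-1}=d(\vt)$ and so that the homeomorphism $\hat{m}_{\mathbb{I}}^{\textup{temp}}$ of Theorem \ref{thm:BHK} carries $\nu_{Pl}|_{\hat{G}^{\mathbb{I},\textup{temp}}}$ to $\nu_{\Hc_{\mathbb{I},\vt}}$. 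Then Theorem \ref{thm:cc} identifies $\textup{Spec}(Z(\Hc_{\mathbb{I},\vt}))=W_{\mathbb{I},0}\backslash T_{\mathbb{I},\vt}$ with $(G^\vee\theta)_{ss}$ and, crucially, identifies the central support $S_L$ of each tempered Harish-Chandra series with the image under $[\varphi]\mapsto W_{\mathbb{I},0}\overline{\varphi(\tilde{\textup{Fr}})}$ of the unramified tempered parameters lying over the residual coset $L$; in particular residual points correspond to discrete unramified parameters.

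For Conjecture \ref{conj:1}, let $\delta$ be a discrete series character of $\Hc_{\mathbb{I},\vt}$ with $\textup{cc}(\delta)=W_{\mathbb{I},0}r$, and let $\varphi$ be the discrete unramified parameter with $W_{\mathbb{I},0}\overline{\varphi(\tilde{\textup{Fr}})}=W_{\mathbb{I},0}r$. By Theorem \ref{thm:ds}(v) we have $\textup{fdeg}(\delta)=d_{\Hc,\delta}\,\frac{d(\vt)}{q(w_0)}\,|m_r(\vt)|$. As the enhanced parameterisation I would take the Kazhdan--Lusztig classification \cite{KLDL} of $\hat{\Hc}_{\mathbb{I},\vt}$ (restricted to tempered modules, composed with the Iwahori--Matsumoto involution as in the footnote to Theorem \ref{thm:main0}), which attaches to $\delta$ a representation $\rho$ of the component group $\pi_0(C_{G^\vee}(\varphi))$, together with the $\chi_G$-bookkeeping of Section \ref{Sect:LLC} matching it to $\Pi(\mathcal{S}_\varphi,\chi_G)$. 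The heart of the proof is then the identity
\[
d_{\Hc,\delta}\,\frac{d(\vt)}{q(w_0)}\,|m_r(\vt)|=\frac{\dim(\rho)}{|\mathcal{S}^\natural_{\varphi}|}\,|\gamma(0,\textup{Ad}\circ\varphi,\psi)|,
\]
which I would split into two parts. The $q$-rational part: one writes $\textup{Ad}\circ\varphi$ as a $W_F\times\textup{SL}_2(\mathbb{C})$-module, computes $L(0,\textup{Ad}\circ\varphi)$, $L(1,\textup{Ad}\circ\varphi)$, the Artin conductor and the $\epsilon$-factor in terms of the eigenvalues of $\varphi(\textup{Fr})$ on the weight spaces of the principal $\textup{SL}_2$ inside $C_{G^\vee}(\varphi)$, and compares term by term with the regularised product $m_r(\vt)=\prod'_{\alpha\in\Sigma_0}(1-\alpha(r)^{-2})\big/\big((1+\alpha(r)^{-1}/q_\alpha^-)(1-\alpha(r)^{-1}/q_\alpha^+)\big)$ evaluated at $r=\overline{\varphi(\tilde{\textup{Fr}})}$; this is exactly Reeder's computation \cite{Ree4}, \cite{Re} (in the relevant generality, \cite{HOH}). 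The rational-constant part: identify $d_{\Hc,\delta}$ with $\dim(\rho)/|\mathcal{S}^\natural_{\varphi}|$ using the generic constants $d_b$ of \cite{CO} attached to the Dirac-induction family through $\delta$. Here the rationality Theorem \ref{thm:rat} together with the deformation Theorem \ref{thm:def} and scaling invariance Theorem \ref{thm:ds}(iv) are decisive: both sides of the displayed identity are rational functions of $\vt$ regular on $\Qt$, and unramified base change $q\mapsto q^n$ is the scaling $\vt\mapsto\vt(n)$, so the identity reduces to the equality of a single rational number per generic family, which the known $d_b$ and Reeder's description of discrete-series $L$-packets by component groups provide.

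For the amplification to general tempered Iwahori-spherical representations (Conjecture \ref{conj:2}) I would run the same argument relative to a residual coset $L=r_LT^L$ and the standard parabolic subalgebra $\Hc_{P(L)}$ of Theorem \ref{thm:parb}. By Theorem \ref{thm:parb}(ii) the Plancherel density on a component with central support $S_L$ is, up to a positive rational constant, the push-forward of $\mu_{\Hc_L}^{\{r_L\}}(r_L)\,m^L(t)\,dt^L$, where $m^L(t)=\frac{1}{q(w^L)}\prod_{\alpha\in\Sigma^L_+}c_\alpha(t)^{-1}c_\alpha(t^{-1})^{-1}$. Under the identification of Theorem \ref{thm:cc}, a generic point $t=r_Lt^L\in L^{\textup{temp}}$ corresponds to a tempered parameter $\varphi$ whose restriction to the Levi $M$ attached to $\Sigma_L$ is essentially discrete with parameter $\varphi_M$ and central character $W_{L,0}r_L$; the discrete factor $\mu_{\Hc_L}^{\{r_L\}}(r_L)$ is handled by the case already treated, contributing $\frac{\dim(\rho)}{|\mathcal{S}^\natural_{\varphi_M}|}|\gamma(0,\textup{Ad}\circ\varphi_M,\psi)|$, while the continuous factor $m^L(t)$ matches the ratio $|\gamma(0,r_M\circ\varphi,\psi)|/|\gamma(0,\textup{Ad}\circ\varphi_M,\psi)|$ by the identical term-by-term comparison, now over the roots in $\Sigma_0\setminus\Sigma_L$. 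Multiplying the two recovers the density $\frac{\dim(\rho)}{|\mathcal{S}^\natural_{\varphi_M}|}|\gamma(0,r_M\circ\varphi,\psi)|$ of Conjecture \ref{conj:2}, up to a constant $c_M\in\mathbb{R}_+$ which is forced to be independent of $F$ precisely by the rationality and scaling-invariance statements above; the Haar measure on $\mathcal{O}$ of \cite{Wal} is matched to $dt^L$ as in \cite{Opd3}, \cite{HII}.

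The main obstacle is the explicit identity between the regularised residue $\frac{d(\vt)}{q(w_0)}|m_r(\vt)|$ of the $\mu$-function and $|\gamma(0,\textup{Ad}\circ\varphi,\psi)|$, together with the identification of the rational constants $d_{\Hc,\delta}$ with $\dim(\rho)/|\mathcal{S}^\natural_{\varphi}|$: this requires a careful unwinding of the combinatorics of the principal $\textup{SL}_2$ inside $C_{G^\vee}(\varphi)$, of the $\varphi(L_F)$-decomposition of $\textup{Lie}(G^\vee)$ (keeping track of the unramified twists encoded by the unequal parameters $q_\alpha^\pm$), and of the Kazhdan--Lusztig--Springer parameterisation of the discrete series by component groups — the latter point also being where the $\mathcal{S}_\varphi$ versus $\mathcal{S}^\natural_\varphi$ and $\chi_G$ subtleties for non-adjoint $G$ must be tracked. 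Everything else — the Morita reduction, the Haar-measure bookkeeping, and the passage to the non-discrete part via parabolic induction — is formal given the results quoted above.
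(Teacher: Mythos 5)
Your proposal is correct and follows essentially the same route the paper takes: Theorem~\ref{thm:BHK} to transport $\nu_{Pl}$ to the Plancherel measure of $\Hc_{\mathbb{I},\vt}$, Theorem~\ref{thm:cc} to identify central characters with unramified parameters, Reeder's computation (\cite{Re}, \cite{Ree4}, \cite{HOH}) for the $q$-rational identity between the regularised residue of the $\mu$-function and the adjoint $\gamma$-factor, the Dirac-induction rationality from \cite{CO} for the rational constants $d_{\Hc,\delta}$, and Theorems~\ref{thm:ds}, \ref{thm:parb} for the passage from discrete series to the full tempered spectrum. The paper's proof is a compressed citation argument (notably to \cite[\S 3.4]{HII} and \cite[Proposition 4.9]{CO}); your write-up unwinds the content of those citations but does not deviate from their logic.
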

\begin{proof} Based on the results of \cite{Re} this was shown in \cite[\S 3.4]{HII} for Iwahori 
spherical discrete series of a group $G$ which is split of adjoint type. 
The results of \cite{Re} have been extended to Iwahori-spherical representations of general 
semisimple unramified groups $G$ in \cite[Proposition 4.9]{CO} using \cite{Ree4}, 
\cite{Opd3}, \cite{HOH}, and Theorem \ref{thm:BHK}. 
Applying the same proof as in \cite[\S 3.4]{HII} shows the result for the discrete series in 
the general unramified semisimple case. 
By Theorems \ref{thm:ds}, \ref{thm:parb} this proves the required results 
for Iwahori spherical tempered representations of an arbitrary unramified group $G$.
\end{proof}
\section{Lusztig's representations  of unipotent reduction and spectral transfer maps. Main result.}
Let $\mbf{G}$ be a connected reductive group over $F$ which is split over an unramified 
extension of $F$. Our main theorem is a slight sharpening and extension of the main 
result of \cite{Opdl}.  

In the formulation of the main result, the action of the group $X_{wur}(G)$ of weakly 
unramified characters of $G$ plays an important role. 
Let $F_{ur}/F$ be  
an unramified extension of $F$ over which $G$ splits, and let 
$\textup{Fr}\in\textup{Gal}(F_{ur}/F)$ denote the geometric Frobenius map.
We also denote by $\textup{Fr}$ the corresponding automorphism on 
$G(F_{ur})$, and by $\textup{Fr}^*$ an inner twist of $\textup{Fr}$ which 
defines an $F$-quasi-split structure (denoted by $\mbf{G}^*$, with group of 
points $G^*$) on $\mbf{G}$. 
We denote by $\theta$ the 
action of $\textup{Fr}$ on $G^\vee$, to that ${}^LG=G^\vee\rtimes\langle\theta\rangle$.
Let $\Omega=\textup{Hom}(Z(G^\vee),\mathbb{C}^\times)$. 
A complex character $\chi$ of $G$ is called \emph{weakly unramified} if $\chi$ is trivial on the kernel 
$G_1$ of the Kottwitz homomorphism $w_G: G\to 
\textup{Hom}(Z(G^\vee),\mathbb{C}^\times)^\theta=\Omega^\theta$ (cf.~\cite{Ko,HR}). 
We denote by $X_{wur}(G)=(\Omega^\theta)^*=
Z(G^\vee)/(1-\theta)Z(G^\vee)$ the 
diagonalizable group of weakly unramified characters.  This is the group of characters 
$\pi_\alpha$ of $G$ attached to the set of unramified Langlands parameters $H^1_{nr}(W_F,Z(G^\vee))$ 
as constructed in \cite[10.2]{Bo}.
We note that if $\Ibb_1\subset \mbf{G}(F_{ur})_1$ 
is a $\textup{Fr}$-stable Iwahori subgroup (such exist by \cite[1.10.3]{Tits}), then there is a canonical isomorphism 
$\Omega=N_{\mbf{G}(F_{ur})}(\Ibb_1)/\Ibb_1$. Put $\Ibb=\Ibb_1^{\Fr}$, then $\Omega^\theta
=N_{G}(\Ibb)/\Ibb$.
\\

Denote by $X_{wur}^{\textup{temp}}(G)\subset X_{wur}(G)$ the subgroup of tempered 
weakly unramified characters.
Tensoring by (tempered) weakly unramified characters 
defines a natural action of $X_{wur}^{\textup{temp}}(G)$ on the set of (tempered) irreducible characters of 
$G$ of unipotent reduction which is Plancherel density preserving. 
There is also a natural action of $X_{wur}(G)$ on $\Phi_{nr}(G)$ 
as follows. If $\omega\in\Omega^*=Z(G^\vee)$ represents a weakly unramified 
character $[\omega]\in X_{wur}(G)=(\Omega^\theta)^*=\Omega^*/(1-\theta)\Omega^*$ and  
$\varphi:\mathbb{Z}\times\textup{SL}_2(\mathbb{C})\to{}^LG$ represents a class 
$[\varphi]\in\Phi_{nr}(G)$, then we define $[\omega].[\varphi]=[\varphi']$
where $\varphi'(\textup{Fr})=\omega\varphi(\textup{Fr})$ while $\varphi'$ 
coincides with $\varphi$ on $\textup{SL}_2(\mathbb{C})$. One easily verifies that 
$\varphi'$ defines an unramified Langlands parameter of $G$, that $[\varphi']$
is independent of the lift $\omega$ of $[\omega]$, and that this defines by restriction an
action of $X_{wur}^{\textup{temp}}(G)$ on $\Phi^{\textup{temp}}_{nr}(G)$ which preserves the adjoint 
$\gamma$-factors. The following lemma is obvious:
\begin{lemma}\label{lem:gamequiv}
The bijection $\gamma^\mathbb{I}$ of Theorem \ref{thm:cc} is equivariant with respect to the action of 
$ X_{wur}^{\textup{temp}}(G)$ on $\Phi^{\textup{temp}}_{nr}(G)$ and on the set 
$S_{\mathbb{I},\vt}$ of central characters of the tempered irreducible $\Hc_{\mathbb{I},\vt}(G^*)$-modules 
corresponding to the Iwahori-spherical tempered irreducible representations of 
unipotent reduction of $G^*$.
\end{lemma}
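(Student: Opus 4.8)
The plan is to unwind the two actions explicitly and to observe that, after the identification $(G^\vee\theta)_{ss}=W_{\mathbb{I},0}\backslash T_{\mathbb{I},\vt}$ of Theorem \ref{thm:cc}, each one is translation by the same element.

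First I would analyse the side of $\Phi^{\textup{temp}}_{nr}(G)$. By construction of the action, $[\omega].[\varphi]=[\varphi']$ with $\varphi'(\textup{Fr})=\omega\varphi(\textup{Fr})$ and $\varphi'|_{\textup{SL}_2(\mathbb{C})}=\varphi|_{\textup{SL}_2(\mathbb{C})}$. Since $\tilde{\textup{Fr}}=(\textup{Fr},\textup{diag}(q^{-1/2},q^{1/2}))$, the element $\varphi(\tilde{\textup{Fr}})\in G^\vee\theta$ is the product of $\varphi(\textup{Fr})$ with the image under $\varphi|_{\textup{SL}_2(\mathbb{C})}$ of $\textup{diag}(q^{-1/2},q^{1/2})$; as $\omega\in Z(G^\vee)$ is central this gives $\varphi'(\tilde{\textup{Fr}})=\omega\,\varphi(\tilde{\textup{Fr}})$ already at the level of elements of $G^\vee\theta$. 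Representing the semisimple class of $\varphi(\tilde{\textup{Fr}})$ by $t\theta$ with $t\in T^\vee$, the class of $\omega\varphi(\tilde{\textup{Fr}})$ is that of $\omega t\theta$, and under $T^\vee\theta\ni t\theta\mapsto\overline{t}\in T^\vee/(1-\theta)T^\vee=T_{\mathbb{I},\vt}$ this passes to $\overline{\omega}\,\overline{t}$. Hence $\gamma^\mathbb{I}([\omega].[\varphi])=W_{\mathbb{I},0}\,\overline{\omega}\,\overline{\varphi(\tilde{\textup{Fr}})}$, where $\overline{\omega}$ is the image of $\omega$ in $T_{\mathbb{I},\vt}$.

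Next I would analyse the action of $X_{wur}^{\textup{temp}}(G)$ on $S_{\mathbb{I},\vt}$. The weakly unramified character $\pi_{[\omega]}$ is trivial on $G_1=\ker w_G$, hence on the Iwahori subgroup $\Ibb\subset G_1$, so tensoring by $\pi_{[\omega]}$ preserves the category of Iwahori-spherical representations of $G^*$ and induces an algebra automorphism of $\Hc_{\mathbb{I},\vt}(G^*)$; this automorphism therefore permutes $S_{\mathbb{I},\vt}$. The point is that, under the identification $\textup{Spec}(Z(\Hc_{\mathbb{I},\vt}))\simeq W_{\mathbb{I},0}\backslash T_{\mathbb{I},\vt}$ used in Theorem \ref{thm:cc} (which is exactly the one underlying \cite[Lemma 6.5, Proposition 6.7]{Bo}), this automorphism translates the Bernstein torus $T_{\mathbb{I},\vt}=T^\vee/(1-\theta)T^\vee$ by $\overline{\omega}$. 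This is built into Borel's construction in \cite[10.2]{Bo} of the characters $\pi_\alpha$ attached to $\alpha\in H^1_{nr}(W_F,Z(G^\vee))=Z(G^\vee)/(1-\theta)Z(G^\vee)$: the unramified character $\pi_{[\omega]}$ corresponds to the point $\overline{\omega}$ of the dual torus, and tensoring an Iwahori-spherical representation by it translates its central character by that point. Thus the action of $[\omega]$ on $S_{\mathbb{I},\vt}\subset W_{\mathbb{I},0}\backslash T_{\mathbb{I},\vt}$ is $W_{\mathbb{I},0}t\mapsto W_{\mathbb{I},0}\,\overline{\omega}\,t$.

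Combining the two computations yields $\gamma^\mathbb{I}([\omega].[\varphi])=W_{\mathbb{I},0}\,\overline{\omega}\,\overline{\varphi(\tilde{\textup{Fr}})}=[\omega].\gamma^\mathbb{I}([\varphi])$, which is the asserted equivariance. The only point that is not a pure unwinding of definitions — and hence the main obstacle — is to verify that the two occurrences of $T^\vee/(1-\theta)T^\vee$ (as the target of $\overline{\varphi(\tilde{\textup{Fr}})}$ in Theorem \ref{thm:cc}, and as the Bernstein torus of $\Hc_{\mathbb{I},\vt}(G^*)$ on which tensoring by $\pi_{[\omega]}$ acts) are matched by one and the same isomorphism from \cite{Bo}, so that the two translation-by-$\overline{\omega}$ actions literally coincide; once this bookkeeping is in place the lemma is immediate, which is why it is stated as obvious.
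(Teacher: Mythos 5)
The paper declares this lemma ``obvious'' and supplies no proof, so there is nothing to compare against; your argument is exactly the expected unwinding of the two definitions, and it is correct. You also rightly isolate the one genuine bookkeeping point---that the copy of $T^\vee/(1-\theta)T^\vee$ receiving $\overline{\varphi(\tilde{\textup{Fr}})}$ and the Bernstein torus of $\Hc_{\mathbb{I},\vt}(G^*)$ on which tensoring by $\pi_{[\omega]}$ acts are identified by the same isomorphism from \cite{Bo}---which is precisely what makes the lemma ``obvious'' once stated.
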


The \emph{unramified} characters of $G$ are the complex (quasi-)characters
of $G$  which are 
trivial on the intersection $G^1$ of the kernels of the compositions 
$\textup{Val}_F\circ\chi$ with $\chi\in X^*(G)$. It is clear that $G/G^1$ can 
be identified with a sublattice of the dual of $X^*(G)$, thus $X_{nr}(G)$ is 
the group of complex points of an algebraic torus. 
By the functoriality of 
Kottwitz's map $w_G$ it follows easily that $G_{der}\subset G_1\subset G^1$, 
so we have a natural embedding $X_{nr}(G)\subset X_{wur}(G)$.
If $A\subset Z(G)$ is the group 
of $F$-points of the maximal $F$-split subtorus of the center $Z(G)$ of $G$ then 
the restriction map gives a canonical embedding $X^*(G)\subset X^*(A)$ with 
finite cokernel. 
It follows that there is a canonical surjection with finite kernel 
$X_{nr}(G)\to X_{nr}(A)$ which we will denote by $m$. 
Again by the functoriality of Kottwitz's map we also see easily that 
there exits a canonical surjection with finite kernel 
$X_{wur}(G)\to X_{wur}(A)=X_{nr}(A)$. 
Therefore $X_{nr}(G)\subset X_{wur}(G)$ is the identity component.
An irreducible representation of 
unipotent reduction $\pi$ canonically defines an unramified character $z_\pi\in X_{nr}(A)$ 
since the scalar action of $A$ on $V_\pi$ is clearly unramified. For $\omega\in  X_{wur}(G)$ 
we have $z_{\omega.\pi}=m(\omega)z_\pi$. 

We mention in passing that $X_{wur}(G)$ and $X_{nr}(G)$ are not sensitive to 
inner twists; in particular we have canonical isomorphisms $X_{wur}(G)=X_{wur}(G^*)$
and $X_{nr}(G)=X_{nr}(G^*)$. 
\\

Our main result Theorem \ref{thm:main} deals with existence and uniqueness of \emph{a parameterisation}  
(the precise meaning of the set $\hat{G}^{\textup{temp}}_{uni}$ is explained in Section \ref{sect:uniptypes}):
\[\varphi: \hat{G}^{\textup{temp}}_{uni}\to \Phi_{nr}^{\textup{temp}},\   \pi\to\varphi_\pi\]  
such that the conjectures \ref{conj:1} and \ref{conj:2} hold. By a parameterisation we mean: 
\begin{definition}\label{def:desi}
A map $\hat{G}^{\textup{temp}}_{uni}\to \Phi_{nr}^{\textup{temp}}$, $\pi\to\varphi_\pi$ is called ``a parameterisation'' 
only if it satisfies following properties:  
\begin{enumerate}
\item[(i)] The map $\pi\to\varphi_\pi$ is equivariant 
for the actions of the group of tempered weakly unramified characters $X_{wur}^{\textup{temp}}(G)$ 
as defined above (cf. \cite[\S 10.2; 10.3(2)]{Bo}). 
\item[(ii)] We can express the character by which the center $Z(G)$ acts on $V_\pi$ in terms of $\varphi_\pi$, 
as in \cite[10.1; 10.3(1)]{Bo}. 
\item[(iii)] Compatibility with unitary parabolic induction as in \cite[\S 10.3(3), 11.3, 11.7]{Bo}. 
In particular, an irreducible 
tempered representation is a direct summand of an induced tempered representation $i_P^G(\delta)$ 
for some relevant parabolic $P=MN$ if and only if $\varphi_\pi$ is equivalent to a parameter 
$\varphi^M_{\delta}\in \Phi_{nr}^{\textup{temp}}(M)$ for $M$ 
(considering ${}^LM$ as a subgroup of ${}^LG$), where $\varphi^M$ is a parameterisation for $M$. 
\item[(iv)] The parameterisation is compatible with restriction of scalars \cite[\S 10.1]{Bo}, 
and taking products of reductive groups. 
\item[(v)] Let $\eta:H\to G$ is an $F$-morphism of connected reductive groups with commutative 
kernel and cokernel, and let $\varphi\in\Phi_{nr}^{\textup{temp}}(G)$. 
Given $\pi\in\Pi_\varphi(G)^{\textup{temp}}$, the pull-back of $\pi$ to $H$ 
is a finite direct sum of tempered irreducible representations in $\Pi_{{}^L\eta\circ\varphi}^{\textup{temp}}(H)$, 
where ${}^L\eta:{}^LG\to{}^LH$ denotes the natural map 
(cf. \cite[\S 10.3(5)]{Bo}). It follows in particular that  
a representation $\pi\in\hat{G}^{\textup{temp}}_{uni}$ factorizes through a representation 
of $G/A$ (with $A$ as above) if and only if $\textup{Im}(\varphi_\pi)\subset {}^LG^\natural$ 
(the $L$-group of $G/A$). (We use here that $G$ maps surjectively to $(\mbf{G}/\mbf{A})(F)$.)
\item[(vi)] For Iwahori-spherical representations of unramified connected reductive groups, 
the correspondence equals that of Theorem \ref{thm:Iw}
(compare with \cite[10.4]{Bo}).
\end{enumerate}
\end{definition}

\subsection{Unipotent types and unipotent affine Hecke algebras}\label{sect:uniptypes}
Let $\Lambda_0=\mathbb{C}[v^\pm]$, with $v$ a formal variable. Let $\vt\in\mathbb{R}_+$ 
be such that $\vt^2=\qt=|\mathfrak{O}/\mathfrak{p}|$.
We remark that there are no ``bad primes'' for representations of unipotent reduction
\cite{Lusztig-chars}, and we may and will often replace $\vt$ by the indeterminate $v$ in the theory.
For example, we can view the Hecke algebra over $\mathbb{C}$ with parameter $\vt$ as 
specialization of the corresponding generic Hecke algebra over $\Lambda_0$. 
\\
 
We assume from now on that $\mbf{G}$ is connected reductive 
over $F$ and splits over an unramified extension of $F$. Let $F_{ur}\supset F$ be a maximal unramified 
extension of $F$. Let $\mbf{T}\subset \mbf{G}$ be a maximally split $F$-torus which is $F_{ur}$-split.
In the apartment $\mathcal{A}_1$ of $\mbf{T}(F_{ur})$ we can choose an $\textup{Fr}$-stable alcove, 
by \cite[1.10.3]{Tits}. Let $\mathbb{I}_1\subset G(F_{ur})$ be the corresponding $\textup{Fr}$-stable 
Iwahori subgroup, and let $\mathbb{I}=\mathbb{I}_1^{\textup{Fr}}$ be the corresponding Iwahori 
subgroup of $G$.
\\

Steinberg's vanishing theorem $H^1(F_{ur},\mbf{G})=1$ implies that: 
$$H^1(F,\mbf{G})=H^1(\textup{Gal}(F_{ur}/F),\mbf{G}(F_{ur}))$$. 
Kottwitz's Theorem expresses this 
in terms of the center of the Langlands dual group:  
$$H^1(F,\mbf{G})=H^1(\textup{Fr},\mbf{G}(F_{ur}))
=[\Omega/(1-\theta)\Omega]_{tor}=\textup{Irr}(\pi_0(Z(G^\vee)^\theta))$$ with $\Omega=X_*(T)/Q$, where 
$Q$ denotes the root lattice of the dual group $G^\vee$. \footnote{From now on we will call 
roots of $(G^\vee,T^\vee)$ ``roots'', and roots of $(\mbf{G},\mbf{T})$ ``coroots''. We apologize for the 
incovenvience this may cause.}
The inner forms of $G$ are parametrised by $H^1(F,\mbf{G}_{ad})=(Z(G^\vee_{sc})^\theta)^*=
\Omega_{sc}/(1-\theta)\Omega_{sc}$ with $\Omega_{sc}=P/Q$, where $P$ is the weight lattice of $Q$.  
Given $\omega\in H^1(F,\mbf{G}_{ad})$ we may choose a representative $u\in N_{G_{ad}(F_{ad})}(\mathbb{I}_1)$ 
whose image in $H^1(F,\mbf{G}_{ad})=\Omega_{sc}/(1-\theta)\Omega_{sc}$ is $\omega$. Then 
the inner twist $\textup{Fr}_u:=\textup{Ad}(u)\circ \textup{Fr}^*$ of $\textup{Fr}^*$ defines an inner form 
of $G^*$ corresponding to $\omega$, which we will often denoted by $G^u$.
\\

Let $\Pbb\subset G$ be a parahoric subgroup. There exists an $\Fr$-stable parahoric $\mbf{P}\subset\mbf{G}(F_{ur})$ 
such that $\Pbb=\mbf{P}^\Fr$. 
We put as before 
$$
\textup{Vol}(\Pbb):=\vt^{-\textup{dim}(\overline{\Pbb})}|\overline{\Pbb}|
$$
This is the specialization at $v=\vt$ of a Laurent polynomial $\textup{Vol}(\Pbb)\in\Lambda_0$ in $v$. 
Let $\sigma$ be a cuspidal unipotent representation of $\overline{\Pbb}$, lifted to 
$\Pbb$. Let $\tilde{\mathbb{P}}:=N_{G}(\mathbb{P})$, and choose an 
extension of $\sigma$ to a representation $\tilde{\sigma}$ of $\tilde{\Pbb}\subset G$
(such extensions exist \cite{Lusztig-unirep}). Then $t=(\Pbb,\sigma)$ is a 
type (see Theorem \ref{thm:unitype}) for a finite set of Bernstein components of representations 
of unipotent reduction of $G$. Notice that $t$ and the extension 
$\tilde{t}=(\tilde{\Pbb},\tilde{\sigma})$ are determined  
by data (the local Tits index of $G$ (with trivial action of the inertia group), a facet of the 
apartment of $T$, a cuspidal unipotent representation $\sigma$ of the corresponding parahoric 
subgroup $\Pbb$, and an extension to its normalizer in $G$) which are independent of the base 
field $F$ of $G$. (We use here that the classification of cuspidal unipotent characters of finite 
groups of Lie type is independent of the field of definition \cite{Lusztig-chars}). 
We write $t=(\Pbb,\sigma)$ to refer to this ``abstract'' unipotent type (in which the base 
field $F$ is undetermined, and the cardinality of its residue field considered as indeterminate $v$), 
while we often write $t_\vt=(\Pbb_\vt,\sigma_\vt)$ if we want to refer to the ``concrete'' type of $G$
``specialised at $v=\vt$''. Similarly for $\tilde{t}$.
Such ``families of unipotent types'' $t$ (with varying base field $F$) have explicit meaning on 
the Langlands dual side, as we will see shortly. 
\begin{thm}[\cite{MP1}, \cite{MP2}, \cite{Mo}, \cite{Lusztig-unirep}, \cite{Lusztig-unirep2}]\label{thm:unitype}
Let $t=(\Pbb,\sigma)$ and $\tilde{t}=(\tilde{\mathbb{P}},\tilde{\sigma})$ be as above, and let 
$\Omega^{\mathbb{P},\theta}=\tilde{\Pbb}/\Pbb\subset 
\Omega^\theta$ be the stabilizer of $\Pbb$ (see \cite[1.16]{Lusztig-unirep}).

Then $t_\vt$ is a type for $G$ with Hecke algebra $\Hc_{t,\vt}$ which is of 
the form $\Hc_{t,\vt}=\Hc_{t,\vt}^a\rtimes \Omega^{\mathbb{P},\theta}$, where 
$\Hc^a_{t,\vt}$
is the specialization at $v=\vt$ of a generic (unextended) affine Hecke algebra $\Hc_t^a$ 
defined over $\Lambda_0$ (depending on $t$ only), on which $\Omega^{\mathbb{P},\theta}$ acts 
via diagram automorphisms. If $\Omega^{\mathbb{P},\theta}_1\subset \Omega^{\mathbb{P},\theta}$ 
denotes the subgroup which acts trivially on $\Hc^a_{t}$, and 
$\Omega^{\mathbb{P},\theta}_2=\Omega^{\mathbb{P},\theta}/\Omega^{\mathbb{P},\theta}_1$ then 
\begin{equation}\label{eq:extendtype}
\Hc_{t}=\Hc_{\tilde{t}}^e\otimes\mathbb{C}[\Omega^{\Pbb,\theta}_1];\ \ 
\Hc_{\tilde{t}}^e=\Hc_t^a\rtimes \Omega^{\mathbb{P},\theta}_2
\end{equation}
where $\Hc_{\tilde{t}}^e$ is an extended affine Hecke algebra. 
The set of extensions 
$\tilde{t}$ of $t$ is a torsor for the quotient $(\Omega^{\mathbb{P},\theta})^*$ of the 
group 
$X_{wur}(G)=(\Omega^{\theta})^*$, 
and $\Hc_{\tilde{t}}^e$ corresponds to the orbit of $\tilde{t}$ under the subgroup 
$(\Omega^{\mathbb{P},\theta}/\Omega^{\mathbb{P},\theta}_1)^*$.
Thus the summands $\Hc_{\tilde{t}}^e$ of $\Hc_{t}$ form a 
torsor for the quotient $(\Omega^{\Pbb,\theta}_1)^*$, and 
$X_{wur}(G)=(\Omega^\theta)^*$ acts on the center of $\Hc_t$. 

Let $\textup{Vol}(\Pbb)\in\Lambda_0$ be the Laurent polynomial defined above, 
let $\textup{deg}(\sigma)\in\Lambda_0$ 
be the polynomials such that $\textup{deg}(\sigma)(\vt)=\textup{deg}(\sigma_\vt)$, 
then the trace $\tau$ of $\Hc_{\tilde{t}}^e$ is normalised by 
$\tau(1)=d^t=|\Omega^{\Pbb,\theta}_1|^{-1}\frac{\textup{deg}(\sigma)}{\textup{Vol}(\mathbb{P}^{\Fr_u}}$. 
This turns the summands $\Hc_{\tilde{t}}^e$ into normalised affine Hecke algebras in the sense 
of \cite{Opdl}.
\end{thm}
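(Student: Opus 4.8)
The plan is to assemble the statement from the literature --- the depth-zero type theory of Moy--Prasad \cite{MP1}, \cite{MP2}, Morris's computation of the Hecke algebra attached to a cuspidal representation of a parahoric subgroup \cite{Mo}, and Lusztig's explicit analysis of the unipotent case in \cite{Lusztig-unirep}, \cite{Lusztig-unirep2} --- rather than to reprove it. The work genuinely left is to check that all the combinatorial data is field-independent, so that the generic algebra $\Hc_t^a$ over $\Lambda_0=\mathbb{C}[v^\pm]$ is well defined, and to pin down the trace normalisation $\tau(1)=d^t$.

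First I would recall why $t_\vt=(\Pbb,\sigma)$ is a type in the sense of Bushnell--Kutzko \cite{BK}. Since $\sigma$ is inflated from a cuspidal unipotent representation of the finite reductive quotient $\overline{\Pbb}$, depth-zero theory identifies $(\Pbb,\sigma)$ as a type covering precisely a finite set of Bernstein components of representations of unipotent reduction, the required covering and intertwining conditions being verified by Morris. This gives the category equivalence $m_t$ and the identification $\Hc_{t,\vt}=e_t\mathcal{H}(G)e_t$. For the structure of $\Hc_{t,\vt}$ I would invoke Morris's generalisation of Howlett--Lehrer: the support of $e_t\mathcal{H}(G)e_t$ is a union of $\tilde{\Pbb}$-double cosets indexed by an extended affine Weyl group, the braid relations arising from the Bruhat-type combinatorics of this double-coset space and the quadratic relations --- with parameters that are powers of $\qt$ --- being read off from the relative root system and the reducibility points of parahoric induction, computed explicitly by Lusztig. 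This yields the decomposition $\Hc_{t,\vt}=\Hc^a_{t,\vt}\rtimes\Omega^{\mathbb{P},\theta}$.

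Next I would establish field-independence. The affine root datum underlying $\Hc^a_{t,\vt}$, the exponents in its quadratic relations, and the action of $\Omega^{\mathbb{P},\theta}=\tilde{\Pbb}/\Pbb$ on it are determined by the local Tits index of $G$, the chosen facet, and the isomorphism type of $\sigma$ alone; here the crucial input is Lusztig's theorem that the classification of cuspidal unipotent characters of finite groups of Lie type, together with their numerical invariants, is independent of the cardinality of the base field \cite{Lusztig-chars}. This licenses replacing $\qt$ by the indeterminate $v^2$ throughout, producing $\Hc_t^a$ over $\Lambda_0$ and the specialisation statement. The group $\Omega^{\mathbb{P},\theta}$ is abelian (a subquotient of a cocharacter lattice), normalises $\Pbb$ and fixes the class of $\sigma$, hence acts on $\Hc_t^a$ by automorphisms preserving the Bernstein presentation, i.e.\ by diagram automorphisms of the affine Dynkin diagram; let $\Omega^{\mathbb{P},\theta}_1$ be the kernel of this action and $\Omega^{\mathbb{P},\theta}_2$ its image. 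Lusztig's analysis shows that the a priori twisted extension of $\Hc_t^a$ by $\Omega^{\mathbb{P},\theta}$ has trivial $2$-cocycle and splits, which gives (\ref{eq:extendtype}) with $\Hc_{\tilde{t}}^e=\Hc_t^a\rtimes\Omega^{\mathbb{P},\theta}_2$ an extended affine Hecke algebra; the two torsor statements are then bookkeeping, using that extensions of $\sigma$ to $\tilde{\Pbb}$ form a torsor under $(\Omega^{\mathbb{P},\theta})^*$ and that two such extensions give the same summand $\Hc_{\tilde{t}}^e$ exactly when they differ by $(\Omega^{\mathbb{P},\theta}/\Omega^{\mathbb{P},\theta}_1)^*$.

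Finally, the trace. By the defining formula for a type idempotent, $\tau(e_t)=\textup{deg}(\sigma_\vt)/\textup{Vol}(\Pbb)$; passing to the summand $\Hc_{\tilde{t}}^e$ strips off the tensor factor $\mathbb{C}[\Omega^{\mathbb{P},\theta}_1]$, on which the trace is $|\Omega^{\mathbb{P},\theta}_1|^{-1}$ times the regular trace, so $\tau(1)=|\Omega^{\mathbb{P},\theta}_1|^{-1}\textup{deg}(\sigma_\vt)/\textup{Vol}(\Pbb)$. Rewriting this with the generic polynomials $\textup{deg}(\sigma)$ and $\textup{Vol}(\mathbb{P}^{\Fr_u})\in\Lambda_0$ (we are working inside the inner form $G^u$) gives $d^t$, and comparison with the definition in \cite{Opdl} shows $\Hc_{\tilde{t}}^e$ is a normalised affine Hecke algebra. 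I expect the one genuinely delicate point to be the vanishing of the $2$-cocycle in the extended-algebra decomposition; everything else is either a citation or a direct computation.
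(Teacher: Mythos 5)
Your proposal is correct and follows the same route the paper takes: the paper's own ``proof'' is nothing more than a pointer to \cite[Section 2.4, Theorem 2.8]{Opdl} and the discussion in \cite{Lusztig-unirep}, and your sketch is an accurate reconstruction of what those sources establish (Morris's Howlett--Lehrer--type description of the support of $e_t\mathcal{H}(G)e_t$, field-independence via \cite{Lusztig-chars}, the diagram-automorphism action of $\Omega^{\Pbb,\theta}$ with its trivial $2$-cocycle, the torsor bookkeeping, and the idempotent computation giving $\tau(1)=|\Omega^{\Pbb,\theta}_1|^{-1}\deg(\sigma)/\textup{Vol}(\Pbb)$). The one point you flag as delicate --- the vanishing of the $2$-cocycle so that the extension of $\Hc_t^a$ by $\Omega^{\Pbb,\theta}$ splits as a semidirect product --- is indeed the crux, and it is exactly what Lusztig's case-by-case analysis in the unipotent setting provides; your proposal is right to defer it to the cited work rather than reprove it.
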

\begin{proof}
We refer the reader to \cite[Section 2.4, Theorem 2.8]{Opdl}, 
where this is worked out in detail, and to the discussion in \cite{Lusztig-unirep}. 
\end{proof}
\begin{definition}[Lusztig \cite{Lusztig-unirep}]\label{def:unip} 
The category $\mathcal{C}_{uni}(G)$ of smooth representations  
of $G$ of unipotent reduction is the direct product over all conjugacy classes of unipotent types 
$t=(\Pbb,\sigma)$ of the abelian subcategories $\mathcal{C}^t(G)$ of $\mathcal{C}(G)$, 
where $\mathcal{C}^t(G)$ is Morita equivalent to the Hecke algebra $\Hc_{t,\vt}$ of $t$. 
Since $\Hc_t$ is a direct sum  (\ref{eq:extendtype}) of (mutually isomorphic) 
extended affine Hecke algebras $\Hc_{\tilde{t}}^e$ parameterised by the 
set of characters of $\Omega^{\Pbb,\theta}_1$,  
each subcategory $\mathcal{C}^t(G)$ decomposes as a product over a finite set of Bernstein 
components $\mathcal{C}^{\tilde{t},e}(G)$ parameterised by the set of extensions $\tilde{t}$ 
of $t$ to the inverse image of $\Omega^{\Pbb,\theta}_1\subset \Omega^{\Pbb,\theta}$ in 
$\tilde{\Pbb}$. This is a torsor for $(\Omega^{\Pbb,\theta}_1)^*$, 
such that for each extended type $\tilde{t}$, the map 
$(\pi,V_\pi)\to 
\textup{Hom}_{N_{G}(\Pbb^{\Fr_u})}(\tilde{\sigma}_\vt,V_\pi|_{N_{G}(\Pbb^{\Fr_u})})$ 
is a Morita equivalence from $\mathcal{C}^{\tilde{t},e}(G)$ 
to $\Hc_{\tilde{t},\vt}^e$.  
\end{definition}
Using his arithmetic-geometric diagram correspondences 
Lusztig constructed \cite{Lusztig-unirep}, \cite{Lusztig-unirep2} a parameterisation of the 
irreducible objects of $\mathcal{C}_{uni}(G)$ if $G$ is simple of adjoint type.
In particular: 
\begin{thm}[Lusztig]\label{thm:lus}
Let $G=G^u$ be (the group of points of) a simple group of adjoint type defined over $F$.
There exists a partitioning 
\begin{equation}
\textup{Irr}^{\textup{temp}}_{uni}(G)=\bigsqcup_{\varphi\in\Phi^{\textup{temp}}_{nr}}\Pi_\varphi
\end{equation}
such that for all $\varphi\in\Phi^{\textup{temp}}_{nr}$, there is a bijection between 
$\Pi_\varphi$ and $\Pi(\Sc_\varphi,\chi_G)$ (where $\chi_G\in \Omega=(Z_{sc}^\theta)^*$ 
and $\chi_G$ and $\Pi(\Sc_\varphi,\chi_G)$  as in Conjecture \ref{conj:LC}).  
This map $\pi\to\varphi_\pi$ can be taken $X_{wur}(G)$-equivariantly.  
\end{thm}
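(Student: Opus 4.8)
The statement is, modulo the dictionary between Hecke-algebraic and Langlands-dual data, a repackaging of Lusztig's classification of the unipotent representations of simple adjoint $p$-adic groups \cite{Lusztig-unirep}, \cite{Lusztig-unirep2}; so the plan is to recall that classification and reorganise it. First I would invoke Theorem \ref{thm:unitype}: over each conjugacy class of unipotent types $t=(\Pbb,\sigma)$ of $G=G^u$ the category $\mathcal{C}^t(G)$ is Morita equivalent to a direct sum of extended affine Hecke algebras $\Hc^e_{\tilde t}$ whose root data and (possibly unequal) Hecke parameters Lusztig computed explicitly, and for each of these Lusztig's arithmetic--geometric diagram correspondences --- a version of the Deligne--Langlands--Kazhdan--Lusztig classification adapted to the unequal-parameter case, via the generalised Springer correspondence and equivariant homology of Steinberg-type varieties --- furnish a bijection between the irreducible $\Hc^e_{\tilde t}$-modules and $G^\vee$-conjugacy classes of triples $(s,u,\rho)$, with $s\in G^\vee$ semisimple, $(s,u)$ a pair satisfying the Deligne--Langlands compatibility, and $\rho$ an irreducible representation of the relevant centraliser component group. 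Collecting these over all types $t$ and all inner forms $G^u$ realises $\bigsqcup_u\textup{Irr}_{uni}(G^u)$ as the set of all such triples, with the inner form $G^u$ --- equivalently its class $\chi_G\in\Omega=(Z(G^\vee_{sc})^\theta)^*$ --- read off from the central character of $\rho$.

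Next I would single out the tempered members and organise them by parameter. After composing with the Iwahori--Matsumoto involution (the convention fixed in the footnote to Theorem \ref{thm:main0}), a simple $\Hc^e_{\tilde t}$-module is tempered precisely when its semisimple datum $s$ is bounded, equivalently when the homomorphism $\varphi=\varphi_{s,u}\colon W_F\times\textup{SL}_2(\CC)\to{}^LG$ attached to $(s,u)$ --- normalised so that (essentially) $\varphi(\tilde{\textup{Fr}})=s$ and $\varphi|_{\textup{SL}_2}$ is the Jacobson--Morozov homomorphism of $u$ --- is bounded, i.e. lies in $\Phi^{\textup{temp}}_{nr}$. Setting $\Pi_\varphi$ to be the set of representations whose triple has underlying parameter $\varphi$ yields the asserted partition of $\textup{Irr}^{\textup{temp}}_{uni}(G)$, and the residual fibre datum is exactly $\rho$. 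Here one uses that, $G$ being adjoint, $G^\vee$ is simply connected, $Z(G)$ (hence $A$) is trivial, and $Z(G^\vee)=Z(G^\vee_{sc})$, so that $S_\varphi=Z_{G^\vee}(\varphi)$ and the centraliser component groups occurring in Lusztig's classification coincide with $\Sc_\varphi=\Sc^\natural_\varphi=\pi_0(Z_{G^\vee}(\varphi))$; the constraint determining which $\rho$ occur on $G=G^u$ is precisely that $\rho$ restricts to a multiple of $\chi_G$ on the image of $Z(G^\vee_{sc})$ in $\Sc_\varphi$, and non-emptiness and completeness of the packet --- that every such $\rho$ is realised --- is exactly the content of Lusztig's classification. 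This identifies $\Pi_\varphi$ with $\Pi(\Sc_\varphi,\chi_G)$.

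For the $X_{wur}(G)$-equivariance I would argue as follows. A weakly unramified character $[\omega]$ with lift $\omega\in\Omega^*=Z(G^\vee)$ acts on $\bigoplus_{\tilde t}\Hc^e_{\tilde t}$ by an algebra isomorphism which permutes the summands (the torsor structure in Theorem \ref{thm:unitype}) and translates their central-character tori by $\omega$; on the level of triples this is $(s,u,\rho)\mapsto(\omega s,u,\rho)$, hence on parameters it is $\varphi\mapsto[\omega].\varphi$ in the sense fixed just before Lemma \ref{lem:gamequiv}. It then remains to check that this is compatible with a single coherent choice of the extensions $\tilde\sigma$ of the cuspidal unipotent representations $\sigma$ (as arranged in \cite{Lusztig-unirep}), after which the bijection $\pi\mapsto\varphi_\pi$ can be taken $X_{wur}(G)$-equivariantly.

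The substantive work is entirely Lusztig's: the explicit geometric parameterisation of the irreducibles of the unequal-parameter algebras $\Hc^e_{\tilde t}$, and the matching of his arithmetic--geometric diagrams with the Langlands-dual triples. The step I expect to demand the most care for the present reformulation is verifying that the identification of Lusztig's component groups with $\Sc_\varphi$, the inner-form bookkeeping through $\chi_G$, the temperedness criterion (via the Iwahori--Matsumoto twist), and the $X_{wur}(G)$-equivariance can all be imposed simultaneously and coherently across the whole family of types; once that dictionary is in place, the remainder is bookkeeping.
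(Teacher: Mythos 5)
The paper itself gives no proof of Theorem \ref{thm:lus}; the statement is attributed to Lusztig and justified by citation to \cite{Lusztig-unirep}, \cite{Lusztig-unirep2}, with the surrounding text supplying exactly the dictionary (Theorem \ref{thm:unitype}, the discussion of arithmetic--geometric diagrams, and the Iwahori--Matsumoto twist from the footnote to Theorem \ref{thm:main0}) that you recapitulate. Your outline is faithful to that dictionary and to how the paper uses the result, so in substance you are on the same track as the paper.

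One bookkeeping point in your sketch is wrong, however, and is worth flagging because it is precisely the kind of detail the theorem statement is calibrated to handle. You write that for $G$ adjoint one has $S_\varphi=Z_{G^\vee}(\varphi)$ and hence $\Sc_\varphi=\Sc^\natural_\varphi$. With the paper's definitions, for $G$ adjoint we have $G^{\vee,\natural}=G^\vee=G^\vee_{sc}$ and $\alpha=\beta\colon G^\vee\to G^\vee_{ad}$, so $S_\varphi^\natural=Z_{G^\vee}(\varphi)$ but $S_\varphi=\beta^{-1}\alpha(S_\varphi^\natural)=Z_{G^\vee}(\varphi)\cdot Z(G^\vee)$. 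A central element $z\in Z(G^\vee)$ centralises the image of an unramified $\varphi$ in ${}^LG=G^\vee\rtimes\langle\theta\rangle$ if and only if $\theta(z)=z$, so $Z(G^\vee)\cap Z_{G^\vee}(\varphi)=Z(G^\vee)^\theta$, and thus $S_\varphi\supsetneq S_\varphi^\natural$ and $\Sc_\varphi\neq\Sc^\natural_\varphi$ whenever $\theta$ acts nontrivially on $Z(G^\vee)$, i.e.\ for $G$ unramified but not split. Your identification is correct only in the split case. This matters because the theorem is stated in terms of $\Pi(\Sc_\varphi,\chi_G)$ rather than $\textup{Irr}(\Sc_\varphi^\natural)$: the enlargement $S_\varphi\supset S_\varphi^\natural$ by $Z(G^\vee)$ is exactly what makes room for the character $\chi_G$ of $Z_{sc}^\theta$ to pick out which inner form of $G^*$ the representations live on, as encoded in Lusztig's simultaneous parameterisation of the unipotent duals of all inner forms. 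Your proof should keep $\Sc_\varphi$ and $\Sc_\varphi^\natural$ distinct in the non-split case and track the $\chi_G$-isotypic condition through that enlargement; the rest of your argument goes through unchanged.
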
 
For $G=G^*$ and $t_\mathbb{I}=(\mathbb{I},\textup{triv})$, $\mathcal{C}^{t_\mathbb{I}}(G)$ 
is the Bernstein component 
of the minimal unramified principal series. Indeed, by Borel's classical result  
this abelian subcategory of $\mathcal{C}(G)$ is equal to the category of smooth 
representations of $G$ which are generated by their Iwahori-fixed vectors. Then the 
equivalence of Definition \ref{def:unip} is the classical equivalence between  
$\mathcal{C}^{t_\mathbb{I}}(G)$ and the module category of the (extended) 
Iwahori-spherical Hecke algebra $\Hc_{t_\mathbb{I},\vt}^e=\Hc_{\mathbb{I},\vt}$. Via this equivalence, 
the restriction of the correspondence of Theorem \ref{thm:lus} to $\mathcal{C}^{t_\mathbb{I}}(G)$
becomes the classical Kazhdan-Lusztig parameterisation \cite{KLDL}.
Theorem \ref{thm:Iw} (and its proof) shows that in this special case, this parameterisation restricted 
to tempered representations satisfies the conjectures \ref{conj:1} and \ref{conj:2}, and that this 
can in fact be extended to general reductive groups.
\\

The basic problem one is facing when trying to extend this result to all tempered representations 
of $\mathcal{C}_{uni}(G^u)$ (with $G^u$ an arbitrary inner twist of $G$), is how one should  
parameterise the tempered irreducible representations of the affine Hecke algebras of the form 
$\Hc_{\tilde{t}}^e$ (with $t=(\mathbb{P},\sigma)$ a unipotent type of $G^u$, and 
$\tilde{t}$ an extension) in terms of tempered unramified Langlands parameters for $G^u$.
Lusztig \cite{Lusztig-unirep}, \cite{Lusztig-unirep2} does this via his theory of local 
systems on $G^\vee$-orbits of Langlands parameters, and the remarkable isomorphisms between 
arithmetic diagrams (related to the affine diagram of $\Hc_{\tilde{t}}^e$) and geometric diagrams 
related to graded affine Hecke algebras associated with a local system.
We follow a different approach in which the conjectures \ref{conj:1} and \ref{conj:2} play a central role.
In view of Theorem \ref{thm:cc}, it is clear that one would like to map the spectrum of the 
center $Z(\Hc_{\tilde{t},\vt}^e)$ of $\Hc_{\tilde{t},\vt}^e$ to the spectrum of the center of $Z(\Hc_{\mathbb{I},\vt})$, 
so that $S(\Hc_{\tilde{t},\vt}^e)$ maps to $S(\Hc_{\mathbb{I},\vt})$, and such that the Plancherel measures 
(see Theorem \ref{thm:ds} and Theorem \ref{thm:parb}) up to constant factors correspond. 
We call such a map a \emph{spectral transfer map} (STM) $\Hc_{\tilde{t},\vt}^e\leadsto \Hc_{\mathbb{I},\vt}$.  
Such maps  
turns out to exist and turn out to be \emph{essentially unique}. Moreover, these STMs are  
essentially ``the same'' as the maps implicit in Lusztig's arithmetic-geometric 
correspondences. Seeing this through (cf. \cite{Opdl}, \cite{FO}, \cite{FOS}) recovers Lusztig parameterisation 
$\pi\to\varphi_\pi$, proving at the same time the conjectures \ref{conj:1} and \ref{conj:2} for this parameterisation. 
In our Main Theorem \ref{thm:main} we have further extended this to general connected reductive group. 
\subsection{Langlands parameters and residual cosets}
Let us first consider the case of cuspidal representations of unipotent reduction.

Let $t=(\Pbb,\sigma)$ be a unipotent type of $G$, with $\Pbb\subset G$   
a maximal parahoric subgroup. Assume that $G$ has anisotropic center. 
Then $\Hc_t = \Lambda_0[\Omega^{\Pbb,\theta}]$ has rank $0$, and 
its trace is normalised by $\tau(1)=d^t=\frac{\textup{deg}(\sigma)}{\textup{vol}(\Pbb)}$. 
Let $\pi$ be a supercuspidal unipotent character of $G^u$ belonging to the finite set $\mathcal{C}^t(G)$.  
Then, in view of Theorems \ref{thm:ds}, \ref{thm:cc} and \ref{thm:Iw} we need to find a 
$\Lambda_0$-valued residual point 
$r\in T_\mathbb{I}(\Lambda_0)$ such that: 
\begin{equation}\label{eq:cuspmu}
\textup{fdeg}(\pi)
=c\mu^{\{r\}}(r)=\frac{c}{q(w_0)\textup{Vol}(\Ibb)}m_r(v)
\end{equation}
for some constant factor $c\in\mathbb{R}_+$, 
where 
\begin{equation}\label{eq:cuspar}
\textup{fdeg}(\pi)=\frac{\textup{deg}(\sigma)}{|\Omega^{\Pbb,\theta}|\textup{Vol}(\Pbb)}
\end{equation}
Recall that by Theorem \ref{thm:cc}, an orbit of residual points $W_0r\in W_0\backslash T_{\Ibb}$  corresponds to 
a unique orbit of discrete unramified Langlands parameters $\varphi\in\Phi_{nr}^{\textup{temp}}(G^*)$ 
such that $W_0r=[\varphi(\tilde{\Fr}))]\in W_0\backslash T_{\Ibb}$. 
By Theorem \ref{thm:Iw} we then have, in terms of $\varphi$, for some constant factor $c'$: 
\begin{equation}\label{eq:mugamma}
\textup{fdeg}(\pi)=c\mu^{\{r\}}(r)=c'\gamma(0,\textup{Ad}\circ\varphi,\psi)
\end{equation}
viewed as identity of rational functions in $v$ (where $v^2=q$).

\begin{ex} We begin with a very basic example.
Let $G=\textup{PGL}_{m+1}(F)$. The only inner form of $G$ which has a cuspidal 
type in this case is the anisotropic inner form $G^u=\Dbb^\times/F^\times$ with $\Dbb$  
the tame division algebra of degree $m+1$ over $F$. The unique parahoric subgroup 
of $G^u$ is $\mathbb{P}^u=G^u_1=\cap_{\chi\in X_{wur}(G^u)}\textup{Ker}(\chi)$, which 
has a unique unipotent cuspidal $\sigma=\textup{triv}$. This gives a maximal unipotent 
type $t=(\mathbb{P}^u,\sigma)$ whose extensions $\tilde{t}$ to $G^u$ are given 
by $X_{wur}(G^u)=\Omega^*$. Let us call the corresponding cuspidal unipotent 
characters of $G^u$: $\pi_\chi$ (with  $\chi\in X_{wur}(G^u)$). 
Now $G$ has essentially only one 
unramified discrete Langlands parameter, the regular parameter $\varphi_0$, up to the 
action of $X_{wur}(G)=\Omega=C_{m+1}$. We have $\varphi_0|_{W_F}=1$, and 
$\varphi_0(\begin{pmatrix}1&1\\0&1\end{pmatrix})$ is a regular unipotent element.
Hence $W_0r=[\varphi_0({\tilde{\Fr}})]\in W_0\backslash T_\mathbb{I}$, and the 
$\Omega^*$-orbit $\Omega^*r$ is defined by 
the equations $\alpha_i(r)=\qt$ for $i=1\dots m$. We check simply from (\ref{eq:cuspmu})
and (\ref{eq:cuspar}) that 
\begin{equation}
\textup{fdeg}(\pi_\chi)=\mu^r(r)=
\frac{1}{(m+1)[m+1]_q},\ \text{\ with\ }[n]_q:=\frac{v^{m+1}-v^{-(m+1)}}{v-v^{-1}}.
\end{equation}
\end{ex} 
\begin{ex} Another example which is known for a long time \cite{Re0}. 
Let $G=\mbf{G}_2(F)$. Let $\pi$ be the cuspidal unipotent character of $G$ 
which is compactly induced from the cuspidal unipotent representation $\mbf{G}_2(\mathbb{F}_{\qt})$ 
which is denoted by $\mbf{G}_2[1]$, inflated to $\Pbb= \mbf{G_2}(\mathcal{O}_F)$. Then 
\begin{equation}
\textup{fdeg}(\pi)=\frac{1}{6}\mu^r(r)= 
\frac{1}{6(v+v^{-1})^2(v^2+1+v^{-2})}.
\end{equation}
where $W_0r=[\varphi_{sub}({\tilde{\Fr}})]\in W_0\backslash T_\mathbb{I}$ is 
the $\Lambda_0$-point associated with the real discrete unramified Langlands parameter 
$\varphi_{sub}$, associated with the subregular unipotent orbit of $\mbf{G_2}$.
This uniquely determines $W_0r$ and thus $\varphi_{sub}$. Indeed, there is a cuspidal local 
system supported by $G^\vee\dot\varphi_{sub}$, and Lusztig maps $\pi$ to this cuspidal local 
system. 
\end{ex}
It turns out that this always works, at least in the case of $G$ being absolutely simple and of adjoint type:
\begin{thm}[Reeder \cite{Re0}, \cite{Re} (split exceptional groups), \cite{HOH}, \cite{FO}, \cite{Fe2}, \cite{FOS}]\label{thm:cuspidalcase}
Let $G$ be a simple group of adjoint type defined over $F$ and  split over an unramified extension of $F$.
Let $\pi$ be a supercuspidal representation of $G$ of unipotent reduction.
Let $t=(\Pbb,\sigma)$ be a unipotent type for $G$ such that $\pi$ belongs to $\mathcal{C}^t(G)$. 
Then $\Pbb\subset G$ is a maximal parahoric subgroup. Conversely, if $\Pbb\subset G$ is maximal then 
$\mathcal{C}^t(G)$ consists of supercuspidal unipotent representations.  
In this situation there exists a \emph{unique}   
$X_{wur}(G)$-orbit of discrete unramified Langlands parameters $[\varphi]\in\Phi^{\textup{temp}}_{nr}(G)$
such that:
\begin{equation}\label{eq:cuspSTM}
\textup{fdeg}(\pi)=
c\gamma(0,\textup{Ad}\circ\varphi,\psi)
\end{equation}
The collection of classes of discrete unramified Langlands parameters $[\varphi]\in\Phi^{\textup{temp}}_{nr}(G)$ thus 
obtained is exactly equal to the set of classes of discrete unramified Langlands parameters which 
support a cuspidal local system.
\end{thm}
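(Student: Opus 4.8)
The plan is to reduce the theorem to an explicit identity of $q$-rational functions, organised through the structural results of Sections~3 and~4, and then to verify that identity along Lusztig's classification of cuspidal unipotent characters. First I would settle the dichotomy for $\Pbb$. A representation $\pi\in\mathcal{C}^t(G)$ is supercuspidal exactly when the corresponding $\Hc_{t,\vt}$-module is cuspidal, i.e.\ not induced (in the Hecke-algebra sense, from a proper sub-root-datum) from a parabolic subalgebra; by the description $\Hc_t=\Hc_t^a\rtimes\Omega^{\Pbb,\theta}$ of Theorem~\ref{thm:unitype}, this happens for some, equivalently every, irreducible module precisely when the generic affine Hecke algebra $\Hc_t^a$ has empty root system, that is $\Hc_t^a=\Lambda_0$. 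By the structure theory of \cite{MP1},\cite{MP2},\cite{Mo},\cite{Lusztig-unirep}, $\Hc_t^a$ is the Hecke algebra of an affine Weyl group $W_t$ attached to the combinatorics of $(\Pbb,\sigma)$, and since an affine Weyl group is infinite unless trivial, $\Hc_t^a=\Lambda_0$ iff $W_t$ is trivial iff $\Pbb$ is a maximal parahoric. This yields both of the first two assertions.

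When $\Pbb$ is maximal, Theorem~\ref{thm:unitype} identifies $\Hc_t=\Lambda_0[\Omega^{\Pbb,\theta}]$ with normalised trace $\tau(1)=d^t$, so $\textup{fdeg}(\pi)=\textup{deg}(\sigma)/(|\Omega^{\Pbb,\theta}|\,\textup{Vol}(\Pbb))$ as in \eqref{eq:cuspar}, with $\textup{Vol}(\Pbb)=v^{-\textup{dim}(\overline{\Pbb})}|\overline{\Pbb}|$; Lusztig's degree formula for cuspidal unipotent characters makes this a completely explicit product of cyclotomic polynomials in $v$ times a power of $v$ and a rational scalar. By Theorems~\ref{thm:cc} and~\ref{thm:Iw} it then suffices to exhibit a $\Lambda_0$-valued residual point $r\in T_{\Ibb}(\Lambda_0)$ of the Iwahori--Hecke algebra $\Hc_{\Ibb,\vt}$ with $\mu^{\{r\}}(r)$ equal, up to a positive rational constant independent of $q$, to $\textup{fdeg}(\pi)$: the attached discrete unramified parameter $\varphi$ with $W_0r=[\varphi(\tilde{\Fr})]$ then satisfies \eqref{eq:cuspSTM}, and the $X_{wur}(G)$-orbit of $r$ records exactly the choice of the extension $\tilde\sigma$ of $\sigma$. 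The candidate for $r$ is dictated by Lusztig's generalised Springer correspondence: $\sigma$ corresponds to a cuspidal pair (a unipotent class together with a cuspidal local system) on the group dual to $\overline{\Pbb}$, the pair $(\Pbb,\sigma)$ together with the inner twist $\Fr_u$ determines a cuspidal local system supported on a $G^\vee$-orbit of a unipotent element of $G^\vee\theta$, and $\varphi$ is the discrete unramified parameter whose $\textup{SL}_2$-part realises the corresponding unipotent class.

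What then remains is the identity
\[
\frac{\textup{deg}(\sigma)}{|\Omega^{\Pbb,\theta}|\,\textup{Vol}(\Pbb)}=c\,\gamma(0,\textup{Ad}\circ\varphi,\psi),\qquad c\in\mathbb{Q}_{+},
\]
read as rational functions of $v=q^{1/2}$. For an unramified discrete $\varphi$ the right-hand side is an explicit product of factors $(1-q^{-j})^{\pm1}$ extracted from the $\textup{SL}_2$-weight decomposition of $\textup{Lie}(G^\vee)$ under $\varphi$, so the verification is a comparison of two explicit product formulas. The positivity and $q$-independence of $c$ can be handled uniformly through the Dirac-induction rationality of Theorem~\ref{thm:rat}, which reduces $c$ to a single generic constant per family; but establishing the equality itself proceeds case by case over Lusztig's list of cuspidal unipotent characters, the split exceptional groups being treated in \cite{Re0},\cite{Re}, the classical groups in \cite{HOH}, and the remaining (in particular non-split) cases in \cite{FO},\cite{Fe2},\cite{FOS}.

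Uniqueness of the $X_{wur}(G)$-orbit $[\varphi]$ then follows from the rigidity of residual points: by Theorem~\ref{thm:ds}(i) the central character $W_0r$ of a discrete series character of $\Hc_{\Ibb,\vt}$ is a $W_0$-orbit of residual points, and among discrete unramified parameters the rational function $\mu^{\{r\}}(r)=\textup{fdeg}(\pi)$ pins down $W_0r$, hence $[\varphi]$, up to the $X_{wur}(G)$-action; this is precisely Reeder's ``circumstantial evidence'' principle that a discrete series $L$-packet is determined by its formal degrees. Finally, running the construction over all maximal parahorics $\Pbb$ of the inner forms of $G$ and all cuspidal unipotent $\sigma$ exhausts, via the generalised Springer correspondence on $G^\vee$, exactly the $G^\vee$-orbits of unipotent elements of $G^\vee\theta$ carrying a cuspidal local system, i.e.\ exactly the discrete unramified parameters supporting a cuspidal local system. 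The hard part will be the displayed formal-degree identity: it is genuinely arithmetic rather than a corollary of the structure theory, since it requires both Lusztig's explicit cuspidal degrees and the precise parameter — with its adjoint $\gamma$-factor — attached to each cuspidal local system on the dual side, and it is the case analysis establishing this equality, distributed over the cited papers, that carries the real weight of the theorem.
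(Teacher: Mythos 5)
Your dichotomy for $\Pbb$ reaches the right conclusion, but the reasoning is off. A module of $\Hc_t$ that is ``not parabolically induced from a proper sub-root-datum'' need not correspond to a supercuspidal representation of $G$: discrete series modules of the full Iwahori--Hecke algebra (positive rank!) are not parabolically induced, yet correspond to non-supercuspidal discrete series of $G$. What actually proves the dichotomy is the structure theory of Moy--Prasad/Morris, combined with Corollary~\ref{cor:beta}: the Bernstein components covered by $t=(\Pbb,\sigma)$ all have cuspidal support with the same Levi $M$, determined by $\Pbb$, and $\mathcal{C}^t(G)$ is supercuspidal iff $M=G$, which for $G$ simple adjoint (anisotropic center) happens exactly when $\Pbb$ is maximal; rank zero of $\Hc_t^a$ is then forced through $\textup{Spec}(Z(\Hc_t))\simeq\Omega^t$, not the other way round.

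The more serious gap is the uniqueness. You assert that ``the rational function $\mu^{\{r\}}(r)=\textup{fdeg}(\pi)$ pins down $W_0r$'' and invoke Reeder's ``circumstantial evidence'' slogan as though it were a theorem. It is a heuristic, and the whole point of this theorem is that in the cases at hand the heuristic can be \emph{verified}. A priori nothing forbids two inequivalent $W_0$-orbits of residual points in $T_\Ibb$ from producing the same rational function $\mu^{\{r\}}(r)$ up to a rational constant, so your argument is circular: it assumes exactly what is to be proved. The paper's (cited) route to uniqueness in the classical cases is precisely the nontrivial arithmetic observation that, apart from $\textup{PGL}_{m+1}$, the expressions $\textup{deg}(\sigma)/\textup{Vol}(\Pbb)$ contain no odd cyclotomic factors in numerator or denominator; this eliminates almost every candidate residual point, and the remaining ambiguity is resolved by the explicit STMs of \cite{Opdl}, \cite{Fe2}. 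In the exceptional cases uniqueness comes from the exhaustive case-by-case classification of residual points for $\Hc_\Ibb$. You need one of these mechanisms; ``rigidity of residual points'' as you state it is not an argument. Likewise, the appeal to Theorem~\ref{thm:rat} for the $q$-independence of $c$ is a red herring here: for a cuspidal STM, $q$-independence of $c$ is part of the definition (Definition~\ref{def:STM}(3) over $\Lambda_0$), not a consequence of Dirac induction, whose role in this paper is the computation of the precise rational constants in Theorem~\ref{thm:main}(c).
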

Recall that $\Hc_{\tilde{t}}^e=\Lambda_0$, and that the trace of this Hecke algebra 
is normalised by $\tau^e(1)=\frac{\textup{deg}(\sigma)}{|\Omega^{\Pbb,\theta}|\textup{Vol}(\Pbb)}$. 
In the terminology of STMs we view a $\Lambda_0$-valued point 
$r:\textup{Spec}(\Lambda_0)\to T_\Ibb(\Lambda_0)$ such that (\ref{eq:cuspSTM}) holds 
as realizing a cuspidal STM $\Hc_{\tilde{t}}^e\leadsto\Hc_\Ibb$. 
In the next section we will discuss the notion of STM in the general case.
\\

The proof of Theorem \ref{thm:cuspidalcase} is difficult. In the exceptional cases it reduces to 
explicit case by case computations due to \cite{HOH}, \cite{Re}, \cite{Fe2}. 
In the classical case, see \cite{FO}. This is based on the existence an explicit set of 
generators between the unipotent normalised affine Hecke algebras of 
type $\textup{C}_n$, and the observation that for classical cases other than $\textup{PGL}_{m+1}(F)$, 
the expressions $\frac{\textup{deg}(\sigma)}{\textup{Vol}(\Pbb)}$ do not contain 
odd cyclotomic factors in the numerator or the denominator. This property turns out 
to eliminate most of the discrete unramified Langlands parameters $\varphi$ 
for which an identity of the form (\ref{eq:cuspSTM}) could hold. Analysing the remaining 
cases carefully using the STMs whose \emph{existence} was established directly in 
\cite{Opdl}, \cite{Fe2} completes the proof.

\subsection{Spectral transfer maps}
Let us now introduce the notion of STM formally.
Assume we are given two normalised affine Hecke algebras $\Hc_1,\Hc_2$ defined over $\Lambda_0$. 
For $i=1,2$ we have the torus $T_i$ defined over $\Lambda_0$ associated with the character 
lattice $X_i$ of the root datum of $\Hc_1$. We have the Weyl groups $W_{0,i}$ acting on $T_i$, and 
the $\mu$-function $\mu_i$, a rational function on $T_i$.
\\

Given a residual coset $L\subset T_2$ defined over $\Lambda_0$, we know that $L=r_LT^L$ 
for some residual point $r_L\in T_L$, by Proposition \ref{prop:par}. Let $L_n=L/{K^n_L}$ be the smallest quotient 
of $L$ such that for each $K^n_Lt\in L_n$, the orbit $W_{0,2}K^n_Lt=W_{0,2}t$ is a single $W_0$-orbit in $T_2$.
Then $K^n_L\subset K_L=T_L\cap T^L$ is the finite abelian subgroup 
$K^n_L=N_{W(\Sigma_L)}(L)/Z_{W(\Sigma_L)}(L)\subset K_L$.  Then $\mu_2^{L}$ (see Theorem \ref{thm:parb}) 
is $K^n_L$ invariant, 
and descends to a $\mu$-function $\mu_2^{L_n}$.
\begin{definition}\label{def:STM}
A spectral transfer map $\Hc_1\leadsto\Hc_2$ between the normalised affine Hecke algebras 
over $\Lambda_0$ is an equivalence class of morphisms 
$\Psi:T_1\to L_n$ of torsors of algebraic tori defined over $\Lambda_0$, 
where $L\subset T_2$ is a residual coset for $\Hc_2$, such that 
\begin{enumerate}
\item[(1)] $\Psi$ is surjective with finite fibres. 
\item[(2)] For all $w_1\in W_{0,1}$ there exists a $w_2\in N_{W_{0,2}}(L)$ 
such that $\Psi\circ w_1=w_2\circ \Psi$.
\item[(3)] $\Psi^*(\mu_2^{L_n})=D\mu_1$ for some constant $D\in \mathbb{Q}^\times$. 
\end{enumerate}
We call $\Psi$ and $\Psi'$ equivalent if $\Psi'=w\circ\Psi$ for some $w\in W_{0,2}$.
\end{definition}
\begin{rem}
\begin{enumerate}
\item[(i)] In the special case $\textup{dim}(T_1)=0$ this recovers the notion cuspidal STMs 
as discussed above for unipotent affine Hecke algebras $\Hc_{\tilde{t}}^e$. 
\item[(ii)] The property $(2)$ in Defnition \ref{def:STM} is almost always superfluous, except 
in degenerate cases \cite[cf. Proposition 5.4]{Opds}. 
For unipotent affine Hecke algebras this property is always automatic.
\item[(iii)] Note that $\Psi$ defines a morphism $\Psi_Z: \textup{Spec}(Z_1)\to \textup{Spec}(Z_2)$
between the spectra of the centers of the $\Hc_i$. 
\item[(iv)] Suppose that $L_1\subset T_1$ is a residual coset for $\Hc_1$, and let $\textup{Im}(\Psi)=L_n$. 
Then $\Psi_Z(W_{1,0}L_1)=N_{W_{0,2}}(L)(L_2)$ for some residual coset $L_2\subset L$, 
and all residual cosets $L_2\subset L$ are the image under $\Psi$ of a residual coset of $\Hc_1$ 
in this sense. 
\item[(v)] We can compose STMs. This is useful as we may generate in this way all STMs between 
the unipotent affine Hecke algebras of the various groups $G^u$ from a small number of generators.
\end{enumerate}
\end{rem}
The most important property of STMs with respect to the spectral decomposition of 
affine Hecke algebras is the following result \cite[Theorem 6.1]{Opds}:
\begin{thm}\label{thm:corr}
Let $\Psi:\Hc_1\leadsto \Hc_2$ be an STM. Suppose that $C_{1,\vt}\subset \hat{\Hc}^{\textup{tem}}_{1,\vt}$
is a component of the space of irreducible tempered characters of $\Hc_{1,\vt}$. Let 
$cc_{i,\vt}:\hat{\Hc}^{\textup{temp},\vt}_i\to \textup{Spec}(Z_{i,\vt})$ be the central character map of $\Hc_{i,\vt}$.  
Then $cc_1(C_{1,\vt})=S_{L_{1,\vt}}$ where $L_1\subset T_1$ is a residual coset. 
There exists a unique orbit $W_{0,2}L_2\subset T_2$ of residual cosets of $\Hc_2$ such that 
$\Psi_Z(S_{L_{1,\vt}})=S_{L_{2,\vt}}$. If $C_{2,\vt}\subset \hat{\Hc}^{\textup{tem}}_{2,\vt}$ 
is such that $cc_2(C_{2,\vt})=S_{L_{2,\vt}}$ then consider the fibred product $C_{12}$ defined by the 
diagram
\begin{equation}\label{eq:commreg}
\begin{CD}
C_{12}@>{P_2}>>C_2\\
@V{P_1}VV                  @VV{cc_2}V\\
C_1@>>{\Psi_Z\circ cc_1}> S_{L_2}
\end{CD}
\end{equation}
Then there exist constants $r_i\in\mathbb{Q}_+$ and a family of positive measures  
$\nu_\vt$ on $C_{12,\vt}$ (for all $\vt>0$) such that for $i=1,2$ we have 
$r_i.P_{i,*}(\nu_\vt)=\nu_{i\vt}|_{C_{1,\vt}}$. 
\end{thm}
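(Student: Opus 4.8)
The plan is to reduce the statement to the structure of the Plancherel measure of affine Hecke algebras established in Theorem~\ref{thm:parb} and its corollaries, and then to track how the $\mu$-functions, hence the measures $\nu_{i,L}$, transform under $\Psi$. First I would record, for $i=1,2$, that $\nu_{\Hc_{i,\vt}}=\sum_{L}\nu_{i,L}$ with $\nu_{i,L}$ a smooth measure supported on $L^{\textup{temp}}$; that a connected component $C_{i,\vt}\subset\hat{\Hc}^{\textup{temp}}_{i,\vt}$ has $cc_i(C_{i,\vt})=S_{L_{i,\vt}}$ for a residual coset $L_i\subset T_i$ that is unique up to $W_{0,i}$ (using that $L\mapsto L^{\textup{temp}}$ is injective on residual cosets); and that $\nu_{\Hc_{i,\vt}}|_{C_{i,\vt}}=c_{C_i}\,i_*\bigl((cc_i|_{C_{i,\vt}^{reg}})^*\nu_{i,L_i}\bigr)$, where $cc_i|_{C_{i,\vt}^{reg}}$ is a finite covering of $S_{L_i}^{reg}$ and, by Theorems~\ref{thm:ds} and~\ref{thm:rat}, the constant $c_{C_i}$ lies in $\mathbb{Q}_+$. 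In this way the problem is entirely controlled by the measures $\nu_{i,L_i}$ on the central support and by finite covering data.

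Next I would pin down the correspondence of residual cosets. By the fourth item of the Remark following Definition~\ref{def:STM}, the residual cosets $L_1\subset T_1$ of $\Hc_1$ are precisely the $\Psi$-preimages of the $N_{W_{0,2}}(L)$-orbits of residual cosets $L_2\subseteq L=\textup{Im}(\Psi)$ of $\Hc_2$, and $\Psi$ restricts to a finite covering $L_1^{\textup{temp}}\to(L_2)_n^{\textup{temp}}$; this is forced by Definition~\ref{def:STM}(3), since the pole divisors of $\mu_1$ and of $\Psi^*\mu_2^{L_n}$ must coincide. From this, together with the definition of $S_L$, one obtains $\Psi_Z(S_{L_{1,\vt}})=S_{L_{2,\vt}}$, and uniqueness of the orbit $W_{0,2}L_2$ again follows from the injectivity of $L\mapsto L^{\textup{temp}}$.

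The core step is to identify $\Psi_*\nu_{1,L_1}$ with a positive rational multiple of $\nu_{2,L_2}$. By Theorem~\ref{thm:parb}(ii), $d\nu_{i,L_i}(t)=\mu_i^{L_i}(t)\,dt$ up to explicit factors in $\mathbb{Q}_+$, where $\mu_i^{L_i}$ is the regularised residue of the full $\mu$-function $\mu_i$ along $L_i$. Starting from the STM identity $\Psi^*(\mu_2^{L_n})=D\mu_1$ with $D\in\mathbb{Q}^\times$, I would take the iterated residue along $L_1$ on the left and along $L_2\subseteq L$ on the right: since $\Psi|_{L_1}$ is an isogeny onto $(L_2)_n$, the normal-direction Jacobian is a nonzero rational, and the identity descends to $\Psi^*(\mu_2^{L_2})=D'\mu_1^{L_1}$ with $D'\in\mathbb{Q}^\times$. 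Because $\Psi|_{L_1^{\textup{temp}}}$ is a finite covering, pushing forward Haar measure multiplies it by the (rational) covering degree, so $\Psi_*\nu_{1,L_1}=D''\nu_{2,L_2}$ with $D''\in\mathbb{Q}_+$; positivity is inherited from the positivity of the two Plancherel measures, and the rationality and $\vt$-independence of $D''$ are guaranteed by the rationality of formal degrees (Theorem~\ref{thm:rat}) and the scaling invariance (Theorem~\ref{thm:ds}(iv)). Since $\Psi$ and all residual cosets occurring here are defined over $\Lambda_0$, this entire comparison runs uniformly in $\vt$.

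Finally I would assemble $\nu_\vt$. Over the open dense locus of $C_{12,\vt}$ on which $P_1$, $P_2$ and the relevant central-character maps restrict to smooth finite coverings, set $\nu_\vt:=(cc_1\circ P_1)^*\nu_{1,L_1}$ and extend by zero. Then $P_{1,*}\nu_\vt$ is a positive rational multiple of $(cc_1|_{C_{1,\vt}^{reg}})^*\nu_{1,L_1}$, which by the first paragraph is a positive rational multiple of $\nu_{\Hc_{1,\vt}}|_{C_{1,\vt}}$, yielding $r_1\in\mathbb{Q}_+$; and by the commutativity of~\eqref{eq:commreg} combined with $\Psi_*\nu_{1,L_1}=D''\nu_{2,L_2}$ one gets in the same way $r_2\in\mathbb{Q}_+$ with $r_2\,P_{2,*}\nu_\vt=\nu_{\Hc_{2,\vt}}|_{C_{2,\vt}}$. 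As every constant entering ($c_{C_i}$, the prefactors in $\nu_{i,L_i}$, $D$, $D''$, and the covering degrees) is a positive rational, the $r_i$ are positive rationals, independent of $\vt$. The main obstacle is the core step: making sense of the iterated residue of the STM $\mu$-identity and proving that the resulting constant $D'$ is rational genuinely uses the fine geometry of residual cosets (Proposition~\ref{prop:par}) and the deeper rationality and scaling-invariance results, not merely the definition of an STM.
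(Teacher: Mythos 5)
The paper does not prove Theorem~\ref{thm:corr} internally; it imports it from \cite[Theorem 6.1]{Opds}, so there is no in-paper argument to compare against. Your outline is, however, the natural one given the framework set out here, and it is correct in its major moves: reduce the Plancherel measure on a tempered component to the regularised $\mu$-function of the residual coset supporting its central characters (Theorem~\ref{thm:parb} and the corollary on components), match residual cosets $L_1\leftrightarrow L_2\subset L=\textup{Im}(\Psi)$ via Remark~(iv) after Definition~\ref{def:STM}, transfer regularised $\mu$-functions along $\Psi$ using axiom~(3), and realise $\nu_\vt$ as the fibre-product pullback of the measure on central characters. You have also correctly isolated the technical crux: descending $\Psi^*(\mu_2^{L_n})=D\mu_1$ to $\Psi^*(\mu_2^{L_2})=D'\mu_1^{L_1}$. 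Two cautions there. First, the paper's regularisation is not literally an iterated residue but a factor-stripping of a specific product formula; since $\Psi^*$ of an irreducible factor of $\mu_2^{L_n}$ need not remain irreducible, the multiplicity bookkeeping that makes the stripped factors correspond on both sides is precisely where a complete proof must do real work, and the ``normal-direction Jacobian'' heuristic does not by itself settle it. Second, the constant $c_C$ in the corollary following Theorem~\ref{thm:parb} is asserted there only to be positive, so its rationality (needed to conclude $r_i\in\mathbb{Q}_+$) has to be extracted from Theorems~\ref{thm:ds}(iv) and~\ref{thm:rat} as you indicate, which deserves to be spelled out rather than cited. Neither of these is a wrong turn, and your closing paragraph appropriately flags the first as the main obstacle; they are simply the places where the sketch would need to be upgraded to a proof.
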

This result states that an STM $\Psi: \Hc_1\leadsto\Hc_2$ defines a correspondence between the 
tempered spectrum of $\Hc_1$ and of $\Hc_2$ which respects the connected components in these 
tempered irreducible spectra and, 
up to constant rational factors only depending on the components, the Plancherel measures 
$\nu_{i,\vt}$ of $\Hc_i$.  
\\

Let $\mbf{G}$ be a connected reductive group defined over $F$ which splits over an unramified 
extension of $F$. Let $t=(\Pbb,\sigma)$ be a unipotent type of $G$, i.e. $\Pbb\subset G$ is  
a parahoric subgroup, and $\sigma$ is a cuspidal unipotent representation of $\Pbb$. 
Suppose in Theorem \ref{thm:corr} that $\Hc_1=\Hc_t(G)$ (a finite direct sum of 
extended affine Hecke algebras of the form $\Hc_{\tilde{t}}^e$), and that $\Hc_2=\Hc_{\Ibb}(G^*)$, 
the Iwahori Hecke algebra of a quasi-split inner form $G^*$ of $G$
(with the obvious extension of the notion of an STM on a finite direct sum of extended affine Hecke 
algebras such as $\Hc_{t}(G)$, by allowing $T_1$ to be disjoint union of algebraic tori over $\Lambda_0$).

Let $M\subset G$ be a Levi subgroup such that the set $\Omega^t$ of $G$-conjugacy classes  
of cuspidal pairs which belong in the inertial classes covered by $t$ are of the form $[(M,\delta)]$.  
By Corollary \ref{cor:beta} the diagonalizable group $T_{1,\vt}$ over $\mathbb{C}$ 
can be identified with the space $\Omega^t(M)$ of $M$-conjugacy classes of 
such cuspidal pairs. The natural action (by taking tensor products) of $X_{wur}(M)$ on  
$\Omega^t(M)$ turns $T_{1,\vt}$ into a $X_{wur}(M)$-space, 
and in fact each component of $T_{1,\vt}$ is a quotient of $X_{wur}(M)$ with finite kernel 
(because $X_{nr}(M)\subset X_{wur}(M)$ is the identity component, and the components of 
$\Omega^t(M)$ are by definition already homogeneous for $X_{nr}(M)$ with finite kernel).  

\begin{rem}
Recall that (cf. Theorem \ref{thm:cc}) the semisimple conjugacy classes of the set 
$G^\vee\theta\subset {}^LG$  are in natural bijection with the set of $W^\theta$-orbits in the complex torus 
$T_\Ibb=\textup{Hom}(X_*(T)^\theta,\mathbb{C}^\times)$. Observe that 
\[X_{wur}(M)=\textup{Hom}((X_*(T)/Q_M)^\theta,\mathbb{C}^\times)\to 
\textup{Hom}(X_*(T)^\theta,\mathbb{C}^\times)=T_2=T_\Ibb.\]
Hence $X_{wur}(M)$ also acts naturally on $T_\Ibb$ (and faithfully, in fact).
\end{rem}
\begin{cor}\label{cor:STMpar}  Let $G$, $\Hc_1$ and $\Hc_2$ be as above. 
Suppose that $\Psi:\Hc_{t}(G)\leadsto \Hc_{\Ibb}(G^*)$ is a $X_{wur}(G)$-equivariant STM.
Let $\psi: \hat{G}^{t,temp}\to \Phi^{\textup{temp}}_{nr}(G)$ be the map 
$(\gamma^\Ibb)^{-1}\circ \Psi_Z\circ\beta^{t}\circ cc$ (cf. (\ref{eq:bercc}), 
Corollary \ref{cor:beta},  Theorem \ref{thm:cc}
and Definition \ref{def:STM}).  
\begin{enumerate}
\item[(1)] Via the map $\pi\to \psi_\pi$ on $\hat{G}^{t,\textup{temp}}$, the restriction of the 
Plancherel measure of $G$ to $\hat{G}^{t,\textup{temp}}$ is expressed as in 
conjectures \ref{conj:1} and \ref{conj:2}, up to rational constant factors.
\item[(2)] 
$\psi$ satisfies the conditions (i), (ii), and (vi) of a  parameterisation if and only if 
the following additional compatibility conditions hold:  
\begin{enumerate}
\item[(a)] $\Psi(\eta.t)=\eta.\Psi(t)$ for all $\eta\in X_{wur}(M)$, $t\in T_{1,\vt}$. 
\item[(b)] If $t=(\Ibb,\textup{triv})$, with $\Ibb\subset G^*$ the Iwahori subgroup 
of a quasi-split group $G^*$ which is an inner twist of $G$.  Then we require that 
$\Psi:\Hc_\Ibb\leadsto \Hc_\Ibb$ is the identity.
\end{enumerate}
\end{enumerate}
\end{cor}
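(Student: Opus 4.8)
The plan is to read everything off the defining formula $\psi=(\gamma^\Ibb)^{-1}\circ\Psi_Z\circ\beta^t\circ cc$ and to track the Plancherel measure and the weakly unramified action through its four ingredients: the Bernstein central character map \eqref{eq:bercc}, the isomorphism $\beta^t$ of Corollary \ref{cor:beta}, the morphism of centres $\Psi_Z$ attached to the STM, and the bijection $\gamma^\Ibb$ of Theorem \ref{thm:cc}. For part (1) I would first invoke Theorem \ref{thm:BHK}: the homeomorphism $\hat m_t^{\textup{temp}}$ carries $\nu_{Pl}|_{\hat G^{t,\textup{temp}}}$ to $\nu_{\Hc_{t,\vt}}$, so it suffices to express the latter through the parameters. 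A connected component $C_1\subset\hat{\Hc}^{\textup{temp}}_{t,\vt}$ has central support $S_{L_1}$ for a residual coset $L_1\subset T_1$ (Theorems \ref{thm:ds}, \ref{thm:parb}); Theorem \ref{thm:corr} then supplies a residual coset $L_2\subset T_\Ibb$ with $\Psi_Z(S_{L_1})=S_{L_2}$ and, along the fibred product \eqref{eq:commreg}, matches $\nu_{\Hc_{t,\vt}}$ on $C_1$ with $\nu_{\Hc_\Ibb(G^*),\vt}$ on the corresponding component up to positive rational constants. Finally Theorem \ref{thm:Iw} is precisely the statement that $\nu_{\Hc_\Ibb(G^*),\vt}$, transported through $\gamma^\Ibb$, is the expression of Conjectures \ref{conj:1} and \ref{conj:2} for $G^*$; since $G$ and $G^*$ share a dual group, $\textup{Ad}\circ\varphi$ (resp.\ $r_M\circ\varphi$ on the non-discrete strata) is literally the same datum for $G$, so pulling back through $(\gamma^\Ibb)^{-1}$ exhibits the Plancherel density at $\pi$ as a positive rational multiple (independent of $q$) of the right-hand sides of Conjectures \ref{conj:1}, \ref{conj:2}, the entire $q$-dependence being reproduced because the STM matches $\mu$-functions as rational functions of $v$ up to a $\mathbb{Q}^\times$-scalar (property (3) of Definition \ref{def:STM}).

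For part (2) I would argue condition by condition, in both directions. Condition (vi): when $t=(\Ibb,\textup{triv})$ for a quasi-split inner twist $G^*$, the map $\beta^t\circ cc$ becomes the identification $\textup{Spec}(Z(\Hc_\Ibb))\simeq S_{\Ibb,\vt}$ and $\psi=(\gamma^\Ibb)^{-1}\circ\Psi_Z$, whereas the parameterisation of Theorem \ref{thm:Iw} is $(\gamma^\Ibb)^{-1}$ itself; since an STM $\Hc_\Ibb\leadsto\Hc_\Ibb$ is essentially unique, the two agree exactly when $\Psi_Z=\textup{id}$, i.e.\ condition (b). Condition (ii): the scalar by which $Z(G)$, equivalently its maximal $F$-split subtorus $A$, acts on $V_\pi$ is the unramified character $z_\pi\in X_{nr}(A)$ determined by $cc(\pi)$, while Borel's recipe \cite[\S 10.1]{Bo} reads off the same character from $\varphi_\pi$; these coincide because $\gamma^\Ibb$ is central-character compatible (Theorem \ref{thm:cc}) and $\Psi_Z$ intertwines the relevant torus actions, which on the relevant part is condition (a). Condition (i): tensoring $\pi$ by $\chi\in X_{wur}^{\textup{temp}}(G)$ corresponds under $\beta^t\circ cc$ to translation by $\chi|_M$ on $T_{1,\vt}=\Omega^t(M)$ together with the induced permutation of the summands $\Hc_{\tilde t}^e$ of $\Hc_t(G)$ (which form an $(\Omega^{\Pbb,\theta}_1)^*$-torsor, cf.\ \eqref{eq:extendtype}), while on the Iwahori side $\gamma^\Ibb$ is $X_{wur}^{\textup{temp}}(G)$-equivariant by Lemma \ref{lem:gamequiv}; hence (i) holds iff $\Psi_Z$ commutes with these translations, and once one checks that the translations so realised, together with the identity component $X_{nr}(M)=X_{nr}(G)$, generate enough of $X_{wur}(M)$ this is exactly condition (a). The converse implications are immediate: restricting $\psi$ to the Iwahori component forces (b), and to the $X_{wur}$-orbits forces (a).

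The step I expect to cost the most is the bookkeeping inside part (1): one must check that the $\mu$-function of the Iwahori Hecke algebra, as it enters Theorems \ref{thm:ds} and \ref{thm:parb}, agrees up to a positive rational constant with the adjoint $\gamma$-factor — on the discrete strata this is the observation going back to \cite{HII}, but on the non-discrete strata it requires matching the parabolic $\mu^L$-functions of Theorem \ref{thm:parb}(ii) with the factors $r_M\circ\varphi$ of Conjecture \ref{conj:2} — and then that no $v$-dependence slips away upon composing with $\Psi_Z$, whose defining property controls $\mu$-functions only up to a constant $D\in\mathbb{Q}^\times$. The subtler point in part (2) is the gap between the standing hypothesis that $\Psi$ is $X_{wur}(G)$-equivariant and the stronger $X_{wur}(M)$-equivariance demanded by (a): making the ``only if'' direction airtight requires identifying precisely which translations on $\Omega^t(M)$ are realised by tensoring representations of $G$ and combining these with the action of $X_{nr}(M)$, which acts on each component of $\Omega^t(M)$ with finite kernel.
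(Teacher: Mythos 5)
Your proposal is correct and follows essentially the same route as the paper. For part (1) you use exactly the chain the paper uses: Theorem \ref{thm:BHK} to transport the Plancherel measure to the Hecke algebra, Theorems \ref{thm:ds} and \ref{thm:parb} to express it via residual cosets, Theorem \ref{thm:corr} to push it through the STM, and Theorem \ref{thm:cc} together with Theorem \ref{thm:Iw} to convert the Iwahori-side Plancherel density into adjoint $\gamma$-factors, with the observation that ${}^LG={}^LG^*$ making $\textup{Ad}\circ\varphi$ the literal same datum. Your point that property (3) of Definition \ref{def:STM} carries the entire $q$-dependence is precisely the mechanism the paper relies on when it asserts $\Psi_Z^*(\mu_{\Ibb,L_2})=c\,\mu_t^{L_1}$ up to a rational constant, and it is what makes the phrase ``up to constants independent of $q$'' honest.

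For part (2) the paper dispatches the equivalence in two sentences (``it is easy to see that $\psi$ satisfies (i), (ii) iff $\Psi$ satisfies (a); ... (2)(b) forces (vi)''), whereas you spell out the same logic condition by condition: (vi) reduces to $\Psi_Z=\textup{id}$ on the Iwahori component via the essential uniqueness of STMs $\Hc_\Ibb\leadsto\Hc_\Ibb$; (ii) comes down to central-character compatibility of $\gamma^\Ibb$ and the torus-equivariance embedded in (a); and (i) is the equivariance of $\gamma^\Ibb$ (Lemma \ref{lem:gamequiv}) plus commutation of $\Psi_Z$ with the translation action of $X_{wur}(G)$. Your concern about the gap between the standing $X_{wur}(G)$-equivariance hypothesis and the stronger $X_{wur}(M)$-equivariance required in (2)(a) is a legitimate observation that the paper waves through as ``easy to see''; the resolution, implicit in the surrounding discussion, is that $X_{nr}(M)\subset X_{wur}(M)$ is the identity component and acts on each component of $T_{1,\vt}$ with finite kernel, and an STM is in particular a morphism of algebraic tori, hence automatically $X_{nr}(M)$-equivariant (up to the finite kernel) once it is $X_{wur}(G)$-equivariant --- so (a) only adds a compatibility with the discrete part of $X_{wur}(M)$, which is what conditions (i) and (ii) detect. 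This is the kind of bookkeeping the paper leaves to the reader, and your reconstruction of it is sound.
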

\begin{proof}
By Theorem \ref{thm:BHK} $\hat{m}_t$ defines a Plancherel measure preserving homeomorphism
from $\hat{G}^{t,\textup{temp}}$ to $\hat{\Hc}_t(G)$. 
By Theorem \ref{thm:ds}, Theorem \ref{thm:parb} a component $C_1$ in the tempered irreducible spectrum 
of $\Hc_t(G)$ is defined by unitary parabolic induction of discrete series characters modulo center of 
Levi-subalgebras. The image $cc^t(C_1)$ under the central character map $cc^t$ is the image 
$S_{L_1}\subset \textup{mSpec}(Z(\Hc_t(G)))$ of the tempered form of a residual coset 
$L_1^{\textup{temp}}$. 
Moreover the Plancherel measure of $\Hc_{t}(G)$ on $C_1$ is given, up to a constant factor, by the pull back 
$cc^{t,*}(\mu_t^{L_1}|_{S_{L_1}})$ on $L_1$ to $C_1$ (using that $cc^t$ is a smooth finite covering map on 
a dense open subset of $C_1$). Hence modulo constant factors, the Plancherel density on $C_1$ is a function 
of the central character only. 

By Theorem \ref{thm:corr} $\Psi_Z(S_{L_1})=S_{L_2}\subset W_{\Ibb,0}\backslash T_{\Ibb}$ for a unique 
residual coset $L_2\subset L=\textup{Im}(\Psi)$, and up to rational constant factors 
$\Psi_Z^*(\mu_{\Ibb,L_2})=c\mu_t^{L_1}$. 
Hence Theorem \ref{thm:cc} and Theorem \ref{thm:Iw} imply that in this way 
we can express the Plancherel density at $\pi\in \hat{G}^{t,\textup{remp}}$
up to locally constant rational factors by the appropriate adjoint $\gamma$-factors in $\psi_\pi$, 
proving (1).

For (2): It is easy to see that $\psi$ satisfies (i), (ii) iff $\Psi$ satisfies the stated 
compatibility condition (a). Clearly (vi) makes sense only in the case of the Borel 
component of $G^*$. In this case, (2)(b) forces (vi) by insisting that $\Psi$ is identical. 
\end{proof}
\begin{rem}
Without this condition (2)(b) it would be allowed in the case $t=(\Ibb,\textup{triv})$ 
that our map $\pi\to \psi_\pi$ is that of 
Theorem \ref{thm:Iw} (as required in (vi)) but twisted by an STM $\Psi:\Hc_\Ibb\leadsto\Hc_\Ibb$ 
satisfying (2)(a). Such STMs are given by the action 
of $X_{wur}(G)=(\Omega^\theta)^*$. 
For general components of $\hat{G}^t$ 
in general I do not know of a preferred choice for $\Psi$ within 
its $X_{wur}(G)$-orbit.
\end{rem}
\subsection{Lusztig's geometric-arithmetic correspondences and STMs}
In this subsection we will assume that $\mbf{G}$ is absolutely simple and adjoint
(we will reduce the general case to this case), 
and that $\mbf{G}$ is split over an 
unramified extension of $F$.

The following result was essentially 
proven in \cite[Theorem 3.4]{Opds}:
\begin{thm}\label{thm:STMexuni}
Let $t=(\Pbb,\sigma)$ be a unipotent type for $G=\mbf{G}(F)$, and let $\Hc_\Ibb(G^*)$ 
denote the Iwahori Hecke algebra of the quasi-split inner twist $G^*$ of $G$.
Let $\tilde{t}=(\tilde{\Pbb},\tilde{\sigma})$ be an extension of $t$ to $\tilde{\Pbb}=N_G(\Pbb)$.
\begin{enumerate}
\item[(a)] There exist STMs $\Phi_{\tilde{t}}:\Hc_{\tilde{t}}^e\leadsto\Hc_\Ibb(G^*)$.  
\item[(b)] If $\Phi'_{\tilde{t}}:\Hc_{\tilde{t}}^e\leadsto\Hc_\Ibb(G^*)$ is also an STM, 
then there exists an 
$\omega\in X_{wur}(G)= X_{wur}(G^*)$ and a spectral automorphism 
$\alpha$ of  $\Hc_t$ such that $\Phi'_{\tilde{t}}=m_\omega\circ\Phi_{\tilde{t}}\circ \alpha$. Here
$m_\omega: \Hc_\Ibb(G^*)\leadsto \Hc_\Ibb(G^*)$ 
denotes the STM given by multiplication with $\omega$.
\item[(c)] There exists a $\Phi_t:\Hc_t^e\leadsto\Hc_\Ibb(G^*)$ which is 
$X_{wur}(G)$-equivariant and which satisfies the conditions (2)(a) and (b)
of Corollary \ref{cor:STMpar}. Such $\Phi_t$ is unique up to the action of 
$X_{wur}(G)$.
\item[(d)] The parameterisation $\varphi_t:\hat{G}^t\to \Phi^{\textup{temp}}_{nr}(G)$
associated to $\Phi_t$ as in Corollary \ref{cor:STMpar} is, up to a twist by $X_{wur}(G)$, 
the same as Lusztig's parameterisation 
$\varphi_{Lu}:\hat{G}^t\to \Phi^{\textup{temp}}_{nr}(G)$ of \cite{Lusztig-unirep}, 
\cite{Lusztig-unirep2}.
\end{enumerate}
\end{thm}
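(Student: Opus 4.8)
The plan is to treat parts (a)--(d) in sequence, leaning heavily on the structure of unipotent affine Hecke algebras (Theorem \ref{thm:unitype}), on the classification and rationality results for residual points and discrete series (Theorems \ref{thm:ds}, \ref{thm:parb}, \ref{thm:rat}, \ref{thm:def}), and on the already-established Iwahori case (Theorems \ref{thm:cc}, \ref{thm:Iw}). For part (a), the existence of an STM $\Phi_{\tilde t}\colon\Hc_{\tilde t}^e\leadsto\Hc_\Ibb(G^*)$, the strategy is to construct it ``one residual coset at a time.'' By Theorem \ref{thm:unitype} the algebra $\Hc_{\tilde t}^e$ is an extended affine Hecke algebra with an explicit root datum and explicit (possibly unequal) parameters; one first handles the rank-zero pieces (the supercuspidal case), where Theorem \ref{thm:cuspidalcase} furnishes exactly the cuspidal STM $\Hc^e_{\tilde t}\leadsto\Hc_\Ibb(G^*)$ realized by a $\Lambda_0$-valued residual point of $\Hc_\Ibb$. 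For the general (positive-rank) case one uses the fact that the semisimple parabolic subquotients of $\Hc^e_{\tilde t}$ are again unipotent-type algebras, applies the cuspidal case to their discrete series via Theorem \ref{thm:ds}(v) to match the leading $\mu$-residues, and then one must glue these across the Bernstein torus $T_1$: concretely, one writes down a morphism of $\Lambda_0$-tori $\Psi\colon T_1\to L_n\subset W_{\Ibb,0}\backslash T_\Ibb$ with $L$ a residual coset, and verifies conditions (1)--(3) of Definition \ref{def:STM}. Condition (1) (finite surjective fibres) and condition (2) (compatibility with $W_0$, automatic for unipotent types by Remark (ii) after Definition \ref{def:STM}) are formal; the content is $(3)$, $\Psi^*(\mu_2^{L_n})=D\mu_1$ for a rational constant $D$, which one checks by comparing the explicit rational functions $\mu_1$ (computed from the $c$-functions of $\Hc^e_{\tilde t}$) and $\mu_2^{L_n}$ (the restriction of the Iwahori $\mu$-function to $L$), using the $q$-rationality built into Theorems \ref{thm:rat} and the residual-coset description of Proposition \ref{prop:par}. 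This is essentially the content of \cite[Theorem 3.4]{Opds}, so the proof here is to invoke that computation and record it.

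For part (b), uniqueness up to $X_{wur}(G)$ and a spectral automorphism, the plan is: given two STMs $\Phi_{\tilde t},\Phi'_{\tilde t}$, both land (after composing with $(\gamma^\Ibb)^{-1}$ via Theorem \ref{thm:cc}) in sets of discrete unramified Langlands parameters for $G^*$, and by part (1) of Corollary \ref{cor:STMpar} both produce the \emph{same} Plancherel densities on $\hat G^{t,\textup{temp}}$ up to rational constants. Two residual cosets of $\Hc_\Ibb$ carrying the same $\mu$-residue function up to a rational constant — in particular with the same tempered part $S_L$ — must coincide by the disjointness statement (the corollary that $L^{\textup{temp}}\subset M^{\textup{temp}}\Leftrightarrow L=M$), so $\Phi_{\tilde t}$ and $\Phi'_{\tilde t}$ have the same image $L$ up to $W_{\Ibb,0}$; the residual freedom in the map $T_1\to L_n$ itself is then exactly the group of automorphisms of $T_1$ preserving the root datum and $\mu_1$ (a spectral automorphism $\alpha$ of $\Hc_t$) composed with the translations of $T_\Ibb$ coming from $X_{wur}(G)=(\Omega^\theta)^*$ acting as in the Remark before Corollary \ref{cor:STMpar}. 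One has to be a little careful that ``spectral automorphism'' is the right target group; I would define it as the group of $\Lambda_0$-torus automorphisms of $T_1$ normalising $W_{0,1}$ and fixing $\mu_1$ up to a rational scalar, and observe that for unipotent types these are induced by diagram automorphisms of $\Hc_t$ as in Theorem \ref{thm:unitype}. Part (c) is then a bookkeeping corollary of (a) and (b): among the $X_{wur}(G)$-orbit of STMs produced in (a) one can, by averaging / choosing a base point, select a representative $\Phi_t$ on the full $\Hc^e_t$ that is $X_{wur}(G)$-equivariant and, when $t=(\Ibb,\textup{triv})$, restricts to the identity — these are precisely conditions (2)(a),(b) of Corollary \ref{cor:STMpar}; uniqueness of $\Phi_t$ up to $X_{wur}(G)$ is immediate from (b) since a spectral automorphism that is also $X_{wur}(G)$-equivariant and identity on the Borel component is forced to be trivial on central characters.

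For part (d), that the parameterisation $\varphi_t$ attached to $\Phi_t$ agrees, up to an $X_{wur}(G)$-twist, with Lusztig's $\varphi_{Lu}$, the plan is to compare both maps at the level of central characters / residual cosets. Lusztig's correspondence, by construction via his arithmetic–geometric diagram isomorphisms, sends a Bernstein component of $\mathcal C^t(G)$ to a $G^\vee$-orbit of unramified parameters in such a way that the supporting cuspidal local system, hence (by Theorem \ref{thm:cc}) the attached residual point, is determined; on the other side $\varphi_t$ sends the same component to the residual coset $L_2=\textup{Im}(\Psi)$, and by part (a)/Theorem \ref{thm:cuspidalcase} these residual data coincide with the Lusztig ones exactly on the set of discrete parameters supporting a cuspidal local system. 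Since a tempered component is pinned down by its central support $S_{L}$ together with the induction data (Theorems \ref{thm:parb}, \ref{thm:corr}), matching the cuspidal supports and using compatibility with parabolic induction on both sides forces $\varphi_t$ and $\varphi_{Lu}$ to agree on the whole of $\hat G^t$, up to the residual $X_{wur}(G)$-ambiguity noted in (b),(c). \textbf{The main obstacle} I expect is not (b)--(d), which are essentially uniqueness and dictionary arguments, but the explicit verification underlying (a) that the unipotent $\mu$-function $\mu_1$ of each $\Hc^e_{\tilde t}$ really does pull back from a residual coset of the Iwahori $\mu$-function: this is a case-by-case matching of rational functions in $v$ across all unipotent types of all (quasi-split) groups $G^*$, and it is exactly the point where one has to import the hard classification work — the ``generic constants'' $d_b$ of Theorem \ref{thm:rat}, the residual-coset classification of \cite{OpdSol}, and the explicit $\textup{deg}(\sigma)/\textup{Vol}(\Pbb)$ computations — rather than argue abstractly.
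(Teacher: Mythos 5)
Your overall plan tracks the paper's: reduce to the cuspidal case, lean on Theorem~\ref{thm:cuspidalcase}, and import the hard case-by-case content from \cite{Opdl}, \cite{Fe2}, \cite{FO}, \cite{FOS}. But there is a genuine gap in your argument for part (b), which is precisely the non-formal step. You claim that because $\Phi_{\tilde t}$ and $\Phi'_{\tilde t}$ produce the same Plancherel densities on $\hat{G}^{t,\textup{temp}}$ up to rational constants, the two target residual cosets of $\Hc_\Ibb(G^*)$ must coincide, invoking the disjointness of tempered parts ($L^{\textup{temp}}\subset M^{\textup{temp}}\Leftrightarrow L=M$). That corollary tells you distinct residual cosets have non-nested tempered parts; it does \emph{not} exclude the a priori possibility that two genuinely different residual cosets $L,L'\subset T_\Ibb$ (with disjoint tempered parts) each admit a surjective finite map from $T_1$ pulling back their respective $\mu$-restrictions to a rational multiple of the same $\mu_1$. ``Same Plancherel density on the source'' does not yield ``same image'' without more work, and that is exactly the content of the uniqueness theorem, not a consequence of it.

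The paper's actual route to (b) is different and essential. First, by \cite[Proposition 7.13]{Opds} it suffices to show that $\Phi'_{\tilde t}$ and $m_\omega\circ\Phi_{\tilde t}$ have the same image for some $\omega\in X_{wur}(G)$; that proposition is what produces the spectral automorphism $\alpha$ once the images are matched. Second, the image is pinned down by passing to the cuspidal support $(M,\delta)$ of $t$: the cuspidal case of the theorem (whose own uniqueness is a case-by-case result of \cite{FO}) fixes a unique $X_{wur}(M)$-orbit $W_Mr_M$ of residual points for $\Hc_{\Ibb_M}(M^*)$, hence a distinguished excellent subdiagram $\tilde J$ of the spectral diagram of $M^*$; and the key combinatorial fact (checked in all cases, not derived abstractly) is that $\tilde J$ embeds into the Kac diagram $D(\mathfrak{g}^\vee,\theta)$ of ${}^LG$ as an excellent subset \emph{uniquely up to the action of $X_{wur}(G)$}. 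That uniqueness of the embedding is what forces the two images $L, L'$ into the same $X_{wur}(G)$-orbit, and it is a statement about diagrams, not about Plancherel densities. Your heuristic points in the right direction but cannot replace this; without either the diagram argument or an independent proof that no two inequivalent discrete unramified parameters for $G^*$ have proportional adjoint $\gamma$-factors (which is what \cite{FO} establishes case by case), part (b) is not established. Parts (c) and (d) inherit the same dependence on (b), so the ``averaging / choosing a base point'' step in (c) also needs the uniqueness of the image before it can be made precise.
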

 In this subsection I will sketch the proof of the Theorem, which is quite involved. 
 For details the reader is 
 referred to \cite{Opdl}, \cite{Opds}, \cite{FO}, \cite{Fe2} and \cite{FOS}.
 
 First the existence (a) is proved in \cite{Opdl} and \cite{Fe2}. 
  
  In the exceptional cases the existence is shown by constructing  
STMs associated to Lusztig's arithmetic-geometric correspondences, in a way we will explain below.
Given a arithmetic-geometric correspondence of diagrams as in Lusztig, it is not difficult to find the 
candidate map $\Phi_{\tilde{t},T}:T_{\tilde{t}}\to L_n$ underlying the alleged STM
 $\Phi_{\tilde{t}}:\Hc_{\tilde{t}}^e\leadsto\Hc_\Ibb(G^*)$. To verify the main property (3) of 
 Definition \ref{def:STM} one needs to do a rather cumbersome computation.
  
 In the classical cases this is not practical. Fortunately the required STMs 
 can be obtained from a small set of generators of STMs between 
certain unipotent affine Hecke algebras of the form $\textup{C}_d[m_-,m_+](q^\beta)$
(see \cite[3.2.6; 3.2.7]{Opdl}). There are three kind of generating STMs: The 
translation STMs which decrease one of $m_\pm$ by $1$ (if this parameter is in $\mathbb{Z}_++1/2$)
or by $2$ (if this parameter is in $\mathbb{Z}_+$), increasing the rank accordingly, and do not 
change $\beta$; the spectral isomorphisms, which interchange $m_-$ and $m_+$, or 
give one of these a minus sign, and the extraspecial STMs. In this case 
$m_\pm\in \mathbb{Z}\pm 1/4$ and $\beta=2$, while the target Hecke algebra is 
of the form $C_n[\delta_-,\delta_+](q)$ with $\delta_\pm\in\{0,1\}$. The latter STMs 
correspond to Hecke algebras of types of inner forms $G^u$  of $G^*$ of 
even orthogonal or symplectic groups where $u\in\Omega/(1-\theta)\Omega$ 
has maximal order. 

Let us now explain the construction of STMs associated with 
a correspondence of diagrams as in Lusztig in the exceptional cases. 
For more details, see \cite{Opdl}.

First one needs to establish the cuspidal case of Theorem \ref{thm:STMexuni}
in the exceptional case. This is an explicit case by case verification based 
on the classification of the cusipdal unipotent characters of finite groups of Lie type 
\cite{Lusztig-chars}, the classification of the residual points for $\Hc_\Ibb$, and 
the computation of the formal degree (up to constant factors) for discrete series 
representations of $\Hc_\Ibb$ supported by the corresponding central characters  
\cite{HOH}, \cite{OpdSol}. This was carried out in \cite[3.2.2]{Opdl}.

Next we discuss the diagram of an STM 
$\Phi:\Hc_{\tilde{t}}^e\leadsto\Hc_\Ibb(G^*)$.  Consider the underlying map 
$\Phi:T_{\tilde{t}}\to L_n$ with $L\subset T=T_\Ibb=T^\vee/(1-\theta)T^\vee$
a residual coset. Such a map is determined by assigning 
weights to the nodes of the Kac diagram $D(\frak{g}^\vee,\theta)$ \cite{Retorsion} of 
${}^LG=G^\vee\rtimes\langle\theta\rangle$ according to the following steps (i)-(vii).
\\

\noindent (i) Lift $\tilde{\Phi}$ to an affine linear map $\frak{t}_{\tilde{t}}\to\frak{t}=\frak{t}_\Ibb$ 
and use the action of the dual affine Weyl group 
$W_\Ibb^{a,\vee}=Q^\vee\rtimes W_{\mathbb{I},0}$ to replace $\tilde{\Phi}$ by the lift $\tilde{\Phi'}$ 
of a map equivalent to $\Phi$,  such that $P_L:=\textup{Im}(\tilde{\Phi'}_{\vt=1})$
meets the closure $\overline{C^\vee}$ of the dual alcove $C^\vee$ in a facet of dimension 
equal to $\textup{dim}(T_{\tilde{t}})$. We assume from now on that $\Phi=\Phi'$ has this property.
\\

\noindent (ii) We have $\textup{Im}(\Phi)=L_n$ for a unique residual coset $L_\Phi=r_LT^L\subset T_\Ibb$, 
where $r_L=s_Lc_L\in T_L$ is a residual point. Then $P_L$ is a lift of $L_{\Phi,\vt=1}$.
\\

\noindent (iii) Let us denote the spectral diagram \cite[Section 8]{OpdSol}, \cite[2.3]{Opds} of 
$\Hc_{\tilde{t}}^e$ by $\Sigma_t^{spec}$. Let  $(W_t^{a,\vee},S_t^{a,\vee})$ be the associated 
affine Coxeter group.
It turns out that property (3) of Definition \ref{def:STM} implies that there exists an isomorphism of 
affine Coxeter groups $(W_t^{a,\vee},S_t^{a,\vee})\to W_L^*:=N_{W_\Ibb^{a,\vee}}(P_L)/C_{W_\Ibb^{a,\vee}}(P_L)$. 
Here $W_\Ibb^{a,\vee}$ is the affine Weyl group associated to the Kac diagram $D(\frak{g}^\vee,\theta)$ 
acting on $\frak{t}_\Ibb=\frak{t}^\theta$ (see \cite{Retorsion}). 

Indeed, using the theory of intertwiners Harish-Chandra proved that the reflection in a 
hyperplane on which the Plancherel density 
of a generalised principal series vanishes $i_{P=LN}^{G^*}(\delta)$ 
is an element of $N_{W_{\Ibb}}(L,\delta)$. Apply this to the generalised principal series of $G^*$ 
supported by $L_\Phi$, then we see that such hyperplane reflection is in  $N_{W_{\Ibb}}(L_\Phi)$.
On the other hand, by (3) of Definition \ref{def:STM} all reflection hyperplanes of $W_t^{a,\vee}$ 
correspond to such zeroes of the Plancherel density $\mu^{L_\Phi}$ via $\Phi_T$. 
This implies that the simple affine reflections in $P_L$ associated with the faces of $P_L$ 
are restrctions to $P_L$ of elements of the normalizer of $P_L$ in $W_\Ibb^{a,\vee}$. 
\\

\noindent (iv) By (iii), the material in \cite[Section 2]{Lusztig-unirep} applies. We see that 
if $P_L$ corresponds to the set $J\subset I$ (with $I$ an index set for the set of nodes of the 
Kac diagram $D(\frak{g}^\vee,\theta)$), then $J\subset I$ is an \emph{excellent}  
subset, and the set of affine simple reflections of $W^*\simeq  (W_t^{a,\vee},S_t^{a,\vee})$ is in natural 
bijection with $K=I\backslash J$. Hence the set $I$ decomposes in a subset $J$ correspond 
to the affine simple roots constant which are constant on $L$, and $K=I\backslash J$, the 
which is in canonical bijection with the set $\tilde{K}$ of simple reflections of $\Sigma_t^{spec}$
via the isomorphism $(W_t^{a,\vee},S_t^{a,\vee})\to W_L^*$ induced by $\Phi$.
\\

\noindent (v) Assign weights to the nodes of $I$: If $i\in I$ let $a_i^\vee$ be the corresponding 
affine simple Kac root, and let $Da^\vee_i$ be its gradient, viewed as character of 
$T_\Ibb$ (defined over $\Lambda_0$). 

We define $w_i(s):=D(a^\vee_i)(\Phi(s))$, viewed as a function of $s\in T_{\tilde{t},v}$ 
(the real vector group $T_{\tilde{t},v}\subset T_{\tilde{t}}$).  
We note that $w_i$ is independent of $s\in T_{\tilde{t},v}$ if and only if $i\in J$, and that 
$\prod_{i\in I} w_i^{n_i}=1$, where $n_i$ are such that $\sum_{i\in I}n_i a^\vee_i=1$. 

Observe that for all $j\in J$, $w_j=v^{\dt_j}$ for certain $\dt_j\in\mathbb{Z}_{\geq 0}$, 
and these coordinates determine the residue point $r_L\in T_L=T_J$ modulo the 
action of $(\Omega_{J,ad})^*$. 

We identify $\tilde{K}$ and $K$ via the isomorphism of (iv). Suppose that 
$\sum_{k\in K}\tilde{n}_kb^\vee_k=1$ is the affine relation for the affine simple roots of 
$\Sigma_t^{Spec}$. It turns out that $\zt_k:=n_k/\tilde{n}_k\in\mathbb{N}$ for all $k\in\tilde{K}=K$. 
We then have for all $k\in K$ that  
$w_k(s)=\zeta_{k}v^{\ct_k}(Db^\vee_k(s))^{1/\zt_k}$, where $b^\vee_k$ is the affine root of 
$(W_t^{a,\vee},S_t^{a,\vee})$ corresponding to $k$ via $\Phi$. Moreover, if $k_0$ denotes the 
unique affine extension root of $\Sigma_t^{Spec}$ then $\zeta_{k_0}$ is a primitive $n_{k_0}$-th
root of unity, $\zeta_k=1$ if $k\not=k_0$, and $\ct_k\in\mathbb{Z}$ can be computed 
by $Da_k^\vee(c_L)=v^{c_k}$. 
\\

\noindent (vi) We call the diagram $D(\frak{g}^\vee,\theta)$ so obtained, with the weights $w_i$ attached, 
the diagram $D(\Phi)$ of the STM $\Phi$. Note that $\Phi$ is determined by $D(\Phi)$, since 
$\tilde{\Phi}$ is.   
\\

\noindent (vii) Given such a diagram, to check that the map it defines is an STM comes down to checking 
first of all that the constant weights $w_j$ on $J$ define a residue point of $T_J$, and secondly
that (3) of Definition \ref{def:STM} holds. The latter verification is straightforward but cumbersome
(a lot of cancellations are taking place).   
\\

We claim that the diagram $D(\Phi)$ without the weights $w_i$, but with the 
subsets $J$ and $K$ remembered, is the ``geometric diagram'' which Lustig \cite{Lusztig-unirep}, 
\cite{Lusztig-unirep2} attaches to the arithmetic diagram of $\Hc_{\tilde{t}}^e$. 

Conversely, given the ``geometric diagram'' Lusztig attaches to the arithmetic diagram of 
$\Hc_{\tilde{t}}^e$. Now one needs to assign the appropriate weights $w_i$ to the nodes of the 
diagram so that it becomes the diagram of an STM. This is done as follows.
From $(\Pbb,\sigma)$ we obtain a corresponding 
cuspidal pair $(M,\delta)$ with a Levi subgroup $M\subset G$ and a supercuspidal representations 
of unipotent reduction $\delta$ of $M/A_M$ (with $A_M$ the maximal split torus in $Z(M)$). 
Note that $M/A_M$ is again of adjoint type.
Thus by the cuspidal case of Theorem \ref{thm:STMexuni} (which we have established already 
for exceptional groups) we obtain a \emph{unique}  
$X_{wur}(M)$-orbit of central characters $W_Mr_M$ with $r_M\in T_M$ a residue points for 
$M/A_M$ associated to $\delta$ such that (\ref{eq:cuspmu}) holds (for $(M,\delta)$ instead of $(G,\pi)$).  
Let $L=r_MT^M\subset T_\Ibb$ be the corresponding residual coset.
Using $W_\Ibb$ we may assume that $L$ is in the position as in (a),(b).
We observe that $X_{wur}(M)/(T_M\cap T^M)$ is a quotient of $X_{wur}(G)$. 
Thus $(M,\delta)$ determines up to the action of $W_\Ibb$ a 
unique $X_{wur}(G)$-orbit of residual subspaces, and we have a 
representative $L=r_MT^M\subset T_\Ibb$ of this orbit in the position described in (b) above.
We now assign weights $w_i$ to the nodes of $D(\frak{g}^\vee, \theta)$,  
exactly as has been described above in (a)-(g), see \cite[3.2.4; 3.2.5]{Opdl}.
This defines a map $\Phi:T_{\tilde{t}}\to L_n$.  
Finally one needs to verify the property (3) of Definition \ref{def:STM} for this map, 
which amounts to case by case computations \cite{Opdl}.
\\

At this stage we have established the existence of enough STMs, Theorem \ref{thm:STMexuni}(a).

Next we now look at the essential uniqueness statement Theorem \ref{thm:STMexuni}(b)
for $\Phi:\Hc_{\tilde{t}}^e\leadsto\Hc_\Ibb(G^*)$
This reduces to the cuspidal case using STM diagrams as follows:  
By Theorem \cite[Proposition 7.13]{Opds} if suffices to show that 
one can find a $\omega \in X_{wur}(G)$ such that $\Phi'_{\tilde{t}}$ and 
$m_\omega\circ\Phi_{\tilde{t}}$ have the same image (in the sense of 
\cite[Definition 5.10]{Opdl}). Assume that the existence and uniqueness 
property (b) for the cuspidal case has been solved already, and let $\Hc_{\tilde{t}}^e$
be given. Let $(M,\delta)$ be the cuspidal pair associated to $t$ by \cite{Mo}, 
then $\Pbb_M=M\cap\Pbb$ is a maximal parahoric subgroup of $M$, and 
$\sigma_M=\mathbb{P}|_{\Pbb_M}$ is a cuspidal unipotent character for $\Pbb_M$.
By the already established cuspidal case of Theorem \ref{thm:STMexuni}(a),(b), 
equation (\ref{eq:cuspmu})  
determines a unique $X_{wur}(M)$-orbit of $W_M$-orbits $W_Mr_M$ 
of residual points $r_M=s_Mc_M$ for $\Hc_{\Ibb_M}(M^*)$. Let $\tilde{J}$ be the 
maximal proper subset of the 
spectral diagram $D(\frak{m},\theta_M)$ of $\Hc_{\Ibb_M}(M^*)$  
which corresponds to $r_M$, i.e. the set of affine simple roots of 
$D(\frak{m},\theta_M)$ which are trivial on $s_M$.
Then it turns out that in all cases, $\tilde{J}$ fits as excellent subdiagram of 
$D(\frak{g}^\vee,\theta)$, with image $J\subset D(\frak{g}^\vee,\theta)$ 
which is \emph{unique} up to the action of $X_{wur}(G)$. 
This determines $T_L$ and the image $W_0r_L\in T_L\subset T_\Ibb$ of 
$W_Mr_M\in T_{\Ibb_M}$, up to the action of $X_{wur}(G)$.
Hence the image $L=r_LL$ of $\Phi$ is uniquely determined by $t=(\Pbb,\sigma)$
up to the action of $X_{wur}(G)$. By \cite[Proposition 7.13]{Opds} 
this proves the desired uniqueness.

The above reasoning reduces the general uniqueness statement 
Theorem \ref{thm:STMexuni}(b) to the existence and uniqueness 
\ref{thm:STMexuni}(a), (b) for the cuspidal case. This existence and uniqueness
for the cuspidal case was shown in general in \cite{FO} (using the 
general existence statement Theorem \ref{thm:STMexuni}(a)), 
thereby finishing the proof of Theorem \ref{thm:STMexuni}(b). 

The existence and essential uniqueness of a $X_{wur}(G)$-equivariant STM, 
and its compatibility to Lusztig's parameterisation (Theorem \ref{thm:STMexuni}(c), (d)) 
also reduces to the cuspidal case, by using the same construction of ``inducing 
higher rank STMs from cuspidal ones'' as discussed above. 
In the cuspidal case, these statements follow from a case by case verification 
\cite{FO}, \cite{FOS}.
This finishes the discussion of the proof of Theorem \ref{thm:STMexuni}.
\subsection{Main Theorem}
The following Theorem is the main result. It is a generalisation of results 
of \cite{Re}, \cite{Opdl}, \cite{FO}, \cite{FOS}. 
\begin{thm}\label{thm:main}
Let $G=\mbf{G}(F)$ be the group of points of a connected reductive group 
$\mbf{G}$ defined over a non-archimedean local field $F$ which splits over an 
unramified extension of $F$. Let $\hat{G}^{\textup{temp}}_{uni}=
\sqcup_{s\in \mathcal{B}_{uni}(G)}\hat{G}^{\textup{temp}}_s$ denote the set of equivalence 
classes of irreducible tempered representations of $G$ of unipotent reduction. 
Here $\mathcal{B}_{uni}(G)\subset \mathcal{B}(G)$ is the finite subset of unipotent inertial 
equivalence classes, and for each $s\in \mathcal{B}_{uni}(G)$, $\hat{G}^{\textup{temp}}_s\subset \Pi(G)_s$
is the subsets of equivalence classes of tempered irreducible representations in $\mathcal{C}(G)_s$.
For a unipotent type $t=(\Pbb,\sigma)$, $\mathcal{B}^t(G)$ is an $X_{wur}(G)$-orbit 
in $\mathcal{B}_{uni}(G)$. For $s\in\mathcal{B}^t(G)$ choose a corresponding extension 
$\tilde{t}(s)$ of $t$ to $N_G(\Pbb)$, and extend this choice $X_{wur}(G)$-equivariantly. 
Let $\Hc_t$ be the Hecke algebra of the type $t$, then $\Hc_t=\oplus_{s\in \mathcal{B}^t(G)}\Hc_{\tilde{t}(s)}^e$. 
Put $\mathcal{H}_{uni}(G):=\oplus_{s\in\mathcal{B}_{uni}(G)}\Hc_{\tilde{t}(s)}^e$. Let $\Hc_\Ibb=\Hc_\Ibb(G^*)$
be the Iwahori Hecke algebra of the quasi-split group $G^*$ in the inner class of $G$. 
\begin{enumerate}
\item[(a)] There exists a parameterisation 
$\varphi:\hat{G}^{\textup{temp}}_{uni}\to \Phi_{nr}^{\textup{temp}}(G)$,  $\pi\to\varphi_\pi$  
such that:
\begin{enumerate}
\item[(1)] Conjectures \ref{conj:1} and \ref{conj:2} hold, up to constant factors 
independent of $q$.
\item[(2)] For every unipotent type $t$ of $G$ there exists a morphism 
$\Phi_Z^t:\textup{Spec}(Z(\Hc_t))\to \textup{Spec}(Z(\Hc_{\mathbb{I}}))$ such that the map 
$\gamma^\mathbb{I}\circ\varphi: \hat{G}^{t,\textup{temp}}\to S_\mathbb{I}\subset \textup{Spec}(Z(\Hc_{\mathbb{I}}))$
factorises as $\gamma^\mathbb{I}\circ\varphi=\Phi_Z^t\circ \beta^t\circ\textup{cc}$.
\end{enumerate} 
The morphism $\Phi_Z^t$ is associated to an STM $\Phi^t:\Hc_t\leadsto \Hc_{\mathbb{I}}$ satisfying 
the conditions (2)(a) and (b) of Corollary \ref{cor:STMpar}. 
\item[(b)] Such a parameterisation is unique up to automorphisms $\alpha\in\textup{Aut}(\mathcal{H}_{uni})$
such that $\varphi':=\varphi\circ\alpha^*$ is again a parameterisation. 
\item[(c)] If $G$ is of adjoint type then Lusztig's enhanced parameterisation 
\cite{Lusztig-unirep,Lusztig-unirep2} $\tilde{\varphi}_{Lu}:\hat{G}^{\textup{temp}}_{uni}\to\tilde{\Phi}_{nr}^{\textup{temp}}(G)$ 
satisfies the conjectures \ref{conj:1} and \ref{conj:2} of Hiraga, Ichino and Ikeda \cite{HII}, 
and moreover satisfies property (a)(2) above. (For general unramified $G$: see Theorem \ref{thm:Iw}.)
\end{enumerate}
\end{thm}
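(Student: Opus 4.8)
The plan is to reduce the statement for a general connected reductive $\mbf{G}$ to the absolutely-simple adjoint case already packaged in Theorem \ref{thm:STMexuni}, using the structure theory of unipotent types (Theorem \ref{thm:unitype}), the translation dictionary between central characters and Langlands parameters (Theorem \ref{thm:cc}, Corollary \ref{cor:beta}), and the spectral-transfer formalism of Corollary \ref{cor:STMpar} and Theorem \ref{thm:corr}. The starting point for part (a) is this: for each unipotent type $t=(\Pbb,\sigma)$ of $G$, the Hecke algebra $\Hc_t$ is a finite direct sum $\oplus_{s}\Hc_{\tilde t(s)}^e$ of extended affine Hecke algebras permuted transitively by $X_{wur}(G)$, and by Theorem \ref{thm:STMexuni} (in the adjoint simple case) there is an $X_{wur}$-equivariant STM $\Phi^t:\Hc_t\leadsto\Hc_\Ibb(G^*)$ satisfying (2)(a),(b) of Corollary \ref{cor:STMpar}. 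Defining $\varphi_\pi:=(\gamma^\Ibb)^{-1}\circ\Phi_Z^t\circ\beta^t\circ\mathrm{cc}(\pi)$ via $\Phi_Z^t:=\Phi^t_Z$, Corollary \ref{cor:STMpar}(1) immediately gives that Conjectures \ref{conj:1} and \ref{conj:2} hold up to $q$-independent rational constants, and (2)(a) together with Theorem \ref{thm:BHK}, Theorem \ref{thm:ds}, Theorem \ref{thm:parb} gives the factorisation in (a)(2); so the entire content of (a) for $G$ simple adjoint is a bookkeeping consequence of Theorem \ref{thm:STMexuni} plus the Iwahori-spherical base case Theorem \ref{thm:Iw}.

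The reduction to the simple adjoint case is the substantive step. First I would use that the parameterisation is required to be compatible with products and with restriction of scalars (conditions (iv), (v) of Definition \ref{def:desi}), so one reduces to $\mbf{G}$ almost simple and $F$-isotropic modulo center. Next, given $\eta:\mbf{G}_{sc}\to\mbf{G}\to\mbf{G}_{ad}$, the cuspidal support of a unipotent type of $G$ pulls back and pushes forward along these isogenies; one exploits that $M/A_M$ in the Levi datum is again adjoint so that the cuspidal case of Theorem \ref{thm:STMexuni} applies to it, and then \emph{inducing higher-rank STMs from cuspidal ones} — the construction reviewed after Theorem \ref{thm:STMexuni} — produces the STM $\Phi^t$ for the general $G^u$ from the cuspidal datum on $M$. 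The weakly-unramified twisting here is controlled because $X_{wur}(M)/(T_M\cap T^M)$ is a quotient of $X_{wur}(G)$, and $X_{wur}$ is insensitive to inner twists; conditions (i), (ii) of Definition \ref{def:desi} follow from (2)(a) of Corollary \ref{cor:STMpar} as in its proof, conditions (iii), (iv), (v) follow from the compatibility of STMs with the induction construction and with the Kottwitz-homomorphism description of the central character, and (vi) is (2)(b). For part (b), uniqueness up to $\alpha\in\mathrm{Aut}(\mathcal H_{uni})$ with $\varphi\circ\alpha^*$ again a parameterisation is exactly the essential-uniqueness statement Theorem \ref{thm:STMexuni}(b) transported through the reduction: any two parameterisations differ by an automorphism of the Hecke-algebra side that is an STM $\Hc_\Ibb\leadsto\Hc_\Ibb$, and such STMs form a torsor under $X_{wur}(G)=(\Omega^\theta)^*$; the $X_{wur}$-equivariance normalisation in (2)(a),(b) pins this down up to the allowed automorphisms. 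For part (c), when $G$ is adjoint, Theorem \ref{thm:STMexuni}(d) says the STM parameterisation agrees up to an $X_{wur}(G)$-twist with Lusztig's $\varphi_{Lu}$, and since the HII conjectures and property (a)(2) are both $X_{wur}(G)$-invariant (the adjoint $\gamma$-factor is preserved by weakly-unramified twisting, and Conjecture \ref{conj:LC} determines the enhancements compatibly), the properties transfer to $\tilde\varphi_{Lu}$.

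The hard part is genuinely the reduction machinery for non-adjoint $G$: one must check that the STM constructed on the adjoint quotient or simply-connected cover descends/lifts correctly, i.e. that the finite isogeny kernels and the resulting finite-group data on the $\mathcal S_\varphi$-side match the change in $X_{wur}$ and in the component groups of centralisers, and that the normalisation constants $d^t$ of Theorem \ref{thm:unitype} transform as the volume factors in $\gamma(0,\mathrm{Ad}\circ\varphi,\psi)$ demand. Concretely, passing from $G_{ad}$ to $G$ changes $\Omega=X_*(T)/Q$ (hence the group of weakly unramified characters and the set of inner forms) and simultaneously changes $Z(G^\vee)$ and the Kottwitz character $\chi_G$; one needs that the bijection $\gamma^\Ibb$ of Theorem \ref{thm:cc}, which is $X_{wur}$-equivariant by Lemma \ref{lem:gamequiv}, intertwines these changes on both sides, and that the STM diagrams $D(\Phi)$ for $G$ are obtained from those for $G_{ad}$ by the expected operation on Kac diagrams $D(\frak g^\vee,\theta)$. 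This is where the bulk of the case-free argument lives; the remaining verifications — that the resulting map satisfies (i)--(vi) of Definition \ref{def:desi}, and the uniqueness clause — are then formal consequences of Corollary \ref{cor:STMpar}, Theorem \ref{thm:corr} and Theorem \ref{thm:STMexuni}(b),(c),(d).
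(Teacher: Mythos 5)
Your high-level plan is aligned with the paper's: treat the absolutely simple adjoint case as input from Theorem \ref{thm:STMexuni}, obtain (a)(1) and (a)(2) via Corollary \ref{cor:STMpar} and Theorem \ref{thm:Iw}, and pass to general $G$ by isogenies. The ordering you suggest (prove (a)(1),(2) first in the adjoint case, derive (c) from Theorem \ref{thm:STMexuni}(d) and $X_{wur}$-invariance of $\gamma$-factors) is a mild rearrangement of the paper, which proves (c) first as \cite[Theorem 4.11]{Opdl} and then derives (a),(b) from it; that difference is immaterial.

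There is, however, a genuine gap in the reduction to adjoint, which you flag but do not fill, and whose precise shape you misdescribe. You invoke the chain $\mbf{G}_{sc}\to\mbf{G}\to\mbf{G}_{ad}$ and speak of pushing cuspidal supports along these isogenies, but that is not how the paper's argument goes, and in fact it is not adequate: on the source side there is no STM defined directly for $\Hc_t(G)$ when $G$ is not adjoint, so one cannot simply ``transport'' the adjoint parameterisation along the adjoint quotient without establishing a matching of types and Hecke algebras first. The paper's reduction is two-layered. First, for $G$ with \emph{anisotropic center}, it uses the specific isogeny $\psi:\mbf{G}\to\mbf{Z}\times\mbf{G}_{ad}$ with $\mbf{Z}=\mbf{G}/\Dc(\mbf{G})$ an anisotropic torus; the crucial verified claim there is that every tempered $\pi$ of unipotent reduction of $G$, after descending to $Z\times\textup{Im}(G)$, is a summand of the restriction of a unipotent $\tilde\pi$ of $G_{ad}$, and that this determines $\varphi_{\tilde\pi}$ only up to the finite group $I_G\subset X_{wur}(G_{ad})$. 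Establishing that claim requires the bijection between parahoric subgroups of $G$ and $Z\times G_{ad}$ built from the Kottwitz homomorphism and \cite{HR}, together with the fact that cuspidal unipotent representations of the parahoric quotients match across the isogeny; none of this appears in your sketch. Second, for general $G$ one reduces to $H=G/A$ (which has anisotropic center) by choosing $x\in X_{wur}(G)$ with $x^{-1}\otimes\pi$ trivial on $A$ and checking well-definedness. Your proposal does not separate these two steps, and the unverified claim above is exactly the ``descent/lift'' you acknowledge but leave open.

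Two smaller inaccuracies. In part (b) you describe the ambiguity as coming from STMs $\Hc_\Ibb\leadsto\Hc_\Ibb$ forming a torsor under $X_{wur}(G)$; but (2)(b) of Corollary \ref{cor:STMpar} already pins that target-side ambiguity, and the residual ambiguity in the theorem is on the source $\mathcal H_{uni}(G)$ (automorphisms $\alpha$ with $\varphi\circ\alpha^*$ again a parameterisation). The paper's uniqueness argument for anisotropic-center $G$ moreover requires constructing an $X_{wur}(G_{ad})$-equivariant lift of a given $X_{wur}(G)$-equivariant parameterisation, using the stronger form $\Phi^t_1=\omega\circ w\circ\Phi^t\circ\alpha$ of the uniqueness in \cite[3.2.8]{Opdl}, \cite[Proposition 7.13]{Opds}; this lifting step is absent from your sketch. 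Finally, the appeal to Kac diagrams $D(\mathfrak g^\vee,\theta)$ does not enter the proof of Theorem \ref{thm:main} itself; it belongs to the (already quoted) proof of Theorem \ref{thm:STMexuni}, so it is a red herring here.
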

\begin{proof}
(c) For $G$ simple and of adjoint type, Conjecture \ref{conj:1} for discrete series representations is 
\cite[Theorem 4.11]{Opdl}. The proof is based on Theorem \ref{thm:STMexuni} (to show the validity modulo 
rational constant factors), and the precise computation of the rational constant $d_{\Hc,\delta}$ for the 
discrete series of affine Hecke algebras based on Theorem \ref{thm:rat}.
Obviously this implies the result for all semisimple groups $G$ of adjoint type.
If $G$ is of adjoint type and $M\subset G$ is a Levi subgroup with maximal split central torus 
$A_M\subset Z_M$, then $M/{A_M}$ is also of adjoint type. 
Hence Conjecture \ref{conj:1} also holds for the discrete series of $M$ 
modulo the center and of unipotent reduction. Thus Conjecture \ref{conj:2} follows from the Plancherel formula of 
Harish-Chandra \cite[Th\'eor\`eme VIII.1.1]{Wal}. 
By Theorem \ref{thm:STMexuni} (see also \cite[Theorem 3.4]{Opdl}), Lusztig's parameterisation 
$\varphi_{Lu}: \hat{G}_{uni}^{\textup{temp}}(G)\to \Phi^{\textup{temp}}_{nr}(G)$ 
of the irreducible tempered representations of unipotent reduction gives rise to a 
spectral transfer map $\Phi_{uni}:\Hc_{uni}(G)\leadsto  \Hc_{\mathbb{I}}(G^*)$. 
By \cite[Theorem 6.1]{Opds} and \cite{BHK} this implies that, up to constants independent of $q$, 
the map 
$\gamma^\mathbb{I}\circ \varphi_{Lu}:  
\hat{G}_{uni}^{\textup{temp}}\to S_\mathbb{I}\subset 
\textup{mSpec}(Z(\Hc_{\mathbb{I}}(G^*)_\vt))$ also satisfies (a)(2). 
\\

(a) and (b): For an absolutely semisimple group $G$ of adjoint type, (c) implies the existence of 
a parameterisation $\pi\to\varphi_\pi$ of tempered representations of unipotent 
reduction satisfying (a)(1) and (2). Let us also prove the uniqueness property (b) for 
such $G$. Given a parameterisation satisfying (a)(1) and (2) it follows 
from Definition \ref{def:desi}(iv) of the parameterisation and \cite[Proposition 8.4]{Bo}, that we may 
assume that $G$ is absolutely simple. 
For all unipotent types $t$ of $G$, the morphism $\Phi^t_Z$ defines a spectral transfer map 
$\Phi^t:\Hc_t\leadsto\Hc_\mathbb{I}$ by definition. Indeed, by Definition \ref{def:desi} (i), (iii) and (v) of a 
parameterisation, the morphism $\Phi^t_Z$ comes from an affine morphism of algebraic 
tori $\Phi^t:T^t=X_{nr}(M)\to L'$ where $L'$ is an intermediate quotient $L\to L'\to L_n$ 
of a residual coset $L\subset T_{\mathbb{I},\vt}$ (indeed, the points of $L'$ are in bijection 
with the orbit of $X_{nr}(M)$ twists of $\delta$, which is a finite quotient of $X_{nr}(M)$, 
and $L_n$ corresponds by definition to the central character map for representations of 
$M$ of this type, hence is a quotient of $L'$). By (a)(1), \cite{BHK}, Theorems \ref{thm:ds} and 
\ref{thm:parb}, and \cite[Definition 5.1]{Opds} it follows that $\Phi^t$ defines a spectral transfer
map $\Hc_t\leadsto \Hc_\mathbb{I}$. By Definition \ref{def:desi}(i) and by 
Theorem \ref{thm:STMexuni} or \cite[Theorem 3.4]{Opdl} 
the required 
uniqueness property follows. Hence we have established the existence and uniqueness of
parameterisations satisfying (a)(1) and (2) for $G$ absolutely simple of adjoint type. 
\\

Let us now show (a) and (b) when $G$ is connected reductive with anisotropic center. 
We write 
$\mbf{G}=\mbf{Z}^0\Dc(\mbf{G})$ with $\Dc(\mbf{G})$ the derived subgroup (which is a connected 
semisimple group) and $\mbf{Z}^0$ an anisotropic torus. Then  
the quotient $\mbf{Z}=\mbf{G}/\Dc(\mbf{G})$ is an isogenous quotient of $\mbf{Z}^0$, in  
particular an anisotropic torus. Let $\mbf{G}_{ad}$ be the adjoint quotient of $\mbf{G}$, 
a connected semisimple group of adjoint type. 
Now consider the isogeny $\psi:\mbf{G}\to \mbf{Z}\times \mbf{G}_{ad}$, and the corresponding 
dual isogeny $\psi^\vee:Z^\vee\times G^\vee_{sc}\to G^\vee$. Let $T^\vee$ be a maximal torus of $G^\vee$, 
and $T^\vee_{sc}$ its inverse image in $G^\vee_{sc}$.
The kernel $\psi$ can be expressed as 
$\textup{Hom}(K_G^*,F^\times_{ur})^{\textup{Fr}}$, where $K_G=X^*(T^\vee_{sc})/X^*(T^\vee)$. 
\\

We have a surjection with finite 
kernel ${}^L(Z^\vee\times G^\vee_{sc})\to {}^LG$. Now $\Phi^{\textup{temp}}_{nr}(Z\times G_{ad})
=\Phi^{\textup{temp}}_{nr}(Z)\times \Phi^{\textup{temp}}_{nr}(G_{ad})\simeq \Phi^{\textup{temp}}_{nr}(G_{ad})$, 
since for the anisotropic torus $Z$, $\Phi^{\textup{temp}}_{nr}(Z)=\{[\varphi_{triv}]\}$. 
The corresponding map $\Phi^{\textup{temp}}_{nr}(Z\times G_{ad})\to \Phi^{\textup{temp}}_{nr}(G)$
can be identified, via these isomorphisms, with the natural map 
$\Phi^{\textup{temp}}_{nr}(G_{ad})\to \Phi^{\textup{temp}}_{nr}(G)$. 
Via the isomorphism of Theorem \ref{thm:cc} this map comes from the covering map 
$T_{sc}^\vee/(1-\theta)T^\vee_{sc}\to T^\vee/(1-\theta)T^\vee$ applied to $S_{sc,\mathbb{I},\vt}$. 
Therefore this map is surjective, with fibres that are orbits for the canonical 
image $I_G\subset (\Omega_{sc}^\theta)^*$ of the finite abelian group 
$(K^\theta_G)^*$ in $X_{wur}(G_{ad})=(\Omega_{sc}^\theta)^*$. Then $I_G$ is 
the kernel of the canonical surjection $(\Omega_{sc}^\theta)^*\to(\Omega_G^\theta)^*$, 
and we have an identification 
$\Phi^{\textup{temp}}_{nr}(G)= I_G\backslash \Phi^{\textup{temp}}_{nr}(G_{ad})$.
\\

We propose the following parameterisation $\hat{G}^{\textup{temp}}_{uni}\to \Phi^{\textup{temp}}_{nr}(G)$.
Given an irreducible tempered representation $(\pi,V_\pi)$ of unipotent reduction of $G$. The kernel 
$\textup{Hom}(K_G^*,F^\times_{nr})^{\textup{Fr}}$ of the isogeny $G\to Z\times G_{ad}$ acts trivially 
in $V_\pi$, hence $(\pi,V_\pi)$ descends to a representation of the image of $G$ in $Z\times G_{ad}$. 
Since the image in $Z$ acts trivially on $V_\pi$, we can extend to a representation $\overline{\pi}$ 
of $Z\times \textup{Im}(G)$ (with trivial action of the $Z$-factor) where $\textup{Im}(G)\subset G_{ad}$. 
We \emph{claim} that $\overline{\pi}$ is a summand of the restriction of 
a representation $\tilde{\pi}$ of unipotent reduction of $G_{ad}$, and that this uniquely determines 
the Lusztig-Langlands parameter $\varphi_{\tilde{\pi}}$ of $\tilde{\pi}$ up to the action of 
$I_G$.
We define, in view of the above identification, $\varphi_\pi:=I_G\varphi_{\pi_{ad}}$.
In order that this definition makes sense we need to verify the above claim.
\\

Let $t=(\mathbb{P},\delta)$ be a ``unipotent type'' for the unipotent Bernstein  
component of $G$, where $\mathbb{P}$ is a parahoric subgroup, $\delta$ 
a cuspidal unipotent of $\mathbb{P}$, and $\tilde{\delta}$ an extension of $\delta$ to $N_G\mathbb{P}$.
The Hecke algebra $\Hc_t$ of $t$ is a direct sum of extended affine Hecke algebras $\Hc_{\tilde{t}}^e$.
We have (see \cite{HR}) $\mathbb{P}=G_1\cap N_G\mathbb{P}$, and this group  
is self-normalizing within $G_1$, the kernel of the Kottwitz homomorphism $w_G$.  
The set of Bernstein 
components described by $t$ corresponds bijectively to the set of extensions 
$\tilde{t}=(N_G\mathbb{P},\tilde{\delta}')$ of $\delta$ to $N_G\Pbb$.
This is a torsor for the character group $(\Omega_G^{\mathbb{P},\theta})^*$ of the abelian group 
$\Omega_G^{\mathbb{P},\theta}$, the subgroup of $\Omega_G^{\theta}=G/G_1$ fixing $\mathbb{P}$
under conjugation. The subalgebra $\Hc_t^a$ of the extended affine Hecke algebra $\Hc_{\tilde{t}}^e$ 
of functions with support in $G_1$ 
is the (unextended) underlying affine Hecke algebra, by the arguments of Lusztig  
\cite[1.10--1.20]{Lusztig-unirep}. The Hecke algebra $\Hc_t$ of the type $t$ is isomorphic to 
$\Hc_t^a\#(\Omega_G^{\mathbb{P},\theta})^*$. This algebra can be written as a direct sum of 
the various extended affine Hecke algebras $\Hc_{\tilde{t}}^e$ associated with the extensions 
$\tilde{t}$. 
This is a direct sum of mutually isomorphic extended affine Hecke algebras $\Hc_{\tilde{t}}^e$ 
whose set of summands 
form a torsor for the group of characters of the subgroup $(\Omega_G^{\mathbb{P},\theta})_1$ 
of $\Omega_G^{\mathbb{P},\theta}$ of elements acting trivially on $\Hc_t^a$.  
In any case, one obtains a canonical action of $(\Omega_G^{\mathbb{P},\theta})^*$, 
and thus of $(\Omega_G^{\theta})^*$, on 
the disjoint union of the centers of the summands $\Hc_{\tilde{t}}^e$ of $\Hc_t$. 
\\

There is a 
bijection between the set of parahoric 
subgroups of $G$ and of $Z\times G_{ad}$ defined as follows. 
First choose a maximally $F$-split maximal $F$-torus $T$ of $G$ which splits over $F_{ur}$. 
Let $\mathcal{A}$ be the apartment of $G$ associated with $T$, which can be embedded 
in the apartment of $\mbf{G}(F_{ur})$ as the set of $\textup{Fr}$-invariant elements.
By \cite[Definition 1; Remark 16]{HR}, \cite[Corollary 2.2]{Opdl}, the stabilizer 
of a facet $f$ in $\mathcal{A}$ in $G_1=\mbf{G}(F_{ur})^{\textup{Fr}}$
is also the pointwise fixator of $f$ in $G_1$, since $\mbf{G}(F_{ur})$ is generated 
by $\mbf{G}_{der}(F_{ur})$ and elements acting trivially on $\mathcal{A}$. Hence 
by \cite{HR} it follows that the parahoric subgroup $\mathbb{P}_f$ of $G$ is also equal 
to the normalizer $\mathbb{P}_f=N_{G_1}(\mathbb{P}_f\cap G_{der})$. Indeed, $\subset$ 
follows since $\mathbb{P}_f$ is self-normalizing in $G_1$, while $\supset$ follows since 
the right hand side is stabilizing, hence pointwise fixing $f$.
 By the functoriality of Kottwitz's map $w_G$ we have a 
homomorphism $G_1\to (Z\times G_{ad})_1$. 
The bijection between the parahoric subgroups is defined as follows: Given 
$\mathbb{P}$ of $G$, we have $\mathbb{P}':=N_{(Z\times G_{ad})_1}(\psi(\mathbb{P}))$ is a parahoric 
group of $Z\times G_{ad}$, as follows from the discussion above and the remark that 
$(Z\times G_{ad})_{der}\subset \psi(G)$; conversely 
if $\mathbb{P}'\subset Z\times G_{ad}$ is parahoric then $\mathbb{P}:=\psi^{-1}(\mathbb{P}')\cap G_1$ 
is parahoric in $G$. These maps are inverse to each other. Moreover, the cuspidal unipotent 
representations of $\mathbb{P}$ and $\mathbb{P}'$ correspond bijectively to each other, since 
it is known that the set of cuspidal unipotent characters of a connected reductive group $\mathcal{G}$ 
over a finite field is independent of the type of $\mathcal{G}$ within its isogeny class (cf. \cite{Lusztig-chars}).  
It follows that the type $t_1:=(\mathbb{P},\delta)$ of $G$ corresponds to a unique type 
$\tilde{t}_1=(\mathbb{P}',\delta')$ of $Z\times G_{ad}$, and the affine Hecke algebras  
$\Hc_{t_1}'$ and $\Hc_{\tilde{t}_1}'$ are isomorphic unextended affine Hecke algebras 
via $\psi$, with the same normalization of their respective traces.  
\\

Now let us return to the verification of the above claim. 
Consider an irreducible tempered representation $(\pi,V_\pi)$ 
of $G$ of unipotent reduction, and let $\tilde{t}=(N_G\mathbb{P},\tilde{\delta})$ be an extension of 
a unipotent type $t$ 
associated to the Bernstein component to which $\pi$ belongs. The induction of $\overline{\pi}$ to 
$Z\times G_{ad}$ is a unitary tempered representation of $Z\times G_{ad}$ with finitely many irreducible 
summands $\tilde{\pi}$, and this gives an obvious construction of tempered irreducible 
representations $\tilde{\pi}$ which contain $\overline{\pi}$ when restricted to the image of 
$G$ in $Z\times G_{ad}$. 
By restriction to $\psi:G_1\to (Z\times G_{ad})_1$ it follows that an extension  
$\tilde{t}$ describing the Bernstein component of 
$\tilde{\pi}$ must restrict to $\tilde{t}_1=(\mathbb{P}',\delta')$ as described above. 

Hence $\tilde{t}$ is of the form $\tilde{t}=(N_{Z\times G_{ad}}\mathbb{P}',\tilde{\delta'})$.
As before, the algebra $\Hc_{\tilde{t}_1}$ is a direct sum of isomorphic extended affine Hecke
algebras. Via $\psi$ we can view $\Hc_{t_1}$ as 
a subalgebra of $\Hc_{\tilde{t}_1}$, and by the Bernstein description of the center of 
an affine Hecke algebra it follows that $Z(\Hc_{t_1})\subset Z(\Hc_{\tilde{t}_1})$ is a 
subalgebra of finite index. The kernel of the corresponding surjective homomorphism 
$\textup{Spec}(Z(\Hc_{\tilde{t}_1}))\to \textup{Spec}(Z(\Hc_{t_1}))$  is the image of 
$I_G\subset (\Omega_{sc}^\theta)^*$ under the surjection 
$(\Omega_{sc}^\theta)^*\to(\Omega_{sc}^{\Pbb,\theta})^*$, and  
the natural action by the group $(\Omega_{sc}^{\theta})^*$ on the 
spectrum of $\textup{Spec}Z(\Hc_{\tilde{t}_1})$ corresponds to the 
natural action of $(\Omega_G^{\theta})^*$ on $\textup{Spec}(Z(\Hc_{\tilde{t}_1}))$. 
Clearly $cc(\tilde{\pi})$ belongs to the $I_G$-orbit of central characters 
in the fibre above $cc(\pi)$ under this map.  

Now recall that for the representation $\tilde{\pi}$ of $G_{ad}$ (or $Z\times G_{ad}$) the 
Langlands parameter $\varphi_{\tilde{\pi}}$ is defined by 
$\varphi_{{\tilde{\pi}}}:=(\gamma^\mathbb{I})^{-1}(\Phi_Z(cc^t(m_t(\tilde{\pi}))))$, where 
$\Phi:\Hc_{uni}(G_{ad})\leadsto \Hc_\mathbb{I}(G_{ad}^*)$ is the 
spectral transfer map of \cite[Theorem 3.4]{Opdl}. We just explained above that 
$cc^t(m_t(\tilde{\pi}))$ belongs to a single $I_G$-orbit, and then the  
$X_{wur}(G_{ad}):=(\Omega_{sc}^{\theta})^*$-equivariance of $\Phi$ and of $\gamma^\mathbb{I}$ 
(by Lemma \ref{lem:gamequiv}) 
finally establishes the claim.
\\

Hence for $G$ connected reductive with anisotropic kernel, we have now shown existence of a 
Langlands correspondence $\varphi:\hat{G}^{\textup{temp}}_{uni}\to \Phi_{nr}^{\textup{temp}}(G)$
which obviously satisfies (a)(1) (we reduced this to the case $G_{ad}$ where we know (a)(1) 
already, via spectral transfer maps which preserve Plancherel densities up to constants by \cite{Opds})  
and (a)(2) (since spectral transfer maps yield such morphisms by definition). The uniqueness property 
(b) follows from the case of $G_{ad}$ if we can show that there always exists a lift 
of a $X_{wur}(G)=I_G\backslash X_{wur}(G_{ad})$-equivariant 
parameterisation $\varphi:\hat{G}^{\textup{temp}}_{uni}\to \Phi_{nr}^{\textup{temp}}(G)$ to a 
$X_{wur}(G_{ad})$-equivariant parameterisation  
$\varphi_{ad}:(\hat{G}_{ad})^{\textup{temp}}_{uni}\to \Phi_{nr}^{\textup{temp}}(G_{ad})$. 
For this we need to invoke a stronger form of the uniqueness property of STMs 
$\Phi^t:\Hc_t\to \Hc_{\mathbb{I},\vt}(G_{ad})$ (where $t$ is an (extended) unipotent type 
which represents a Bernstein component of representations of unipotent reduction for an 
inner form of $G_{ad}$) as explained in \cite[3.2.8]{Opdl} and \cite[Proposition 7.13]{Opds}:  
If $\Phi^t_1:\Hc_t\to \Hc_{\mathbb{I},\vt}(G_{ad})$ is another such STM, then there exists 
an isomorphism $\alpha: \Hc_t\to \Hc_t$, $w\in W_{\mathbb{I},0}$, and 
$\omega\in X_{wur}(G_{ad})=(\Omega_{sc}^\theta)^*$ such that 
$\Phi^t_1=\omega\circ w \circ \Phi^t\circ \alpha$. It follows that any 
two matching $I_G$-orbits of connected components under $\varphi$ must also 
correspond under any equivariant correspondence $\tilde\varphi_{ad}$ (which we know exists)
up to the action of $X_{wur}(G_{ad})$. In this way we can compose $\tilde\varphi_{ad}$ with  
an element of $X_{wur}(G_{ad})$ to obtain an $I_G$-equivariant lifting of $\varphi$ defined 
on this $I_G$-orbit of components. Clearly we can do this for any $I_G$-orbit, thus defining 
the desired equivariant lifting $\varphi_{ad}$ of $\varphi$.  
\\

Finally let us consider the general case. Let $G$ be connected reductive and 
let $A$ be the maximal $F$-split torus in the center of $G$. Then $H=G/A$
is the group of $F$-points of the quotient $\mbf{G}/\mbf{A}$, whose center 
is anisotropic. In particular the 
functoriality of the Kottwitz homomorphism implies that $G_1$ maps surjectively 
to  $H_1$. 

Given a tempered irreducible representation $\pi$ of unipotent reduction of $G$, 
choose a $x\in X_{wur}(G)$ such that $x^{-1}\otimes \pi$ is trivial on $A$. Hence 
$x^{-1}\otimes \pi$ descends to a tempered irreducible representation of unipotent 
reduction $\overline{x^{-1}\otimes\pi}$ of $H$. Let 
$\varphi_{\overline{x^{-1}\otimes\pi}}\in \Phi_{nr}^{\textup{temp}}(H)$. Let 
$p_G(\varphi_{\overline{x^{-1}\otimes\pi}})$ in 
$\Phi_{nr}^{\textup{temp}}(G)$ be the image of 
$\varphi_{\overline{x^{-1}\otimes\pi}}$ under the canonical map ${}^LH\to{}^LG$.
Now define $\varphi_\pi:= x.p_G(\varphi_{\overline{x^{-1}\otimes\pi}})\in 
\Phi_{nr}^{\textup{temp}}(G)$. In order that this makes sense, we need to show the 
independence of the choice of $x\in X_{wur}(G)$ such that $x^{-1}\otimes \pi$ is 
trivial on $A$. Suppose that $z\in X_{wur}(G)$ such that $z|_A=1$. 
Then $z$ restricts to $\overline{z}$, a character of $H$. Since $G_1$ surjects onto 
$H_1$, we see that $\overline{z}\in X_{wur}(H)$. Hence if we would replace $x$ 
by $zx$ then we would get $\varphi'_\pi:= (zx).p_G((\varphi_{\overline{(z^{-1}x^{-1})\otimes\pi}}))$.
The equivariance for $X_{wur}(H)$ of the parameterisation of $H$ and of $p_G$ 
(which is obvious) implies that $\varphi'_\pi=\varphi_\pi$. 
Hence our definition is sound, $X_{wur}(G)$-equivariant by construction, and is the 
only possible extension of a given equivariant parameterisation for $H$. The 
properties (a)(1), (a)(2) and (b) easily follow.
\\

This finishes the proof of the existence and essential uniqueness of STM's 
$\Phi_Z^t:\textup{Spec}(Z(\Hc_t))\to \textup{Spec}(Z(\Hc_{\mathbb{I}}))$ 
in the general case for all unipotent types $t$, satisfying the conditions (2)(a) and (b)
of Corollary \ref{cor:STMpar}. We finally need to show that the corresponding map 
$\hat{G}^{\textup{temp}}_{uni}\to \Phi_{nr}^{\textup{temp}}$, $\pi\to\varphi_\pi$  
defines a parameterisation in the sense of Definition \ref{def:desi}. Of course, 
in conditions (iii) and (v) where two such maps play a role we need to 
allow for the fact that the maps are only uniquely determined up to 
certain spectral automorphisms of $\Hc_{uni}$, which means that we may need to 
choose the relevant morphisms $\Phi_Z^t$ judiciously.  

The conditions (i), (ii) and (vi) hold because $\Phi_Z^t$ satisfies conditions (2)(a) and (b)
of Corollary \ref{cor:STMpar}. Furthermore condition (iv) follows easily from 
\cite[Proposition 8.4]{Bo}. 

Let us show that condition (iii) (the compatibility with unitary parabolic induction) 
holds. Let $\pi\in \hat{G}^{\textup{temp}}_{uni}$ be a tempered 
representation of unipotent reduction of $G$, and suppose that $\pi$ is a summand of 
$i_P^G(\pi')$ (unitary induction), for an $F$-parabolic subgroup $P=HU$ with $H$ a standard 
$F$-Levi subgroup of 
$G$, and $\pi'\in \hat{H}^{\textup{temp}}_{uni}$. 
Let $\varphi^H_{\pi'}\in\Phi_{nr}^{\textup{temp}}(H)$ be the parameter for $\pi'$, defined by 
a morphism $\Phi^{t_H}_Z$ as in (a) (thus originating from an 
STM $\Phi^{t_H}:\Hc_{t_H}\leadsto \Hc_{\Ibb}(H^*)$ which satisfies the conditions (2)(a) and (b)
of Corollary \ref{cor:STMpar}) for $H$). Then we want to show that we can find such 
a morphism $\Phi^{t_G}_Z$
as in $(a)$ for $G$, such that it attaches the parameter 
$\varphi^H_{\pi'}\in\Phi_{nr}^{temp}(H)\subset \Phi_{nr}^{temp}(G)$ to $\pi$. 
In order to do so, suppose that $\pi\in \hat{G}^{\textup{temp}}_{t_G}$ 
for some unipotent type $t_G$ for $G$ (such type exists by \cite{Mo},\cite{MP1}). 
Then $t_G$ is a $G$-cover of a cuspidal unipotent type 
$t_M$ for a cuspidal pair $(M,\tau)$ in the cuspidal support of $\pi$ 
(\cite{BK}, \cite{Mo}, \cite{MP2}), 
where we choose $M\subset G$ to be a standard parabolic subgroup.
Since the cuspidal support of $\pi'$ equals that of $\pi$, we see that we may assume 
$M\subset H$, and that we have an intermediate $H$-cover $t_H$ of $t_M$ such that 
$\pi'\in \Pi(H)^{t_H}$. In this situation 
we have (\cite{BK}, \cite{Mo})) an injective homomorphism 
$j:\Hc_{t_H}\to \Hc_{t_G}$ (a ``parabolic subalgebra''  
in the sense of \cite{Opd3})  
inducing a canonical embedding of commutative algebras 
$Z(\Hc_{t_G})\subset Z(\Hc_{t_H})$.  
Similarly we have $\Hc_{\Ibb}(H^*)\subset \Hc_{\Ibb}(G^*)$ (likewise 
a parabolic subalgebra), inducing a canonical embedding 
$Z(\Hc_{\Ibb}(G^*))\subset Z(\Hc_{\Ibb}(H^*))$. The morphisms defined by induction
on the spaces of central characters of these Hecke algebras are given by 
these inclusion maps.

It follows from 
\cite[3.1.1]{Opdl} that $\Phi^{t_H}: \Hc_{t_H}\to\Hc_\Ibb(H^*)$ is induced form a cuspidal STM 
$\Phi'_M:\Hc_{t_M}\to\Hc_\Ibb(M^*)_M$. Similarly $\Phi_{t_G}$ (whose existence 
we have established in the first part of the proof) is induced from a cuspidal STM 
$\Phi_M:\Hc_{t_M}\to\Hc_\Ibb(M^*)_M$. But it is clear that $\Phi^{t_G}$ also 
defines an STM $\Hc_{t_H}\leadsto \Hc_{\Ibb}(H^*)$ satisfying the conditions (2)(a) and (b)
of Corollary \ref{cor:STMpar} for $H$ (defined by the same map as $\Phi^{t_G}$ on $T_{t_H}=T_{t_G}$; 
indeed this clearly defines a spectral preserving map for the parabolic subalgebra 
$\Hc_{t_H}$ if it does so for $\Hc_{t_G}$).
In particular, we may just as well choose $\Phi'_M=\Phi_M$. The resulting commuting diagram 
\[\begin{CD}
\textup{Spec}(Z(\Hc_{t_H}))@>>>\textup{Spec}(Z(\Hc_{t_G}))\\
@V{\Phi^{t_H}_Z}VV                            @VV{\Phi^{t_G}_Z}V\\
\textup{Spec}(Z(\Hc_{\Ibb}(H^*)))@>>>\textup{Spec}(Z(\Hc_{\Ibb}(G^*)))
\end{CD}\]
now easily shows the desired compatibility with unitary induction for 
the associated maps $\varphi^H$ and $\varphi^G$.
\\

The proof of (v) is comparable to that of (iv). Suppose that $\eta:H\to G$ 
is an $F$-morphism with abelian kernel and co-kernel. 
We want to show that if $\varphi\in\Phi_{nr}^{\textup{temp}}(G)$ is given 
and $\pi\in\Pi_\varphi(G)^{\textup{temp}}$, the pull-back of $\pi$ to $H$ 
is a finite direct sum of tempered irreducible representations in 
$\Pi_{{}^L\eta\circ\varphi}^{\textup{temp}}(H)$, 
where ${}^L\eta:{}^LG\to{}^LH$ denotes the natural map.
\\

We choose maximally $F$-split tori $T_H$ and $T_G$ such that 
$\eta(T_H)\subset T_G$.  
The induced map  by $\eta$ on the (absolute) root data of $(H,T_H)$ and $(G,T_G)$ 
is an isomorphism on the root systems, but the lattices may differ. This defines a 
bijective correspondence between the sets of standard $F$-Levi subgroups 
of $H$ and $G$. Let $M_G$ and $M_H$  be matching standard Levi subgroups 
of $G$ and $H$ respectively. Then 
$\eta^M:=\eta|_{M_H}:M_H\to M_G$ also has abelian kernel and co-kernel. 
The pull-back of a cuspidal character of $M_G$ of unipotent reduction defines a 
cuspidal character of unipotent reduction of $M_H$, and conversely, every cuspidal 
character of unipotent reduction of $M_H$ is obtained like this. Suppose that 
we have such matching unipotent inertial equivalence classes $\mathfrak{s}_H$
and $\mathfrak{s}_G$, and let $t_H$ and $t_G$ be unipotent types for $H$ 
and $G$ respectively covering the corresponding Bernstein components.  
Let $\Phi^{t_H}: \Hc_{t_H}\leadsto\Hc_\Ibb(H^*)$ and $\Phi^{t_G}: \Hc_{t_G}\leadsto\Hc_\Ibb(G^*)$
be corresponding STMs which satisfy the conditions (2)(a) and (b)
of Corollary \ref{cor:STMpar} (the existence and essential uniqueness of such STMs 
was proved in the first part of the proof).
By \cite[3.1.1]{Opdl} these STMs are induced from cuspidal unipotent STMs 
$\Phi_{M_H}:\Hc_{t_{M_H}}\to\Hc_\Ibb(M_H^*)_{M_H}$ and 
$\Phi_{M_G}:\Hc_{t_{M_G}}\to\Hc_\Ibb(M_G^*)_{M_H}$ of the 
reductive groups $(M_H)_{ssa}:=M_H/A^M_H$ and 
$(M_G)_{ssa}=M_G/A^M_G$ respectively, 
which are almost direct products of a semisimple and 
an anisotropic groupn (here $A^M_G$ denotes the connected split 
center of $M_G$, and likewise $A^M_H$
is the connected split center of $M_H$). 
Hence the latter Hecke algebras are 
all semisimple. By the equivariance property (2)(a) of 
Corollary \ref{cor:STMpar}, the STMs $\Phi^{t_H}$ and 
$\Phi^{t_G}$ are completely determined by $\Phi_{M_H}$ 
and $\Phi_{M_G}$ respectively. 
Note that $\eta$ induces a spectral isomorphism 
$\Phi^t_\eta:\Hc_{t_{M_H}}\leadsto \Hc_{t_{M_G}}$ 
(since these Hecke algebras have rank zero, if an 
STM between them exists it is an isomorphism)  
and an embedding (in particular an 
STM--see \cite[7.1.3]{Opds}) 
$\Phi^\Ibb_\eta:\Hc(M_H^*)_{M_H}\leadsto \Hc_{t_{M_G}}(M_G^*)_{M_G}$.  

Because the root systems of $G$ and $H$ are isomorphic via $\eta$, 
it is clear that $\Phi'_{M_G}=\Phi^\Ibb_\eta\circ \Phi_{M_H}\circ (\Phi^t_\eta)^{-1}$
also defines an STM $\Hc_{t_{M_G}}\to\Hc_\Ibb(M_G^*)_{M_H}$ that can be induced 
to an STM $(\Phi^{t_G})':\Hc_{t_G}\leadsto\Hc_\Ibb(G^*)$. (Indeed, the $\mu$-functions 
of the $\Hc_\Ibb(G^*)$ and $\Hc_\Ibb(H^*)$ are given by ``the same'' formula.) 
Thus we may and will from now on 
assume that $\Psi^{t_G}=(\Phi^{t_G})'$ is constructed like this.
It follows easily that we have a commutative diagram of morphisms 
\[\begin{CD}
\textup{Spec}(Z(\Hc_{t_G}))@>{(\eta^t_Z)^*}>>\textup{Spec}(Z(\Hc_{t_H}))\\
@V{\Phi^{t_G}_Z}VV                            @VV{\Phi^{t_H}_Z}V\\
\textup{Spec}(Z(\Hc_{\Ibb}(G^*)))@>{(\eta^\Ibb_Z)^*}>>\textup{Spec}(Z(\Hc_{\Ibb}(H^*)))
\end{CD}\]
where $\eta^t_Z$ and $\eta^\Ibb_Z$ denote the natural morphisms 
which $\eta$ induces on the centers of the unipotent affine Hecke algebras 
involved. By comparing the central characters 
of a tempered representation of $\Hc_{t_G}$ and a summand of its pull back 
via $\eta$ the desired property (v) now follows in a straightforward fashion.
\end{proof}
\begin{rem} In \cite[Theorem 1.3]{FOS} the conjecture \ref{conj:1} 
(including the rational constants) 
was proved for 
supercuspidal representations of unipotent reduction of semisimple groups. 
We hope that this can be extended to prove the 
conjectures \ref{conj:1} and \ref{conj:2} of \cite{HII} for all tempered representations of 
unipotent reduction of a general connected reductive group over $F$ (split over 
an unramified extension), using \cite{CO} (in particular Theorem 
\ref{thm:rat}) and drawing on ideas from \cite{Ree4}.
\end{rem}

\end{document}